\def\gcal{\mathcal{G}}
\def\pcal{\mathcal{P}}
\def\lcal{\mathcal{L}}
\def\cal{\mathcal{H}}
\def\ucal{\mathcal{U}}
\def\scal{\mathcal{S}} 
\def\bbz{\mathbb{Z}}
\def\bbq{\mathbb{Q}}
\def\bbf{\mathbb{F}}
\def\bbr{\mathbb{R}}
\def\bba{\mathbb{A}}
\def\bbh{\mathbb{H}}
\def\bbg{\mathbb{G}}
\def\bbu{\mathbb{U}}
\def\bbl{\mathbb{L}}
\def\gfr{\mathfrak{g}}
\def\hfr{\mathfrak{h}}
\def\afr{\mathfrak{a}}
\def\pfr{\mathfrak{p}}
\def\f{\mathfrak{f}}
\def\F{\mathfrak{F}}
\def\L{\mathfrak{L}}
\def\X{\mathbb{X}}
\def\lfr{\mathfrak{l}}
\def\ufr{\mathfrak{u}}
\def\vare{\varepsilon}
\def\Aff{{\rm Aff}}
\def\Aut{{\rm Aut}}
\def\Ad{{\rm Ad}}
\def\Lie{{\rm Lie}}
\def\GL{{\rm GL}}
\def\SL{{\rm SL}}
\def\spec{{\rm Spec}}
\def\adj{{\rm adj}}
\def\h{\hspace{1mm}}
\def\lhde{\trianglelefteq}
\def\a{\alpha}
\def\b{\beta}
\def\d{\delta}
\def\e{\varepsilon}
\def\f{\varphi}
\def\g{\gamma}
\def\Ga{\Gamma}
\def\l{\lambda}
\def\o{\omega}
\def\L{{\mathbb L}}
\def\G{{\mathbb G}}
\def\H{{\mathbb H}}
\def\U{{\mathbb U}}
\def\Q{{\mathbb Q}}
\def\Z{{\mathbb Z}}
\def\F{{\mathbb F}}
\def\P{{\mathbb P}}
\def\V{{\mathbb V}}
\def\W{{\mathbb W}}
\def\wh{\widehat}
\def\wt{\widetilde}
\def\GG{\mathcal{G}}
\def\HH{\mathcal{H}}
\def\LL{\mathcal{L}}
\def\KK{\mathcal{K}}
\def\sm{\backslash}
\def\pr{{\rm pr}}
\def\Ker{{\rm Ker}}
\def\Tr{{\rm Tr}}
\def\Mat{{\rm Mat}}
\def\Im{{\rm Im}}
\def\be{\begin{equation}}
\def\ee{\end{equation}}
\def\bea{\begin{eqnarray}}
\def\eea{\end{eqnarray}}
\def\bean{\begin{eqnarray*}}
\def\eean{\end{eqnarray*}}
\newtheorem{lem}{Lemma}
\newtheorem{thm}[lem]{Theorem}
\newtheorem*{thma}{Theorem A}
\newtheorem*{thmc}{Theorem C}
\newtheorem*{lemd}{Lemma D}
\newtheorem*{prpb}{Proposition B}
\newtheorem{prp}[lem]{Proposition}
\newtheorem{prop}[lem]{Proposition}
\newtheorem{cor}[lem]{Corollary}
\theoremstyle{definition}
\newtheorem{examp}[lem]{Example}
\newtheorem{question}[lem]{Question}
\newtheorem{rmk}[lem]{Remark}
\newtheorem{dfn}[lem]{Definition}
\def\la{\lesssim}
\title{Expansion in perfect groups}
\begin{document}
\author{Alireza Salehi Golsefidy\footnote{A. S-G. was partially supported by the
NSF grant
DMS-1001598.} and P\'eter P. Varj\'u\footnote{P.P. V. was partially
supported by the NSF grant DMS-0835373.}}
\maketitle

\begin{abstract}
Let $\Ga$ be a subgroup of $\GL_d(\bbz[1/q_0])$ generated by a finite symmetric set S.
For an integer $q$, denote by $\pi_q$ the projection
map $\Z[1/q_0]\to\Z[1/q_0]/q\Z[1/q_0]$.
We prove that the Cayley graphs of $\pi_q(\Ga)$ with respect to the generating sets
$\pi_q(S)$ form a family of expanders when $q$ ranges over square-free integers
with large prime divisors if and only if the connected component of
the Zariski-closure of $\Ga$ is perfect, i.e. it has no nontrivial Abelian quotients.
\end{abstract}

\section{Introduction}
\label{sc_intro}
Let $\GG$ be a graph, and for a set of vertices
$X\subset V(\GG)$, denote by $\partial X$ the set of
edges that connect a vertex in $X$ to one in $V(\GG)\sm X$.
Define
\[
c(\GG)=\min_{X\subset V(\GG),\;\;|X|\le|V(\GG)|/2}
\frac{|\partial X|}{|X|},
\]
where $|X|$ denotes the cardinality of the set $X$.
A family of graphs is called a family of expanders, if
$c(\GG)$ is bounded away from zero for
graphs $\GG$ that belong to the family.
Expanders have a wide range of applications in computer
science (see e.g. Hoory, Linial and Widgerson \cite{HLW} for a
survey on expanders) and
recently they found remarkable applications in pure mathematics
as well.

Let $S$ be a symmetric (i.e. closed for taking inverses) subset
of $\GL_d(\bbq)$ and let $\Gamma$  be the group generated by $S$.
For any positive integer $q$, let $\pi_q:\bbz\rightarrow \bbz/q\bbz$
be the residue map.
If the prime factors of $q$ are large, $\pi_q$ induces a
homomorphism from $\Gamma$ to $\GL_d(\bbz/q\bbz)$.
We denote this and all the similar maps by $\pi_q$ also.
In this article, we give a necessary and sufficient condition
under which the family of Cayley graphs $\gcal(\pi_q(\Gamma),\pi_q(S))$
form expanders as $q$ runs through square-free integers
with large prime factors.
Let us recall that if $S \subset G$ is a symmetric set of generators, then
the Cayley graph $\GG(G,S)$ of $G$ with respect to the
generating set $S$ is defined to be the graph whose vertex set
is $G$, and two vertices $x,y\in G$ are connected
exactly if $y\in Sx$.

\begin{thm}
\label{th_gengrp}
Let $\Gamma\subseteq \GL_d\left(\bbz[1/{q_0}]\right)$ be the group generated by a symmetric set $S$. Then $\gcal(\pi_q(\Gamma),\pi_q(S))$ form a family of expanders when $q$ ranges over square-free integers coprime to $q_0$ if and only if the connected component of the Zariski-closure of $\Gamma$ is perfect.
\end{thm}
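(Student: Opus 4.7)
The two directions are very different in difficulty. The ``only if'' direction is a fairly standard obstruction argument via abelian Cayley graphs, while the ``if'' direction is the heart of the paper and must proceed along the Bourgain--Gamburd template, with new input needed to accommodate a non-trivial unipotent radical.

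\textbf{The ``only if'' direction.} Suppose the connected component $G^0$ of the Zariski closure is not perfect, so that the abelianization $G^0/[G^0,G^0]$ is a nontrivial commutative algebraic group. Passing to the finite-index subgroup $\Gamma_0=\Gamma\cap G^0(\bbq)$, there is a surjective homomorphism from $\Gamma_0$ to a subgroup of finite index in an integer lattice of an abelian algebraic group, hence onto something containing $\bbz$. Reducing modulo $q$ one obtains, for a dense set of square-free $q$, an abelian quotient of $\pi_q(\Gamma_0)$ of size $\gea q^{\vare}$ on which the images of $S$ still generate. The Fourier analysis on abelian groups (eigenvalues $\prod_j \cos(2\pi s_j\cdot x/q)$) shows that Cayley graphs of $\bbz/N\bbz$ with a bounded symmetric generating set have spectral gap $O(1/\log^2 N)$, so they cannot form an expander family. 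Since a quotient of a Cayley graph bounds its Cheeger constant from above, the original family is not a family of expanders.

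\textbf{The ``if'' direction: setup.} After replacing $\Gamma$ by a finite index subgroup I may assume that the Zariski closure $\G$ of $\Gamma$ is connected and perfect. Write the Levi decomposition $\G=\L\ltimes\bbu$, where $\L$ is (connected) semisimple and $\bbu$ is the unipotent radical. Perfectness forces $\L$ to act on the abelianization $\bbu/[\bbu,\bbu]$ without trivial summands, which is the key algebraic constraint for subsequent non-concentration arguments. Via a strong approximation type statement I identify $\pi_q(\Gamma)$, up to bounded index, with $\G(\bbz/q\bbz)$ for square-free $q$ with large enough prime factors, so that spectral gap in the Cayley graph is equivalent to the $\ell^2$-decay of the random walk on $\G(\bbz/q\bbz)$.

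\textbf{The ``if'' direction: main steps.} The plan is the three-step Bourgain--Gamburd scheme applied uniformly in $q=\prod p_i$. Let $\mu$ be the uniform probability measure on $S$ and write $\nu_\ell=\pi_q(\mu^{*\ell})$ on $\pi_q(\Gamma)$.
\begin{itemize}
\item[(a)] \emph{Initial decay / escape.} For $\ell_0\asymp\log q$ I would show $\|\nu_{\ell_0}\|_2^2\le |\pi_q(\Gamma)|^{-\vare}$. This is deduced from a Diophantine/escape estimate: random walks on $\Gamma$ escape proper algebraic subgroups (and in particular the preimages of proper subgroups of $\pi_q(\Gamma)$) at an exponential rate. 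Here perfectness of $\G^0$ is crucial, because on a non-perfect group the abelian quotient traps the walk near additive subgroups.
\item[(b)] \emph{$\ell^2$-flattening.} I would apply a Balog--Szemer\'edi--Gowers / product-theorem argument to show that if $\|\nu_\ell*\nu_\ell\|_2\ge |\pi_q(\Gamma)|^{-\vare}\|\nu_\ell\|_2$, then $\nu_\ell$ is concentrated in an approximate subgroup $H$ of $\pi_q(\Gamma)$. The challenge, and the main technical obstacle of the paper, is to extend the product theorems of Breuillard--Green--Tao and Pyber--Szab\'o from the quasi-simple setting to $\G(\bbz/q\bbz)$ with non-trivial unipotent radical. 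This needs a structure theorem for approximate subgroups of $(\L\ltimes\bbu)(\bbz/q\bbz)$ saying that any such $H$ essentially sits inside the $\bbz/q\bbz$-points of a proper algebraic subgroup, together with a coordinate-by-coordinate (Chinese Remainder) treatment to handle square-free $q$.
\item[(c)] \emph{Quasi-randomness.} Combining (a) and iterating (b) drives $\|\nu_\ell\|_2$ down to $|\pi_q(\Gamma)|^{-1/2+o(1)}$ in $\ell=O(\log q)$ steps. A Sarnak--Xue / Frobenius type lower bound on the minimal dimension of non-trivial irreducible representations of $\G(\bbz/q\bbz)$ (polynomial in the smallest prime factor of $q$) then upgrades this to a uniform spectral gap.
\end{itemize}

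\textbf{Main obstacle.} The semisimple case of (a)--(c) is by now classical, so the crux is step (b) in the presence of a unipotent radical: classifying approximate subgroups of $\G(\bbz/q\bbz)$ and ruling out ``mixed'' concentration in which a $\Gamma$-walk is trapped partly in a subtorus of $\L$ and partly in an $\L$-invariant piece of $\bbu$. Once perfectness is used to exclude trivial $\L$-summands in $\bbu$, the combinatorial escape estimate must be proved simultaneously along Levi and unipotent directions; this is where the new ideas of the paper should concentrate, and it is also what cleanly fails when $\G^0$ is not perfect, matching the ``only if'' direction.
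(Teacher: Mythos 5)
Your overall scheme is the same as the paper's (the Bourgain--Gamburd three-step argument for sufficiency, and the abelian obstruction for necessity), but two of your steps, as stated, do not work. In the ``only if'' direction, the large abelian group you produce is a quotient of $\pi_q(\Gamma_0)$, where $\Gamma_0=\Gamma\cap\G^{0}(\Q)$ is a finite-index \emph{subgroup} of $\Gamma$; it is not a quotient of $\pi_q(\Gamma)$, and the elements of $S$ need not even lie in $\Gamma_0$, so ``the images of $S$ still generate'' has no meaning and ``a quotient of a Cayley graph bounds its Cheeger constant'' does not apply. You must either pass to the genuine quotient $\G\to\G/[\G^{0},\G^{0}]$ (which is defined over $\Q$, commutes with $\pi_q$ once the prime factors are large, and sends $\Gamma$ onto a finitely generated virtually abelian, hence amenable, group whose unboundedly large finite quotients generated by the image of $S$ cannot expand), or else prove that expansion of $\gcal(\pi_q(\Gamma),\pi_q(S))$ forces expansion of $\gcal(\pi_q(\Gamma^{\circ}),\pi_q(S^{\circ}))$. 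The latter is the paper's route and is not automatic: it rests on showing that $\Gamma^{\circ}$ contains the congruence kernels $\Gamma(q)$ (because $\G/\G^{\circ}$ is a finite $\Q$-group), so that the relevant profinite completions are commensurable; your sketch skips this entirely.

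In the sufficiency direction, step (a) as you state it would fail. Escape ``from proper algebraic subgroups (and in particular the preimages of proper subgroups of $\pi_q(\Gamma)$)'' is not correct: the preimage in $\Gamma$ of a proper subgroup $H<\pi_q(\Gamma)$ has finite index in $\Gamma$ and is therefore Zariski dense in $\G$, so no escape-from-subvarieties statement applies to it. What the flattening step actually consumes is the coset bound $\pi_q[\chi_{S'}^{(l)}](gH)\ll[\pi_q(\Gamma):H]^{-\delta}$, uniform over \emph{all} proper subgroups $H$ and with the exponent tied to the index. The paper's bridge is Nori's theorem: after replacing $H$ by a bounded-index subgroup $H^{\sharp}$, the set of \emph{small} lifts, i.e.\ $h\in\Gamma$ with $\pi_q(h)\in H^{\sharp}$ and $\|h\|\le[\pi_q(\Gamma):H]^{\delta}$, lies in a proper algebraic subgroup of $\G$; a fixed finite family of irreducible representations together with algebraic families of \emph{affine} representations (needed precisely because the unipotent radical kills all irreducible ones and some $H$ surject onto the Levi) converts this into ``fixes a line or an affine point''; and a ping-pong argument in a free subgroup generated by a specially constructed $S'$ shows that the walk fixes any given line or point with exponentially small probability. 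Without this lifting-plus-representation mechanism, or a substitute for it, your step (a) gives only escape from subvarieties, which is far weaker than the hypothesis needed in your step (b). Your description of (b) and (c) is consistent with the paper's argument (Proposition B of Varj\'u together with Breuillard--Green--Tao/Pyber--Szab\'o for the Levi part, a separate treatment of the unipotent radical via Farah's approximate-homomorphism theorem and orbit-sum lemmas, and quasirandomness via Landazuri--Seitz), although the paper proves a tripling statement under non-concentration hypotheses rather than a classification of approximate subgroups, and it must also transfer the spectral gap from $\Gamma^{\circ}$ back to $\Gamma$ at the end.
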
 

\subsection{Motivation and related results}

A result of the type of Theorem \ref{th_gengrp} was proved by Bourgain, Gamburd and Sarnak~\cite{BGS}.
They proved that such Cayley graphs form expanders if
$\Gamma\subseteq \SL_2(\Z)$ is Zariski-dense.
Their main motivation was to formulate and prove
an affine sieve theorem.
Moreover, they proved the affine sieve theorem for groups more
general than $\SL_2$ provided a conjecture of Lubotzky
holds (see \cite[Conjecture {1.5}]{BGS} and \cite{Lub0})
which is a special case
of our Theorem \ref{th_gengrp}.
Following the ideas in \cite{BGS} and using Theorem~\ref{th_gengrp},
in a forthcoming paper, Salehi Golsefidy and Sarnak~\cite{SS}
get a similar affine sieve theorem whenever the character group of the connected component of the Zariski-closure of $\Gamma$ is trivial.

Following \cite{BGS}, now there is a rich literature of sieving
applications of expander graphs not limited to number theory.
We refer to the recent surveys of Kowalski \cite{Kow} and Lubotzky
\cite{Lub} for more details on these developments.
Besides sieving, results similar to the above theorem are useful
in studying covers of hyperbolic 3-manifolds, see the paper
of Long, Lubotzky and Reid \cite{LLR} following the work of
Lackenby \cite{Lac}.

The question about the expanding properties of mod $q$
quotients was first studied only for ``thick" groups,
namely lattices in semisimple Lie groups.
The first results used the representation theory of the underlying
Lie group; property (T) in Kazhdan \cite{Kaz} and Margulis \cite{Mar}
and automorphic forms in e.g. \cite{Sel}, \cite{BS}, \cite{KS2} and
\cite{Clo}.
Later Sarnak and Xue \cite{SX} developed a more elementary method.
Kelmer and Silberman \cite{KS} combined this method with
recent advances on automorphic forms to obtain a very general result
on arbitrary arithmetic lattices.
The advantage of these results over our method is that they give
explicit and very good bounds.

Lubotzky was the first person who asked this question
for a ``thin" group in his famous 1-2-3 conjecture
(see \cite{Lub0}).\footnote{He asked this question for
$S=\left\{\left[\begin{array}{cc} 1&\pm 3\\0&1\end{array}\right],
\left[\begin{array}{cc} 1&0\\\pm 3&1\end{array}\right]\right\}$.}
Shalom \cite{Sh1}, \cite{Sh2} obtained the first results which establish the
expander property for quotients of certain non-lattices.
A few years later Gamburd \cite{Gam} showed that quotients of $\Ga<\SL_2(\Z)$
are expanders if the Hausdorff dimension of the
limit set of $\Ga$ is larger than 5/6.
The first paper which achieved a result which depend merely on the
Zariski-closure of $\Ga$ was obtained by Bourgain and Gamburd \cite{BG1}.
Their assumptions were that the Zariski-closure of $\Ga$ is
$\mathbb{SL}_2$, and
the modulus $q$ is prime.
In the past four years, several articles appeared which extended that
result, see \cite{BG1}--\cite{BGS}, and \cite{Var}.
However, there are still interesting questions to explore. For instance,

\begin{question}\label{q:AnyModulus}
Does the family of Cayley graphs $\gcal(\pi_q(\Gamma),\pi_q(S))$
form expanders as $q$ runs through {\it any}
positive integer with large prime factors if the connected
component of the Zariski-closure of $\Gamma$ is perfect?
\end{question}

Bourgain and the second author \cite{BV} give an affirmative answer to this question
when the Zariski-closure of $\Gamma$ is $\mathbb{SL}_d$.

\begin{question}
If $\Gamma\subseteq \GL_d(\bbf_p(t))$ is generated by a symmetric set $S$,
then what is the necessary and sufficient condition such that
$\gcal(\pi_{\pfr}(\Gamma),\pi_{\pfr}(S))$
form expanders as $\pfr$ runs through square-free polynomials
with large degree prime factors?
\end{question}

Moreover, one might hope that the answer is positive even to
the following very general question that was communicated to us
by Alex Lubotzky:

\begin{question}[Lubotzky]\label{q:Lub}
Let $\Gamma\subseteq \GL_n(A)$ is a finitely generated subgroup,
where $A$ is an integral domain which is generated by the traces of
the elements of $\Gamma$.
Is it true that if the Zariski-closure of $\Gamma$ is semisimple, then
the Cayley graphs
$\gcal(\pi_{\afr}(\Gamma),\pi_{\afr}(S))$ form a family of expanders as $\afr$
ranges through finite index ideals of $A$?
\end{question}

We also mention that there are 
studies devoted to the problem of expansion with respect to random
generators, see \cite{BGT2} and \cite{BGT3}, and
in the work of Breuillard and Gamburd \cite{BrG} it is proved that
except maybe for a set of primes $p$ of zero density, $\SL_2(\F_p)$
are expanders with respect to any generators.

\subsection{Groups defined over number fields}

Let $k$ be a number field and $\Gamma$ be a finitely generated
subgroup of $\GL_n(k)$.
Let us also remark that Theorem~\ref{th_gengrp}
tells us under what condition the Cayley graphs 
of the square free quotients of $\Gamma$ form expanders.
To be precise, by the means of restriction of scalars,  
we can view $\Gamma$ as a subgroup $\Gamma\rq{}$
of $R_{k/\bbq}(\GL_d)(\bbq)\subseteq \GL_{rd}(\bbq)$,
where $r=\dim_{\bbq}k$.
Now it is easy to see that
$\gcal(\pi_{\mathfrak{q}}(\Gamma),\pi_{\mathfrak{q}}(S))$
form expanders as $\mathfrak{q}$ runs through square free 
ideals of $\mathcal{O}_k$ (the ring of integers in $k$)
with large prime factors if and only if 
$\gcal(\pi_q(\Gamma\rq{}),\pi_q(S\rq{}))$
form expanders as $q$ runs through square free 
integers with large prime factors.
By Theorem~\ref{th_gengrp}, we know the necessary and 
sufficient condition under which the latter holds.
In the following example, we present a finitely 
generated subgroup $\Gamma$ of $\GL_d(\bbq[i])$
whose Zariski-closure is Zariski connected and 
perfect but $\gcal(\pi_{\pfr}(\Gamma),
\pi_{\pfr}(S))$ do not form expanders as $\pfr$ runs 
through prime ideals of $\bbz[i]$.
It shows that in general it is necessary to view $\Gamma$ as a subgroup 
of $\GL_{rd}(\bbq)$ and then look at its Zariski-closure.
 
\begin{examp}\label{ex:RestrictionScalar}
Let $h$ be a non-degenerate symplectic form on $V=\bbz^m$.
Let $\bbh$ be the Heisenberg 
group associated with $h$.
To be precise, $\bbh(\bbz)$ is the set $V\times \bbz$ endowed with 
the group law
\[
(v,t)\cdot(v\rq{},t\rq{}):=(v+v\rq{},t+t\rq{}+h(v,v\rq{})).
\]
From the definition it is clear that $\bbh$ is a central
extension of the group scheme $\mathbb{V}$ associated with $V$.
The action of the symplectic group ${\rm Sp}_{h,V}$ on
$\mathbb{V}$ can be naturally extended to an action on
$\bbh$ by acting trivially on the center.
Now let  $\bbg={\rm Sp}_{h,V}\ltimes \bbh$ and 
\[
\Gamma:=\{(\gamma,(v,t))\in \bbg(\bbz[i])|\h
\gamma\in {\rm Sp}_{h,V}(\bbz),\h v\in V,\h t\in  \bbz[i]\}. 
\]
It is easy to see $\bbg_{\bbq[i]}:=\bbg\times_{\spec(\bbz)}\spec(\bbq[i])$
is a perfect Zariski-connected $\bbq[i]$-group and
$\Gamma$ is a finitely generated Zariski-dense subgroup
of $\bbg_{\bbq[i]}$. On the other hand, the Zariski-closure of $\Gamma'$ in $R_{\bbq[i]/\bbq}(\bbg)$ is isomorphic to $\bbg\times\bbg_a$, which is not perfect. Thus the Cayley graphs $\gcal(\pi_{\pfr}(\Gamma),\pi_{\pfr}(S))$ do not form expanders as $\pfr$ runs through primes in $\bbz[i]$.
\end{examp}

As we have seen in Example~\ref{ex:RestrictionScalar},
in general, the connected component of the Zariski-closure
$\bbg$ of $\Gamma$ in $\GL_d(k)$ might be perfect but the
connected component of its Zariski-closure $\bbg\rq{}$
in $R_{k/\bbq}(\bbg)(\bbq)\subseteq \GL_{rd}(\bbq)$,
where $r=\dim_{\bbq}k$, might be not perfect.
However it is easy to see that if $\bbg$ is 
semisimple, so is $\bbg\rq{}$.
Hence we get the following corollary:

\begin{cor}\label{c:Semisimple}
Let $\Gamma\subseteq \GL_d(k)$ be the group generated
by a symmetric set $S$, where $k$ is a number field.
If the Zariski-closure of $\Gamma$ is semisimple,
then $\gcal(\pi_{\mathfrak{q}}(\Gamma),\pi_{\mathfrak{q}}(S))$
form a family of expanders when $\mathfrak{q}$
ranges over square-free ideals of the ring of integers
$\mathcal{O}_k$ in $k$ with large prime factors. 
\end{cor}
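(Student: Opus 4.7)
The plan is to reduce the corollary to Theorem~\ref{th_gengrp} by the standard restriction-of-scalars device sketched in the discussion immediately preceding the statement. Writing $r = \dim_\bbq k$, I would embed $\Gamma$ into $R_{k/\bbq}(\GL_d)(\bbq) \subseteq \GL_{rd}(\bbq)$, producing a finitely generated subgroup $\Gamma' \subseteq \GL_{rd}(\bbq)$ with generating set $S'$. As already noted in the paper, the Cayley graphs $\gcal(\pi_\mathfrak{q}(\Gamma), \pi_\mathfrak{q}(S))$ form expanders as $\mathfrak{q}$ ranges over square-free ideals of $\mathcal{O}_k$ with large prime factors if and only if the Cayley graphs $\gcal(\pi_q(\Gamma'), \pi_q(S'))$ form expanders as $q$ ranges over square-free integers with large prime factors. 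Hence by Theorem~\ref{th_gengrp} it suffices to show that the identity component $(\bbg')^\circ$ of the Zariski-closure $\bbg'$ of $\Gamma'$ in $\GL_{rd}$ over $\bbq$ is perfect.

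To verify this, I would base-change to $\overline{\bbq}$, using the canonical decomposition
\[
R_{k/\bbq}(\GL_d) \times_\bbq \overline{\bbq} \;\cong\; \prod_{\sigma \colon k \hookrightarrow \overline{\bbq}} \GL_d,
\]
under which $\Gamma'$ maps to the diagonal $\{(\sigma(\gamma))_\sigma : \gamma \in \Gamma\}$ inside $\prod_\sigma \GL_d$. Since $\Gamma$ is Zariski-dense in the (connected) semisimple $k$-group $\bbg$, its $\sigma$-th projection is Zariski-dense in $\bbg_\sigma := \bbg \times_{k,\sigma} \overline{\bbq}$, which is again semisimple. Consequently $(\bbg')^\circ \times_\bbq \overline{\bbq}$ is a closed connected subgroup of $\prod_\sigma \bbg_\sigma$ that surjects onto every factor. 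Its radical, being connected, normal and solvable, then surjects onto a connected normal solvable subgroup of each semisimple $\bbg_\sigma$, which must be trivial; so the radical lies in the intersection of the kernels of all of the coordinate projections, which is trivial. Therefore $(\bbg')^\circ_{\overline{\bbq}}$ is semisimple; this descends to semisimplicity of $(\bbg')^\circ$ over $\bbq$, and in particular to perfectness.

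The principal technical step, and in my view the only real obstacle, is checking that the identity component of the Zariski-closure of $\Gamma'$ really does surject onto each factor $\bbg_\sigma$ after base change, and that the resulting semisimplicity conclusion descends from $\overline{\bbq}$ to $\bbq$. Both facts are standard: the first uses only that Zariski-density is preserved by the $\bbq$-algebra homomorphism $k \to \overline{\bbq}$ corresponding to $\sigma$ together with connectedness of $\bbg_\sigma$, and the second uses that semisimplicity of a connected algebraic group is a geometric property. Once these formalities are dispatched, Theorem~\ref{th_gengrp} applied to $\Gamma'$ and $S'$ finishes the proof.
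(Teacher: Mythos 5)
Your proposal is correct and follows the paper's own route: reduce via restriction of scalars and the equivalence of the two families of Cayley graphs noted just before the statement, observe that semisimplicity of the Zariski closure over $k$ forces the (connected component of the) Zariski closure of $\Gamma'$ in $\GL_{rd}$ over $\Q$ to be semisimple, hence perfect, and then invoke Theorem \ref{th_gengrp}. The paper leaves the preservation of semisimplicity as an "easy to see" remark, and your base-change argument to $\overline{\Q}$ with the product decomposition $\prod_\sigma \GL_d$ is a correct way of filling in exactly that step.
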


\subsection{Outline of the argument}

Similarly to most of the previous works done on this
problem~\cite{BG1}--\cite{BGS} and \cite{Var},
first we prove {\it escape from proper subgroups} and then 
we show the occurrence of the {\it flattening phenomenon}.

\begin{prp}[Escape from subgroups]
\label{pr_escp}
Let $\G$ be a Zariski-connected, perfect algebraic group defined over $\Q$.
Let $S\subset \G(\Q)$
be finite and symmetric.
Assume that $S$ generates a subgroup $\Ga<\G(\Q)$
which is Zariski dense in $\G$.

Then there is a constant $\d$ depending only on $S$,
and there is a symmetric set $S'\subset \Ga$
such that the following holds.
For any square-free integer $q$ which is relatively
prime to the denominators of the entries of $S$, and for any proper
subgroup $H<\pi_q(\Ga)$ and for any even integer $l\ge\log q$,
we have
\[
\pi_q[\chi_{S'}^{(l)}](H)\ll[\pi_q(\Ga):H]^{-\d}.
\]
\end{prp}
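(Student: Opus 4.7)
The plan follows the standard three-step template: (i) use strong approximation and Goursat's lemma to reduce to a maximal proper subgroup at a single prime; (ii) invoke the Larsen--Pink structure theorem to place such a subgroup inside the $\F_p$-points of a proper algebraic subgroup of bounded complexity; (iii) construct $S'$ and prove a uniform escape-from-subvarieties estimate.

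For step (i), since $\G$ is Zariski connected and perfect and $\Ga$ is Zariski dense in $\G$, a Nori--Weisfeiler-type strong approximation (extended to the perfect setting by handling the unipotent radical separately) yields $\pi_q(\Ga)=\prod_{p\mid q}\pi_p(\Ga)$ for all square-free $q$ coprime to $q_0$ whose prime factors lie outside a fixed finite exceptional set, with each $\pi_p(\Ga)$ of uniformly bounded index in $\G(\F_p)$. It suffices to prove the \emph{absolute} bound $\chi_{S'}^{(l)}(H^*)\ll q^{-c}$ for maximal proper $H^*<\pi_q(\Ga)$, because any proper $H$ lies in some maximal $H^*$ and $[\pi_q(\Ga):H]\ll q^{\dim\G}$ then gives $\chi_{S'}^{(l)}(H)\le\chi_{S'}^{(l)}(H^*)\ll q^{-c}\ll[\pi_q(\Ga):H]^{-\d}$ for $\d=c/\dim\G$. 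By Goursat's lemma, such an $H^*$ is either the preimage of a maximal proper $M<\pi_{p_0}(\Ga)$ at a single prime, or the graph of an isomorphism between nontrivial quotients of two factors $\pi_{p_1}(\Ga)$, $\pi_{p_2}(\Ga)$; the diagonal case reduces to the single-prime case because simple composition factors of Lie type are distinguished by their defining characteristic and perfectness of $\G$ tightly restricts the remaining (abelian-type) composition factors.

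For step (ii), Larsen--Pink applied to $M<\pi_{p_0}(\Ga)$ shows $M$ is contained in the $\F_{p_0}$-points of a proper algebraic subgroup $\G'\subsetneq\G$ of complexity bounded in terms of $\dim\G$ (the ``bounded order'' case being absorbed via Jordan-type rigidity of finite subgroups of $\GL_d$). For step (iii), we construct $S'$ by iteratively adjoining to $S$ elements of $\Ga$ that lie outside each of finitely many proper algebraic subgroups of bounded complexity (possible by Zariski density of $\Ga$, since bounded-complexity subgroups form a bounded algebraic family admitting a finite net). A ping-pong argument on the semisimple Levi quotient of $\G$, bootstrapped to the full group via perfectness (which expresses elements of the unipotent radical as iterated commutators involving the Levi), then yields uniform geometric decay
\[
\chi_{S'}^{(l)}(\G'(\bbq)\cap\Ga)\ll e^{-cl}
\]
over all proper algebraic subgroups $\G'\subsetneq\G$ of bounded complexity. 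Taking $l\ge\log q$ converts this into the required bound $q^{-c}$.

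The main obstacle is establishing the escape estimate uniformly across the bounded-complexity family of proper algebraic subgroups and across primes $p$, in particular propagating the geometric decay from the semisimple Levi quotient down through the unipotent radical. The perfectness hypothesis is essential here: absent perfectness, $\G$ admits a nontrivial abelian quotient, producing abelian quotients of $\pi_q(\Ga)$ and hence proper subgroups (preimages of proper subgroups of this abelianization) from which no random walk can escape polynomially in their index, precluding the claimed bound.
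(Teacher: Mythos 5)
Your step (i) contains the fatal gap: you replace the index-sensitive bound $[\pi_q(\Ga):H]^{-\d}$ by an absolute bound $\pi_q[\chi_{S'}^{(l)}](H^*)\ll q^{-c}$ for maximal proper $H^*$, and this absolute bound is simply false, hence unprovable. Take $q=p_1\cdots p_n$ with $p_1$ a fixed admissible prime and $n$ large, and let $H^*$ be the full preimage in $\pi_q(\Ga)$ of a maximal subgroup $M<\pi_{p_1}(\Ga)$. Then $\pi_q[\chi_{S'}^{(l)}](H^*)=\pi_{p_1}[\chi_{S'}^{(l)}](M)$, and as $l\to\infty$ through even integers this converges to a positive quantity bounded below in terms of $p_1$ alone (at least $|\pi_{p_1}(\Ga)|^{-1}$ on the image of the group generated by $S'$), independently of $q$; since the proposition must hold for \emph{every} even $l\ge\log q$ while $q^{-c}\to0$ as $n\to\infty$, no bound of the form $q^{-c}$ can hold for such $H^*$. (Your formal deduction $q^{-c}\ll[\pi_q(\Ga):H]^{-\d}$ is fine; the premise is unattainable.) This is exactly why the paper keeps the estimate relative to the index throughout: it works with the set $\lcal_\d(H^\sharp)$ of lifts of norm at most $[\pi_q(\Ga):H^\sharp]^{\d}$ (a cutoff depending on the index of $H$), proves the escape estimate for words of length up to $c_1\log[\pi_q(\Ga):H]$, and only then extends to arbitrary even $l\ge\log q$ using that $\pi_q[\chi_{S'}^{(2k)}](H^\sharp)$ is non-increasing in $k$ together with a Cauchy--Schwarz step to pass from $H^\sharp$ and its cosets back to $H$. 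Any correct proof must have this index-dependent structure; a reduction to maximal subgroups with an absolute $q$-power saving cannot work.

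The later steps also assert, rather than prove, the genuinely hard uniformity. Escaping "finitely many proper algebraic subgroups of bounded complexity" via a "finite net" is not meaningful: membership in a subgroup is not a condition stable under perturbation, and the family of proper subgroups arising from the $\lcal_\d(H^\sharp)$ is uncountable, so the required statement is a bound, uniform over every fixed line (resp.\ fixed point), on the proportion of reduced words of length $l$ fixing it --- which the paper obtains through the Chevalley-type representations of Proposition \ref{pr_descr} and the ping-pong estimate of Proposition \ref{pr_pingpong}. Moreover, your proposed bootstrap from the Levi quotient through the unipotent radical "via commutators" does not address the real obstruction: a proper subgroup surjecting onto the Levi is invisible to every irreducible linear representation (the unipotent radical lies in every kernel), which is precisely why the paper has to introduce the algebraic families of affine representations $\f_{i,v}$ and prove $|\{g\in B_l:\f_{i,v}(g)w=w\}|<|B_l|^{1-c}$ uniformly in $v$ and $w$. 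Without an index-sensitive formulation and without these affine families (or a substitute detecting subgroups containing the Levi), the proposal does not yield Proposition \ref{pr_escp}.
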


The notation used in this and the next proposition is explained in detail
in Section \ref{sc_not}.
Now we only mention that $\chi_{S'}$ is the normalized counting (probability) measure on $S'$ and $\chi_{S'}^{(l)}$ is the $l$-fold convolution of $\chi_{S'}$ with itself.

\begin{prp}[$l^2$-flattening] 
\label{pr_flatening}
Let $\G$ be a Zariski-connected, perfect algebraic group defined over $\Q$.
Let $\Ga<\G(\Q)$ be a finitely generated Zariski-dense subgroup.
Then for any $\e>0$, there is some $\d>0$ depending only on
$\e$ and $\G$
such that the following holds.
Let $q$ be a square-free integer which is relatively prime
to the entries of the elements of $\Ga$ and let
$\mu$ and $\nu$ be probability measures on $\pi_q(\Ga)$ such that
$\mu$ satisfies
\[
\|\mu\|_2>|\pi_q(\Ga)|^{-1/2+\e}\quad{\rm and}\quad
\mu(gH)<[\pi_q(\Ga):H]^{-\e}
\]
for any $g\in \pi_q(\Ga)$ and for any proper subgroup $H<\pi_q(\Ga)$.
Then
\[
\|\mu*\nu\|_2<\|\mu\|_2^{1/2+\d}\|\nu\|_2^{1/2}.
\]
\end{prp}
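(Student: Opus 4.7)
The plan is to run the Bourgain--Gamburd $\ell^2$-flattening machine in this perfect-group setting, arguing by contradiction: from the failure of flattening I extract an approximate subgroup via the non-commutative Balog--Szemer\'edi--Gowers lemma, classify it via a product theorem for square-free quotients of $\Ga$, and rule out every option in that classification using the two hypotheses on $\mu$.

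\textbf{Step 1 (BSG).} Assume toward a contradiction that $\|\mu*\nu\|_2 \ge \|\mu\|_2^{1/2+\d}\|\nu\|_2^{1/2}$. A dyadic decomposition of $\mu$ into level sets together with pigeonholing produces a subset $A\subset\pi_q(\Ga)$ on which $\mu$ is essentially constant, with $|A|\approx\|\mu\|_2^{-2}$ and multiplicative energy at least $\|\mu\|_2^{O(\d)}|A|^3$. Tao's non-commutative Balog--Szemer\'edi--Gowers lemma, in the quantitative form used in \cite{BG1, Var}, then yields a symmetric $K$-approximate subgroup $H\subset\pi_q(\Ga)$, an element $g_0\in\pi_q(\Ga)$, and a positive constant $c\ge K^{-O(1)}$ with
\[
K=\|\mu\|_2^{-O(\d)},\qquad |H|\le K^{O(1)}\|\mu\|_2^{-2},\qquad \mu(g_0 H)\ge c.
\]

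\textbf{Step 2 (product theorem --- the main obstacle).} The crux of the argument is a product theorem for $\pi_q(\Ga)$ uniform over square-free $q$ with large prime factors: there exist constants $\d_0, C_0>0$ depending only on $\Ga$ such that every $K$-approximate subgroup $H\subset\pi_q(\Ga)$ satisfies one of the trichotomy
\[
|H^3|\ge K^{\d_0}|H|,\quad |H|\ge |\pi_q(\Ga)|/K^{C_0},\quad \text{or}\quad H\subset g'H_0
\]
for some coset of a proper subgroup $H_0<\pi_q(\Ga)$, together with a usable quantitative lower bound on $[\pi_q(\Ga):H_0]$. The proof of this theorem is the main technical content of the paper: it invokes strong approximation for Zariski-dense subgroups of Zariski-connected perfect $\bbq$-groups to identify $\pi_q(\Ga)$, modulo bounded-index pieces, with a direct product $\prod_{p\mid q}\pi_p(\Ga)$ of nearly-simple finite groups of Lie type of bounded rank, and then assembles the prime-level Pyber--Szab\'o and Breuillard--Green--Tao product theorems into a statement for square-free $q$ by induction on the number of prime divisors of $q$, in the spirit of \cite{Var}. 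Keeping all constants uniform as the number of primes dividing $q$ grows, and obtaining a sharp enough index estimate in the structured case, is the genuine difficulty.

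\textbf{Step 3 (discharging).} The approximate-subgroup property $|H^3|\le K^{O(1)}|H|$ rules out the growth alternative once $\d\ll\d_0$. The second alternative $|H|\ge|\pi_q(\Ga)|/K^{C_0}$, combined with $|H|\le K^{O(1)}\|\mu\|_2^{-2}$ and $K=\|\mu\|_2^{-O(\d)}$, forces $\|\mu\|_2\le|\pi_q(\Ga)|^{-1/2+O(\d)}$, contradicting $\|\mu\|_2>|\pi_q(\Ga)|^{-1/2+\e}$ as soon as $\d\ll\e$. In the structured alternative one compares the BSG lower bound $\mu(g_0 g'H_0)\ge c\ge K^{-O(1)}$ with the non-concentration upper bound $\mu(g_0 g'H_0)<[\pi_q(\Ga):H_0]^{-\e}$; the index lower bound built into Step 2 then makes these incompatible for $\d$ sufficiently small relative to $\e$. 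Tracking these numerological dependencies and extracting a single $\d=\d(\e,\G)$ that works uniformly in $q$ is routine once Step 2 is in place.
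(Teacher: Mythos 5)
Your top-level architecture (assume flattening fails, extract a small-tripling set via Balog--Szemer\'edi--Gowers, contradict a product theorem for subsets of $\pi_q(\Ga)$ using the non-concentration hypothesis) is exactly the paper's route, via Lemma D and Proposition \ref{pr_product}. But your Step 2, which you correctly identify as the crux, has a genuine gap: you describe $\pi_q(\Ga)$, up to bounded-index pieces, as a direct product of nearly-simple finite groups of Lie type of bounded rank, and propose to assemble the prime-level Pyber--Szab\'o/Breuillard--Green--Tao theorems across the primes dividing $q$. That description is only valid when $\G$ is semisimple. For a general perfect $\G$ one has $\pi_p(\Ga)=L_p\ltimes U_p$ with $U_p$ a nontrivial $p$-group of size a fixed power of $p$ (not bounded index), and the quasi-simple product machinery says nothing about growth through the unipotent part. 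Handling this is precisely the new content of the paper's proof of Proposition \ref{pr_product}: Proposition \ref{pr_unipotent}, proved via Farah's approximate-homomorphism theorem (Lemma \ref{lm_Farah}) to locate an element of $\Ker(\pr)$ with a large "support" across the primes, the orbit-sum lemmas recovering the normal subgroup it generates (Lemma \ref{lm_normal}, Corollary \ref{c:OrbitSumsNoTrivialFactor}), and the nilpotent step-climbing Lemma \ref{lm_nilpotent}; perfectness of $\G$ enters essentially there, to rule out trivial composition factors of the $L_p$-action on $U_p/[U_p,U_p]$. Your sketch, as written, proves the proposition only for semisimple $\G$.

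A second, related problem is the form of your structured alternative. You posit "$H\subset g'H_0$ for a proper subgroup $H_0$, together with a usable quantitative lower bound on $[\pi_q(\Ga):H_0]$", and the discharge in Step 3 hinges on that index bound. No such uniform lower bound exists: since $G_q=\prod_{p\mid q}G_p$, there are proper subgroups of index roughly a single prime divisor of $q$, which is negligible compared to $|G_q|$ and to $K^{1/\e}$, so containment in such a coset yields no contradiction with $\mu(gH_0)<[G_q:H_0]^{-\e}$. This is exactly why the square-free setting requires the index-weighted non-concentration hypothesis $\chi_A(gH)<[G_q:H]^{-\e}|G_q|^{\d}$ for \emph{all} proper $H$ simultaneously, as in Proposition \ref{pr_product} (whose proof, through Proposition B and assumption (A5), works with a whole hierarchy of subgroups across the primes rather than a single structured subgroup). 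Reformulating your trichotomy so that the structured case asserts concentration above the index-weighted threshold --- i.e. the contrapositive of Proposition \ref{pr_product} --- and transferring the hypothesis on $\mu$ to the BSG set via the pointwise lower bound on $\wt\mu*\mu$, as the paper does, repairs this part; the unipotent-radical step, however, still needs to be supplied.
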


We deduce Theorem~\ref{th_gengrp} from
the above propositions.
The method to prove spectral gap using analogues of these
propositions was discovered by Bourgain and Gamburd \cite{BG1}
building on ideas that go back to
Sarnak and Xue \cite{SX}.
Very briefly it goes as follows:

By Proposition \ref{pr_escp}, we can bound
$\pi_q[\chi_{S'}^{(l)}](gH)$ which is
the probability that the random walk after $l\approx \log q$
steps is in a coset of a proper subgroup $H$.
In particular, taking $H=\{1\}$, we get
$\|\chi_{S'}^{(l)}\|_{2}\le|\pi_q(\Gamma)|^{\delta}$.
Now we can apply Proposition \ref{pr_flatening} and iterate it
to get improved bounds.
Finally the representation theory of $\G(\Z/q\Z)$ gives
a lower bound for the multiplicity of the eigenvalues of the
adjacency matrix of the Cayley graph $\GG(\pi_q(\Gamma),\pi_q(S))$.
Then we can use a trace formula to deduce an upper bound
for the eigenvalues.

The papers \cite{BG1}--\cite{BGS}, \cite{Var} and the current
work are all based on the above strategy.
The difference between the proofs
is in the way the analogues of these two propositions are proved.

We divided the proof of Proposition~\ref{pr_escp} into two parts.
First, mainly using Nori's result, we lift up the problem to $\Gamma$,
and show that ``small" lifts of a certain large subgroup of $H$ is
inside a proper algebraic subgroup $\H$.
(The idea of using Nori's theorem in this context is not new,
it goes back
to the paper of Bourgain and Gamburd \cite{BG3}.)
Then we give a geometric description for being in a proper
algebraic subgroup in the spirit of Chevalley's theorem.
 To this end, we construct finitely
many irreducible representations $\rho_i$ of the semisimple
quotient of $\bbg$. Then for any $i$ we also give an algebraic
family $\{\phi_{i,v}\}_v$ of affine transformation lifts of
$\rho_i$ to $\bbg$. And we show that a proper algebraic
subgroup $\bbh$ either fixes a line via $\rho_i$ or fixes
a point via $\phi_{i,v}$ for some $i$ and $v$. 
In the second step, using some ideas of Tits, we construct
certain ``ping-pong players", and show that,
in the process of the random walk with respect
to this set of generators, the probability of fixing either a particular line or a point in these finitely many algebraic families of affine representations is exponentially small.

In order to prove Proposition~\ref{pr_flatening}, first we
prove a triple product theorem similar to
Helfgott's result~\cite{Hel}, \cite{Hel2}.
I.e. we show that if $A\subset\G(\Z/q\Z)$ is suitably distributed
among proper subgroup cosets (to be made precise, see
Proposition \ref{pr_product})
then $|A.A.A|\ge|A|^{1+\d}$.
Then the proposition can be deduced from the Balog-Szemer\'edi-Gowers
Theorem just as in \cite{BG1}.

We comment on the new ideas of the current work compared
to the previous results, especially to \cite{Var},
where Theorem \ref{th_gengrp} was proved for $\G=R_{k/\Q}(\SL_d)$.
We also indicate which of these ideas are relevant also
when the Zariski closure $\G$ of $\Ga$
is semisimple, since this case is of special interest.

Compared to \cite{Var}, the "ping-pong argument" used in
the current work is more flexible.
In \cite{Var}, the argument applies only
for representations  that are both proximal and irreducible.
Whereas in the current paper we give a more self-contained
argument that needs only irreducibility.
This is significant because even in the semisimple case,
it could be difficult to construct suitable representations
with both properties.
Moreover, we do not rely on the result of Goldsheid
and Margulis on proximality of Zariski dense subgroups.
This allows us to work both 
in the Archimedean and the non-Archimedean setting which is
needed to prove Theorem \ref{th_gengrp} for $S$-integers.
(The theorem of Goldsheid and Margulis does not hold over
p-adic fields.)

When $\G$ is not semi-simple further new ideas are needed.
The unipotent radical is in the kernel
of any irreducible representation.
In order to detect proper subgroups which
surject onto the semisimple factor of $\G$,
we introduce algebraic families of affine representations.

We also need to use more complicated constructions when
we use Nori's result to lift subgroups of finite groups
to algebraic subgroups.
On the other hand we eliminate the use of  the quantitative
Nullstellensatz which was a tool in \cite{Var}.

It was proved in \cite{Var}
(see Proposition B in Section
\ref{sc_product})
that if the triple product theorem holds for a family of
simple groups
(which also satisfy some additional, more technical properties), then
the triple product theorem is also true for their direct product.
The proof of this result in \cite{Var} is closely related to the
proof of the square-free sum-product theorem proved in \cite{BGS}.
The triple product theorem for finite simple groups of Lie type of
bounded rank was achieved in a recent breakthrough of
Breuillard, Green, Tao \cite{BGT} and
of Pyber, Szab\'o \cite{PS} independently.
(See Theorem C in Section \ref{sc_product}.)
These results are used as black boxes in our paper.
When $\G$ is semisimple, then Proposition~\ref{pr_flatening} almost
immediately follows from these results.
The new contribution of the current work in the proof
of Proposition~\ref{pr_flatening} is when $\G$ is not semisimple. 
To this end, we have to deal with certain
semidirect products and one can see some similarities
with the work of Alon, Lubotzky and Wigderson~\cite{ALW}.

We note that the all the constants appearing in the paper are effective.
However, an explicit computation would be tedious especially since some of our references
are non-explicit, too.
In particular the paper of Nori \cite{Nor} uses non-effective techniques, but it can be made effective
using some results of computational algebraic geometry.
We discuss this briefly in the Appendix.

All of our arguments are constructive, and the constants could be computed in a straightforward
way, except for some of the proofs in Section \ref{sc_pingpong}.
At those places, we prove the existence of certain objects by nonconstructive
means.
However, these objects can be found by an algorithm simply by checking
countable many possibilities.
The existence of the object implies that the algorithm terminates in finite
time.

For example Proposition
\ref{pr_generators} claims the existence of a finite subset of $\Ga$
and certain subsets of vector spaces with certain properties.
It is easy to see that we can choose those sets to be bounded by rational hyperplanes, so the data
whose existence is claimed in the proposition can be found within a countable set.
Since it is a finite computation to check the required properties, one can always find a suitable subset
of $\Ga$ and the accompanied data by finite computation.

The other place is the proof of Proposition \ref{pr_pingpong}, where we show that the intersection of a collection
of sets parametrized by an integer $k$ is empty for some $k$.
Although the proof does not give a clue how large $k$ needs to be, but we can always compute it
by computing the intersection of the sets for every $k$ until it becomes empty.

The organization of the paper is as follows.
In Section \ref{sc_not} we introduce some notation.
Section \ref{sc_escp} is devoted to the proof of Proposition
\ref{pr_escp}.
In Section \ref{sc_product} we prove Proposition
\ref{pr_flatening}.
In Section \ref{sc_proof}, we finish the proof of Theorem~\ref{th_gengrp}.
Finally in the appendix  the effectiveness of Nori's results~\cite{Nor}
is showed.

\bigskip

\noindent{\bf Acknowledgment.}
We would like to thank Peter Sarnak and Jean Bourgain for their interest
and many insightful conversations.
We are very grateful to Brian Conrad for his help in the
proof of Theorem~\ref{t:EGA}. We are also in debt to Alex Lubotzky for his interest and 
permission to include Question~\ref{q:Lub}.
We also wish to thank the referee for her or his suggestions 
and careful reading of the paper.
 
\section{Notations}
\label{sc_not}

We introduce some notation that will be used throughout the
paper.
We use Vinogradov's notation $x\ll y$ as a shorthand for $|x|<C y$
with some constant $C$.
Let $G$ be a group.
The unit element of any multiplicatively 
written group is denoted by 1.
For given subsets $A$ and $B$, we denote their product-set by
\[
A.B=\{gh\,|\, g\in A,h\in B\},
\]
while the $k$-fold iterated product-set of $A$ is denoted by
$\prod_k A$.
We write $\wt A$ for the set of inverses of all elements of $A$.
We say that $A$ is symmetric if $A=\wt A$.
The number of elements of a set $A$ is denoted by $|A|$.
The index of a subgroup $H$ of $G$ is denoted by
$[G:H]$ and we write $H_1\la_L H_2$ if $[H_1:H_1\cap H_2]\le L$
for some subgroups $H_1,H_2<G$.
We denote the center of $G$ by $Z(G)$.
If $\rho$ is a representation of $G$, then we denote the underlying vector space by $W_{\rho}$
and we denote by $(W_\rho)^G$ the set of points fixed by all elements of $G$.
Occasionally (especially when a ring structure is present) we
write groups additively, then we write
\[
A+B=\{g+h\,|\, g\in A,h\in B\}
\]
for the sum-set of $A$ and $B$, $\sum_k A$ for the $k$-fold
iterated sum-set of $A$ and 0 for the unit element.

If $\mu$ and $\nu$ are complex valued functions on $G$, we define
their convolution by
\[
(\mu*\nu)(g)=\sum_{h\in G}\mu(gh^{-1})\nu(h),
\]
and we define $\wt \mu$ by the formula
\[
\wt\mu(g)=\mu(g^{-1}).
\]
We write $\mu^{(k)}$ for the $k$-fold convolution of $\mu$ with itself.
As measures and functions are essentially the same on discrete
sets, we use these notions interchangeably, we will also
use the notation
\[
\mu(A)=\sum_{g\in A} \mu(g).
\]
A probability measure is a nonnegative measure with total mass 1.
Finally, the normalized counting measure on a finite
set $A$ is the probability
measure
\[
\chi_A(B)=\frac{|A\cap B|}{|A|}.
\]

\section{Escape from subgroups}
\label{sc_escp}

In this section we prove Proposition \ref{pr_escp}.
Some ideas are taken from \cite{Var} but there are substantial
new difficulties especially due to the lack of proximality 
of the adjoint representation and because
we also consider groups that are not semisimple.

 We begin this section by recalling some results from the literature
on the subgroup structure of $\pi_q(\Ga)$.
Then in Section \ref{sc_descr}, in order to solve the problem of escaping from proper subgroups of $\pi_q(\Gamma)$, we lift it up to a problem on escaping from certain proper subgroups of $\Gamma$. For that purpose, we consider ``small" lifts of elements of $H$ in $\Gamma$; namely, for a square-free integer $q$ and a subgroup $H<G_q$, we write
\[
\lcal_\d(H):=\{h\in \Ga|\pi_q(h)\in H,\;
\|h\|_{\scal}<[G_q:H]^\d\},
\]
where $\|h\|_{\scal}=\max_{p\in\scal\cup\{\infty\}} \|h\|_p$ and $\|h\|_p$ is the operator norm on $\bbq_p^d$.
In the next section, we give a geometric description of
the set $\LL_\d(H)$ in terms of its action in an irreducible
representation.
Then in Section \ref{sc_pingpong}, we present an argument
to show that only a small fraction of the elements of $\Ga$
satisfy this geometric property.
Finally, we combine these two results to get Proposition \ref{pr_escp}. 

Let $\bbg\subseteq \mathbb{GL}_d$ be a Zariski-connected $\bbq$-group. Then it is well-known that its unipotent radical $\bbu$ is also defined over $\bbq$ (e.g. see \cite[Proposition 14.4.5]{Spr}) and it has a Levi subgroup defined over $\bbq$, i.e. a reductive subgroup $\bbl$ defined over $\bbq$ such that $\bbg$ is $\bbq$-isomorphic to $\bbl\ltimes \bbu$. If $\bbg$ is a perfect group, i.e. $\bbg=[\bbg,\bbg]$, then clearly $\bbl$ is a semisimple group. We say that $\bbg$ is simply-connected if $\bbl$ is simply-connected. If $\bbl$ is a simply-connected $\bbq$-group, then we can write $\bbl$ as product of absolutely almost simple groups. The absolute Galois group permutes these factors. So there are number fields $\kappa_i$ and absolutely almost simple $\kappa_i$-groups $\bbl_i$ such that
\[
\bbl\simeq \prod_{i=1}^m R_{\kappa_i/\bbq}(\bbl_i)
\]
as $\bbq$-groups. By a result of Bruhat-Tits~\cite[Section 3.9]{Tit}, for large enough $p$, $\bbl(\bbz_p)$ is a hyper-special parahoric subgroup and so $\bbl(\bbf_p)$ is a product of quasi-simple groups. We also have that $\bbu(\bbf_p)$ is a finite $p$-group, and, for large enough $p$, $\bbg(\bbf_p)\simeq \bbl(\bbf_p)\ltimes \bbu(\bbf_p)$ and is a perfect group. Again as part of Bruhat-Tits theory, we know that $\bbl(\bbf_p)$ is generated by order $p$ elements. (To be precise, we know that, for large enough $p$, the special fiber of $\bbl$ over $\bbf_p$ is a connected, simply connected, semisimple  $\bbf_p$-group.) Thus, for large enough $p$, $\bbg(\bbf_p)=\bbg(\bbf_p)^+$, where $G^+$ is the subgroup generated by $p$-elements, for any subgroup $G$ of $\GL_d(\bbf_p)$. As part of Nori's Strong Approximation~\cite[Theorem 5.4]{Nor}, we have that,
\begin{thma}
Let $\bbg\subseteq \mathbb{GL}_d$ be a Zariski-connected, perfect, simply-connected $\bbq$-group. Let $\Gamma$ be a finitely generated Zariski-dense subgroup of $\bbg(\bbq)$; then there is a finite set $\scal$ of primes such that,
\begin{itemize}
\item[1.] The closure of $\Gamma$ in $\prod_{p\in \pcal\setminus \scal}\bbg(\bbz_p)$ is an open subgroup.
\item[2.] There is a constant $p_0$ depending on $\Gamma$ such that for any square free integer $q$ with prime factors larger than $p_0$, $\pi_q(\Gamma)=\bbg(\bbz/q \bbz)$.
\item[3.] There is a constant $p_0$ depending on $\bbg$ and its embedding into $\mathbb{GL}_d$, such that for any square free integer $q$ with prime factors larger than $p_0$,
\[
\prod_{p|q}\pi_p :\bbg(\bbz/q \bbz) \xrightarrow{\sim} \prod_{p|q} \bbg(\bbz/p\bbz).
\]
Moreover, for $p>p_0$, $\bbg(\bbf_p)=\bbl(\bbf_p)\ltimes \bbu(\bbf_p)$, $\bbl(\bbf_p)$ is a product of quasi-simple finite groups whose number of factors is bounded in terms of $\dim \bbl$. 
\end{itemize}
\end{thma}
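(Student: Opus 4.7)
The plan is to derive the three parts from Nori's Strong Approximation Theorem~\cite[Theorem 5.4]{Nor}, the Chinese Remainder Theorem applied to a suitable integral model, and Bruhat--Tits theory. I would begin by choosing a finite set $\scal$ of primes large enough that (i) the entries of a fixed set of generators of $\Gamma$ lie in $\bbz[1/\scal]$; (ii) $\bbg$ extends to a smooth closed subgroup scheme $\underline{\bbg}\subseteq\mathbb{GL}_d$ over $\bbz[1/\scal]$; (iii) the Levi decomposition $\bbg\cong\bbl\ltimes\bbu$ and the restriction-of-scalars decomposition $\bbl\cong\prod_{i=1}^m R_{\kappa_i/\bbq}(\bbl_i)$ both spread out to the same base, with each $\kappa_i$ unramified outside $\scal$; and (iv) each $\underline{\bbl}_i$ has semisimple, simply-connected fibers at every prime not in $\scal$. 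All of this is standard from EGA-style limit arguments, which is where the paper's acknowledgment to Brian Conrad is relevant.

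For Part 1, I would feed the projection of $\Gamma$ to $\bbl(\bbq)$ into Nori's theorem, using that this projection is finitely generated and Zariski-dense in the simply-connected semisimple group $\bbl$. Nori then gives that the closure in $\prod_{p\notin\scal}\underline{\bbl}(\bbz_p)$ is open. To lift the conclusion to $\bbg$, I would use perfectness of $\bbg$: since $\bbg=[\bbg,\bbg]$, the image of the commutator map $\bbl\times\bbu\to\bbu$, $(\lambda,u)\mapsto[\lambda,u]$, is Zariski-dense in $\bbu$, so for each $p>p_0$ its reduction spans $\bbu(\bbf_p)$. Because $\bbu(\bbz_p)$ is a pro-$p$ group for $p$ large, combining this commutator generation with the openness of the semisimple image suffices to conclude openness in $\prod_{p\notin\scal}\underline{\bbg}(\bbz_p)$.

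For Part 3, once $\underline{\bbg}$ is a smooth affine scheme over $\bbz[1/\scal]$, the Chinese Remainder Theorem gives $\underline{\bbg}(\bbz/q\bbz)\cong\prod_{p\mid q}\underline{\bbg}(\bbf_p)$ for every square-free $q$ coprime to $\scal$. The spread-out Levi decomposition gives $\bbg(\bbf_p)=\bbl(\bbf_p)\ltimes\bbu(\bbf_p)$, and the factorization $R_{\kappa_i/\bbq}(\bbl_i)(\bbf_p)\cong\prod_{\pfr\mid p}\bbl_i(\OO_{\kappa_i}/\pfr)$, combined with the Bruhat--Tits fact that each $\bbl_i(\OO_{\kappa_i}/\pfr)$ is quasi-simple modulo its center for large $p$, yields the desired structure; the number of factors is bounded by $\sum_i[\kappa_i:\bbq]\le\dim\bbl$. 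For Part 2, surjectivity mod $q$ reduces via Part 3 to simultaneous surjectivity onto $\prod_{p\mid q}\bbg(\bbf_p)$, which follows from the single-prime case (Nori for large $p$) plus a Goursat-lemma argument: the quasi-simple factors have pairwise non-isomorphic composition factors for distinct large $p$ (their orders differ), so the diagonal image covers the full product. The main obstacle in this plan is Nori's theorem itself, whose proof uses the exponential map for nilpotent matrices mod $p$ and the associated ``$p$-envelope'' construction; I would cite it as a black box.
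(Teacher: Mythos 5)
Your treatment of Parts 2 and 3 (spreading out an integral model, CRT, the Bruhat--Tits facts for $\bbl(\bbz_p)$, the bound $\sum_i[\kappa_i:\bbq]\le\dim\bbl$ on the number of quasi-simple factors, and a Goursat-type argument using that $\bbg(\bbf_p)$ is perfect and that the nonabelian composition factors for distinct large primes are non-isomorphic) is consistent with what the paper does; the paper itself presents Theorem A as a quotation of Nori's Theorem 5.4 together with exactly this kind of structural discussion. The genuine problem is your Part 1. Nori's Theorem 5.4 applies directly to $\bbg$ itself: ``simply connected'' there is a condition on $\bbg(\bbc)$ as a topological group, and since $\bbu(\bbc)$ is contractible and $\bbl$ is simply connected, $\bbg(\bbc)$ satisfies it. That is how the paper uses it. You instead apply Nori only to the Levi quotient and then try to ``lift'' openness to $\bbg$ with the remark that the commutator image $[\bbl,\bbu]$ generates $\bbu$ (perfectness) plus the pro-$p$ structure of $\bbu(\bbz_p)$. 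As written this does not prove anything: openness of the image of the closure $C$ of $\Gamma$ in $\prod_p\bbl(\bbz_p)$ gives, for each $\lambda$ in an open subgroup, \emph{some} element $(\lambda,u)\in C$ with an uncontrolled unipotent part, and commutators of such elements lie in $\bbu$ only when their Levi parts commute. What you actually need is that $C\cap\prod_p\bbu(\bbz_p)$ is open in $\prod_p\bbu(\bbz_p)$ (then $C$ has finite index in the open group $\mathrm{pr}^{-1}(\mathrm{pr}(C))$ and is open), and nothing in your sketch produces unipotent elements of $C$ at all.

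That this step is genuinely nontrivial can be seen already for $\bbg=\mathbb{SL}_2\ltimes\bbg_a^2$: since $H^1$ of a free group is large, one can take $\Gamma$ to be the graph of a non-algebraic crossed homomorphism from a free, Zariski-dense subgroup of $\mathbb{SL}_2(\bbz)$ into $\bbq^2$; such a $\Gamma$ is finitely generated, Zariski-dense in $\bbg$, and satisfies $\Gamma\cap\bbu(\bbq)=\{1\}$. Its closure nevertheless contains an open subgroup of $\prod_p\bbu(\bbz_p)$, but this is exactly the content of strong approximation for the perfect group $\bbg$ and cannot be obtained from ``the commutator image spans $\bbu(\bbf_p)$'' plus ``$\bbu(\bbz_p)$ is pro-$p$.'' To repair your route you would have to do real work: e.g. first prove $\pi_p(\Gamma)=\bbg(\bbf_p)$ for all large $p$ using Nori's Theorems B and C on subgroups of $\GL_d(\bbf_p)$ together with the perfectness argument (as in the paper's Corollary~\ref{c:NormalSubgroupPerfect} and the appendix), and then run a Frattini-type argument in the congruence kernel to pass from mod-$p$ surjectivity to openness in $\prod_p\bbg(\bbz_p)$. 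The simpler and intended course is the paper's: invoke Nori's Theorem 5.4 for the full group $\bbg$, which makes the detour through the Levi unnecessary.
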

If $\Gamma$ is a finitely generated subgroup of $\bbg(\bbq)$, $\iota:\widetilde{\bbg}\rightarrow \bbg$ is a $\bbq$-isogeny, and $\widetilde{\bbg}=\widetilde{\bbl}\ltimes \bbu$ is simply connected, then it is easy to see that $\Gamma\cap \iota(\widetilde{\bbg})$ is a finite index subgroup of $\Gamma$. Furthermore, for large enough $p$, $\iota(\widetilde{\bbg}(\bbz_p))\subseteq \bbg(\bbz_p)$ and the pre-image of  the first congruence subgroup of $\bbg(\bbz_p)$ is the first congruence subgroup of $\widetilde{\bbg}(\bbz_p)$. In particular, there is a square free number $q_0$ such that, for any $q$, $\pi_q(\Gamma)\simeq \pi_{(q,q_0)}(\Gamma)\times \prod_{p|q/(q,q_0)} \pi_p(\Gamma)$; moreover $\pi_p(\Gamma)=\iota(\widetilde{\bbl}(\bbf_p)) \ltimes \bbu(\bbf_p)$ and $\iota(\widetilde{\bbl}(\bbf_p))$ is a product of quasi-simple finite groups if $p$ is large enough. Let us also add that $\bbu/[\bbu,\bbu]$ is a commutative unipotent $\bbq$-group, and so it is a $\bbq$-vector group, i.e. it is $\bbq$-isomorphic to $\bbg_a^M$ for some $M$ (the logarithm and exponential maps give $\bbq$-isomorphisms between a commutative unipotent $\bbq$-group and its Lie algebra). Thus, for large enough $p$, $(\bbu/[\bbu,\bbu])(\bbf_p)$ is an $\bbf_p$-vector space. As $[\bbu,\bbu]$ is an $\bbf_p$-split unipotent algebraic group, $(\bbu/[\bbu,\bbu])(\bbf_p)=\bbu(\bbf_p)/[\bbu,\bbu](\bbf_p)$. The next lemma, shows that, for large enough $p$, we have $[\bbu,\bbu](\bbf_p)=[\bbu(\bbf_p),\bbu(\bbf_p)]$, and so overall we have that, for large enough $p$, $\bbu(\bbf_p)/[\bbu(\bbf_p),\bbu(\bbf_p)]$ is an $\bbf_p$-vector space. Let $\gamma_k(\bbu)$ be the $k$-th lower central series, i.e. $\gamma_1(\bbu)=\bbu$ and $\gamma_{i+1}(\bbu)=[\bbu,\gamma_i(\bbu)]$. It is well-known that, if $\bbu$ is defined over $\bbq$, then all of the lower central series are also defined over $\bbq$.
\begin{lem}\label{l:LowerCentralSeries}
Let $\bbu$ be a unipotent $\bbq$-algebraic group. Then,  for any $k$ and large enough $p$, $\gamma_k(\bbu)(\bbf_p)=\gamma_k(\bbu(\bbf_p))$.
\end{lem}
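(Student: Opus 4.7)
The inclusion $\gamma_k(\bbu(\bbf_p)) \subseteq \gamma_k(\bbu)(\bbf_p)$ holds trivially, since $\gamma_k(\bbu)$ is a closed algebraic subgroup stable under commutators and hence contains every iterated commutator of $\bbf_p$-points. The content of the lemma is the reverse inclusion, and my plan is to transfer it to the Lie algebra side via the exponential/logarithm correspondence, where both versions of the lower central series become manifestly equal.

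Set $\mathfrak{u}=\Lie(\bbu)$; this is a nilpotent $\bbq$-Lie algebra of some class $c$, and the exponential $\exp\colon\mathfrak{u}\to\bbu$ together with its inverse $\log$ are mutually inverse polynomial bijections defined over $\bbq$ whose coefficient denominators are bounded in terms of $c$. I would first fix a $\bbz[1/N]$-integral model of $\bbu$ compatible with the whole flag $\bbu=\gamma_1(\bbu)\supseteq\cdots\supseteq\gamma_c(\bbu)\supseteq 1$, with the Lie algebra $\mathfrak{u}$ and its own lower central series, and with the BCH polynomials. For every prime $p\nmid N$, the pair $\exp,\log$ reduces to mutually inverse bijections $\exp_p\colon\mathfrak{u}(\bbf_p)\to\bbu(\bbf_p)$ and $\log_p$, and
\[
\gamma_k(\bbu)(\bbf_p)=\exp_p\bigl(\gamma_k(\mathfrak{u})(\bbf_p)\bigr)=\exp_p\bigl(\gamma_k(\mathfrak{u}(\bbf_p))\bigr),
\]
where the first equality is the classical correspondence between the central series of an algebraic unipotent group and of its Lie algebra, and the second uses bilinearity of the bracket applied to the integral model.

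The lemma therefore reduces to the inclusion $\exp_p\bigl(\gamma_k(\mathfrak{u}(\bbf_p))\bigr)\subseteq\gamma_k(\bbu(\bbf_p))$, which I would prove by induction on $k$ using the Baker--Campbell--Hausdorff (BCH) formula; because $\mathfrak{u}$ is nilpotent of class $c$, BCH is a finite polynomial identity. For the inductive step, any element of $\gamma_k(\mathfrak{u}(\bbf_p))$ is, modulo $\gamma_{k+1}(\mathfrak{u}(\bbf_p))$, a finite sum $\sum_i[X_i,Y_i]$ with $Y_i\in\gamma_{k-1}(\mathfrak{u}(\bbf_p))$. BCH for the group commutator gives $\log_p[\exp_p X_i,\exp_p Y_i]\equiv[X_i,Y_i]\pmod{\gamma_{k+1}(\mathfrak{u}(\bbf_p))}$, since all higher terms involve a bracket against $Y_i\in\gamma_{k-1}$; and BCH for the product gives $\log_p\prod_i[\exp_p X_i,\exp_p Y_i]\equiv\sum_i[X_i,Y_i]\pmod{\gamma_{k+1}(\mathfrak{u}(\bbf_p))}$. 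By the inductive hypothesis $\exp_p Y_i\in\gamma_{k-1}(\bbu(\bbf_p))$, so each $[\exp_p X_i,\exp_p Y_i]$ and their product lie in $\gamma_k(\bbu(\bbf_p))$. Since $\mathfrak{u}$ is nilpotent of class $c$, the filtration terminates ($\gamma_{c+1}(\mathfrak{u})=0$), so iterating the correction downward closes the $\gamma_{k+1}$-discrepancy stage by stage and yields the claimed inclusion on the nose.

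The main obstacle I foresee is not conceptual but bookkeeping: one must exhibit a single $\bbz[1/N]$-model in which all the identities used---the polynomial formulas for $\exp$ and $\log$, the BCH expansion, the algebraic-group/Lie-algebra correspondence for the central series, and the bilinear identity $\gamma_k(\mathfrak{u})(\bbf_p)=\gamma_k(\mathfrak{u}(\bbf_p))$---descend correctly modulo every prime $p\nmid N$. Once such a model is fixed, each of the above steps is routine, and the lemma holds for all $p$ exceeding some $p_0$ depending only on $\bbu$.
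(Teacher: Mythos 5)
Your proof is correct in substance, but it takes a genuinely different route from the paper's. The paper disposes of the lemma in a few lines: since $\bbu$ is a unipotent $\bbq$-group it contains a lattice $\Gamma_U\subset\bbu(\bbr)$, a finitely generated Zariski-dense subgroup of $\bbu(\bbq)$; then $\gamma_k(\Gamma_U)$ is Zariski-dense in $\gamma_k(\bbu)$, so Nori's strong approximation gives $\pi_p(\gamma_k(\Gamma_U))=\gamma_k(\bbu)(\bbf_p)$ for large $p$, and since $\pi_p(\gamma_k(\Gamma_U))\subseteq\gamma_k(\bbu(\bbf_p))$ while the reverse inclusion $\gamma_k(\bbu(\bbf_p))\subseteq\gamma_k(\bbu)(\bbf_p)$ is trivial, equality follows. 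Your Malcev-type argument via $\exp$, $\log$, integral models and truncated BCH avoids strong approximation altogether, which makes it self-contained and visibly effective (the authors use exactly this $\exp/\log$ device in the appendix to make $[\ucal_p(\bbf_p),\ucal_p(\bbf_p)]=[\ucal,\ucal]_p(\bbf_p)$ effective), at the cost of the spreading-out bookkeeping you already acknowledge. One point in your sketch should be made explicit so that the correction iteration is seen not to be circular: at level $k$, the $j$-th discrepancy $Z_j$ lies in $\gamma_{k+j}(\mathfrak{u}(\bbf_p))$ and must be written as a sum of brackets $[X,Y]$ with $Y\in\gamma_{k+j-1}(\mathfrak{u}(\bbf_p))$, with the BCH congruence taken modulo $\gamma_{k+j+1}$; the group elements $[\exp_p X,\exp_p Y]$ used to absorb $Z_j$ still lie in $\gamma_k(\bbu(\bbf_p))$ because $\gamma_{k+j-1}\subseteq\gamma_{k-1}$, so only the level-$(k-1)$ inductive hypothesis is ever invoked, the discrepancy drops strictly at each stage, and it dies at $\gamma_{c+1}=0$ for $p$ large. (Alternatively, spanning $\gamma_m(\mathfrak{u}(\bbf_p))$ by left-normed brackets and matching them with iterated group commutators of elements $\exp_p x_i\in\bbu(\bbf_p)$ removes the need for the inductive hypothesis entirely.)
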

\begin{proof}
As $\bbu$ has a $\bbq$-structure, there is a lattice  $\Gamma_U$ in $\bbu(\bbr)$. In particular, it is a finitely generated, Zariski-dense subgroup of $\bbu(\bbq)$. Thus $\gamma_k(\Gamma_U)$ is Zariski-dense in $\gamma_k(\bbu)$, for any $k$. By Nori's result, $\gamma_k(\Gamma_U)$ modulo $p$ is the the full group $\gamma_k(\bbu)(\bbf_p)$, for large enough $p$, which finishes the proof.
\end{proof}

For the rest of this section, $\scal$ is a finite set of primes such that $\Gamma\subseteq \GL_d(\bbz_\scal)$, $q$ will be a square-free integer,
and we assume that it has no prime divisor less than a constant
which depends on $\Ga$.
We write $G_q=\pi_q(\Ga)$.
For future reference we record the properties of $G_q$
that we deduced above:
For any square-free integer $q$ with sufficiently large prime divisors,
and for any sufficiently large prime $p$, we have
\begin{enumerate}
\item
$G_q=\prod_{p|q}G_p$,
\item
$G_p=\G(\F_p)^+=L_p\ltimes U_p$, where
\begin{enumerate}
\item
$L_p$ is a product of quasi-simple finite groups of Lie type over
finite fields which are extensions of $\F_p$;
moreover the number of the quasi-simple factors has an upper-bound
independent of $p$,
\item
$U_p$ is a $k$-step nilpotent $p$-group, where $k$ is independent
of $p$, and $U_p/[U_p,U_p]$ is isomorphic to an $\F_p$-vector space; moreover $\log_p |U_p|$ is bounded independently of $p$.
\end{enumerate}
\end{enumerate}

 Let us also recall from Nori's paper \cite[Theorem B and C]{Nor} that for large enough $p$, any subgroup $H$ of ${\rm GL}_d(\bbf_p)$ satisfies the following properties:
\begin{enumerate}
\item[3.] There is a Zariski-connected algebraic subgroup $\bbh$ of $\mathbb{GL}_d$ defined over $\bbf_p$ such that $\bbh(\bbf_p)^+=H^+$.
\item[4.] \label{pg_4}There is a commutative subgroup $F$ of $H$ such that $H^+\cdot F$ is a normal subgroup of $H$ and $[H:H^+\cdot F]<C$, where $C$ just depends on $d$ the size of the matrices.
\item[5.] \label{page}There is a correspondence between $p$-elements of $H$ and nilpotent elements of $\hfr(\bbf_p)$, where $\hfr$ is the Lie algebra of $\bbh$; moreover $\hfr(\bbf_p)$ is generated by its nilpotent elements. 
\end{enumerate}

\subsection{Description of subgroups}
\label{sc_descr}

In this section we describe in geometric terms the set $\LL_\d(H)$ defined above.
In fact what  we show is that there is a subgroup $H^\sharp$ of small
index, such that $\LL_\d(H^\sharp)$ is contained in a certain proper algebraic subgroup of $\G$.
It is well-known by Chevalley's theorem, that then there is a representation of $\G$
in which $\LL_\d(H^\sharp)$ fixes a line that is not fixed by the whole group.
For technical reasons we need that the representations come from a fixed finite
family; therefore we construct them explicitly. 
In addition, the methods of the next section would require that the representations are
irreducible, which is not possible to fulfill since $\G$ is not necessarily semisimple.
For this reason, besides the irreducible representations we also consider homomorphisms into the group
of affine transformations.
Unfortunately, a finite family of such homomorphism is not rich enough to capture all possible
subgroups. We need to consider uncountable families, where the linear part of the action is the same
and the translation part can be parametrized by elements of an affine space.
The precise formulation is contained in the next proposition that we will use later as a black box in the paper.

\begin{prop}\label{pr_descr}
Let $\G$ be a Zariski-connected perfect $\bbq$-group, and let
 $\Gamma$ be a Zariski-dense, finitely generated subgroup of $\bbg(\bbq)$.
Then there are non-trivial irreducible representations
$\rho_i$ for $1\le i\le m$ of $\G$ and morphisms
$\f_i:\bbg\times \mathbb{V}_i\rightarrow \Aff(\mathbb{W}_{\rho_i})$
with the following properties:
\begin{enumerate}
\item For any $i$, $\mathbb{V}_i$ is a (possibly $0$-dimensional) affine space.
$\mathbb{V}_i$,  $\rho_i$ and $\f_i$  are defined over a local field $\mathcal{K}_i$;
and $\rho_i(\Gamma)$ is an unbounded subset of $\GL(W_{\rho_i})$,
where $W_{\rho_i}=\mathbb{W}_{\rho_i}(\mathcal{K}_i)$.
\item For any $i$ and $0\neq v\in\mathbb{V}_i(\mathcal{K}_i)$, $\f_{i,v}=\f_i(\cdot,v)$
is a group homomorphism to $\Aff(W_{\rho_i})$ whose linear parts are
$\rho_i$, and $\G(\mathcal{K}_i)$
does not fix any point of $W_{\rho_i}$ via this action.
\item There are positive constants $C$ and $\delta$ such that the following holds.
Let $q=p_1\cdots p_n$ be a square-free number, such that each $p_i$ is a sufficiently large prime,
and let $H$ be a proper subgroup of $\pi_q(\Gamma)$.
Then there is a subgroup $H^{\sharp}$ of index at most $C^n$
in $H$ that satisfies one of the following two conditions:
 \begin{enumerate}
 \item For some $1\le i\le m$, there is $w\neq 0$ in $W_{\rho_i}$
such that $\rho_i(h)([w])=[w]$, for any $h\in \lcal_{\delta}(H^{\sharp})$.
 \item For some $1<i\le m$, there is $v\in \mathbb{V}_i(\mathcal{K}_i)$
and $w\in W_{\rho_i}$  such that $\|v\|=1$ and $\phi_{i,v}(h)(w)=w$,
for any $h\in\lcal_{\delta}(H^{\sharp})$. 
 \end{enumerate}
\end{enumerate}
\end{prop}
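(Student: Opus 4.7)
The plan is to establish the proposition in three stages: attach a proper $\bbq$-algebraic subgroup $\bbh$ of $\bbg$ to any proper $H < \pi_q(\Gamma)$ by lifting via Nori's theorem; handle the case that $\bbh$ maps to a proper subgroup of the semisimple Levi $\bbl$ of $\bbg$ using a finite list of irreducible representations of $\bbl$; and handle the remaining case (when $\bbh$ surjects onto $\bbl$ but has proper intersection with $\bbu$) using affine families coming from cocycles valued in quotients of the lower central series of $\bbu$.

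For the first stage, write $H = \prod_{p \mid q} H_p$ according to property 1 and apply properties 3--5 to each factor: Nori's theorem yields a Zariski-connected $\bbh_p \subseteq \mathbb{GL}_{d,\bbf_p}$ with $\bbh_p(\bbf_p)^+ = H_p^+$, and a commutative subgroup $F_p$ of $H_p$ with $[H_p : H_p^+ F_p] \le C$. Let $H^{\sharp} := \prod_{p \mid q} H_p^+ F_p$; this has index at most $C^n$ in $H$. For $\delta$ sufficiently small, every element of $\lcal_\delta(H^{\sharp})$ reduces mod $p$ into $\bbh_p(\bbf_p)$, and the Zariski closure $\bbh$ of $\lcal_\delta(H^{\sharp})$ in $\bbg$ over $\bbq$ then satisfies $\bbh(\bbf_p) \subseteq \bbh_p(\bbf_p)$ for every $p \mid q$. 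If $\bbh = \bbg$, the $p$-element/nilpotent correspondence in property 5, combined with Theorem A, forces $\bbh_p(\bbf_p)^+ = \bbg(\bbf_p)^+$ for every $p \mid q$, contradicting properness of $H^{\sharp}$. Hence $\bbh$ is a \emph{proper} $\bbq$-algebraic subgroup of $\bbg$.

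Now two cases arise. If the image of $\bbh$ in $\bbl$ is a proper subgroup, Chevalley's theorem yields an irreducible representation of $\bbl$ in which that image stabilizes a line; since $\bbl$ has bounded rank, a finite list of irreducible representations suffices to detect every proper subgroup, giving the $\rho_i$ of item 3(a) (pulled back from $\bbl$ to $\bbg$, and viewed over a completion $\mathcal{K}_i$ in which $\rho_i(\Gamma)$ is unbounded---such a completion exists by Zariski density of $\Gamma$). If instead $\bbh$ surjects onto $\bbl$ but is proper in $\bbg$, choose the largest $k$ with $\gamma_k(\bbu) \subseteq \bbh$; then the image of $\bbh$ in $\bbl \ltimes \bigl(\gamma_{k-1}(\bbu)/\gamma_k(\bbu)\bigr)$ is a proper subgroup surjecting onto $\bbl$. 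By Lemma \ref{l:LowerCentralSeries}, $W := \gamma_{k-1}(\bbu)/\gamma_k(\bbu)$ is a $\bbq$-vector group, carrying a linear $\bbl$-representation $\rho$. The image of $\bbh$ is the graph of a $1$-cocycle $c \colon \bbl \to W$, which defines an affine action $\phi_c(g)(w) = \rho(g)w + c(g)$ under which the origin is fixed by (the image of) $\bbh$. Parametrizing $1$-cocycles by the affine space $\mathbb{V}_i$ gives the family $\phi_i \colon \bbg \times \mathbb{V}_i \to \Aff(\mathbb{W}_{\rho_i})$ required in item 3(b); the finitely many steps of the lower central series and the finitely many irreducible constituents inside each graded piece produce only finitely many pairs $(\rho_i, \phi_i)$.

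The main obstacle is the tension between the \emph{prime-by-prime} nature of Nori's theorem and the need for a \emph{single} $\bbq$-algebraic subgroup $\bbh$ detecting $\lcal_\delta(H^{\sharp})$, together with the requirement that only a fixed finite list of $(\rho_i, \phi_i)$ must work uniformly in $H$ and $q$. The first is resolved by choosing $\delta$ small enough (in terms of the complexity of $\bbg$) that the bounded-height elements $\lcal_\delta(H^{\sharp})$ cannot simultaneously escape proper subvarieties modulo every $p \mid q$ unless their global Zariski closure is proper; the second is resolved by noting that proper subgroups of the semisimple quotient fall into finitely many conjugacy classes of maximal subgroups, and by absorbing the remaining freedom in the non-semisimple case into the continuous parameter $v \in \mathbb{V}_i$ of item 3(b).
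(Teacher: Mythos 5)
Your overall architecture (Nori lift to a proper algebraic subgroup, then detection by a fixed finite family of irreducible and affine representations) matches the paper's, but the two hard steps are asserted rather than proved, and in places the assertions are wrong as stated. First, you begin by writing $H=\prod_{p\mid q}H_p$ "according to property 1"; property 1 only says $G_q=\prod_p G_p$, and a proper subgroup of a product is in general \emph{not} the product of its projections (think of diagonally embedded subgroups). The paper needs Lemma~\ref{l:ProductForm} (a Jordan--H\"older/composition-factor argument exploiting the distinct characteristics) precisely to reduce to the product case while keeping control of the index, and this control is what makes $\lcal_\delta(H^\sharp)\subseteq\lcal_{\delta/\delta_1}(H^\sharp_{q''})$ meaningful. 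Second, your central claim in the lifting stage --- that the global Zariski closure $\bbh$ of $\lcal_\delta(H^\sharp)$ satisfies $\bbh(\F_p)\subseteq\bbh_p(\F_p)$ and hence is proper --- does not follow from anything you say: Nori's group $\bbh_p$ only controls $H_p^+$, not the commutative factor $F_p$, and nothing ties the mod-$p$ reductions of a Zariski closure taken over $\bbq$ to the groups $\bbh_p$. The paper's Proposition~\ref{p:LiftingUp} instead (a) constructs for each bad prime an explicit polynomial $P_p$ of degree at most $4$ (via the $\bbg$-module $\gfr\otimes\gfr^*\cong{\rm End}(\gfr)$) vanishing on $H_p^\sharp$ but not on $G_p$ --- the delicate case being $H_p^+$ normal, where the commutative part is handled through the field $E_{i,\pfr}$ and the subspace $W\subseteq{\rm End}(\gfr(\F_p))$; (b) uses the height bound in $\lcal_\delta$ and a subdeterminant/denominator argument to convert these mod-$p$ relations into a single relation over $\bbq$, placing $\lcal_\delta(H^\sharp)$ in a proper subvariety; and (c) invokes the escape result \cite[Proposition 3.2]{EMO} to pass from a proper subvariety to a proper algebraic \emph{subgroup} at the cost of shrinking $\delta$. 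Your "choose $\delta$ small enough that bounded-height elements cannot escape proper subvarieties mod every $p$" gestures at (b) but supplies neither the bounded-degree varieties of (a) nor the step (c).

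The representation-theoretic stage also has a genuine gap in the case where $\bbh$ surjects onto $\bbl$. Your cocycle construction is not well defined: there is no homomorphism $\bbg\to\bbl\ltimes\bigl(\gamma_{k-1}(\bbu)/\gamma_k(\bbu)\bigr)$ in general, the "largest $k$ with $\gamma_k(\bbu)\subseteq\bbh$" may be degenerate (e.g.\ $\bbh\cap\bbu$ trivial while the projection of $\bbh$ to $\bbu$ generates $\bbu$), and a map $c\colon\bbl\to W$ only gives an action of $\bbl$, whereas the proposition requires homomorphisms of all of $\bbg$ (so that $\lcal_\delta(H^\sharp)\subseteq\bbg(\bbq)$ acts) with linear part a fixed nontrivial irreducible $\rho_i$ and, crucially, with the property that for $v\neq0$ the full group $\G(\KK_i)$ fixes no point --- the naive coboundary family violates exactly this. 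The paper gets these features by splitting into two subcases: when the projection of $\bbh$ to $\bbu$ generates a proper $\bbl$-subspace, it builds the representation $\psi$ on $\Aff_{\bbl}(\widetilde{\mathbb{W}},\mathbb{W})$; when that projection generates $\bbu$, it takes $\tilde\rho=\wedge^{\dim\hfr}\Ad$, uses that $\bbh$ has no characters to fix the vector $w_0$, and runs the filtration $(\tilde\rho(\bbu)-1)^i\widetilde{\mathbb{W}}$ to locate a subquotient $\mathbb{W}^{(1)}\oplus\mathbb{W}^{(2)}$ where $w_0$ is $\bbh$-fixed but not $\bbg$-fixed; property 2 of the proposition then follows from the absence of nonzero $\bbg$-fixed vectors in $\mathbb{W}/\mathbb{W}^{\bbg}$. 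Finally, in the Levi-proper case your appeal to "finitely many conjugacy classes of maximal subgroups" plus Chevalley is both unproved in the generality needed and misses the proper \emph{normal} subgroups of $\bbl$ (handled in the paper via the adjoint representations of the individual almost-simple factors); the paper avoids all of this by using $\wedge^{\dim\hfr}\Ad$ for the finitely many possible values of $\dim\hfr$ and projecting the fixed line to a nontrivial irreducible constituent.
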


We will easily deduce Proposition~\ref{pr_descr} from the following more technical version.

\begin{prop}\label{p:descr}
Let $\bbg$ and $\Gamma$ be as in the setting of Proposition~\ref{pr_descr} and $\bbg=\bbl\ltimes\bbu$, where $\bbl$ is a semisimple group and $\bbu$ is a unipotent group. Then there are finitely many representations $\rho_1,\ldots,\rho_{m'},\psi_1,\ldots,\psi_k$ of $\bbg$ with the following properties:
 \begin{enumerate}
 \item[1.] For any $i$, $\bbu\subseteq \ker(\rho_i)$ and the restriction of $\rho_i$ to $\bbl$ is a non-trivial irreducible representation.
 \item[2.] For any $i$, there is a sub-representation $\mathbb{W}^{(1)}_i$ of $\mathbb{W}_{\psi_i}$ such that
 \begin{enumerate}
 \item $\bbu$ acts trivially on $\mathbb{W}^{(1)}_i$ and $\mathbb{W}_{\psi_i}/\mathbb{W}^{(1)}_i$.
 \item $\mathbb{W}^{(1)}_i$ is a non-trivial irreducible representation of $\bbl$ that we denote by $\rho_{m'+i}$.
 \item $\mathbb{W}_{\psi_i}=\mathbb{W}^{(1)}_i\oplus \mathbb{W}^{(2)}_i$, where $\mathbb{W}^{(2)}_i=\mathbb{W}_{\psi_i}^{\bbl}$ is the set of $\bbl$-invariant vectors.
 \end{enumerate}
 \item[3.] For any $i$, there are local fields $\mathcal{K}_i$ such that $\rho_i$  is defined over $\mathcal{K}_i$ and $\psi_i$ are defined over $\mathcal{K}_{i+m'}$; moreover $\rho_i({\rm pr}(\Gamma))$, where ${\rm pr}$ is the projection to $\bbl$, is an unbounded subset of $\rho_i(\bbl(\mathcal{K}_i))$.
 \item[4.] There are positive constants $C$ and $\delta$ such that the following holds.
Let $q=p_1\cdots p_n$ be a square-free number, such that each $p_i$ is a sufficiently large prime,
and let $H$ be a proper subgroup of $\pi_q(\Gamma)$.
Then there is a subgroup $H^{\sharp}$ of index at most $C^n$
in $H$ that satisfies one of the following two conditions:
 \begin{enumerate}
 \item For some $i$, there is $w\neq 0$ in $W_{\rho_i}=\mathbb{W}_{\rho_i}(\mathcal{K}_i)$ such that $\rho_i(h)([w])=[w]$, for any $h\in \lcal_{\delta}(H^{\sharp})$.
 \item For some $i$, there is a vector $0\neq w\in W_{\psi_i}$ such that $\psi_i(h)(w)=w$, for any $h\in\lcal_{\delta}(H^{\sharp})$. Moreover there is no nonzero vector $w'$ in $W_{\psi_i}$ such that $\psi_i(\bbg)(w')=w'$.
 \end{enumerate}
 \end{enumerate}
 \end{prop}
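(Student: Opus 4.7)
My plan is to construct the two families $\{\rho_i\}$ and $\{\psi_i\}$ from the internal structure of $\bbg=\bbl\ltimes\bbu$, then, given a proper $H<\pi_q(\Gamma)$, use Nori's theorem to pass to a subgroup $H^\sharp$ of index at most $C^n$ whose small lifts $\LL_\delta(H^\sharp)$ are confined to a single proper algebraic subgroup $\bbh\subseteq\bbg$, and finally exhibit the required fixed line or fixed vector according to whether $\bbh$ surjects onto $\bbl$.

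For the construction of the representations, I would take $\rho_1,\dots,\rho_{m'}$ to be irreducible non-trivial representations of $\bbl$ pulled back to $\bbg$ through the projection $\bbg\to\bbl$ that kills $\bbu$, chosen rich enough that every proper Zariski-connected algebraic subgroup of $\bbl$ stabilises a line in at least one $\rho_i$. This is possible by Chevalley's theorem, using that $\bbl$ has only finitely many conjugacy families of maximal proper subgroups. For each $\rho_i$, select a local field $\mathcal{K}_i$ over which $\rho_i(\pr(\Gamma))$ is unbounded; this exists because $\pr(\Gamma)$ is finitely generated and Zariski-dense in $\bbl$, which forces some matrix entry to have unbounded absolute value at some place of the common field of definition. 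For the affine family, project $\bbu$ onto its abelianisation $\bbu/[\bbu,\bbu]$ (a semisimple $\bbl$-module in characteristic zero) and, for each irreducible non-trivial $\bbl$-summand $V$, put $\mathbb{W}_{\psi_i}:=V\oplus\mathcal{K}_{m'+i}$ with $\bbl$ acting as $V\oplus\mathbf{1}$ and $\bbu$ acting by the unipotent block $(v,c)\mapsto(v+c\cdot\pi(u),c)$, where $\pi:\bbu\to V$ is the $\bbl$-equivariant projection through the abelianisation. Setting $\mathbb{W}^{(1)}_i:=V$ and $\mathbb{W}^{(2)}_i:=\mathcal{K}_{m'+i}$ verifies conditions (1) and (2) of the proposition; that $\bbg$ fixes no nonzero vector in $\mathbb{W}_{\psi_i}$ follows since a fixed $(v,c)$ with $c\neq 0$ would force $\pi\equiv 0$ (contradiction), after which $c=0$ leaves $v$ pointwise fixed by $\bbl$, so $v=0$ by irreducibility of $V$.

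For property (4), given a proper $H<\pi_q(\Gamma)$ I apply Nori's Theorem (properties 3--5 stated just before the subsection) one prime at a time: at the cost of a bounded-index per prime, hence at most $C^n$ overall, I can assume $\pi_p(H^\sharp)=\bbh_p(\bbf_p)^+$ for some Zariski-connected algebraic subgroup $\bbh_p\subseteq\bbg_{\bbf_p}$. For sufficiently large primes the finite groups $\bbg(\bbf_p)$ are pairwise non-isomorphic, so no "diagonal" Goursat obstruction arises and $H$ being proper forces at least one $\bbh_p$ to be proper; a height/dimension argument using the bound $\|h\|_\scal<[G_q:H]^\delta$ in the definition of $\LL_\delta$ then confines every small lift of $H^\sharp$ into a single proper algebraic subgroup $\bbh\subseteq\bbg$ defined over $\bbq$, for $\delta$ small enough. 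Now split into cases. If $\pr(\bbh)\subsetneq\bbl$, some $\rho_i$ with $i\le m'$ has a $\pr(\bbh)$-fixed line by construction, hence an $\bbh$-fixed line, yielding case (4a). Otherwise $\bbh$ surjects onto $\bbl$ while $\bbh\cap\bbu$ is a proper $\bbl$-invariant subgroup of $\bbu$; a Frattini-type argument for nilpotent groups forces the image of $\bbh\cap\bbu$ in $\bbu/[\bbu,\bbu]$ to be a proper $\bbl$-submodule, hence disjoint from some irreducible summand $V$. For the corresponding $\psi_i$, a Mostow Levi decomposition $\bbh=\bbh_{\rm red}\ltimes(\bbh\cap\bbu)$ supplies a section $\bbl\simeq\bbh_{\rm red}\hookrightarrow\bbh$, and the vector $(0,1)\in V\oplus\mathcal{K}_{m'+i}$ is fixed by $\bbh_{\rm red}$ (being $\bbl$-invariant) and by $\bbh\cap\bbu$ (its translation component equals the image in $V$, which is zero), yielding case (4b).

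The main obstacle I expect is the final step. First, the characteristic-zero Mostow decomposition of $\bbh$ has to be carefully coupled with the mod-$p$ data from Nori to ensure that the produced fixed vector is fixed by all of $\LL_\delta(H^\sharp)$ and not merely by the abstract algebraic group $\bbh$. Second, one must make sure that the finite list of $\psi_i$ really covers every possible surjection-onto-$\bbl$ scenario; this reduces to the finiteness of irreducible $\bbl$-components in the semisimple module $\bbu/[\bbu,\bbu]$, which is immediate since $\bbu$ is finite-dimensional, but the precise matching between $V$ and the image of $\bbh\cap\bbu$ requires the Frattini step to furnish a summand rather than merely a quotient, using full reducibility over $\bbl$.
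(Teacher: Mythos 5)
The decisive gap is in your construction of the $\psi_i$: you fix, for each nontrivial irreducible $\bbl$-summand $V$ of $\bbu/[\bbu,\bbu]$, one equivariant projection $\pi:\bbu\to V$ and let $\bbu$ act on $\mathbb{W}_{\psi_i}=V\oplus\mathcal{K}$ through that single cocycle, so $\mathbb{W}^{(2)}_i$ is one-dimensional. A vector $(v,c)$ with $c\neq0$ can only be fixed by a subgroup $\bbh$ with $\pr(\bbh)=\bbl$ if your fixed $\pi$ annihilates the image of $\bbh\cap\bbu$ in $\bbu/[\bbu,\bbu]$, and when an isotypic component has multiplicity at least two this fails for every member of your finite list. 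Concretely, take $\bbg=\bbl\ltimes(V\oplus V)$ (perfect, Zariski-connected) and $\bbh=\bbl\ltimes\Delta$ with $\Delta$ the diagonal copy of $V$; your own reduction hands you exactly such an $\bbh$ when $H$ is the reduction of a Zariski-dense subgroup of $\bbh$. Every $\rho_j$ factors through $\pr$, so $\rho_j(\bbh)=\rho_j(\bbl)$ fixes no line, ruling out 4(a); and in each of your $\psi_i$ (built from a coordinate projection $f_1$ or $f_2$) the only $\bbh$-fixed vector is $0$, since $\bbl$-invariance forces $v=0$ and then $c\,f_j(\Delta)=0$ forces $c=0$. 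The functional you would need is $f_1-f_2$, and the copies of $V$ inside $V\oplus V$ form a $\P^1$, so no finite list of fixed translation cocycles can cover all proper subgroups. This is precisely the phenomenon the paper flags before Proposition~\ref{pr_descr} ("a finite family of such homomorphisms is not rich enough"), and it is why in Proposition~\ref{p:descr} the space is $\Aff_{\bbl}(\widetilde{\mathbb{W}},\mathbb{W})\simeq \mathbb{W}\oplus{\rm Hom}_{\bbl}(\widetilde{\mathbb{W}},\mathbb{W})$: there $\mathbb{W}^{(2)}_i$ has dimension equal to the multiplicity, and the fixed vector can be taken to be the a priori unknown equivariant map $f$ vanishing on the image of $\bbh\cap\bbu$, whatever proper submodule that is.

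Two further steps are weaker than you acknowledge. First, your $\rho_i$'s rest on the unproved assertion that finitely many Chevalley representations detect all proper subgroups of $\bbl$ because "$\bbl$ has only finitely many conjugacy families of maximal proper subgroups"; this is a heavy classification input, and it also does not obviously handle non-connected $\bbh$ (a line fixed by $\bbh^{\circ}$ need not be $\bbh$-fixed). The paper avoids this entirely: it uses the nontrivial irreducible components of $\wedge^{\dim\hfr}\Ad$ (finitely many possible values of $\dim\hfr$) together with the adjoint representations of the simple factors, exhibiting the invariant line $\wedge^{\dim\hfr}\hfr$ explicitly. Second, in your surjective case the Levi $\bbh_{\rm red}$ is only a $\bbu$-conjugate of $\bbl$, so $(0,1)$ is not fixed by it in general (the fixed vector must be translated by the conjugating element), and even after that correction one is back to the multiplicity problem above; the paper's third case, where the projection of $\bbh$ to $\bbu$ generates all of $\bbu$ while $\bbh\cap\bbu$ may be trivial, is handled there by the filtration $(\tilde\rho(\bbu)-1)^{k}\widetilde{\mathbb{W}}$ rather than by a Levi-plus-Frattini argument. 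Finally, the confinement of $\lcal_{\delta}(H^{\sharp})$ into a single proper $\bbq$-subgroup, which you dispatch in a sentence, is the content of the paper's lifting proposition, whose hardest subcase (when $\pi_p(H)^{+}$ is normal and surjects onto simple factors) requires the commutative-subfield/centralizer construction and the product lemma over the primes dividing $q$; as written, your sketch does not engage with it.
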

 \begin{proof}[Proof of Proposition~\ref{pr_descr} assuming Proposition~\ref{p:descr}]
 For any $1\le i\le m'$, we let $\rho_i$ be the same as in Proposition~\ref{p:descr}. For these representations, we take $\mathbb{V}_i=0$ and let $\f_i(g,0)=\rho_i(g)$.  For $1\le i\le k$, we let $\rho_{m'+i}$ be the representation of $\bbl$ on $\mathbb{W}^{(1)}_{i}$ and $\mathbb{V}_{m'+i}=\mathbb{W}_i^{(2)}$. For any $w_1\in \mathbb{W}^{(1)}_{i}$, let 
 \[
 \f_{m'+i}(g,v)(w_1):=\psi_i(g)(w_1+v)-v.
 \]
Notice that, since $g$ acts trivially on $\mathbb{W}_{\psi_i}/\mathbb{W}^{(1)}_i$, $\f_{m'+i}(g,v)\in \Aff(\mathbb{W}_{i}^{(1)})$,  for any $i$. 

With these choices, in order to complete the proof, it is enough to make the following observations. If for some $m'<i\le m$ and $0\neq v\in \mathbb{V}_i$, $\bbg$ fixes a point $w_1\in\mathbb{W}_{\rho_i}$, then by definition, $w_1+v$ is fixed by $\bbg$. Therefore $w_1+v=0$, and so $w_1=v=0$, which is a contradiction. On the other hand, if $\psi_i(h)(w)=w$, where $w=w_1+v$, $w_1\in \mathbb{W}_i^{(1)}$ and $v\in \mathbb{W}^{(2)}_i=\mathbb{V}_i$, then $\f_{i,v}(h)(w_1)=w_1.$ 
 \end{proof}
 We prove Proposition~\ref{p:descr} in two steps. First we show that for an appropriate choice of $\delta$ and $H^{\sharp}$, the Zariski-closure of the group generated by $\lcal_{\delta}(H^{\sharp})$ is a proper subgroup of $\bbg$. Then, for any proper closed subgroup of $\bbg$, we construct the desired representations.  We start with some auxiliary lemmata describing the normal subgroups of $G_p$. 
 \begin{lem}\label{l:NormalSubgroupsSemisimple}
Let $L=\prod_{i=1}^m L^{(i)}$, where $L^{(i)}$ are quasi-simple groups. Then any normal subgroup $H$ of $L$ is of the form $\prod_{i\in I}L^{(i)}\times Z$, where $I$ is a subset of $\{1,\ldots,m\}$ and $Z\le\prod_{i\not\in I}Z(L^{(i)})$.
\end{lem}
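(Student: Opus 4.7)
The plan is to analyze $H$ via the projection maps $\pi_i : L \to L^{(i)}$, exploiting the very restricted normal-subgroup structure of each quasi-simple factor together with the perfectness that such factors enjoy. Recall that a quasi-simple group $Q$ is by definition a perfect central extension of a simple group, so any normal subgroup of $Q$ is either contained in $Z(Q)$ or equal to $Q$.

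First, since $H \trianglelefteq L$ and each $\pi_i$ is a surjective homomorphism, $\pi_i(H)$ is normal in $L^{(i)}$. By the dichotomy above, either $\pi_i(H) = L^{(i)}$ or $\pi_i(H) \subseteq Z(L^{(i)})$. Define
\[
I = \{\, i \in \{1,\dots,m\} : \pi_i(H) = L^{(i)} \,\},
\]
so that for $i \notin I$, $\pi_i(H) \subseteq Z(L^{(i)})$.

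The key step is to show that $\prod_{i \in I} L^{(i)} \subseteq H$, where each $L^{(i)}$ is viewed as the obvious subgroup of $L$ concentrated in the $i$-th coordinate. Fix $i \in I$ and $g \in L^{(i)}$. Since $L^{(i)}$ is perfect, write $g = \prod_k [a_k, b_k]$ with $a_k, b_k \in L^{(i)}$. For each $k$, choose $h_k \in H$ with $\pi_i(h_k) = a_k$ (possible because $i \in I$), and let $\tilde b_k \in L$ have $i$-th coordinate $b_k$ and all other coordinates equal to $1$. Then $[h_k, \tilde b_k] \in H$ because $H$ is normal in $L$; its $i$-th coordinate is $[a_k, b_k]$ and all its other coordinates are trivial (since coordinates in $j \neq i$ of $\tilde b_k$ are $1$ and commute with everything). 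Multiplying these elements gives $g$ (embedded in the $i$-th factor), which therefore lies in $H$. Hence $L^{(i)} \subseteq H$ for every $i \in I$, and so $K := \prod_{i \in I} L^{(i)} \subseteq H$.

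Finally, for $h \in H$ write $h = h_I \cdot h_{I^c}$ according to the decomposition $L = K \times \prod_{i \notin I} L^{(i)}$. Since $h_I \in K \subseteq H$, we get $h_{I^c} \in H \cap \prod_{i \notin I} L^{(i)} =: Z$. Thus $H = K \times Z$. For each $j \notin I$ and each $z \in Z$ we have $\pi_j(z) \in \pi_j(H) \subseteq Z(L^{(j)})$, giving $Z \le \prod_{i \notin I} Z(L^{(i)})$, which is the desired form. The main point requiring care is the commutator trick in the middle step, which crucially uses both the perfectness of each $L^{(i)}$ and the fact that distinct direct factors centralize each other; everything else is just bookkeeping.
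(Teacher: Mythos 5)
Your proof is correct and follows essentially the same route as the paper: project $H$ to each factor, use the dichotomy for normal subgroups of a quasi-simple group to define $I$, and use single-coordinate commutators (which lie in $H$ by normality, since the other factors commute) to conclude $L^{(i)}\subseteq H$ for $i\in I$, after which the splitting $H=\prod_{i\in I}L^{(i)}\times Z$ is bookkeeping. The only minor variation is in that middle step: the paper fixes one $s\in H$ with non-central $i$-th coordinate and notes that the subgroup generated by $\{[g,s_i]:g\in L^{(i)}\}$ is a non-central normal subgroup of $L^{(i)}$, hence all of $L^{(i)}$, whereas you invoke perfectness of $L^{(i)}$ directly to write an arbitrary element as a product of commutators and lift each one through $\pi_i$.
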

\begin{proof}
For any $i$, either ${\rm pr}_i(H)$, the projection onto $L^{(i)}$, is central or ${\rm pr}_i(H)=L^{(i)}$. If $(s_1,\ldots,s_m)$ is in $H$, then, for any $g\in L^{(i)}$, $(1,\ldots,1,[g,s_i],1,\ldots,1)$ is also in $N$. On the other hand, the group generated by $[g,s_i]$ is a normal subgroup of $L^{(i)}$. If $s_i$ is not central, then the above group cannot be central as $Z\left(L^{(i)}/Z(L^{(i)})\right)=\{1\}$, and so it is the full group $L^{(i)}$. The rest of the argument is straightforward.
\end{proof}
\begin{lem}~\label{l:NormalSubgroups}
Let $L$ be a direct product of quasi-simple finite groups which acts on a finite nilpotent group  $U$. Then any normal subgroup $H$ of $L\ltimes U$ is of the form $(H\cap L)\ltimes (H\cap U)$ if the prime factors of $|U|$ are larger than an absolute constant depending only on the size of the center of $L$. Moreover $H\cap L$ acts trivially on $U/H\cap U$.
\end{lem}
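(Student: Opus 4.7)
My approach is to induct on the nilpotency class of $U$, handling the abelian case as the base. Suppose first that $U$ is abelian, written additively, and let $H_L := \pi_L(H) \lhd L$. For each $l \in H_L$, the fiber $\{u \in U : (l, u) \in H\}$ is a coset of $H \cap U$. Conjugating an element $(l, u) \in H$ by $(1, u') \in L \ltimes U$ and then multiplying by $(l, u)^{-1}$ yields $(1, (1-l) u') \in H$ for every $u' \in U$; hence $(1-l) U \subseteq H \cap U$, and every $l \in H_L$ acts trivially on $V := U/(H \cap U)$. This triviality makes the section $\sigma : H_L \to V$, sending $l$ to the class of any $u$ with $(l, u) \in H$, a well-defined group homomorphism. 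By Lemma~\ref{l:NormalSubgroupsSemisimple}, $H_L = \prod_{i \in J} L^{(i)} \times Z'$ with $Z' \le \prod_{i \notin J} Z(L^{(i)})$; since each $L^{(i)}$ is perfect and $V$ is abelian, $\sigma$ kills the $L^{(i)}$-factors and therefore factors through $Z' \le Z(L)$. The image $\sigma(H_L)$ then has order dividing both $|Z(L)|$ and $|U|$, so the coprimality hypothesis forces $\sigma \equiv 0$. Equivalently $(l, 0) \in H$ for every $l \in H_L$, which gives $H = H_L \ltimes (H \cap U)$.

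For the inductive step, take $Z := Z(U)$, a nontrivial abelian $L$-invariant subgroup, so that $U/Z$ has strictly smaller nilpotency class. The image $\bar H \lhd L \ltimes (U/Z)$ of $H$ satisfies the inductive hypothesis (the coprimality condition is inherited, since prime divisors of $|U/Z|$ divide $|U|$), so for each $l \in H_L$ there is some $z \in Z$ with $(l, z) \in H$. The already-proved abelian base case, applied to the normal subgroup $H \cap (L \ltimes Z)$ of $L \ltimes Z$, then yields $(l, 0) \in H$ for every such $l$, and so $H = H_L \ltimes (H \cap U)$. The \emph{moreover} statement is immediate: for $l \in H \cap L$ and $u \in U$, a direct computation in $L \ltimes U$ gives the commutator $[(l, 1_U), (1_L, u)] = (1_L, (l \cdot u) u^{-1})$, which lies in $H$ by normality since $(l, 1_U) \in H$, whence $l \cdot u \equiv u$ modulo $H \cap U$.

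The main technical obstacle will be the semidirect-product bookkeeping in the abelian base case: one must first establish the trivial action of $H_L$ on $V$ before the section $\sigma$ is well defined as a group homomorphism, and then combine Lemma~\ref{l:NormalSubgroupsSemisimple} with the perfectness of each $L^{(i)}$ to reduce the image of $\sigma$ into a subgroup of $Z(L)$, at which point the coprimality hypothesis kills it. After the base case is in hand, the induction propagates the hypothesis automatically, since the prime divisors of any subquotient of $U$ lie among those of $|U|$.
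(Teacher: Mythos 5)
Your proof is correct, and it runs on the same engine as the paper's: produce a homomorphism from the normal subgroup $\pi_L(H)\lhd L$ into (a quotient of) $U$, then kill it by combining Lemma~\ref{l:NormalSubgroupsSemisimple} with perfectness of the quasi-simple factors and the coprimality of $|Z(L)|$ with $|U|$. The organization, however, is genuinely different. The paper never reduces to the abelian case: it quotients by $H\cap U$ once, writes $H$ as the graph of a map $\varphi\colon \pi_L(H)\to U$, checks that $\varphi$ is a $1$-cocycle, and uses normality (conjugating by $(1,u)$ and specializing $u=\varphi(s_1)^{-1}$) to turn $\varphi$ into an anti-homomorphism; since none of these manipulations uses commutativity of $U$, the nilpotent case is dispatched in one stroke, with no induction on the nilpotency class. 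You instead prove the abelian case directly --- there, once $H_L$ is shown to act trivially on $V=U/(H\cap U)$, the section $\sigma$ is automatically a homomorphism (forming $V$ together with its $H_L$-action implicitly uses that $H\cap U$ is normal in $L\ltimes U$, which holds because it is the intersection of two normal subgroups) --- and then climb the nilpotency class through $Z(U)$, applying the inductive hypothesis to $\bar H\lhd L\ltimes (U/Z(U))$ and the base case to $H\cap(L\ltimes Z(U))\lhd L\ltimes Z(U)$; both applications are legitimate since $Z(U)$ is characteristic, hence $L$-invariant, and the coprimality hypothesis passes to every subquotient of $U$. Your route buys a cleaner, cocycle-free base case at the price of an extra (avoidable) layer of induction; the paper's single computation is shorter precisely because its cocycle identity is insensitive to whether $U$ is abelian. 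Your treatment of the ``moreover'' clause, via the commutator $[(l,1),(1,u)]\in H\cap U$, coincides with the paper's.
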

\begin{proof}
Passing to $H/H\cap U \unlhd L \ltimes (U/H\cap U)$, we can and will assume that $H\cap U=\{1\}$. Thus projection to $L$ induces an embedding and we get a map $\f:{\rm pr}(H)\rightarrow U$, where $\pr:L\ltimes U\to L$ is the projection map, such that 
\[
H=\{(s,\f(s))|\h s\in {\rm pr}(H)\}.
\]
One can easily check that $\f$ is a $1$-cocycle, i.e. $\f(s_1s_2)=s_2^{-1} \f(s_1) s_2\cdot \f(s_2)$. Furthermore, for any $u\in U$ and $s\in L$, we have 
\[
(1,u^{-1})(s,\f(s))(1,u)=(s,s^{-1}u^{-1}s\cdot \f(s)\cdot u)\in H;
\]
thus $\f(s)=s^{-1}u^{-1}s\cdot \f(s)\cdot u$, for any $u\in U$ and $s\in L$. In particular, setting $s=s_2$ and  $u=\f(s_1)^{-1}$, and then using the cocycle relation, we have 
\[
\f(s_2)\cdot \f(s_1)=s_2^{-1}\f(s_1)s_2\cdot \f(s_2)=\f(s_1s_2).
\]
Since $\f(1)=1$, by the above equation, we have that $\f(s^{-1})=\f(s)^{-1}$. Therefore, by the above discussion, $\theta(s)=\f(s^{-1})$ defines a homomorphism from ${\rm pr}(H)$ to $U$. On the other hand, ${\rm pr}(H)$ is a normal subgroup of $L$, so by Lemma~\ref{l:NormalSubgroupsSemisimple}, if the prime factors of $|U|$ are larger than the size of the center of $L$, then $\theta$ is trivial, which finishes the proof of the first part. For the second part, it is enough to notice that $sus^{-1}u^{-1}$ is in $H\cap U$, for any $s\in H\cap L$ and $u\in U$.
\end{proof}
\begin{cor}\label{c:NormalSubgroupPerfect}
Let $L$ be a product of quasi-simple finite groups, which acts on a finite nilpotent group $U$. Assume that $G=L\ltimes U$ is a perfect group. If $H$ is a normal subgroup of $G$ and the projection of $H$ onto $L$ is surjective, then $H=G$.
\end{cor}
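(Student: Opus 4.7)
The plan is to apply Lemma~\ref{l:NormalSubgroups} directly and then play the perfectness of $G$ against the nilpotence of $U$. Since the hypotheses of Lemma~\ref{l:NormalSubgroups} are satisfied in the setting in which this corollary will be invoked (the prime divisors of $|U|$ are large relative to the center of $L$), we may write
\[
H \;=\; (H\cap L)\ltimes (H\cap U).
\]
The surjectivity hypothesis says $\pr(H)=L$. But from the semidirect decomposition every element of $H$ projects into $H\cap L$, so $\pr(H)=H\cap L$, and therefore $H\cap L=L$. Consequently $H=L\ltimes (H\cap U)$.

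Next I would look at the quotient $G/H$. Since $H\cap U$ is normal in $U$ (because $H$ is normal in $G$ and $U$ is a subgroup of $G$, so conjugation by elements of $U$ preserves $H\cap U$), the obvious map $U\to G/H$ descends to an isomorphism
\[
G/H \;\cong\; U/(H\cap U),
\]
with $L$ mapping trivially because $L\subset H$. Thus $G/H$ is a quotient of the nilpotent group $U$, hence itself nilpotent.

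On the other hand, $G$ is perfect by hypothesis, and any quotient of a perfect group is perfect, so $G/H$ is perfect. The only group that is both perfect and nilpotent is the trivial group (a nontrivial nilpotent group has a nontrivial abelianization, contradicting perfectness). Therefore $U/(H\cap U)$ is trivial, which means $H=G$.

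There is no real obstacle here; the argument is purely structural once Lemma~\ref{l:NormalSubgroups} is in hand. The only thing to keep in mind is the implicit hypothesis on the prime divisors of $|U|$ required for that lemma, which is harmless in the applications appearing later.
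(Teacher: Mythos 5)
Your proof is correct, and its skeleton is the paper's: invoke Lemma~\ref{l:NormalSubgroups} and then play perfectness of $G$ against nilpotence of $U$. The endgame differs slightly. The paper quotients by $U'=H\cap U$ and uses the ``moreover'' clause of Lemma~\ref{l:NormalSubgroups} (that $H\cap L$ acts trivially on $U/U'$) to identify $G/U'$ with the direct product $L\times (U/U')$, which cannot be perfect unless $U'=U$. You instead quotient by $H$ itself and use the second isomorphism theorem, $G/H=HU/H\cong U/(H\cap U)$, so the trivial-action clause is never needed. In fact your route shows that the appeal to Lemma~\ref{l:NormalSubgroups} can be dropped altogether: surjectivity of the projection of $H$ onto $L$ already gives $HU=G$ (no need first to deduce $L\subseteq H$), whence $G/H\cong U/(H\cap U)$ is simultaneously perfect and nilpotent, hence trivial. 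This removes the implicit large-prime hypothesis you flag at the start and at the end (the corollary as stated carries no such hypothesis, and with this shortcut none is needed), and it shows the statement holds for any group $L$ and any solvable normal complement $U$, not just the quasi-simple-product and nilpotent setting.
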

\begin{proof}
By Lemma~\ref{l:NormalSubgroups}, $H=L\ltimes U'$, for a normal subgroup $U'$ of $U$, and $L$ acts trivially on $U/U'$. Thus $L\ltimes U/U'\simeq L\times U/U'$ is not a perfect group unless $U=U'$. On the other hand, any quotient of $G$ is perfect, which finishes the proof.
\end{proof}

In the next step, for any proper subgroup $H$ of $G_q$, we will find another subgroup containing $H$ which is of product form and is of comparable size. 
\begin{lem}\label{l:ProductForm}
Let  $H$ be a proper subgroup of $G=\prod_{p\in \Sigma} G_p$, where 
\begin{itemize}
\item[0.] $\Sigma$ is a finite set of primes larger than $7$,
\item[1.] $G_p=L_p\ltimes U_p$, where $L_p=\prod_i L^{(i)}_p$, $L^{(i)}_p$ are quasi-simple groups of Lie type over a finite field of characteristic $p$, and $U_p$ is a $p$-group,
\item[2.] $G_p$ is perfect,
\item[3.] $|G_p|\le p^k$ for any $p\in \Sigma$, with a fixed $k$ independent of $p$.
\end{itemize}
Then there is a positive number $\delta$ depending on $k$, such that 
\[
\prod_{p\in \Sigma}[G_p:\pi_p(H)]\ge [G:H]^{\delta},
\]
where $\pi_p$ is the projection onto $G_p$.
\end{lem}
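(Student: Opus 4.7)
The plan is to prove this by induction on $|\Sigma|$, aiming for $\delta = 1/k$; the base case $|\Sigma| = 1$ is a tautology. For the inductive step, fix $p_1 \in \Sigma$, let $\Sigma' = \Sigma \setminus \{p_1\}$, $G' = \prod_{p \in \Sigma'} G_p$, $K_1 = \pi_{p_1}(H)$, and $H' = \pi_{G'}(H)$. The inductive hypothesis gives $\prod_{p \in \Sigma'}[G_p : \pi_p(H)] \ge [G':H']^{1/k}$. Applying the two-factor Goursat lemma to $H$ inside $K_1 \times H'$ yields a common quotient $Q \cong K_1/N_1 \cong H'/N'$ such that $[G:H] = [G_{p_1}:K_1] \cdot [G':H'] \cdot |Q|$. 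A short algebraic manipulation then reduces the desired inequality to proving the single bound $|Q| \le [G_{p_1}:K_1]^{k-1}$.

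In case A, when $K_1 = G_{p_1}$, I will show $Q = 1$. Since $G_{p_1}$ is perfect, so is its quotient $Q$, and any nontrivial finite perfect group has a nonabelian simple composition factor $S$ (because a perfect solvable group is trivial). Being a composition factor of the quotient $Q = G_{p_1}/N_1$, the simple group $S$ is also a composition factor of $G_{p_1}$, hence of the form $L_{p_1}^{(i)}/Z(L_{p_1}^{(i)})$, a simple group of Lie type in characteristic $p_1$ of order $\gtrsim p_1$. On the other hand $Q \cong H'/N'$ is a subquotient of $H' \le G'$, and since any nonabelian simple subquotient of a direct product is a subquotient of one of the factors, $S$ must appear as a nonabelian simple subquotient of some $G_{p_2}$ with $p_2 \in \Sigma'$. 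By the classification of finite simple groups, any nonabelian simple subquotient of a group of Lie type in characteristic $p_2$ that is not itself of Lie type in characteristic $p_2$ has order bounded in terms of the rank, hence in terms of $k$. Since $p_1 \ne p_2$ are both large enough to avoid exceptional isomorphisms among Lie-type groups over different characteristics, this bounded order contradicts $|S| \gtrsim p_1$ once $p_1$ is sufficiently large, forcing $Q = 1$.

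In case B, when $K_1 \lneq G_{p_1}$, the key ingredient is the lower bound $[G_{p_1}:K_1] \ge p_1$. Indeed, by Nori's theorem $G_{p_1} = G_{p_1}^+$ is generated by its $p_1$-elements, so any proper subgroup of index $n < p_1$ would produce a homomorphism $G_{p_1} \to S_n$ whose image has order coprime to $p_1$; then every $p_1$-element would lie in the kernel, and since these generate $G_{p_1}$ the kernel would have to be all of $G_{p_1}$, contradicting the properness of $K_1$. Combining this lower bound with the trivial estimate $|Q| \le |K_1| \le |G_{p_1}|/[G_{p_1}:K_1] \le p_1^k/[G_{p_1}:K_1]$ gives $|Q| \le p_1^{k-1} \le [G_{p_1}:K_1]^{k-1}$, which closes the induction. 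The main obstacle is case A, which ultimately rests on the classification of finite simple groups together with the assumption that the primes in $\Sigma$ exceed a threshold depending on $k$, so that no simple group of Lie type in characteristic $p_1$ can appear as a subquotient of any $G_{p_2}$ with $p_2 \ne p_1$.
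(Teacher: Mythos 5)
Your Goursat reduction and your Case B are sound --- indeed they would give the cleaner exponent $\delta=1/k$, and your index bound $[G_{p_1}:K_1]\ge p_1$ is exactly the fact the paper also uses (it follows from the hypotheses alone, since the quasi-simple factors are perfect and hence generated by their $p$-elements together with the $p$-group $U_p$; no appeal to Nori is needed). The genuine gap is in Case A. The lemma assumes only that the primes exceed $7$, but your argument that $Q=1$ needs $p_1$ to exceed a threshold depending on $k$: you bound, in terms of $k$, the order of every nonabelian simple section of $G_{p_2}$ that is not of Lie type in characteristic $p_2$ (a CFSG/Larsen--Pink strength input; note also that $G_{p_2}=L_{p_2}\ltimes U_{p_2}$ is not itself of Lie type, so you are silently using the easy extra reduction from sections of $G_{p_2}$ to sections of a simple factor $L^{(j)}_{p_2}/Z$), and then you contradict $|S|\ge p_1$. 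For primes that are $>7$ but small compared with $k$ the claim $Q=1$ is simply false: take $G_{p_1}=SL_2(\F_{11})$ and let $G_{p_2}=SL_m(\F_{p_2})$ contain a copy $H'$ of $PSL_2(11)$ with $m$ bounded independently of $p_2$ (reduce a faithful complex representation modulo $p_2\nmid|PSL_2(11)|$); the fibre product of $G_{p_1}\twoheadrightarrow PSL_2(11)$ and $H'\simeq PSL_2(11)$, times the remaining full factors, is a proper subgroup $H$ with $\pi_{p_1}(H)=G_{p_1}$ and $Q\simeq PSL_2(11)\neq1$, whereas your reduction demands $|Q|\le[G_{p_1}:\pi_{p_1}(H)]^{k-1}=1$. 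The lemma itself is still true there (the large index $[G_{p_2}:\pi_{p_2}(H)]$ compensates), but your induction breaks at exactly this step; as you yourself note, what you prove is the weaker statement with a $k$-dependent prime threshold, which is not the statement of the lemma.

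It is worth contrasting this with the paper's route, which sidesteps the cross-characteristic section problem entirely. The paper inducts on $|G|$, isolates the primes where $\pi_p(H)=G_p$, and first quotients out $H\cap G_p$ at those primes (using its normal-subgroup lemmas and perfectness), so that when Jordan--H\"older is invoked the comparison is always between composition factors of a quotient of $G_{p_0}$ and of the \emph{full} group $G_p$; the only input there is the classical fact that simple groups of Lie type in distinct characteristics, one of them larger than $7$, are non-isomorphic. At the primes where $\pi_p(H)$ is proper it makes no attempt to exclude a common factor: it merely records the divisibility $p_0\mid|G_p|$ and the bound $[G_p:\pi_p(H)]\ge p$, and the bookkeeping yields $\delta=1/(k(k+1))$ for all primes $>7$, effectively and without the classification. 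So your approach buys a better exponent at the cost of heavy machinery and a stronger hypothesis (which would in fact suffice for this paper's applications, where the primes are taken large anyway); to prove the lemma as stated you would have to allow $Q\neq1$ in Case A and rework the argument along the paper's lines, where nontrivial common quotients at non-surjective primes are converted into order-divisibility constraints rather than into a contradiction.
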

\begin{proof}
We prove this by induction on $|G|$. Let $\Sigma'=\{p\in \Sigma|\h \pi_p(H)=G_p\}$. If $\Sigma'$ is empty, then $[G_p:\pi_p(H)]\ge p$ for any $p$, as $\pi_p(H)$ is a proper subgroup of $G_p$ and $G_p$ is generated by $p$ elements. Therefore
\[
\prod_{p\in \Sigma} [G_p:\pi_p(H)]\ge \prod_{p\in \Sigma}p\ge \prod_{p\in \Sigma}|G_p|^{1/k}\ge [G:H]^{1/k}.
\]
So we shall assume that $\Sigma'$ is non-empty. For any $p$, $H_p:=G_p\cap H$ is a normal subgroup of $\pi_p(H)$; in particular, $H_p$ is a normal subgroup of $G_p$ when $p\in \Sigma'$. Let 
\[
G_p'=\begin{cases}G_p&\text{if $p\not\in \Sigma'$,}\\
&\\
G_p/H_p &\text{if $p\in \Sigma'$,}
\end{cases}
\h\h\h\quad{\rm and}
\]
\[
H'=H/\prod_{p\in \Sigma'} H_p\subseteq \prod_{p\in \Sigma} G_p'=:G'.
\]
If $H_p$ is not trivial for some $p\in \Sigma'$, then $|G'|<|G|$. Moreover, by Lemma~\ref{l:NormalSubgroups} and Corollary~\ref{c:NormalSubgroupPerfect}, we can apply the induction hypothesis, and we get that
\[
[G:H]^{\delta}=[G':H']^{\delta}\le \prod_{p\in\Sigma} [G_p':\pi_p(H')]= \prod_{p\in\Sigma} [G_p:\pi_p(H)],
\]
and we are done. Thus, without loss of generality, we can and will assume that $H_p$ is trivial for any $p\in \Sigma'$. 

Let $p_0\in \Sigma'$. Since $H_{p_0}$ is trivial and $\pi_{p_0}(H)=G_{p_0}$, there is an epimorphism from $H':=\pi_{\Sigma\setminus\{p_0\}} (H)$ to $G_{p_0}$ and $H'$ is isomorphic to $H$. Let $N$ be the kernel of this epimorphism. By the induction hypothesis, we have that
\[
\prod_{p\in \Sigma\setminus\{p_0\}}[G_p:\pi_p(N)]\ge [\prod_{p\in \Sigma\setminus\{p_0\}} G_p:N]^{\delta}.
\]
On the other hand, $|H|=|H'|=|G_{p_0}|\cdot|N|$; so 
\begin{equation}\label{e:HN}
\prod_{p\in \Sigma\setminus\{p_0\}}[G_p:\pi_p(N)]\ge [G:H]^{\delta}.
\end{equation}
By (\ref{e:HN}), it is clear that, if we have $\pi_p(N)=\pi_p(H)$ for all $p$, then we are done. Thus we can assume that $\pi_p(N)\neq \pi_p(H)$  for some $p$.

Since $H'/N\simeq G_{p_0}$, it is clear that, for any $p\neq p_0$, $\pi_p(H)/\pi_p(N)$ is a homomorphic image of $G_{p_0}$. Thus, by the Jordan-H\"{o}lder Theorem, either $G_{p_0}$ and $\pi_p(H)$ have some composition factor in common or $\pi_p(H)=\pi_p(N)$. On the other hand, the composition factors of $G_{p}$ are the cyclic group of order $p$ and $L_{p}^{(i)}$. In particular, if $p$ and $p'$ are distinct primes and either $p$ or $p'$ is larger than $7$, then $G_p$ and $G_{p'}$ do not have any composition factor in common.  So $\pi_p(H)=\pi_p(N)$ if $p\in \Sigma'\setminus\{p_0\}$.  
So, for any $p_0\in\Sigma'$, there is $p\in \Sigma\setminus\Sigma'$ such that $|L^{(i)}_{p_0}|$ divides $|G_p|$, for some $i$. In particular, $p_0$ divides $|G_p|$. Thus we have that
\[
\prod_{p\in \Sigma'}p\h |\prod_{p\in\Sigma\setminus\Sigma'} |G_p|.
\]
Therefore $\prod_{p\in \Sigma'}p\le \prod_{p\in\Sigma\setminus\Sigma'} p^{k}.$ Now it is straightforward to show that 
\[
\prod_{p\in\Sigma}[G_p:\pi_p(H)]\ge [G:H]^{\frac{1}{k(k+1)}},
\]
and we are done.
\end{proof}

After these preparations we are ready to make the first step towards the
proof of Proposition \ref{p:descr}.
Recall that we try to describe the set $\lcal_\d(H)$ in a geometric way,
which set is the set of "small lifts" of $H$ to $\G(\Z)$.
The next Proposition shows that for every $H<G_q$, we can find a subgroup
$H^\sharp$ for which $\lcal_\d(H^\sharp)$ is contained in a proper algebraic
subgroup of $\G$.

\begin{prop}\label{p:LiftingUp}
There are positive numbers $\delta$ and $C$ for which the following holds:

Take any proper subgroup $H$ of $G_q$, where $q$ is a square free
integer with large prime factors.
Then one can find a proper algebraic subgroup $\H<\G$ defined over $\Q$,
and a subgroup $H^{\sharp}$ of $H$, such that
\begin{enumerate}
\item $\lcal_{\delta}(H^{\sharp})\subset \H(\Q)$,
\item $[H:H^{\sharp}]\le C^n,$
\end{enumerate}
where $n$ is the number of prime factors of $q$.
\end{prop}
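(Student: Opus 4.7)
The plan is to construct $\mathbb{H}$ as the Zariski closure over $\mathbb{Q}$ of the subgroup of $\Gamma$ generated by $\mathcal{L}_\delta(H^\sharp)$, for a suitable subgroup $H^\sharp$ of $H$ produced by Nori's theorem. The critical input will be Lemma~\ref{l:ProductForm}, which yields an absolute $\delta_0>0$ such that
\[
\prod_{p\mid q}[G_p:\pi_p(H)]\ \ge\ [G_q:H]^{\delta_0}.
\]
Since $H$ is proper, the right-hand side exceeds $1$, forcing at least one $p\mid q$ to have $\pi_p(H)$ a proper subgroup of $G_p$.

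I would first trim $H$ using property~4 of Nori's theorem (as summarized after Theorem~A in the excerpt): for each $p\mid q$ there is a commutative subgroup $F_p\subseteq \pi_p(H)$ such that $\pi_p(H)^+F_p$ is normal in $\pi_p(H)$ of index at most some constant $C_0$ depending only on $d$. Setting $H^\sharp := H\cap \prod_{p\mid q}(\pi_p(H)^+F_p)$ gives $[H:H^\sharp]\le C_0^n$, which takes care of the second conclusion. Property~3 of Nori's theorem then attaches to each $\pi_p(H)^+$ a Zariski-connected $\mathbb{F}_p$-algebraic subgroup $\mathbb{H}_p\le \mathbb{G}$ with $\mathbb{H}_p(\mathbb{F}_p)^+=\pi_p(H)^+$; this $\mathbb{H}_p$ is a proper subgroup of $\mathbb{G}$ whenever $\pi_p(H)\subsetneq G_p$.

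Next I would set $\mathbb{H}$ to be the Zariski closure in $\mathbb{G}$ over $\mathbb{Q}$ of the subgroup of $\Gamma$ generated by $\mathcal{L}_\delta(H^\sharp)$. By construction $\mathbb{H}$ is a $\mathbb{Q}$-algebraic subgroup and $\mathcal{L}_\delta(H^\sharp)\subseteq \mathbb{H}(\mathbb{Q})$, giving condition~(1). The heart of the argument is to verify $\mathbb{H}\subsetneq \mathbb{G}$. Suppose for contradiction $\mathbb{H}=\mathbb{G}$; then some finite subset $T\subseteq \mathcal{L}_\delta(H^\sharp)$ already generates a Zariski-dense subgroup $\langle T\rangle$ of $\mathbb{G}$, and the elements of $T$ have $\mathcal{S}$-norm at most $[G_q:H^\sharp]^\delta$. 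Nori's strong approximation theorem (Theorem~A) applied to $\langle T\rangle$ yields a threshold $p_1(T)$ above which $\pi_p(\langle T\rangle)=G_p$. Choosing $\delta$ small enough, depending only on $\mathbb{G}$ and $\Gamma$, I arrange $p_1(T)$ to be less than every prime divisor of $q$ (each assumed large). Hence $\pi_p(\langle T\rangle)=G_p$, and a fortiori $\pi_p(H)=G_p$, for every $p\mid q$---which flatly contradicts the Lemma~\ref{l:ProductForm} inequality above.

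The main obstacle is the calibration in the last step: the threshold $p_1(T)$ coming from Nori's strong approximation must be controlled uniformly by the $\mathcal{S}$-norms of the generating set $T$, and the scaling has to be chosen so that a single universal $\delta$ works for every $q$ and every proper $H$. Since the paper explicitly avoids the quantitative Nullstellensatz used in earlier work, this effectivization must be extracted directly from the scheme-theoretic content of Nori's construction (the algebraic envelope as a functor over $\mathbb{Z}$ with controlled good reduction) rather than from a generic Nullstellensatz-style estimate, and it is precisely this delicate calibration that pins down the universal $\delta$ promised by the proposition.
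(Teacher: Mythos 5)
Your construction of $H^\sharp$ (via property 4 of Nori's theorems, taking $\pi_p(H)^+F_p$ at each prime) and the final shape of the conclusion (the Zariski closure of $\langle\lcal_\delta(H^\sharp)\rangle$ is the desired $\H$) agree with the paper. But the step on which everything hinges --- showing $\H\neq\G$ --- is where your argument has a genuine gap. You derive the contradiction from an \emph{effective} strong approximation statement: that for a Zariski-dense subgroup generated by a set $T$ of elements of $\|\cdot\|_\scal$-norm at most $[G_q:H^\sharp]^\delta$, one has $\pi_p(\langle T\rangle)=G_p$ for all primes $p$ exceeding a threshold $p_1(T)$ that is bounded by a \emph{fixed} power of $\max_{t\in T}\|t\|_\scal$, uniformly in $q$, $H$ and $T$. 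Nori's Theorem A gives no such quantitative dependence: its threshold depends on the subgroup in an unspecified way. Worse, in this paper the statement you want to invoke is exactly Theorem~\ref{t:5.1} of the appendix (effective strong approximation with threshold $\max_\gamma\|\gamma\|_S^{1/\delta}$), and its proof there explicitly says ``following the proof of Proposition~\ref{p:LiftingUp}\ldots'' --- i.e.\ the quantitative input you need is a \emph{consequence} of the proposition you are proving, so your calibration step is circular unless you supply an independent effective argument, which you only gesture at in the last paragraph.

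The paper avoids this by never invoking strong approximation for $\langle T\rangle$ at all. Instead, for each prime $p\mid q$ with $\pi_p(H)$ proper, it produces a polynomial $P_p$ of degree at most $4$ vanishing on $H_p^\sharp$ but not on $G_p$; this is the real work of the proof (invariant subspaces of $\gfr(\F_p)\otimes\gfr^*(\F_p)$, Lemma~\ref{l:NormalSubgroupsSemisimple}, Corollary~\ref{c:NormalSubgroupPerfect}, and in the delicate case where $H_p^+$ is normal, the commutative field $E_{i,\pfr}$ generated by $F_p$ and the space $W$ it defines). The uniform bound on the degree is what makes the quantitative step elementary: a $D\times D$ subdeterminant formed from $\Psi(\lcal_\delta(H^\sharp))$ would be a nonzero rational of height too small to survive reduction modulo every $p\mid q$, so $\lcal_\delta(H^\sharp)$ lies in a proper subvariety $\X$; then \cite[Proposition 3.2]{EMO} (bounded number of steps $N$ depending only on the degree, dimension and number of components of $\X$) upgrades ``proper subvariety'' to ``proper algebraic subgroup'' after shrinking $\delta$ to $\delta/N$. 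In short, your outline replaces the paper's mod-$p$ invariant-subspace construction and height/determinant estimate with an effective strong approximation black box that is not available independently of the proposition itself; to repair the proof you would need to carry out essentially the paper's construction of the low-degree equations $P_p$ (or some substitute providing equations of uniformly bounded degree and height).
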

\begin{proof}
We begin the proof by constructing $H^\sharp$.
Let $q''$ be the product of prime factors of $q$ such that $G_p\neq \pi_p(H)$.
 By Nori's result (see 4. on page \pageref{pg_4}), for large enough $p$, there is a commutative subgroup  $F_p$ of $\pi_p(H)$ such that
$H_p^{\sharp}:=\pi_p(H)^+\cdot F_p$ is a normal subgroup of $\pi_p(H)$ and $[\pi_p(H):H_p^{\sharp}]<C$, where $C$ just depends on the size of the matrices.
Let  $H_{q''}^{\sharp}=\prod_{p|q''} H_p^{\sharp}$ and 
\[
H^{\sharp}=\{h\in H|\h \pi_{q''}(h)\in H_{q''}^{\sharp}\}. 
\]
It is clear that
$[H:H^{\sharp}]\le [\prod_{p|q''}\pi_p(H):H_{q''}^{\sharp}]\le C^n$,
where $n$ is the number of prime factors of $q$.
We will see that without loss of generality we can replace $q$  by $q''$ and $H$ by $H^\sharp_{q''}$.
By Lemma \ref{l:ProductForm} and the discussions in the beginning of Section~\ref{sc_escp}, we have that
\[
[G_q:H]^{\delta_1}\le [G_{q''}:\prod_{p|q''}\pi_p(H)].
\]
Hence 
\[
[G_q:H^{\sharp}]^{\delta_1}\le [G_{q''}:H_{q''}^{\sharp}].
\]
 Therefore $\lcal_{\delta}(H^{\sharp})\subseteq \lcal_{\delta/\delta_1}(H_{q''}^{\sharp})$.
So, without loss of generality, we assume that
 \begin{enumerate}
\item $H=\prod_{p|q} H_p$, where $H_p$ is a proper subgroup of 
$G_p$. 
\item $H^{\sharp}=\prod_{p|q} H^{\sharp}_p$.
 \end{enumerate}

Consider an embedding $\G\subseteq \G\L_d$ defined over $\Q$.
Below we will show that for each prime $p|q$
there is a polynomial $P_p\in(\Z/p\Z)[x_{1,1},\ldots,x_{d,d}]$ of degree
at most 4 such that all elements $g\in H_p^\sharp$
satisfy this equation, i.e. $P_p(g)=0$, but not all elements of $G_p$ do.
Now we show that this easily implies
the first part of the proposition with some
$\d>0$.
To do this, we first show that there is a polynomial
$P\in\Q[x_{1,1},\ldots,x_{d,d}]$ which vanishes on $\lcal_\d(H^\sharp)$
but not on all of $\G$.
Consider the usual degree 4 monomial map:
$\Psi:\G\L_d\to\bba_{\binom{d^2+4}{4}}$.
Denote by $D$ the dimension of the linear subspace of $\bba_{\binom{d^2+4}{4}}$
spanned by $\Psi(\G)$.
We need to show that $\Psi(\lcal_\d(H^\sharp))$ spans a subspace of
dimension lower than $D$.
Suppose the contrary, and let $g_1,\ldots,g_D\in\Psi(\lcal_\d(H^\sharp))$
which are linearly independent.
We can consider the $g_i$ as column vectors, and form a
${\binom{d^2+4}{4}}\times D$ matrix.
By independence this has a nonzero $D\times D$ subdeterminant,
whose entries are all less than
\[
\frac{1}{D!}q^{\frac{1}{D(|\scal|+1)}}
\]
in the $\|\cdot\|_\scal$-norm,
if we choose $\d$ sufficiently
small.
Recall that $\scal$ is the set of primes which occur in the denominators
of elements of $\Ga$.
Now, the value of this subdeterminant is a nonzero rational number
less than $q^{1/(|\scal|+1)}$, whose denominator is less than
$q^{|\scal|/(|\scal|+1)}$.
This implies that the projection mod $p$ of this determinant
is still nonzero for some $p|q$, which contradicts the existence of $P_p$
to be demonstrated below.

So far we showed that for sufficiently small $\d$, $\lcal_\d(H^\sharp)$
is contained in a proper subvariety $\X\subseteq\G$.
By \cite[Proposition 3.2]{EMO}, there is an integer $N$ such that
if $A\subseteq\G(\Q)$ is a finite symmetric set generating a Zariski dense
subgroup of $\G$, then $\prod_N A\nsubseteq\X(\Q)$.
This implies that $\lcal_{\d/N}(H^\sharp)$ is contained in a proper
algebraic subgroup.
We note that the proof of \cite[Proposition 3.2]{EMO}
gives that $N$ depends only on the dimension, the degree and the number of
irreducible components of $\X$.
The proof in \cite[Proposition 3.2]{EMO} is based on the idea, that
by Zariski density, one can find an element $g\in A$ such that
$\X\cap g\X$ is either of lower dimension or contains less components
of maximal dimension than $\X$.
$N$ is the number of iterations we need to make to get a trivial
intersection.
It is clear that we can keep track of the dimension, the number of components
and the degree of the varieties that arise this way.
Hence the
procedure terminates in $N$ steps controlled by those parameters only.

It remains to show our claim about the existence of the polynomials $P_p$.
In what follows, let $\gfr$ be the Lie algebra of $\bbg$
and $\gfr^*$ its dual.

We consider the adjoint representation of $\bbg$
and its dual  on these spaces, respectively.
For large enough $p$, these actions reduce to the action
of $\bbg(\bbf_p)$ on $\gfr(\bbf_p)$ and $\gfr^*(\bbf_p)$;
moreover $\gfr^*(\bbf_p)=\gfr(\bbf_p)^*$.
There is a natural non-degenerate bilinear form on
$\gfr\otimes \gfr^*$, which is the linear extension of the following map
\[
\langle v_1\otimes f_1, v_2\otimes f_2\rangle:= f_1(v_2) f_2(v_1).
\]
(It is worth mentioning that this bilinear map is $\bbg$-invariant.)
It is also well-known that $\gfr\otimes \gfr^*$ is isomorphic to
${\rm End}(\gfr)$ as a $\bbg$-module, where $\bbg$ acts on
${\rm End}(\gfr)$ via the conjugation by the adjoint representation.
We denote both of these representations by $\rho$.  

Clearly, we can assume that any prime divisor $p|q$ is sufficiently large,
hence $\langle\cdot,\cdot\rangle$ induces a non-degenerate bilinear
form on $\gfr(\bbf_p)\otimes \gfr^*(\bbf_p)$. 

For any $x$ and $y$ in $\gfr\otimes \gfr^*$, let $\eta_{x,y}$
be a polynomial in $d^2$ variables with coefficients in
$\bbz[1/q_0]$, which is defined as follows,
\[
\eta_{x,y}(g):=\langle \rho(g)(x), y\rangle,
\] 
We show that for a prime divisor $p$ of $q$,
we can find some $g_0\in S$ and
$x$ and $y$ in $\gfr\otimes \gfr^*$ such that
$\eta_{x,y}(g)=0$ modulo $p$,
for any $h\in H_p^{\sharp}$, and $\eta_{x,y}(g_0)=1$.
Since $P_p:=\eta_{x,y}$ is of degree at most 4, this proves our claim,
and hence the proposition.

Since $\langle\cdot,\cdot \rangle$ is a non-degenerate
bilinear form on $\gfr(\bbf_p)\otimes \gfr(\bbf_p)^*$,
it is enough to show that there is a proper subspace
of $\gfr(\bbf_p)\otimes \gfr(\bbf_p)^*$ which is
$H_p^{\sharp}$-invariant, but not $\bbg(\bbf_p)^+$-invariant. 

If $H_p^+$ is not a normal subgroup of $\bbg(\bbf_p)^+$,
then clearly, by Nori's result (see 3. and 5. on page \pageref{page}
in Section~\ref{sc_escp}), $\hfr(\bbf_p)$ is $H_p^{\sharp}$-invariant,
but not $\bbg(\bbf_p)^+$-invariant.
So $\hfr(\bbf_p)\otimes \gfr^*(\bbf_p)$ has the desired property. 

Now, let us assume that $H_p^+$ is a normal subgroup.
Since it is a proper normal subgroup of $\bbg(\bbf_p)^+$,
by Corollary~\ref{c:NormalSubgroupPerfect},
the projection ${\rm pr}(H_p^+)$ of $H_p^+$ to $\bbl(\bbf_p)^+$
is a proper normal subgroup. On the other hand, we know that
\[
\bbl(\bbf_p)^+\simeq \prod_{i=1}^m \prod_{\pfr\in \pcal(k_i), \pfr|p}
\bbl_i(\bbf_{\pfr})^+,
\]
and $\bbl_i(\bbf_{\pfr})^+$ is quasi-simple, for any $i$ and $\pfr$.
Hence, by Lemma \ref{l:NormalSubgroupsSemisimple},
there is a non-empty subset $I$ of possible indices $(i,\pfr)$ such that
\[
{\rm pr}(H_p^+)\subseteq \prod_I \bbl_i(\bbf_{\pfr})^+
\times \prod_{I^c} Z(\bbl_i(\bbf_{\pfr})^+),
\]
where $I^c$ is the complement of $I$ in the set of all the possible indices.
Thus we have that ${\rm pr}(H_p^+)= \prod_I \bbl_i(\bbf_{\pfr})^+$ 
as $H_p^+$ is generated by $p$-elements.
We also notice that $\gfr=\lfr \oplus \ufr$,
where $\lfr$ is the Lie algebra of $\bbl$ and $\ufr$ is the Lie algebra
of $\bbu$.
Moreover 
\[
\gfr(\bbf_p)=\left[\bigoplus_{i=1}^m\bigoplus_{\pfr\in \pcal(k_i),
\pfr|p} \lfr_i(\bbf_{\pfr})\right] \oplus \ufr(\bbf_p),
\]
where $\lfr_i$ is the Lie algebra of $\bbl_i$.
For any possible indices $(i,\pfr)$, let $F_{i,\pfr}$
be the projection of $F_p$ (defined at the beginning of this proof)
to $\bbl_i(\bbf_p)^+$.
We notice that $\lfr_i(\bbf_{\pfr})$ is an irreducible
$\bbl_i(\bbf_{\pfr})^+$-module.
Therefore, if $\lfr_i(\bbf_{\pfr})$ is not an irreducible
$F_{i,\pfr}$-module, for some $(i,\pfr)$, then one can easily get a
proper subspace of $\gfr(\bbf_p)$ which is invariant under $H_p^{\sharp}$,
but not under $\bbg(\bbf_p)^+$ and finish the argument as before.
So, without loss of generality, we assume that $\lfr_i(\bbf_{\pfr})$
is an irreducible $F_{i,\pfr}$-module.
Since $F_{i,\pfr}$ is a commutative group,
the $\bbf_{\pfr}$-span $E_{i,\pfr}$ of its image in
${\rm End}(\lfr_i(\bbf_\pfr))$ is a field extension of
$\bbf_{\pfr}$ of degree $\dim_{\bbf_\pfr} \lfr_i(\bbf_\pfr)$.
(In particular, by the Double Centralizer Theorem, the centralizer
of $E_{i,\pfr}$ in ${\rm End}(\lfr_i(\bbf_\pfr))$ is itself.)
Now we consider the subspace $W$ of ${\rm End}(\gfr(\bbf_p))$
consisting of elements $x$ with the following properties,
\[
\begin{array}{rll}
1.&x(\ufr(\bbf_p))\subseteq \ufr(\bbf_p)&\\
&&\\
2.&x(\lfr_i(\bbf_{\pfr}))\subseteq \lfr_i(\bbf_{\pfr})\oplus
\ufr(\bbf_p)&\text{if $(i,\pfr)\in I$,}\\
&&\\
3.&\exists\h y\in E_{i,\pfr}, \forall \h l_i\in
\lfr_i(\bbf_{\pfr}):\h x(l_i)-y(l_i)\in\ufr(\bbf_p)&
\text{if $(i,\pfr)\not\in I$}. 
\end{array}
\]
We claim that $W$ is $H_p^{\sharp}$-invariant,
but not $\bbg(\bbf_p)^+$-invariant.
Let $g\in \bbg(\bbf_p)^+$ and $x\in W$;
then ${\rm Ad}(g)^{-1}x{\rm Ad}(g)$ clearly satisfies the first
and second conditions.
It is straightforward to check that ${\rm Ad}(g)^{-1}x\h{\rm Ad}(g)$
satisfies the third condition for all $x$ if and only if 
\begin{equation}\label{e:FieldInvariant}
{\rm Ad}(g_{i,\pfr})^{-1} E_{i,\pfr} \h{\rm Ad}(g_{i,\pfr})= E_{i,\pfr},
\end{equation}
for any $(i,\pfr)\not\in I$, where $g_{i,\pfr}$ is the
projection of $g$ onto $\bbl_i(\bbf_{\pfr})^+$.
So clearly $W$ is $H_p^{\sharp}$-invariant.
On the other hand, any element in
\[
N(E_{i,\pfr})=\{x\in {\rm GL}(\lfr_i(\bbf_{\pfr}))|\h x^{-1}
E_{i,\pfr} x= E_{i,\pfr}\}
\]
induces a Galois element and $E_{i,\pfr}$ is a maximal
subfield of ${\rm End}(\lfr_i(\bbf_{\pfr}))$.
Therefore $[N(E_{i,\pfr}):E_{i,\pfr}^*]\le \dim_{\bbf_{\pfr}}
\lfr_i(\bbf_{\pfr})$. Thus $\bbg(\bbf_p)^+$ cannot leave $W$
invariant if $p$ is large enough, as we wished. 

\end{proof} 

The next proposition is the source of the desired representations claimed in Proposition~\ref{p:descr}.
\begin{prop}\label{p:ProperSubgroup}
Let $\bbg=\bbl\ltimes\bbu$ be a perfect group, where $\bbl$ is a semisimple group and $\bbu$ is a unipotent group. There are finitely many representations $\rho_1,\ldots,\rho_{m'},\psi_1,\ldots,\psi_k$ of $\bbg$ with the following properties:
\begin{enumerate}
\item For any $i$, $\bbu\subseteq \ker(\rho_i)$ and the restriction of $\rho_i$ to $\bbl$ is a non-trivial irreducible representation.
\item For any $i$, there is a sub-representation $\mathbb{W}^{(1)}_i$ of $\mathbb{W}_{\psi_i}$ such that
\begin{enumerate}
\item $\bbu$ acts trivially on $\mathbb{W}^{(1)}_i$ and $\mathbb{W}_{\psi_i}/\mathbb{W}^{(1)}_i$.
\item $\mathbb{W}^{(1)}_i$ is a non-trivial irreducible representation of $\bbl$ that we denote by $\rho_{m'+i}$.
\item $\mathbb{W}_{\psi_i}=\mathbb{W}^{(1)}_i\oplus \mathbb{W}^{(2)}_i$, where $\mathbb{W}^{(2)}_i=\mathbb{W}_{\psi_i}^{\bbl}$ is the set of $\bbl$-invariant vectors.
\end{enumerate}
\item For any proper subgroup $\bbh$ of $\bbg$, one of the following holds:
\begin{enumerate}
\item For some $i$, there is $w\neq 0$ in $\mathbb{W}_{\rho_i}$ such that $\rho_i(\bbh)([w])=[w]$.
\item For some $i$, there is $0\neq w\in \mathbb{W}_{\psi_i}$ such that $\psi_i(\bbh)(w)=w$. Moreover there is no non-zero vector $w'$ in $\mathbb{W}_{\psi_i}$ such that $\psi_i(\bbg)(w')=w'$.
\end{enumerate}
\end{enumerate}
\end{prop}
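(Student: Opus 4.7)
The strategy is to split proper closed subgroups $\bbh \le \bbg$ according to the projection $\pi: \bbg \twoheadrightarrow \bbg/\bbu \simeq \bbl$. In Case~A, $\pi(\bbh)$ is a proper subgroup of $\bbl$ and will be detected by pullbacks $\rho_1, \ldots, \rho_{m'}$ of representations of $\bbl$. In Case~B, $\pi(\bbh) = \bbl$ but $\bbh$ is proper in $\bbg$, which forces $\bbh \cap \bbu$ to be a proper $\bbl$-stable subgroup of $\bbu$; these will be handled by the $\psi_i$.

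Case~A reduces to a \emph{uniform Chevalley theorem} for the semisimple group $\bbl$: a finite list of non-trivial irreducible rational $\bbl$-representations in which every proper closed subgroup of $\bbl$ fixes a line. For a proper $\bbh' \le \bbl$ whose Lie algebra $\hfr$ satisfies $0 \ne \hfr \ne \lfr$, the class $[\hfr]$ in the Grassmannian of $\lfr$ is $\bbh'$-stable, so by the Pl\"ucker embedding $\bbh'$ fixes a line in $\wedge^{\dim \hfr}\lfr$. Subgroups whose Lie algebra is an ideal of $\lfr$ (a product of simple factors) fix lines in the fundamental representations of the complementary simple factors, and finite subgroups can be accommodated using Jordan's theorem and bounded-degree tensor powers of a faithful representation. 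The resulting finite family $\{\rho_i\}_{1 \le i \le m'}$ depends only on $\bbl$.

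For Case~B, since $\bbh \cap \bbu$ is a proper $\bbl$-stable subgroup of $\bbu$, the Nakayama-type lemma for nilpotent groups shows that its image $M_\bbh$ in $\bbu^{\mathrm{ab}} := \bbu/[\bbu,\bbu]$ is a proper $\bbl$-submodule. Complete reducibility in characteristic $0$ gives an isotypic decomposition $\bbu^{\mathrm{ab}} = V_0 \oplus V_1 \oplus \cdots \oplus V_k$ with the $V_i$ ($i \ge 1$) non-trivial irreducible and $V_0$ trivial; let $\pi_i: \bbu^{\mathrm{ab}} \to V_i$ denote the natural projections. For each $1 \le i \le k$, set $\mathbb{W}_{\psi_i} := V_i \oplus T_i$, where $T_i$ is a one-dimensional trivial $\bbl$-module, and define $\psi_i(g)(v + te_i) = \rho_{m'+i}(g)v + te_i$ for $g \in \bbl$ and $\psi_i(u)(v + te_i) = \bigl(v + t\pi_i(\bar u)\bigr) + te_i$ for $u \in \bbu$ (here $e_i$ spans $T_i$, $\bar u$ is the image of $u$ in $\bbu^{\mathrm{ab}}$, and $\rho_{m'+i}$ is the $\bbl$-action on $V_i$). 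The $\bbl$-equivariance of $\pi_i$ makes $\psi_i$ a homomorphism; $\mathbb{W}^{(1)}_i := V_i \oplus 0$ and $\mathbb{W}^{(2)}_i := 0 \oplus T_i = \mathbb{W}_{\psi_i}^{\bbl}$ satisfy the structural requirements, and $\bbg$ fixes no non-zero vector (since $\bbu$-invariance forces $t = 0$ and then $\bbl$-invariance forces $v = 0$). Given $\bbh$ in Case~B, choose $i$ with $M_\bbh \subseteq \ker\pi_i$, write each $h \in \bbh$ uniquely as $g_h u_h$ with $g_h \in \bbl$ and $u_h \in \bbu$, and observe that $c(g) := \pi_i(\bar u_h)$ is a well-defined $1$-cocycle $\bbl \to V_i$ because $\pi_i(M_\bbh) = 0$. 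By \emph{Whitehead's lemma} for semisimple algebraic groups in characteristic $0$ ($H^1_{\mathrm{alg}}(\bbl, V_i) = 0$, a consequence of complete reducibility), $c$ is a coboundary, yielding $v \in V_i$ with $\psi_i(h)(v + e_i) = v + e_i$ for every $h \in \bbh$.

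The main obstacle is the uniform Chevalley theorem in Case~A: while the Grassmannian/Pl\"ucker construction handles generic proper subgroups, finite subgroups and subgroups whose Lie algebra is an ideal of $\lfr$ require separate treatment, and it is here that one must prove a finite list of $\rho_i$ depending only on $\bbl$ suffices. Once this list is fixed, the construction of the $\psi_i$ and the cohomological argument in Case~B go through cleanly.
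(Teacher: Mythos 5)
Your Case B contains the decisive gap. Each of your $\psi_i$ has a one\nobreakdash-dimensional trivial part $T_i$, and the $\bbu$\nobreakdash-action is wired to a \emph{fixed} projection $\pi_i$, so $\bbh$ acquires a fixed vector with nonzero $T_i$\nobreakdash-component only when $\pi_i(M_\bbh)=0$. This fails as soon as a nontrivial irreducible $W$ occurs in $\bbu/[\bbu,\bbu]$ with multiplicity at least two: take $\bbg=\bbl\ltimes(W\oplus W)$ (perfect, since $W$ has no trivial constituent) and $\bbh=\bbl\ltimes\Delta W$ the diagonal. Then $\bbh$ is proper, surjects onto $\bbl$ (so no $\rho_i$ can see it), and $M_\bbh=\Delta W$ is killed by neither projection of whatever decomposition you fixed in advance. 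Nor can you repair this by adding finitely many representations of your shape: for an $\bbl$-equivariant surjection $q:\bbu/[\bbu,\bbu]\to W$ the analogous $\psi_q$ detects exactly those $\bbh$ with $q(M_\bbh)=0$, by Schur two such representations are isomorphic only when the defining maps are proportional, and the subgroups $\bbl\ltimes\ker q$ realize a projective space worth of non-proportional $q$'s. This is precisely what the paper's construction is designed to handle: in its second case the representation is on $\Aff_{\bbl}(\widetilde{\W},\W)\simeq \W\oplus\mathrm{Hom}_{\bbl}(\widetilde{\W},\W)$, so the trivial part is the whole multiplicity space and the fixed vector itself records which equivariant map to use; in its third case (projection to $\bbu$ generating $\bbu$, which your single Case B absorbs) it uses subquotients of $\wedge^{\dim\hfr}\Ad$ with $\W^{(2)}=\W^{\bbl}/\W^{\bbg}$ again possibly of dimension greater than one. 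That is exactly why item 2(c) allows $\dim\W^{(2)}_i>1$ and why Proposition~\ref{pr_descr} must pass to affine families parametrized by $\V_i$. Your cocycle-plus-Whitehead argument is attractive and, with the target enlarged to the paper's $\mathrm{Hom}$-space module, it would give a cleaner substitute for the paper's filtration argument in the third case; but with one-dimensional $T_i$ statement 3 is simply false for your list. (A harmless slip in the same place: $\bbh\cap\bbu$ need not be $\bbl$-stable; only its image $M_\bbh$ in $\bbu/[\bbu,\bbu]$ is, which is all you use.)

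Case A is incomplete by your own account, and the sketched repair does not close it. The line $[\wedge^{\dim\hfr}\hfr]$ fails to be $\bbl$-invariant (and hence projects into a nontrivial irreducible constituent) exactly when $\hfr$ is not an ideal of $\lfr$; if $\hfr$ is a proper ideal, the projection of $\bbh$ to the complementary simple factors is zero-dimensional, so the entire residual difficulty is the case of finite images. For these, Jordan's theorem plus bounded exterior or tensor powers of a faithful representation only produces an $F$-eigenline that may well lie in the $\bbl$-invariants: already for $F\le\SL_2$ acting irreducibly on the standard representation the recipe outputs the line $\wedge^2$ of the standard module, which is $\bbl$-trivial, so condition 3(a) is not reached. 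The paper handles this case with the same exterior-power trick for non-normal $\bbh$ and with the adjoint representations of the complementary simple factors for proper normal $\bbh$, without invoking Jordan's theorem; whichever route you prefer, the configurations with $\hfr$ an ideal but $\bbh$ not normal require an actual argument rather than the black boxes you cite, and until both this and the multiplicity issue above are supplied the proposal does not prove the proposition.
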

\begin{proof}
We divide the argument into several cases. First we consider the case, where the projection of $\bbh$ onto $\bbl$ is not surjective. In the second case we will assume that the group generated by its projection to $\bbu$ is a proper subgroup of $\bbu$, and the third case finishes the argument. In each step, we introduce only finitely many representations which satisfy the desired properties.
 
In the first case, without loss of generality, we can assume that $\bbg=\bbl$ is a semisimple group and $\bbh$ is a proper subgroup. If $\bbh$ is not a normal subgroup, then, in the representation $\wedge^{\dim \hfr} \Ad$, $[w]=\wedge^{\dim \hfr} \hfr\in W$ is $\bbh$-invariant, but it is not $\bbl$-invariant. Since $\bbl$ is semisimple, we can take a decomposition of $W$ into irreducible components. Since $[w]$ is not $\bbl$-invariant, its projection to one of the non-trivial irreducible components is not zero, and so this representation satisfies the condition 3(a).

Notice that this process introduced only finitely many representations which satisfy the properties of $\rho_i$.

If $\bbh$ is a proper normal subgroup and $\bbl=\prod_{i=1}^{m_0}\bbl_i$, where $\bbl_i$ is an absolutely almost simple group, then there is a proper subset $I$ of indices such that
\[
\bbh\subseteq \prod_{i\in I} \bbl_i\times Z(\prod_{i\not\in I} \bbl_i).
\]
Consider the action of $\bbl$ on the Lie algebra $\lfr_i$ of $\bbl_i$ via the adjoint representation of $\bbl_i$, for any $i\not\in I$. Clearly any line in this representation is fixed by $\bbh$ and not by $\bbg$, which finishes the proof of the first case.

We notice that if $\bbh$ is a proper subgroup of $\bbg$, then $\bbh[\bbu,\bbu]$ is also a proper subgroup of $\bbg$. So, without loss of generality, we assume that $\bbu$ is a vector group, i.e. isomorphic to $\bbg_a^{k_0}$, for some $k_0$.

Now we assume that the projection of $\bbh$ onto $\bbl$ is surjective, but the group generated by its projection to $\bbu$ is a proper subgroup. So, without loss of generality, we assume that $\bbh=\bbl\ltimes \bbu'$, where $\bbu'$ is a proper subgroup of $\bbu$. We consider $\widetilde{\mathbb{W}}=\bbu$ as an $\bbl$-space (and $\widetilde{\mathbb{W}}'=\bbu'$ is a proper $\bbl$-subspace), and take its decomposition into homogeneous subspaces, i.e. 
\[
\widetilde{\mathbb{W}}=\widetilde{\mathbb{W}}_1\oplus \widetilde{\mathbb{W}}_2\oplus \cdots \oplus\widetilde{\mathbb{W}}_n,
\]
where ${\rm Hom}_{\bbl}(\widetilde{\mathbb{W}}_i,\widetilde{\mathbb{W}}_j)=0$ if $i\neq j$, and $\widetilde{\mathbb{W}}_i\simeq \mathbb{W}_i^{m_i}$, where $\mathbb{W}_i$ is an irreducible $\bbl$-space.
Here ${\rm Hom}_{\bbl}(\widetilde{\mathbb{W}}_i,\widetilde{\mathbb{W}}_j)$ denotes the space
of  $\bbl$-equivariant linear maps from $\widetilde{\mathbb{W}}_i$  to $\widetilde{\mathbb{W}}_j$.
Since $\mathbb{W}'$ is a proper $\bbl$-subspace, its projection to at least one of the homogeneous spaces is proper. Therefore, without loss of generality, we assume that $\widetilde{\mathbb{W}}\simeq \mathbb{W}^m$, where $\mathbb{W}$ is an irreducible $\bbl$-space. We call a map $f$ from $\widetilde{\mathbb{W}}$ to $\mathbb{W}$ an affine map  if 
\[
f(t_1 \tilde{w}_1+t_2 \tilde{w}_2)=  t_1 f(\tilde{w}_1)+t_2f(\tilde{w}_2),
\]
for any $\tilde{w}_1$ and $\tilde{w}_2$ in $\widetilde{\mathbb{W}}$ and $t_1+t_2=1$. Let $\Aff(\widetilde{\mathbb{W}},\mathbb{W})$ be the set of all affine maps. If $f$ is an affine map, then there are $f_{{\rm lin}}\in {\rm Hom}(\widetilde{\mathbb{W}},\mathbb{W})$ and $w\in \mathbb{W}$ such that 
\[
f(x)=f_{{\rm lin}}(x)+w,
\]
for any $x\in\widetilde{\mathbb{W}}$; $f_{\rm lin}$ is called the linear part of $f$. Let $\Aff_{\bbl}(\widetilde{\mathbb{W}},\mathbb{W})$ be the set of affine maps whose linear part is in ${\rm Hom}_{\bbl}(\widetilde{\mathbb{W}},\mathbb{W})$. Therefore 
\[
\Aff_{\bbl}(\widetilde{\mathbb{W}},\mathbb{W})= {\rm Con}(\widetilde{\mathbb{W}},\mathbb{W})\oplus{\rm Hom}_{\bbl}(\widetilde{\mathbb{W}},\mathbb{W}),
\]
where 
${\rm Con}(\widetilde{\mathbb{W}},\mathbb{W})$ is the space of the constant functions. We claim that the representation $\psi$ of  $\bbg=\bbl\ltimes \widetilde{\mathbb{W}}$ on $\Aff_{\bbl}(\widetilde{\mathbb{W}},\mathbb{W})$, defined by
\[
\psi(l,\tilde{w})(f)(x):=l\cdot f(l^{-1}\cdot x-\tilde{w}),
\]
satisfies our desired conditions. Alternatively, we can say that if $f(x)=f_{{\rm lin}}(x)+w,$ then
\[
\psi(l,\tilde{w})(f)(x)=f_{{\rm lin}}(x)+l\cdot w -l\cdot f_{{\rm lin}}(\tilde{w}).
\]
Both of the subspaces ${\rm Con}(\widetilde{\mathbb{W}},\mathbb{W})$ and ${\rm Hom}_{\bbl}(\widetilde{\mathbb{W}},\mathbb{W})$ are $\bbl$-invariant. Moreover, $\bbl$ acts trivially on ${\rm Hom}_{\bbl}(\widetilde{\mathbb{W}},\mathbb{W})$ and ${\rm Con}(\widetilde{\mathbb{W}},\mathbb{W})$ is isomorphic to $\mathbb{W}$ as an $\bbl$-space, and, in particular, it is irreducible. It is also clear that $\widetilde{\mathbb{W}}$ acts trivially on ${\rm Con}(\widetilde{\mathbb{W}},\mathbb{W})$ and $\Aff_{\bbl}(\widetilde{\mathbb{W}},\mathbb{W})/{\rm Con}(\widetilde{\mathbb{W}},\mathbb{W})$. 

Now since $\widetilde{\mathbb{W}}'$ is a proper $\bbl$-subspace of $\widetilde{\mathbb{W}}$, there is $0\neq f\in{\rm Hom}_{\bbl}(\widetilde{\mathbb{W}},\mathbb{W})$ such that $\widetilde{\mathbb{W}}'\subseteq \ker(f)$. Thus, by the definition of $\psi$, $\psi(l,\tilde{w}')(f)=f,$
 for any $(l,\tilde{w}')\in \bbl\ltimes\widetilde{\mathbb{W}}'$. Now assume that, for some $w\in \mathbb{W}$ and $f_{{\rm lin}}\in {\rm Hom}_{\bbl}(\widetilde{\mathbb{W}},\mathbb{W})$, $f_{w}(x)=f_{{\rm lin}} (x)+w$ is $\bbg$-invariant. Then $f_w$ is a constant function as it is invariant under any translation, i.e. $f_{{\rm lin}}=0$. Hence $w$ is $\bbl$-invariant and so $w$ is also $0$. Therefore this representation satisfies all the desired properties.

Now, we assume that the projection of $\bbh$ to $\bbl$ is surjective and the group generated by the projection of $\bbh$ to $\bbu$ generates $\bbu$.
By Corollary~\ref{c:NormalSubgroupPerfect}, $\bbh$ is not a normal subgroup of $\bbg$.
(In fact, Corollary \ref{c:NormalSubgroupPerfect} was stated for finite groups,
however, the proof works verbatim for algebraic groups, as well.)
Hence, again,  in the representation $\tilde{\rho}=\wedge^{\dim \hfr} \Ad$, $[w_0]=\wedge^{\dim \hfr} \hfr\in \widetilde{\mathbb{W}}$ is $\bbh$-invariant, but it is not $\bbg$-invariant.  We claim that $\bbh$ does not have any character, and therefore $w_0$ is fixed by $\bbh$. Let $\chi$ be a character of $\bbh$; then $\bbh\cap\bbu\subseteq \ker(\chi)$ as $\bbu\cap\bbh$ is a unipotent group. So $\chi$ factors through a character of $\bbh/(\bbh\cap\bbu)\simeq \bbl$. Since $\bbl$ is semisimple, $\chi$ is trivial. 

Since $\bbu$ is a unipotent and normal subgroup of $\bbg$, 
\[
\widetilde{\mathbb{W}} \supseteq (\tilde{\rho}(\bbu)-1)(\widetilde{\mathbb{W}}) \supseteq \cdots \supseteq (\tilde{\rho}(\bbu)-1)^{n_0+1}(\widetilde{\mathbb{W}})=0,
\]
and for any $i$, $(\tilde{\rho}(\bbu)-1)^i(\widetilde{\mathbb{W}})$ is a $\bbg$-space. Let $k+1$ be the smallest possible integer such that the projection of $w_0$ to $\widetilde{\mathbb{W}}/\langle(\tilde{\rho}(\bbu)-1)^{k+1}\cdot \widetilde{\mathbb{W}}\rangle$ is not $\bbg$-invariant. Notice that $k$ is definitely positive as $\bbg=\bbh\cdot\bbu$, $w_0$ is $\bbh$-invariant and $\bbu$ acts trivially on $\widetilde{\mathbb{W}}/\langle(\tilde{\rho}(\bbu)-1)\cdot \widetilde{\mathbb{W}}\rangle$. After going to the quotient space, we can and will assume that $\langle(\tilde{\rho}(\bbu)-1)^{k+1}\cdot \widetilde{\mathbb{W}}\rangle=0$, i.e. $\bbu$ acts trivially on $\langle(\tilde{\rho}(\bbu)-1)^{k}\cdot \widetilde{\mathbb{W}}\rangle$.

Let $\widehat{\mathbb{W}}$ be the subspace of $\widetilde{\mathbb{W}}$ such that 
\[
(\widetilde{\mathbb{W}}/\langle(\tilde{\rho}(\bbu)-1)^{k}\cdot \widetilde{\mathbb{W}}\rangle)^{\bbg}=\widehat{\mathbb{W}}/\langle(\tilde{\rho}(\bbu)-1)^{k}\cdot \widetilde{\mathbb{W}}\rangle,
\]
i.e. $\widehat{\mathbb{W}}=\{w\in \widetilde{\mathbb{W}}|\h \forall\h g\in\bbg,\h \tilde{\rho}(g)(w)-w\in \langle(\tilde{\rho}(\bbu)-1)^{k}\cdot \widetilde{\mathbb{W}}\rangle\}$. By the the above argument, $w_0\in \widehat{\mathbb{W}}$. 

Now take a decomposition $\mathbb{W}_1\oplus\cdots\oplus \mathbb{W}_{n}$ of $\langle(\tilde{\rho}(\bbu)-1)^{k}\cdot \widetilde{\mathbb{W}}\rangle$ into irreducible $\bbl$-spaces.
Since $\bbl$ is a semisimple group and it acts trivially on $\widehat{\mathbb{W}}/\langle(\tilde{\rho}(\bbu)-1)^{k}\cdot \widetilde{\mathbb{W}}\rangle$, there is a subspace $\mathbb{W}_0$ such that $\widehat{\mathbb{W}}=\mathbb{W}_0\oplus\mathbb{W}_1\oplus\cdots\oplus\mathbb{W}_n$ and $\bbl$ acts trivially on $\mathbb{W}_0$.
We claim that the projection of $w_0$ to one of the non-trivial irreducible components is non-zero. Otherwise, $w_0$ is $\bbl$-invariant and so it is also invariant by the image of the projection of $\bbh$ onto $\bbu$. By our assumptions on $\bbh$, we conclude that $w_0$ is $\bbg$-invariant, which is a contradiction.
 So, for some $i$,
we have that the projection of $w_0$ to 
\[
\mathbb{W}=\widehat{\mathbb{W}}/\oplus_{j\neq i} \mathbb{W}_j
\]
is not $\bbg$-invariant.
Now let $\mathbb{W}^{(1)}=\oplus_j \mathbb{W}_j/\oplus_{j\neq i} \mathbb{W}_j$ and $\mathbb{W}^{(2)}= \mathbb{W}^{\bbl}/\mathbb{W}^{\bbg}$. Clearly 
\[
\mathbb{W}/\mathbb{W}^{\bbg}=\mathbb{W}^{(1)}\oplus \mathbb{W}^{(2)},
\]
$\bbu$ acts trivially on $\mathbb{W}^{(1)}$ and $\mathbb{W}/\mathbb{W}^{(1)}$, and $\mathbb{W}^{(1)}$ is an irreducible $\bbl$-space; moreover $\bbh$ fixes $\overline{w}_0=w_1+w_2$, where $\overline{w}_0$ is the image of $w_0$ in $\mathbb{W}/\mathbb{W}^{\bbg}$, $w_1\in \mathbb{W}^{(1)}$ and $w_2\in \mathbb{W}^{(2)}$.  Moreover, if $\overline{w}'\in (\mathbb{W}/\mathbb{W}^{\bbg})^{\bbg}$, then $g\cdot w'-w'\in \mathbb{W}^{\bbg}\subseteq \mathbb{W}^{\bbl}=\mathbb{W}^{(2)}$. On the other hand, $\bbg$ acts trivially on $\mathbb{W}/\mathbb{W}^{(1)}$, and so $g\cdot w'-w'\in \mathbb{W}^{(1)}$. Therefore overall, we have that $w'\in \mathbb{W}^{\bbg}$, i.e. $\overline{w}'=0$ as we wished.
\end{proof}
\begin{rmk}\label{r:ProperSubgroups}
From the proof of Proposition~\ref{p:ProperSubgroup}, it is clear that, if $\bbg$ is defined over $\bbq$, then there is a number field $\kappa$ such that all the desired representations $\rho_i$ and $\psi_i$ are defined over $\kappa$. 
\end{rmk}
\begin{lem}\label{l:Unbounded}
Let $\Gamma$ be a Zariski-dense subgroup of  $\bbg\subseteq \mathbb{GL}_{d}$, a Zariski-connected $\bbq$-group, such that $\Gamma\subseteq \bbg(\bbz_{\scal})$. Let $\rho$ be a non-trivial representation of $\bbg$ which is defined over a number field $\kappa$. Then there are $p\in \scal\cup\{\infty\}$ and a place $\pfr$ of $\kappa$  such that, 
\begin{itemize}
\item[1.] $\pfr$ divides $p$, i.e. $\bbq_p$ is a subfield of $\kappa_{\pfr}$.
\item[2.] $\rho(\Gamma)$ is an unbounded subset of $\rho(\bbg(\kappa_{\pfr}))$.
\end{itemize}
\end{lem}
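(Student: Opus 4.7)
The strategy is proof by contradiction, combining Zariski-density with Northcott's finiteness theorem for algebraic numbers of bounded height. Assume, toward a contradiction, that $\rho(\Gamma)$ is bounded in $\rho(\bbg(\kappa_{\pfr}))$ at every place $\pfr$ of $\kappa$ lying above any element of $\scal\cup\{\infty\}$. The goal is to derive that $\rho(\Gamma)$ must be finite, contradicting an infinity that follows from Zariski-density.

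First, I note that $\rho(\Gamma)$ is infinite. Since $\Gamma$ is Zariski-dense in $\bbg$, its image $\rho(\Gamma)$ is Zariski-dense in $\rho(\bbg)$. Because $\bbg$ is Zariski-connected and $\rho$ is non-trivial, $\rho(\bbg)$ is a non-trivial connected algebraic group, hence an infinite set; so $\rho(\Gamma)$ is infinite.

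Next, I establish integrality of $\rho(\gamma)$ at places of $\kappa$ lying outside those above $\scal\cup\{\infty\}$. Because $\rho:\bbg\to \mathbb{GL}_N$ is a morphism of finite-type $\kappa$-schemes, it spreads out to a morphism of $\mathcal{O}_{\kappa,T}$-schemes for some finite set $T$ of places of $\kappa$. By enlarging $\scal$ if necessary to absorb the residue characteristics of the places in $T$ not already above $\scal$ --- which preserves the hypothesis $\Gamma\subseteq\bbg(\bbz_\scal)$ --- we may arrange that $\rho$ extends integrally at every place $\pfr$ of $\kappa$ not above $\scal\cup\{\infty\}$. For such $\pfr$ and any $\gamma\in\Gamma\subseteq\bbg(\bbz_\scal)\subseteq\bbg(\mathcal{O}_{\kappa,\pfr})$, it follows that $\rho(\gamma)\in\mathbb{GL}_N(\mathcal{O}_{\kappa,\pfr})$, so every entry of $\rho(\gamma)$ has $\pfr$-adic absolute value at most $1$.

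Combining this with the contradiction hypothesis, every entry $\alpha$ of every matrix $\rho(\gamma)$ with $\gamma\in\Gamma$ has $|\alpha|_{\pfr}$ uniformly bounded at every place $\pfr$ of $\kappa$. Hence the set of such entries has uniformly bounded absolute height; by Northcott's theorem, it is a finite subset of $\kappa$. Thus $\rho(\Gamma)$ is finite, which contradicts the infinity established above, and the lemma follows. The only subtle point is the spreading-out step, which is where the finitely many ``bad'' places of the morphism $\rho$ must be incorporated into $\scal$; once integrality away from $\scal\cup\{\infty\}$ is secured, the remainder is a routine Northcott-type argument.
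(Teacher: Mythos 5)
Your overall strategy is the same as the paper's: assume boundedness of $\rho(\Gamma)$ at the places over $\scal\cup\{\infty\}$, upgrade this to boundedness at every place of $\kappa$, conclude by a Northcott/adelic argument that $\rho(\Gamma)$ is finite, and contradict Zariski-density of $\rho(\Gamma)$ in the non-trivial Zariski-connected group $\rho(\bbg)$. The paper compresses this into three lines; your write-up just makes the finiteness step explicit, which is fine.

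There is, however, one unjustified step. The contradiction hypothesis gives boundedness of $\rho(\Gamma)$ only at places lying over the \emph{original} set $\scal\cup\{\infty\}$; since $\scal$ appears in the conclusion of the lemma, you are not free to enlarge it. After you enlarge $\scal$ to absorb the bad places $T$ of the spreading-out, your integrality argument covers the places outside the \emph{enlarged} set, but the finitely many places of $T$ lying over primes not in the original $\scal$ are covered by neither the contradiction hypothesis nor the integrality, so the sentence ``combining this with the contradiction hypothesis, every entry is bounded at every place'' is not justified as written (and if instead you regard the lemma as being proved for the enlarged $\scal$, you have proved a weaker statement). The gap closes in one line: at any finite place $\pfr$ of $\kappa$ whose residue characteristic is not in $\scal$, one has $\Gamma\subseteq\GL_d(\bbz_\scal)\subseteq\GL_d(\mathcal{O}_{\kappa_\pfr})$ (entries of $\gamma$ and of $\gamma^{-1}$ are $\pfr$-integral), so $\Gamma$ lies in the compact set $\bbg(\kappa_\pfr)\cap\GL_d(\mathcal{O}_{\kappa_\pfr})$, and since $\rho$ is continuous for the $\pfr$-adic topology, $\rho(\Gamma)$ is automatically bounded at $\pfr$. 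This covers in particular the bad places of the spreading-out without touching $\scal$ (and in fact makes the spreading-out step unnecessary); with that substitution your argument is complete and coincides with the paper's.
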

\begin{proof}
If this is not the case, then $\rho(\Gamma)$ is bounded in $\rho(\bbg(\kappa_{\pfr}))$, for any $\pfr\in\pcal(\kappa)$. Hence $\rho(\Gamma)$ is a finite group. On the other hand, $\rho(\Gamma)$ is Zariski-dense in $\rho(\bbg)$. Moreover we know that $\rho(\bbg)$ is Zariski-connected. Thus $\rho$ is trivial which is a contradiction.
\end{proof}
 \begin{proof}[Proof of Proposition~\ref{p:descr}] This is a direct consequence of Proposition~\ref{p:LiftingUp}, Proposition~\ref{p:ProperSubgroup}, Remark~\ref{r:ProperSubgroups} and Lemma~\ref{l:Unbounded}.
 \end{proof}

\subsection{A ping-pong argument}
\label{sc_pingpong}

We recall the notation from Proposition \ref{pr_descr}.
$\G$ is a Zariski-connected perfect $\Q$-group, and $\Ga$ is
a Zariski dense subgroup of $\G(\Q)$.
We are given finitely many irreducible representations $\rho_i$ for $1\le i\le m$
each defined over a local field $\KK_i$, and $\rho_i(\Ga)$ is unbounded.
Furthermore, for each $i$ we are given an affine space $\V_i$ and a morphism
$\f_i:\G\times\V_i\to\Aff(\W_{\rho_i})$.
Often we think about $\f_i$ as homomorphisms from $\G$ to $\Aff(\W_{\rho_i})$
parametrized by the elements of $\V_i$.
Then we also write $\f_i(\cdot,v)=\f_{i,v}(\cdot)$.
We also recall that for any $i$ and $0\neq v\in\V_i(\KK_i)$,
$\f_{i,v}:\G(\KK_i)\to\Aff(W_{\rho_i})$ is a homomorphism
whose linear part is $\rho_i$, and
no point of $W_{\rho_i}$ is fixed under the action of $\G(\KK_i)$.
Our aim in this section is to prove that if we modify our generating set
in an appropriate way, then only a small
fraction of our group satisfy a condition like 3.(a) or 3.(b)
in Proposition
\ref{pr_descr}.

Let $A$ be a subset of a group that generates freely
a subgroup.
A reduced word over $A$ is a product of
the form $g_1\cdots g_l$, where $g_i\in A$ or $g_i^{-1}\in A$
and $g_ig_{i+1}\neq 1$ for any i.
We write $B_l(A)$ (or simply $B_l$) for the set of reduced
words over $A$ of length $l$.
\begin{prp}
\label{pr_pingpong}
Let notation be as above.
Then there is a set
$A\subset\Ga$ generating freely a subgroup $\Ga'$, which satisfies
the following properties.
Write $S'=A\cup\widetilde A$.
Then for any $i$ and for any vector $w\in W_{\rho_i}$, we have
\[
|\{g\in B_l|\rho_i(g)[w]=[w]\}|<|B_l|^{1-c},
\]
furthermore, for any $i$, $v\in\V_{i}(\KK_i)$ with $\|v\|=1$ and for $w\in W_{\rho_i}$ we have
\[
|\{g\in B_l|\f_{i,v}(g)w=w\}|<|B_l|^{1-c},
\]
where $c$ is a constant depending on $S$ and on the representations.
\end{prp}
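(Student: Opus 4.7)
My plan is to carry out a Tits-style ping-pong construction adapted simultaneously to the linear representations $\rho_i$ and the affine representations $\f_{i,v}$, and then obtain the bound $|B_l|^{1-c}$ by tracking orbits of a fixed line or point through the tree of reduced words in the free group $\Ga'$.

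The first step is to produce, for each $i$, elements of $\Ga$ whose $\rho_i$-image has contracting/expanding dynamics on $\P(W_{\rho_i})$. Since $\rho_i(\Ga)$ is unbounded in $\GL(W_{\rho_i})$ over the local field $\KK_i$, the Cartan ($KAK$) decomposition produces a sequence $\gamma_n \in \Ga$ with $\|\rho_i(\gamma_n)\|_{\KK_i} \to \infty$, and the high powers of $\gamma_n$ concentrate mass on an attracting subspace while repelling mass from a complementary subspace. As $\rho_i$ is only assumed irreducible rather than proximal, the attracting subspace need not be a line; I would pass to a suitable exterior power of $\rho_i$ (which remains $\Ga$-irreducible thanks to Zariski density) to extract a genuinely proximal quotient of the dynamics, or equivalently work with extremal weights in the Zariski closure of $\rho_i(\Ga)$. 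This is precisely the place where the outline advertises a self-contained argument avoiding the Goldsheid--Margulis proximality theorem.

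Next I would assemble a finite set $A \subset \Ga$ of ping-pong players serving all $i$ and, crucially, uniformly in $v \in \V_i(\KK_i)$ with $\|v\| = 1$. Using Zariski density of $\Ga$, I would conjugate the candidates to spread their attracting/repelling loci into general position relative to one another and to all $(\rho_i, \f_{i,v})$-data; taking sufficiently high powers then yields disjoint open sets $U_a^{\pm}$ in $\P(W_{\rho_i})$ (and corresponding regions in $W_{\rho_i}$ for the affine action) such that each $a \in A$ carries the complement of $U_{a^{-1}}^-$ into $U_a^+$. For the affine actions, the unique fixed point of $\f_{i,v}(a)$, when $\rho_i(a) - I$ is invertible, is $-(\rho_i(a) - I)^{-1} t(a,v)$, which depends algebraically on $v$; compactness of the unit sphere $\{\|v\| = 1\} \subset \V_i(\KK_i)$ in the local field together with continuity yields a ping-pong setup uniform in $v$. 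Tits' lemma then ensures $A$ generates freely, so $S' = A \cup \wt A$ freely generates a subgroup $\Ga'$ of $\Ga$.

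Finally, with the ping-pong in place, fix either a line $[w] \in \P(W_{\rho_i})$ or a point $w \in W_{\rho_i}$. Building a reduced word $g = s_l \cdots s_1$ letter by letter, at each step all but a bounded number of the $|S'|-1$ admissible next letters send the current image outside every region containing $[w]$ (respectively $w$), so $\rho_i(g)[w] \ne [w]$ (respectively $\f_{i,v}(g) w \ne w$) unless we follow one of these exceptional branches. A Markov-type counting on the Cayley tree of $\Ga'$ then produces a bound of the form $(1 - c')^l |B_l|$, which is absorbed into $|B_l|^{1-c}$ for a suitable $c > 0$ depending only on $S$ and the representations; the affine case proceeds identically since $\f_{i,v}(g)w = w$ reduces to the linear constraint $(\rho_i(g) - I)w = -t(g,v)$, whose solution set, for generic ping-pong $g$, is a single point that moves with $g$. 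The main obstacle I foresee is the first step: extracting a working ping-pong structure from mere irreducibility plus unboundedness, simultaneously for finitely many representations and for affine families parametrized by the unit sphere in $\V_i(\KK_i)$, without recourse to Goldsheid--Margulis-type proximality results; once this simultaneous ping-pong setup is secured, the tree-counting finale is essentially standard.
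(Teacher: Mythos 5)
Your outline has the right overall shape (build a ping-pong table for all $\rho_i$ and all $\f_{i,v}$ at once, then count reduced words on the tree), but the two steps you yourself flag as delicate are genuine gaps, not technicalities. First, the reduction to proximality does not work as stated: for an irreducible $\rho_i$ the exterior power $\wedge^{r}\rho_i$ is in general \emph{not} irreducible (Zariski density of $\Ga$ only converts $\Ga$-irreducibility into $\G$-irreducibility of the same module, it does not make $\wedge^r$ of an irreducible module irreducible), and in any case the fixed lines and affine fixed points you must count live in $W_{\rho_i}$ itself, not in an auxiliary proximal representation. What is actually needed is a ping-pong on $\P(W_{\rho_i})$ whose attractors are neighborhoods of $(r_i-1)$-dimensional projective subspaces, where $r_i$ is the minimal rank of a quasi-projective limit of $\rho_i(\Ga)$; making this work requires the dichotomy that for a minimal-rank limit $b_1$ and any limit $b_2$ either $V^+(b_1)\cap V^-(b_2)=\{0\}$ or $V^+(b_1)\subseteq V^-(b_2)$ (a consequence of Lemma \ref{lm_AMS}), a simultaneous limit for all the $\rho_i$ and their contragredients (Lemma \ref{lm_contractive}), and transversality of the translated subspaces $\rho_i(g_k)V^{\pm}$, obtained from irreducibility via the nonempty Zariski-open conditions of Lemmas \ref{lm_Zopen1} and \ref{lm_Zopen2}. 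This machinery is exactly the content of Proposition \ref{pr_generators}, and your proposal does not supply a substitute for it; "conjugating into general position and taking high powers" presupposes precisely the structure that has to be proved.

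Second, the affine count is not "identical to the linear case". Your formula for the fixed point assumes $\rho_i(a)-I$ is invertible, which need not hold, and more importantly the real obstruction is a vector $w$ of bounded norm: there the expansion $\|\f_{i,v}(g)w\|>\|w\|$ (property 5 of Proposition \ref{pr_generators}) gives no information, so one cannot argue letter by letter that the orbit leaves the fixed-point locus. The paper handles this with a separate compactness argument: using that for every unit $v$ and every $w$ at least two generators move $w$ (property 6, which itself rests on part 2 of Lemma \ref{lm_Zopen2} and on the hypothesis that $\G(\KK_i)$ fixes no point under $\f_{i,v}$ for $v\neq0$), one shows that the sets $E_\o$ of pairs $(v,x)$ kept inside the ball of radius $R$ by all words of length $k$ have empty intersection for some finite $k$, uniformly over the unit sphere of $\V_i(\KK_i)$; this yields a block length $k$ such that in every block of $k$ letters at least one continuation strictly increases the norm of the image, and the bound $\bigl(((|S'|-1)^k-1)/(|S'|-1)^k\bigr)^{l/k-2}|B_l|$ follows. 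Without this escape-from-the-ball step your counting argument does not close, so as written the proposal does not yield the second inequality of the proposition.
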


The rest of this section is devoted to the proof of this proposition and
in Section \ref{sc_proof4} we combine it with Proposition \ref{pr_descr} to
get Proposition \ref{pr_escp}.
First we construct a desirable set of generators.

\begin{prp}
\label{pr_generators}
Let notation be as above.
Then there is a symmetric set $S'\subset\Ga$, and a number
$R>0$ and for each $g\in S'$
and $1\le i\le m$ there are two sets $K_g^{(i)}\subset
U_g^{(i)}\subset W_{\rho_i}$ such that the following
hold.
\begin{enumerate}
\item
For each $i$ and $g$ we have $\rho_i(g)(U_g^{(i)})\subset
K_g^{(i)}$.
\item
For each $i$, any vector $w\neq0\in W_{\rho_i}$ is
contained in $U_g^{(i)}$
for at least two elements $g\in S'$.
\item
For each $i$ and for any elements $g_1,g_2\in S'$ we have
$K_{g_1}^{(i)}\subset U_{g_2}^{(i)}$ unless $g_1g_2=1$.
\item 
For each $i$ and for any elements $g_1,g_2\in S'$ we have
$K_{g_1}^{(i)}\cap K_{g_2}^{(i)}=\emptyset$ unless $g_1=g_2$.
\item
For each $i$, $v\in\V_i(\KK_i)$ with $\|v\|=1$ and $g\in S'$ we have that if
$w\in U_g^{(i)}$ and $\|w\|>R$,
then $\f_{i,v}(g)w\in K_g^{(i)}$ and $\|\f_{i,v}(g)w\|>\|w\|$.
\item
For each $i$, $v\in\V_i(\KK_i)$ with $\|v\|=1$ and $w\in W_{\rho_i}$,
there are at least two elements
$g_1,g_2\in S'$
such that $\f_{i,v}(g_1)w\neq w$ and $\f_{i,v}(g_2)w\neq w$.
\end{enumerate}
\end{prp}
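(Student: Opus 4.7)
The plan is a Tits-style ping-pong construction, carried out simultaneously in all $m$ representations $\rho_i$ and extended to handle the accompanying affine morphisms $\f_i$.

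First, I would fix a single representation $\rho_i$ and construct a ``proximal-like'' element. Since $\rho_i(\Ga)$ is unbounded in $\GL(W_{\rho_i})$ over the local field $\KK_i$, by applying the Cartan $KAK$ decomposition to iterates $\rho_i(\gamma^n)$ of a suitable $\gamma \in \Ga$ and passing to a subsequence, one obtains an element whose action admits an attracting subspace $V_g^+$ and a complementary repelling subspace $V_g^-$, with contraction/expansion ratio as large as desired. Defining $K_g^{(i)}$ as a narrow cone-neighborhood of $V_g^+$ (intersected with a half-space of sufficiently large norm) and $U_g^{(i)}$ as the complement of a small neighborhood of $V_g^-$, the contracting condition (1), the disjointness condition (4), and the norm-growth part of (5) become tractable for this single element.

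Second, the construction must be simultaneous in all the $\rho_i$'s, and must provide enough elements to meet the covering condition (2) and the nesting condition (3). I would proceed inductively: starting from a pair of good elements for $\rho_1$, enlarge the set by multiplying (or conjugating) with elements good for the remaining $\rho_i$'s, then pass to high powers. Zariski density of $\Ga$ in the connected group $\G$, combined with irreducibility of each $\rho_i$, guarantees that generic conjugates produce attracting and repelling subspaces in general position across all representations simultaneously, since the ``bad'' configurations (coincident attracting directions, attracting subspace of one element contained in the repelling subspace of another, etc.) each cut out a proper subvariety of $\G$. Taking sufficiently many elements and sufficiently high powers then allows a simultaneous choice of cones satisfying (1)--(4) for every $i$.

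Third, I would handle the affine conditions. The morphism $\f_i\colon \G \times \V_i \to \Aff(\W_{\rho_i})$ is polynomial, so restricting to $v \in \V_i(\KK_i)$ with $\|v\|=1$ and $g$ in the (finite) set $S'$ gives a translation part of norm bounded by some constant $C$ depending only on $S'$. Choosing $R$ much larger than $C$ divided by the minimum expansion factor of $\rho_i(g)$ on $U_g^{(i)}$, the linear part dominates: writing $\f_{i,v}(g) w = \rho_i(g) w + t_{g,v}$, the image lies in $K_g^{(i)}$ and satisfies $\|\f_{i,v}(g) w\| > \|w\|$ whenever $w \in U_g^{(i)}$ with $\|w\| > R$, giving (5). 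For (6), the hypothesis that $\G(\KK_i)$ has no fixed point under $\f_{i,v}$ implies that the fixed-point set of $\f_{i,v}(g)$ is a proper affine subvariety of $W_{\rho_i}$ for each $g$; by including sufficiently many elements in general position one ensures that no $w$ is fixed by all but at most one element of $S'$.

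The main obstacle is the simultaneous general-position argument across all $m$ representations, especially since they are defined over different local fields $\KK_i$. One must verify that a single finite set $S' \subset \Ga$ can witness proximal-like dynamics in every $\rho_i$ at once, which requires a careful inductive application of Zariski density to ensure that good configurations for one representation are preserved when adjusting for another. The possibility that individual $\rho_i$ fail to be proximal (only unboundedness is assumed) adds further technical delicacy, but the ping-pong arguments at the level of cones in $W_{\rho_i}$, rather than lines in $\mathbb{P}(W_{\rho_i})$, are robust enough to accommodate this.
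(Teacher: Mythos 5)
Your overall strategy---ping-pong cones around attracting subspaces, generic conjugates chosen via Zariski density and irreducibility, and a large $R$ so the linear part dominates the translation part---is the same as the paper's, but there is a genuine gap at the very first step. You claim that for a ``suitable'' $\gamma\in\Ga$, Cartan decompositions of the iterates $\rho_i(\gamma^n)$ produce, along a subsequence, an attracting subspace $V^+$ together with a \emph{complementary} repelling subspace $V^-$. With only unboundedness of $\rho_i(\Ga)$ available (proximality is deliberately not assumed, and Goldsheid--Margulis is unavailable over the non-archimedean $\KK_i$), complementarity is exactly what can fail: if the normalized limit $b=\lim\rho_i(\gamma^{n_k})/\|\rho_i(\gamma^{n_k})\|$ comes from an element whose top-modulus part has nontrivial Jordan structure, then $\Im(b)\cap\Ker(b)\neq\{0\}$ (already for a unipotent $2\times2$ matrix the limit is nilpotent with image inside kernel), and no cone around $V^+$ avoiding a neighborhood of $V^-$ can be chosen. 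Your later ``general position'' step only addresses configurations between \emph{different} elements (attracting space of one inside repelling space of another); it cannot repair the self-intersection $V^+(b)\cap V^-(b)\neq\{0\}$, and taking higher powers of the same element does not help. The paper resolves this in Lemma \ref{lm_contractive}: it works with the set of limit maps $\overline{\rho_i(\Ga)}$, takes $b$ of \emph{minimal rank} $r_i$, and forces $V^+(b)\cap V^-(b)=\{0\}$ by inserting generic group elements between two limit maps (the sequence $\wt h_j=g_1h_j'g_2h_j$), using Lemma \ref{lm_AMS} and the dichotomy that for minimal rank either $V^+(b_1)\cap V^-(b_2)=\{0\}$ or $V^+(b_1)\subseteq V^-(b_2)$; the resulting $h_j$ (not a power of a fixed element) is then conjugated by generic $g_1,\ldots,g_M$ to build $S'$.

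A second, smaller omission: $S'$ must be symmetric, and conditions 2--4 (as well as the counting in Proposition \ref{pr_pingpong}) involve the inverses of your generators, so you also need attracting/repelling data for $g^{-1}$. The paper obtains this by running Lemma \ref{lm_contractive} simultaneously for the contragredient representations $\widetilde\rho_i$ and using $V^+(b)\subseteq V^-(\widetilde b)$ and $V^+(\widetilde b)\subseteq V^-(b)$; your proposal does not address inverses at all. Your treatment of conditions 5 and 6 (translation part bounded on the compact unit sphere of $\V_i(\KK_i)$, genericity of fixed-point sets---with the caveat that a single $\f_{i,v}(g)$ may well be the identity, so only a dimension-counting over tuples as in Lemma \ref{lm_Zopen2} gives the needed uniformity in $v$ and $w$) is in line with the paper's argument.
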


The construction of the set $S'$ relies on the notion of
quasi-projective transformation introduced by Furstenberg \cite{Fur} and
further studied by Goldsheid and Margulis \cite{GM} and Abels, Margulis,
and Soifer \cite{AMS}.
We use a slightly different notion also used by Cano and Seade \cite{CS},
which suits better our purposes.
Let $b\in \Mat_d(\KK)$ be a not necessarily invertible linear
transformation, where $\KK$ is a local field.
Write $V^+(b)=\Im(b)$ and $V^-(b)=\Ker(b)$.
Denote by $\P(\KK^d)$ the projective space, and in general we denote
by $\P(\cdot)$ the projectivization of a concept.
Then $\P(b):\P(\KK^d)\sm \P(V^-(b))\to\P(\KK^d)$ is a partially defined map
on the projective space, and we call it a quasi-projective transformation.

Consider a sequence $\{g_i\}_{i=1}^\infty\subset \GL_d(\KK)$.
It is easy to see (see e.g. \cite[Proposition 2.1]{CS})
that it contains a subsequence,
still denoted by $\{g_i\}_{i=1}^\infty$
such that
\be
\label{eq_qpt}
\lim_{i\to \infty}g_i/\|g_i\|=b
\ee
uniformly for some linear transformation $b:\KK^d\to \KK^d$.
Here and everywhere below
$\|\cdot\|$ denotes a fixed submultiplicative matrix-norm.
Moreover, this implies that
\[
\lim_{i\to \infty}\P(g_i)=\P(b)
\] 
uniformly on compact subsets of $\P(\KK^d)\sm\P(V^-(b))$.
Let $\Ga\le \GL_d(\KK)$ be a group and
denote by $\overline \Ga$ the set of maps $b$ for which (\ref{eq_qpt})
holds for some sequence $\{g_i\}_{i=1}^\infty\subseteq \Ga$.
The following lemma, crucial for us, is statement b, in
\cite[Lemma 4.3]{AMS}.
For completeness we give the proof.
\begin{lem}
\label{lm_AMS}
Let $\{g_i\}_{i=1}^\infty,\{h_i\}_{i=1}^\infty\subseteq \GL_d(\KK)$
be two sequences such that
\be
\label{eq_unif}
\lim_{i\to \infty}g_i/\|g_i\|=b_1\quad {\rm and}\quad
\lim_{i\to \infty}h_i/\|h_i\|=b_2
\ee
for some linear transformations
$b_1,b_2$.
If $b_1b_2\neq0$, then
\[
\lim_{i\to \infty}g_ih_i/\|g_ih_i\|=\l b_1b_2
\]
for some nonzero $\l\in \KK$.
\end{lem}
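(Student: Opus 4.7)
The plan is to exploit the fact that normalized matrix convergence passes through products, together with continuity of the matrix norm, to identify the limiting scalar explicitly. First I would observe that since matrix multiplication is continuous, the hypothesis (\ref{eq_unif}) gives
\[
\frac{g_i h_i}{\|g_i\|\cdot\|h_i\|}=\frac{g_i}{\|g_i\|}\cdot\frac{h_i}{\|h_i\|}\longrightarrow b_1 b_2
\]
as $i\to\infty$.

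Next, applying the (submultiplicative, continuous) matrix norm to this convergent sequence yields
\[
\frac{\|g_i h_i\|}{\|g_i\|\cdot\|h_i\|}\longrightarrow \|b_1 b_2\|.
\]
The assumption $b_1 b_2\neq 0$ is used exactly here: it guarantees $\|b_1b_2\|>0$, so this ratio is bounded away from $0$ for all sufficiently large $i$.

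Finally I would write
\[
\frac{g_i h_i}{\|g_i h_i\|}=\left(\frac{g_i h_i}{\|g_i\|\cdot\|h_i\|}\right)\cdot\left(\frac{\|g_i\|\cdot\|h_i\|}{\|g_i h_i\|}\right),
\]
and take the limit in each factor. The first factor tends to $b_1 b_2$ by the first step, while the second tends to $1/\|b_1 b_2\|$ by the second step. Hence
\[
\lim_{i\to\infty}\frac{g_i h_i}{\|g_i h_i\|}=\lambda\, b_1 b_2, \qquad \lambda:=\frac{1}{\|b_1 b_2\|}\in\KK\setminus\{0\},
\]
as required. There is no real obstacle in this argument; the only thing one needs to be careful about is the nonvanishing of $\|b_1b_2\|$, which is precisely what the hypothesis $b_1b_2\neq 0$ provides and which is needed both to avoid dividing by zero and to produce a nonzero scalar $\lambda$.
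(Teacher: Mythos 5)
Your argument is correct and takes essentially the same route as the paper: both proofs first use continuity of multiplication to get $g_ih_i/(\|g_i\|\,\|h_i\|)\to b_1b_2$ and then renormalize using that $\|b_1b_2\|\neq0$, the only difference being that you make the scalar $\lambda=\|b_1b_2\|^{-1}$ explicit. The one small point to keep in mind is that when $\KK$ is non-Archimedean the ratios $\|g_i\|\,\|h_i\|/\|g_ih_i\|$ are real numbers, so to obtain a limit $\lambda\in\KK$ one uses that the norm takes values in the discrete value group of $\KK$ (hence these ratios are eventually constant); the paper's own renormalization step relies on the same implicit fact.
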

\begin{proof}
Since the convergences in (\ref{eq_unif}) are uniform, we
have
\[
\lim_{i\to\infty}\frac{g_i}{\|g_i\|}\circ\frac{h_i}{\|h_i\|}=b_1b_2.
\]
Observe the following:
If $\{\g_i\}_{i=1}^\infty\subseteq \GL_d(\KK)$
and $\g_i/\l_i$ converge to a nonzero linear transformation
for a sequence of scalars
$\{\l_i\}_{i=1}^\infty\subset \KK$, then $\g_i/\|\g_i\|$
is convergent, too.
This proves the lemma.
\end{proof}

This lemma implies that
if $b_1,b_2\in\overline \Ga$, and $b_1b_2\neq0$, then
$\l b_1b_2\in\overline \Ga$.
This property is crucial for us.
Denote by $r$ the minimum of the ranks of the elements in $\overline \Ga$.
If $b_1\in\overline \Ga$ is of rank $r$ and if
$V^+(b_1)\cap V^-(b_2)\neq \{0\}$
for some $b_2\in\overline \Ga$,
then $b_1b_2=0$, whence $V^+(b_1)\subset V^-(b_2)$.

We will use the following lemma to construct the first element
of $S'$.
This lemma is a variant of \cite[Lemma 5.15]{AMS}.
\begin{lem}
\label{lm_contractive}
Let $\G$ be a Zariski-connected algebraic group, and let
$\rho_1,\ldots,\rho_m$ be irreducible representations defined
over local fields $\KK_i$.
Let $\Ga\le \G(\Q)$ be Zariski-dense.
For each $1\le i\le m$ denote by $r_i$ the minimal rank of
an element in $\overline{\rho_i (\Ga)}$.

Then for each $1\le i\le m$ there is a $b_i\in\overline{\rho_i (\Ga)}$
and there is a sequence of elements $\{h_j\}_{j=1}^\infty\subseteq \Ga$
such that the following hold.
\begin{enumerate}
\item
For each $1\le i\le m$, $V^+(b_i)\cap V^-(b_i)=\{0\}$ and
$\dim V^+(b_i)=r_i$.
\item
For each $1\le i\le m$, we have
\[
\lim_{j\to\infty}\rho_i(h_j)/\|\rho_i(h_j)\|=b_i.
\]
\end{enumerate}
\end{lem}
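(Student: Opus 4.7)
The strategy is to handle the two conclusions in sequence: first produce, for each $\rho_i$ separately, an element $b_i\in\overline{\rho_i(\Ga)}$ of rank $r_i$ with $V^+(b_i)\cap V^-(b_i)=\{0\}$; then build a single sequence in $\Ga$ whose images under all the $\rho_i$ converge (after normalization) to these limits simultaneously. Both steps will rely on Lemma~\ref{lm_AMS}, which gives $\overline{\rho_i(\Ga)}$ the structure of a multiplicative semigroup (up to scalars and nondegeneracy), and on the Zariski density of $\Ga$ in $\G$ together with the irreducibility of each $\rho_i$.

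For the first step I would fix $i$ and start with any $a_i\in\overline{\rho_i(\Ga)}$ of minimal rank $r_i$; by definition of $r_i$ such an $a_i$ exists and is realized as $a_i=\lim_n\rho_i(g_n^{(i)})/\|\rho_i(g_n^{(i)})\|$ for some sequence $g_n^{(i)}\in\Ga$. Because $\rho_i$ is irreducible, the linear span of $\rho_i(\G)V^+(a_i)$ is all of $W_{\rho_i}$, so the condition ``$\rho_i(h)V^+(a_i)\cap V^-(a_i)=\{0\}$'' on $h$ cuts out a nonempty Zariski-open subset of $\G$; Zariski density of $\Ga$ then supplies an $h\in\Ga$ satisfying it. Setting $b_i:=\rho_i(h)a_i$ (up to scalar, an element of $\overline{\rho_i(\Ga)}$ realized by the sequence $hg_n^{(i)}$), one computes $V^+(b_i)=\rho_i(h)V^+(a_i)$ and $V^-(b_i)=V^-(a_i)$, which are transverse and of complementary dimensions. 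This settles assertion~(1).

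For assertion~(2) I would pass to a diagonal subsequence so that for every pair $(i,j)$ the normalized matrices $\rho_j(g_n^{(i)})/\|\rho_j(g_n^{(i)})\|$ converge to some $b_j^{(i)}\in\overline{\rho_j(\Ga)}$, with $b_i^{(i)}=b_i$; the auxiliary limits $b_j^{(i)}$ for $j\neq i$ may have higher rank. Set $h_n:=g_n^{(1)}g_n^{(2)}\cdots g_n^{(m)}$. Iterating Lemma~\ref{lm_AMS} gives that $\rho_i(h_n)/\|\rho_i(h_n)\|$ converges to a scalar multiple of the product $c_i:=b_i^{(1)}b_i^{(2)}\cdots b_i^{(m)}$ whenever the product is nonzero; since the factor $b_i^{(i)}=b_i$ has the minimal rank $r_i$, any nonzero $c_i$ automatically has rank exactly $r_i$. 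I would then modify each $g_n^{(i)}$ by pre- and post-multiplying by suitable fixed elements of $\Ga$ to enforce the Zariski-open conditions
\[
V^+\bigl(b_i^{(j+1)}\cdots b_i^{(m)}\bigr)\cap V^-\bigl(b_i^{(j)}\bigr)=\{0\}\quad\text{and}\quad V^+(c_i)\cap V^-(c_i)=\{0\}
\]
for every $i$ and every $j$. Irreducibility of each $\rho_i$ together with Zariski density of $\Ga$ ensures these open conditions are individually nonempty, hence their intersection over the finitely many indices is still met by some element of $\Ga$, and the modified $h_n$ provides the required sequence.

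\textbf{Main obstacle.} The chief difficulty is the simultaneous nondegeneracy $V^+(c_i)\cap V^-(c_i)=\{0\}$ in every $\rho_i$, because these two subspaces are controlled by the outermost factors in the iterated product and therefore depend on all the auxiliary limits $b_j^{(i)}$, not only on the minimal-rank factor $b_i$. The saving grace is that rank-minimality forces $\dim V^+(c_i)=r_i$ and $\dim V^-(c_i)=d_i-r_i$, so transversality is a complementary-dimension, Zariski-open condition; the argument reduces to showing that a generic modification of the building sequences by Zariski-dense elements of $\Ga$ places these subspaces in general position in every $\rho_i$ at once.
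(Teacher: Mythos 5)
Your overall strategy --- realize a minimal-rank limit for each $\rho_i$, splice the realizing sequences into one product sequence in $\Ga$, identify the limit of the product via Lemma \ref{lm_AMS}, and choose the connecting elements generically using Zariski density --- is essentially the paper's argument (the paper organizes it as an induction on $i$, forming $\wt h_j=g_1h_j'g_2h_j$ at each step rather than one $m$-fold product; that difference is cosmetic). The genuine gap is in how you certify the transversality conditions. Both in Step 1 (existence of $h$ with $\rho_i(h)V^+(a_i)\cap V^-(a_i)=\{0\}$) and at the end of Step 2 (the condition $V^+(c_i)\cap V^-(c_i)=\{0\}$), you claim nonemptiness of the transversality locus from irreducibility together with complementary dimensions (``general position''). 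That implication is false in general: take $\SL_2\times\SL_2$ acting irreducibly on $W=\KK^2\otimes\KK^2$, with $V_1=u_0\otimes\KK^2$ and $V_2=\KK^2\otimes w_0$; these have complementary dimensions, yet for every $g=(g_1,g_2)$ the nonzero vector $g_1u_0\otimes w_0$ lies in $gV_1\cap V_2$, so no group element makes them transverse. Zariski-openness is not the issue; nonemptiness is, and it does not follow from irreducibility alone, so as written your genericity claims are unsupported.

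What rescues the argument --- and what the paper actually uses --- is the dichotomy recorded immediately after Lemma \ref{lm_AMS}: if $b_1\in\overline{\rho_i(\Ga)}$ has the minimal rank $r_i$ and $V^+(b_1)\cap V^-(b_2)\neq\{0\}$ for some $b_2\in\overline{\rho_i(\Ga)}$, then the corresponding product of limits vanishes and hence $V^+(b_1)\subseteq V^-(b_2)$. Consequently, for your minimal-rank elements it suffices to arrange the much weaker non-containment $V^+\nsubseteq V^-$, and \emph{that} is a nonempty open condition by irreducibility (one only needs to move a single nonzero vector of $V^+$ off the proper subspace $V^-$); this fixes Step 1, and for $c_i$ it can be achieved by varying only the outermost left modifier, which moves $V^+(c_i)$ while leaving $V^-(c_i)$ unchanged. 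Relatedly, the intermediate conditions you impose, $V^+\bigl(b_i^{(j+1)}\cdots b_i^{(m)}\bigr)\cap V^-\bigl(b_i^{(j)}\bigr)=\{0\}$, are stronger than needed and can even be dimensionally impossible when the partial product does not have minimal rank in $\rho_i$ (its rank may exceed that of $b_i^{(j)}$, so the two dimensions sum to more than $\dim W_{\rho_i}$); all that is needed there is $V^+(\text{partial product})\nsubseteq V^-\bigl(b_i^{(j)}\bigr)$, so that the next application of Lemma \ref{lm_AMS} yields a nonzero limit, with the modifiers chosen successively from the right since each such condition depends on the choices already made.
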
 
\begin{proof}
Let $1\le k\le m$ and assume that
$\{h_j\}_{j=1}^\infty\subseteq \Ga$ is a sequence
such that for each $i$ we have
$\rho_i(h_j)/\|\rho_i(h_j)\|\to b_i$ for some
linear transformation $b_i$, and 1. holds for $i<k$.
We show below that we can replace $\{h_j\}_{j=1}^\infty$
with another sequence such that 1. holds for $i=k$ as well.
Then the Lemma follows by induction.

Let $\{h_j'\}_{j=1}^\infty\subseteq \Ga$ be a sequence such that
$\rho_k(h_j')/\|\rho_k(h_j')\|\to b_k'$, where $b_k'$ is a linear
transformation of rank $r_k$.
By taking a subsequence we can assume that
$\rho_i(h_j')/\|\rho_i(h_j')\|\to b_i'$ for some linear transformation
$b_i'$ for all $1\le i\le m$.
Take two elements $g_1,g_2\in \Ga$.
We consider the sequence
\[
\{\wt h_j=g_1h_j'g_2h_j\}_{j=1}^\infty
\]
By Lemma \ref{lm_AMS} we get that for all $1\le i\le m$ we have
\[
\rho_i(\wt h_j)/\|\rho_i(\wt h_j)\|\to \l_i \rho_i(g_1)b_i'\rho_i(g_2)b_i
\]
provided $\rho_i(g_1)b_i'\rho_i(g_2)b_i\neq 0$.
Then for each $i$, there is a nonempty
Zariski-open subset $X_i$ of $\G(\KK_i)$ such that for
$g_2\in X_i$ we have
\[
\rho_i(g_2)(V^+(b_i))\nsubseteq V^-(b_i').
\]
The Zariski-openness is clear and non-emptiness follows from the
irreducibility of $\rho_i$.
(A more detailed argument for a similar statement will be given
in the proof of Lemma \ref{lm_Zopen1}).
Now take $g_2\in \Ga\cap\bigcap X_i$.
Then
$\rho_i(g_1)b_i'\rho_i(g_2)b_i\neq 0$
no matter how we choose $g_1$.
Similarly, there is a nonempty
Zariski-open subset $X_i'$ of $\G(\KK_i)$ such that for
$g_1\in X_i'$, we have
\be
\label{eq_g1}
\rho_i(g_1)(V^+(b_i'\rho_i(g_2)b_i))\nsubseteq V^{-}(b_i'\rho_i(g_2)b_i).
\ee
Take $g_1\in \Ga\cap\bigcap X_i'$.
For $i\le k$, the rank of $\rho_i(g_1)b_i'\rho_i(g_2)b_i$
is $r_i$, and then (\ref{eq_g1})
implies that
\[
V^{+}(\rho_i(g_1)b_i'\rho_i(g_2)b_i)\cap V^-(\rho_i(g_1)b_i'\rho_i(g_2)b_i)
=\{0\}
\]
by the remarks after Lemma \ref{lm_AMS}, which we wanted to show.
\end{proof}

Let $\{g_i\}_{i=1}^n\subseteq \GL_d(\KK)$ be a sequence
such that
\[
\lim_{i\to\infty}g_i/\|g_i\|=b\quad{\rm and}\quad
\lim_{i\to\infty}g_i^{-1}/\|g_i^{-1}\|=\widetilde b
\]
for some non-invertible $b,\widetilde b\in \Mat_d(\KK)$.
Let $w\in V^{+}(b)$ and assume to the contrary that
$w\notin V^-(\widetilde{b})$.
Then there is some vector $u_1\in \KK^d$ such that
\[
\lim_{i\to \infty}g_i(u_1)/\|g_i\|=w.
\]
By uniform convergence, we then have
\[
\lim_{i\to \infty}\frac{g_i^{-1}(g_i(u_1)/\|g_i\|)}
{\|g_i^{-1}\|}=u_2.
\]
for some nonzero $u_2\in \KK^d$.
This implies that $\|g_i\|\cdot\|g_i^{-1}\|$ is bounded
which contradicts to the non-invertibility of $b$.
Therefore we can conclude that $V^{+}(b)\subset V^-(\widetilde b)$
and $V^+(\widetilde b)\subset V^-(b)$.

In the proof of Proposition \ref{pr_generators}
we will use Lemma \ref{lm_contractive}
to produce an element $g_0\in\Ga$ with certain nice properties,
and then we will define $A$ to be a set of appropriate conjugates of it,
whom we will find using the following two lemmata.
\begin{lem}
\label{lm_Zopen1}
Let $\G$ be a Zariski-connected algebraic group defined over
a local field $\KK$, and
let $\rho$ be an irreducible representation of it.
Let $V_1^+,V_1^-,V_2^+,V_2^-\subseteq W_\rho$ be subspaces
such that $V_1^+\cap V_1^-=V_2^+\cap V_2^-=\{0\}$ and
$V_1^+\subseteq V_2^-$ and $V_2^+\subseteq V_1^-$.
Let $M$ be an integer and denote by $X\subseteq \G(\KK)^M$
the set of $M$-tuples $(g_1,\ldots,g_M)$
such that the following hold.
If we have $\rho(g_\a)(V_i^+)\subseteq \rho(g_\b)(V_j^-)$
for some $1\le i,j\le2$
and $1\le\a,\b\le M$, then $\a=\b$ and $i+j=3$.
Then $X$ is a nonempty Zariski-open set.
\end{lem}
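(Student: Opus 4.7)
The plan is to exhibit $X$ as a finite intersection of Zariski-open subsets of $\G^M$ and then verify that each such subset is nonempty; since $\G$ is Zariski-connected and hence irreducible as a variety, so is $\G^M$, and a finite intersection of nonempty open subsets of an irreducible variety is again nonempty. For openness, I would observe that for each pair $((\alpha,i),(\beta,j))$ the locus $\{\rho(g_\alpha)(V_i^+)\subseteq \rho(g_\beta)(V_j^-)\}$ is Zariski-closed: after choosing a basis of $V_i^+$ and a complement to $V_j^-$, the condition amounts to the vanishing of the entries of the composition of $\rho(g_\alpha)|_{V_i^+}$ with the quotient map $W_\rho\to W_\rho/\rho(g_\beta)(V_j^-)$, which is polynomial in the matrix entries of $g_\alpha$ and $g_\beta$. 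Intersecting the complements of these closed loci over the finitely many pairs with $\alpha\neq\beta$ or $i+j\neq 3$ immediately gives that $X$ is Zariski-open.

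For nonemptiness I would split into two cases. If $\alpha=\beta$, the relevant pairs are $(i,j)\in\{(1,1),(2,2)\}$, and the ``bad'' condition reduces to $V_i^+\subseteq V_i^-$, which is impossible by $V_i^+\cap V_i^-=\{0\}$ together with the implicit nontriviality of $V_i^+$ (without which the lemma would be vacuous). If $\alpha\neq\beta$, I would vary $g_\alpha$ and $g_\beta$ independently and set $h=g_\beta^{-1}g_\alpha$, so that it suffices to produce a single $h\in\G(\KK)$ with $\rho(h)(V_i^+)\nsubseteq V_j^-$. Suppose no such $h$ exists; then $\sum_{h\in\G(\KK)}\rho(h)(V_i^+)\subseteq V_j^-$. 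The left-hand side is the smallest $\G(\KK)$-invariant subspace containing $V_i^+$, and because $\G(\KK)$ is Zariski-dense in the Zariski-connected $\KK$-group $\G$, it is in fact $\G$-invariant. Irreducibility of $\rho$ then forces this subspace to be $\{0\}$ or all of $W_\rho$; nontriviality of $V_i^+$ rules out the first, so $V_j^-=W_\rho$, contradicting $V_j^+\cap V_j^-=\{0\}$ together with $V_j^+\neq\{0\}$.

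The only delicate step is the last one: one must upgrade $\G(\KK)$-invariance of the span to $\G$-invariance in the algebraic-group sense in order to invoke irreducibility of $\rho$. This is precisely where the hypothesis that $\KK$ is a local field (and $\G$ is Zariski-connected) enters, through Zariski-density of $\G(\KK)$ in $\G$; everything else in the argument is essentially formal.
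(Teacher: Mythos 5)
Your argument is correct and essentially the same as the paper's: both express each containment $\rho(g_\alpha)(V_i^+)\subseteq\rho(g_\beta)(V_j^-)$ by polynomial equations (the paper via annihilator functionals $\langle\rho(g_1)v_j,\rho(g_2)^*\psi_i\rangle=0$, you via the quotient map, which amounts to the same thing), show each individual open condition is nonempty using irreducibility of $\rho$, and conclude because finitely many nonempty Zariski-open subsets of the irreducible variety $\G^M$ intersect. Your explicit treatment of the $\alpha=\beta$, $i=j$ conditions, of the implicit nontriviality of the $V_i^{\pm}$, and of the Zariski-density of $\G(\KK)$ in $\G$ merely spells out steps the paper leaves tacit.
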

\begin{proof}
Let $v_1,\ldots,v_r$ be a basis for $V_1^+$ and let
$\psi_1,\ldots,\psi_r$ be a basis for the space of functionals
vanishing on $V_{2}^-$.
Then the condition
$\rho(g_1)(V_1^+)\subseteq \rho(g_2)(V_2^-)$ is equivalent
to the equations
$\langle\rho(g_1)v_j,\rho(g_2)^*\psi_i\rangle=0$ for $1\le i,j,\le r$.
The other conditions can be described in terms of algebraic
equations similarly, whence the Zariski-openness follows.

It is clear that there is an $M$-tuple $(g_1,\ldots,g_M)$
for which
the single condition $\rho(g_1)(V_1^+)\nsubseteq \rho(g_2)(V_2^-)$
is satisfied.
For example we can take $g_2=1$
pick a vector $w_1\in V_1^+$ and choose $g_1$ in such a way that
$\rho(g_1)w_1\notin V_2^-$,
the existence of $g_1$ follows from irreducibility.
It is a similar argument to show that the other constraints can
be satisfied, so $X$, being the intersection of finitely many
nonempty Zariski-open sets, is nonempty.
\end{proof}

\begin{lem}
\label{lm_Zopen2}
Let $\G$ be a Zariski-connected algebraic group, and
let $\rho$ be an irreducible representation of it.
Let $\V$ be an affine space and
let $\f:\G\times\V\to \Aff(\W_\rho)$ be a morphism such that the linear part of $\f_v$ is $\rho$.
Assume that for some $0\neq v\in\V$ and $w\in W_\rho$ there is an element
$g\in\G(\KK)$ such that $\f_v(g)w\neq w$.

Then for $M\ge 2\dim(W_\rho)+\dim(\V)+1$,
there is a nonempty Zariski-open set $X\subset \G(\KK)^{M}$
such that if $(g_1,\ldots,g_{M})\in X$ then the following hold.
\begin{enumerate}
\item
Let $w\in W_\rho$ and $W\subsetneq W_\rho$ be a proper linear subspace. Then
for any set of indices $I\subset\{1,\ldots,M\}$ with
$|I|=2\dim(W_\rho)-1$,
there is some $i\in I$ such that $\rho(g_i)w\notin W$.
\item
Let $v\in\V(\KK)$, $w\in W_\rho$ and $W\subset W_\rho$ be an affine subspace, then
for any set of indices $I\subset\{1,\ldots,M\}$ with $|I|=2\dim(W_\rho)+\dim(\V)+1$,
there is some $i\in I$ such that $\f_v(g_i)w\notin W$.
\end{enumerate}
\end{lem}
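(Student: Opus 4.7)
My plan is to mimic the proof of Lemma~\ref{lm_Zopen1}: for each of the two assertions I will realise the set of ``bad'' tuples $(g_1,\ldots,g_M)\in\G^M$ as the image under projection of an incidence variety whose dimension is strictly less than $M\dim\G$. The bad set is then a proper Zariski-closed subvariety of $\G^M$, and its complement is Zariski-open; since $\G$ is Zariski-connected, $\G^M$ is irreducible and the intersection of the finitely many open sets (one for each subset $I$ of the prescribed size) is nonempty and is the desired $X$.

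For part~1 set $d=\dim W_\rho$ and, without loss of generality, take $W$ to be a linear hyperplane. For $I\subset\{1,\ldots,M\}$ of size $2d-1$ I form the incidence variety
\[
Z_I^{(1)}=\{\,((g_j)_j,[w],W)\in\G^{|I|}\times\P(W_\rho)\times\mathrm{Gr}(d-1,W_\rho):\rho(g_i)w\in W\ \forall i\in I\,\}.
\]
Irreducibility of $\rho$ implies that, for each fixed $([w],W)$, the locus $\{g\in\G:\rho(g)w\in W\}$ has codimension one in $\G$, so
\[
\dim Z_I^{(1)}\le(2d-1)\dim\G+2(d-1)-(2d-1)=(2d-1)\dim\G-1,
\]
and the projection to $\G^{|I|}$ is a proper Zariski-closed subvariety. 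Pulling back to $\G^M$ and intersecting over the finitely many $I$ yields the open subset required for part~1.

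Part~2 will be handled analogously, now with the extra parameter $v\in\V$, with $W$ an affine hyperplane (again after enlarging), and with $Y$ denoting the $d$-dimensional variety of affine hyperplanes of $W_\rho$. For $|I|=2d+\dim\V+1$ the incidence variety
\[
Z_I^{(2)}=\{\,((g_j)_j,v,w,W)\in\G^{|I|}\times\V\times W_\rho\times Y:\f_v(g_i)w\in W\ \forall i\in I\,\}
\]
should have dimension bounded by $|I|\dim\G+\dim\V+2d-|I|=|I|\dim\G-1$, again forcing a proper closed projection. The hypothesis of the lemma will be used to justify the codimension-one fibre bound: the single triple $(v_0,w_0,g_0)$ with $\f_{v_0}(g_0)w_0\ne w_0$ shows that $\{(g,v,w,W):\f_v(g)w\in W\}$ is a proper subvariety of $\G\times\V\times W_\rho\times Y$, so for generic $(v,w,W)$ the $g$-fibre is of codimension one.

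The main obstacle will be the ``exceptional locus'' in the parameter space $\V\times W_\rho\times Y$ over which $\f_v(\G)w\subseteq W$, for then the fibre of $Z_I^{(2)}\to\G^{|I|}$ is the full $\G^{|I|}$ and the projection could a priori cover everything. The key observation that will rescue the argument is that, by irreducibility of $\rho$, the linear part of any $\f_v$-invariant proper affine subspace of $W_\rho$ is a $\rho$-invariant linear subspace, hence is $0$; so every proper $\f_v$-invariant affine subspace must be a single $\f_v$-fixed point. This pins the exceptional locus down to the set where $w$ is an $\f_v$-fixed point lying in $W$, which under the hypothesis (together with the standing input from Proposition~\ref{pr_descr} that $\G(\KK)$ has no fixed point under $\f_v$ for $v\neq0$) is a proper subvariety of sufficiently low dimension to be absorbed into the bound $|I|\dim\G-1$, completing the proof.
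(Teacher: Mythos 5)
Your part~1 is, up to notation, the paper's own argument: the paper builds the incidence variety inside $\P(W_\rho)\times\P(W_\rho^*)\times\G(\KK)^M$ (your Grassmannian of linear hyperplanes is the same parameter space), uses irreducibility of $\rho$ to make every fibre over a fixed $([w],[\psi])$ of codimension at least $2\dim(W_\rho)-1$, and concludes that the closure of the projection to $\G(\KK)^M$ is proper; the different index sets $I$ are then handled by intersecting finitely many nonempty open sets, exactly as you do. (Both you and the paper implicitly discard the degenerate vector $w=0$; that is what the projectivization is for.)

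For part~2 the paper only says the argument ``is similar'', so you are supplying details, and your structural observation is the right one: since the linear part of $\f_v$ is the irreducible $\rho$, a $\f_v(\G)$-invariant proper affine subspace has $\rho$-invariant direction space, hence direction $0$, hence is a single fixed point; so an orbit $\f_v(\G)w$ lies in a proper affine subspace only if $w$ is a common fixed point. But your last step is wrong as written. If the exceptional locus $P_0=\{(v,w,W):\f_v(\G)w\subseteq W\}$ is nonempty, it cannot be ``absorbed into the bound $|I|\dim\G-1$'': over any point of $P_0$ the fibre of $Z_I^{(2)}$ is all of $\G^{|I|}$, so $\dim Z_I^{(2)}\ge |I|\dim\G$, the projection to $\G^{|I|}$ is dominant, and in fact \emph{no} tuple $(g_1,\ldots,g_M)$ satisfies the conclusion at such a parameter, so no Zariski-open $X$ could exist at all. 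What the dimension count actually needs is that $P_0$ be \emph{empty} on the parameter variety you use, and that is precisely what your observation delivers once combined with the no-fixed-point property (for the relevant parameters $v\neq0$, as provided in the setting of Proposition~\ref{pr_descr}; the degenerate parameters such as $v=0$ and $w$ a $\rho$-fixed vector have to be excluded, just as $w=0$ is implicitly excluded in part~1 — note the literal one-point hypothesis of the lemma is too weak to give the uniform statement you need). With $P_0=\emptyset$, every condition $\f_v(g)w\in W$ cuts out a proper subvariety of $\G$ for \emph{every} parameter, the fibres of $Z_I^{(2)}$ over the $(\dim\V+2\dim W_\rho)$-dimensional parameter space have codimension at least $|I|$, and your bound $\dim Z_I^{(2)}\le |I|\dim\G-1$ follows. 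So replace ``of sufficiently low dimension to be absorbed'' by ``empty, by the fixed-point hypothesis''; with that correction your argument closes, and it is the natural completion of the sketch the paper leaves implicit.
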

\begin{proof}
We only show that property 1. can be satisfied, 2. is similar,
and then we can take the intersection of the two sets.
Moreover, it is enough to show that 1. can be satisfied for
the index set $I=\{1,\ldots,2\dim(W_\rho)-1\}$.
Consider the algebraic variety
\[
\P(W_\rho)\times\P(W_\rho^*)\times\G(\KK)^M.
\]
Consider also the subvariety
\[
Y=\{([w],[\psi],g_1,\ldots,g_M):\langle\rho(g_i)(w),\psi\rangle=0
\;{\rm for}\;1\le i\le2\dim(W_\rho)-1\}.
\]
By the irreducibility of $\rho$ it follows that for
$([w],[\psi])\in\P(W_\rho)\times\P(W_\rho^*)$ fixed,
the variety
\[
Y_{w,\psi}=\{g\in\G(\KK):\langle\rho(g)(w),\psi\rangle=0\}
\]
is a proper subvariety of $\G(\KK)$, hence
$\dim(Y_{w,\f})\le\dim(\G)-1$.
This implies that the fiber of $Y$ over $([w],[\psi])$ is of codimension
at least $2\dim(W_\rho)-1$ in $\G(\KK)^M$.
Now let $Z$ be the Zariski-closure of the image of $Y$
under the projection map
\[
\P(W_\rho)\times\P(W_\rho^*)\times\G(\KK)^M\to\G(\KK)^M.
\]
Then $\dim(Z)\le\dim(Y)$, hence $Z$ is a proper subvariety,
and by construction its complement satisfies 1.
for $I=\{1,\ldots,2\dim(W_\rho)-1\}$.
\end{proof}

\begin{proof}[Proof of Proposition \ref{pr_generators}]
If $\rho$ is a representation of $\G$, then  write $\widetilde\rho$
for the representation that associates the transpose inverse of
$\rho(g)$ for every $g\in\G$.
Apply Lemma \ref{lm_contractive}
to the representations $\rho_1,\ldots,\rho_m,
\widetilde \rho_1,\ldots,\widetilde\rho_m$.
We get a sequence $\{h_i\}_{i=1}^\infty\subset\Ga$
and linear transformations
$b_1^{(1)},\ldots,b_m^{(1)},b_1^{(2)},\ldots,b_m^{(2)}$
with the following properties.
For each $1\le i\le m$, we have
\[
\lim_{j\to\infty}\rho_i(h_j)/\|\rho_i(h_j)\|=b_i^{(1)}\quad{\rm and}\quad
\lim_{j\to\infty}\rho_i(h_j^{-1})/\|\rho_i(h_j^{-1})\|=b_i^{(2)}.
\]
Furthermore we have that
$\dim(V^+(b_i^{(1)}))=\dim(V^{+}(b_i^{(2)}))=r_i$
and $V^+(b_i^{(j)})\cap V^-(b_i^{(j)})=\{0\}$.
Here and everywhere below $r_i$ denotes the minimal rank of the
elements of $\overline{\rho_i(\Ga)}$.
By the remarks preceding Lemma \ref{lm_Zopen1}
we see that $V^+(b_i^{(j)})\subset V^-(b_i^{(3-j)})$
for $j=1,2$.
Let $d$ be the maximum of the dimensions of the representation
spaces $W_{\rho_i}$ and parameter spaces $\V_i$.
Apply Lemma \ref{lm_Zopen1}
with $M=3d+2$ for each $\rho_i$ and for the subspaces
$V_j^+=V^+({b_i^{(j)}})$ and $V_j^-=V^-(b_i^{(j)})$, $j=1,2$.
This way we get Zariski-open subsets $X_i\subset\G(\KK_i)^M$.
Also apply Lemma \ref{lm_Zopen2} for the representations $\rho_i$
and for the morphisms $\f_i$, this gives
Zariski-open subsets $X_i'\subset\G(\KK_i)^M$.
Since $\Ga$ is Zariski dense,
we get elements $g_1,\ldots,g_{M}\in\Ga$
such that $(g_1,\ldots, g_M)\in X_i\cap X_i'$ for all $i$, hence
they have the following properties.
Recall that if $c_1,c_2\in\overline{\rho_i(\Ga)}$, and
$\dim(V^+(c_1))=r_i$,
then either $V^+({c_1})\cap V^-(c_2)=\{0\}$ or
$V^+(c_1)\subset V^-(c_2)$.
For each $i$ we have
\[
\rho_i(g_\a)(V^+(b_i^{(j)}))\cap \rho_i(g_\b)(V^-({b_i^{(k)}}))=\{0\}
\]
for every $1\le j,k\le2$
and $1\le\a,\b\le M$, except for $\a=\b$ and $i+j=3$.
Using 1. in Lemma \ref{lm_Zopen2} with
$W=V^-(b_i^{(1)})$, we also have that
\[
\rho_i(g_{\a_1})(V^-(b_i^{(1)}))\cap\ldots\cap
\rho_i(g_{\a_{2d-1}})(V^-(b_i^{(1)}))=\{0\}
\]
for any $1\le\a_1<\ldots<\a_{2d-1}\le M$.

We show that if we set $A=\{g_1h_jg_1^{-1},\ldots,g_Mh_jg_M^{-1}\}$
and $j$ is large enough then we can choose the sets
$K_g^{(i)}$ and $U_g^{(i)}$ in such a way that the proposition
holds.
At this point we fix $i$, and omit the corresponding indices
everywhere.

For a set $X\subset\P(\KK^d)$ denote by $B_\e(X)$ the set
of points which are of distance at most $\e$ from $X$
with respect to any fixed metric which induces the standard topology
on $\P(\KK^d)$.
Let $\e>0$ be sufficiently small, so that
when $V_1,\ldots V_l$ are among the subspaces
$\rho(g_k)(V^\pm({b^{(j)}}))$, then
\[
B_\e(\P(V_1))\cap\ldots\cap B_\e(\P(V_l))\neq\emptyset
\]
only if
$\P(V_1)\cap\ldots\cap\P(V_l)\neq\emptyset$.
For $g=g_kh_jg_k^{-1}\in A$
define
\bean
K_g&=&\{w\in \KK^d\sm\{0\}|[w]\in B_\e(\P(\rho(g_k)(V^+({b^{(1)}}))))\}
\quad{\rm and}\quad\\
U_g&=&\{w\in \KK^d\sm\{0\}|w]\notin B_\e(\P(\rho(g_k)(V^-({b^{(1)}}))))\}.
\eean
We define $K_g$ and $U_g$ in a similar manner for $g\in\widetilde A$,
but we use $b^{(2)}$ instead of $b^{(1)}$.
Properties 2., 3. and 4. can be deduced immediately from the properties
of the spaces $\rho(g_k)(V^+(b^{(1)}))$ and
$\rho(g_k)(V^-({b^{(1)}}))$ provided $\e$ is small enough.
We remark that property 4. follows from property 3., since by construction
$K_g\cap U_{g^{-1}}=\emptyset$.

Property 1. holds if $j$ is large enough, since $\P(\rho(g_kh_jg_k^{-1}))$
converges to $\P(\rho(g_k)b^{(1)}\rho(g_k^{-1}))$ uniformly
on compact subsets of $\P(W_\rho)\sm\P(V^-({b^{(1)}}))$.
For property 5, we note that there is a constant $c>0$ depending
on $\e$ such that 
\[
\|\rho(g_kh_jg_k^{-1})(w)\|>c\|\rho(g_kh_jg_k^{-1})\|\cdot\|w\|
\]
for $w\in U_g$.
Since $\lim_{j\to\infty}\|\rho(g_kh_jg_k^{-1})\|=\infty$,
we have $c\|\rho(g_kh_jg_k^{-1})\|>1$ for large $j$.
Then for $\|w\|>R$ large and $v\in\V(\KK)$, $\|v\|=1$, the translation component of
$\f_v(g_kh_jg_k^{-1})$ is negligible compared to the
linear part, and property 5 follows.
Here we used that the unit ball in $\V(\KK)$ is compact, hence for $j$ fixed, the
translation part of $\f_v(g_kh_jg_k^{-1})$ is bounded in $v$.
Finally, denote by $W\subset W_\rho$ the possibly empty affine subspace
that consist of the fixed points of $\f_v(h_j)$.
Then the set of fixed points of $\f_v(g_kh_jg_k^{-1})$ is
$g_k(W)$.
From this we see that property 6. follows from part 2. of Lemma
\ref{lm_Zopen2}.
\end{proof}

\begin{proof}[Proof of Proposition \ref{pr_pingpong}]
Let $A\subset\Ga$ be the set that we constructed in Proposition
\ref{pr_generators}, and let $K_g^{(i)}$ and $U_g^{(i)}$
be the corresponding sets.
In what follows we fix $i$ and omit the corresponding indices.

We deal with the two parts separately, first we estimate
the size of the set $\{g\in B_l|\rho(g)[w]=[w]\}$.
For $0\le k<l$ denote by $X_k$ the set of those
reduced words $g_l\cdots g_1\in B_l$ for which
$\rho(g_k\cdots g_1)w\in U_{g_{k+1}}$, and $k$ is the
smallest index with this property.
We write $X_l$ for those words which are not contained in any of the $X_k$
with $k<l$.
Let $g_l\cdots g_1\in X_k$.
We remark that by the properties of the sets $U_g$ and $K_g$,
we have
\[
\rho(g_{k+1}\cdots g_{1})w\in K_{g_{k+1}}\subset U_{g_{k+2}}.
\]
In fact, by induction we can conclude that
$\rho(g_j\cdots g_{1})w\in K_{g_j}$ 
for $j>k$.
Assume further that $\rho(g_l\cdots g_{1})[w]=[w]$.
Then we also have
\[
\rho(g_{j+1}^{-1}\cdots g_{l}^{-1})[w]
=\rho(g_j\cdots g_{1})[w]\in \P(K_{g_j})
\]
for $j>k$.
Since the sets $K_g$ are disjoint, we see that $[w]$ determines
$g_j$ uniquely for $j>k$.
Indeed, once $g_l,\ldots,g_{j+1}$ are known, they determine which of the
sets $\P(K_{g_j})$ does $\rho(g_{j+1}^{-1}\cdots g_{l}^{-1})[w]$ belong to.
On the other hand we know that for $j\le k$, we have
$\rho(g_{j-1}\cdots g_1)w\notin U_{g_{j}}$.
Since $\rho(g_{j-1}\cdots g_1)w$ is covered by at least two of
the sets $U_g$, we have at most $|S'|-2$ possibilities for
$g_j$.
Therefore we have
\[
|\{g\in B_l|\rho(g)[w]=[w]\}\cap X_k|\le (|S'|-2)^k,
\]
from where the first part of the proposition follows easily.

Now we give an estimate for $|\{g\in B_l|\f_v(g)w=w\}|$.
We show that there is an integer $k$ such that
for any $v\in\V(\KK)$ with $\|v\|=1$, $w\in W_\rho$ and $g\in S'$
there is a reduced word $\o\in B_k$ of
length $k$ with the following properties.
The first letter of $\o$ is not $g$, and
we have $\|\f_v(\o)w\|>R$, $\|\f_v(\o)w\|>\|w\|$ and
$\f_v(\o)w\in K_{g'}$, where $g'$ is the last letter of $\o$.
If $|w|>R$, this is easy, since there are at least two letters
$g'\in S'$ such that $w\in U_{g'}$.
We can also make an existing word longer, since we can preserve the
required properties no matter how we continue it as long as it stays
reduced.
Consider the case when $\|w\|\le R$.
Denote by $D\subset W_\rho$ the solid ball of radius $R$.
To each reduced word $\o$ we associate a set
$E_{\o}\subset\{v\in\V(\KK):\|v\|=1\}\times W_\rho$
defined by
\[
E_{\o}=\{(v,x):x\in\f_v(\o^{-1})(D)\}.
\]
We need to show that there is a number $k$ such that
\[
\bigcap_{\o\in B_k :\h\o {\rm\: does\: not\: contain\:  g}}E_{\o}=\emptyset.
\]
Since the $E_{\o}$ are compact, it is enough to show that
\[
\bigcap_{\o:\h \o {\rm\: does\: not\: contain\: g}}E_{\o}=\emptyset.
\]
We show that for each $v_0\in\V(\KK)$ with $\|v_0\|=1$,
\[
(\{v_0\}\times W_\rho)\cap\bigcap_{\o:\h\o {\rm\: does\: not\: contain\: g}}E_{\o}=\emptyset
\]
Assume to the contrary that there are at least two points
\[
(v_0,w_1),(v_0,w_2)\in\{v_0\}\times W_\rho\cap
\bigcap_{\o:\h\o {\rm\: does\: not\: contain\: g}}E_{\o}.
\]
Then $w_1-w_2\in U_h$ for some $g\neq h\in S'$.
Property 5 in Proposition \ref{pr_generators} implies that there is some
$c>1$ such that $\|\rho(h)w\|>c\|w\|$
for all $w\in U_h$.
Then $2R>\|\f_{v_0}(h^l)w_1-\f_{v_0}(h^l)w_2\|>c^l\|w\|$, a contradiction.
Assume to the contrary that
\[
\{v_0\}\times W_\rho\cap\bigcap_{\o:\h\o {\rm\: does\: not\: contain\: g}}E_{\o}=\{(v_0,w)\}
\]
for a point $w\in W_\rho$.
Then $w$ is fixed by all elements of $S'$ except maybe for $g$,
which contradicts to property 6 in Proposition \ref{pr_generators}.
So far we showed all required properties of $\o$ except that $\f_v(\o)w$ belong
to the right set $K_{g'}$.
However, by properties 2 and 5 in Proposition \ref{pr_generators} there are at
least two letters that we can append to $\o$ to fulfill these last requirement as well.
For at least one of these two, the word stays reduced.

Now consider a reduced word $g_l\cdots g_1$ for which
$\f_v(g_l\cdots g_1)w=w$.
Then we also have for all $1\le j<l/k$
\be
\label{eq_fixed}
\f_v(g_{jk}\cdots g_{1}g_l\cdots g_{jk+1})
(\f_v(g_{jk}\cdots g_1)w)=\f_v(g_{jk}\cdots g_1)w.
\ee
The above argument shows that out of the $(|S'|-1)^k$
possibilities for $g_{(j+1)k}\cdots g_{jk+1}$, there
is at least one for which (\ref{eq_fixed}) does not hold
since the vector on the left hand side is longer than the one on the right.
Although $g_{jk}\cdots g_{1}g_l\cdots g_{jk+1}$ may not
be reduced, if $j<l/k-1$, we still get a reduced word ending
with $g_{(j+1)k}\cdots g_{jk+1}$ after all possible reductions.
This shows that
\[
|\{g\in B_l|\f_v(g)w=w\}|
<\left(\frac{(|S'|-1)^k-1}{(|S'|-1)^k}\right )^{l/k-2}|B_l|
\]
giving the second half of the proposition.
\end{proof}

\subsection{Proof of Proposition \ref{pr_escp}}
\label{sc_proof4}

We show that the proposition holds if
$S'$ is the set of generators constructed
in Proposition \ref{pr_pingpong}.
Let $q$ be a square-free integer and $H<\pi_q(\Ga)$ a subgroup.
Denote by $q_1$ the product of those prime factors of $q$
which are large enough so that Propositions \ref{pr_descr}
and \ref{pr_pingpong} hold.
Let $H_1=\pi_{q_1}(H)<\pi_{q_1}(\Ga)$.
Then clearly
\[
[\pi_{q_1}(\Ga):H_1]\ge q_1/q[\pi_q(\Ga):H]\gg [\pi_q(\Ga):H],
\]
hence if we show the claim for $q_1$ and $H_1$, it will
follow for $q$ and $H$ as well with a worse implied constant.
Form now on we assume that $q=q_1$.

Let $H^\sharp$ be the subgroup corresponding to $H$ in Proposition
\ref{pr_descr}.
Let $l\le c_1\log [\pi_q(\Ga):H]$ be an integer,
where $c_1$ is a sufficiently small constant.
Let $h\in\Ga$ be such that $\pi_q(h)\in H^\sharp$ and $h\in B_{l}$,
where $B_{l}$ is the set of reduced words of length
$l$ over the alphabet $S'$.
If $c_1$ is small enough, then $\|h\|<[\pi_q(\Ga):H^\sharp]^\d$
with the same $\d$ for which Proposition \ref{pr_descr}
holds.
Then by definition, $h\in\LL_\d(H^\sharp)$.
If we combine Propositions \ref{pr_descr} and \ref{pr_pingpong},
then we get
\[
|B_{l}\cap\LL_\d(H^\sharp)|<|B_{l}|^{1-c_2}
\]
for some $c_2>0$.

Write $|S'|=2M$
Set $P_k(l)=\chi_{S'}^{(2k)}(\o)$, where $\o\in B_l$.
Since $|B_l|=2M(2M-1)^{l-1}$ for $l\ge1$,
\be
\label{eq_psi}
1=P_k(0)+\sum_{l\ge1}|B_l|P_k(l).
\ee
By a result of Kesten \cite[Theorem 3.]{Kes}, we have
\[
\limsup_{k\to\infty} (P_k(0))^{1/k}=(2M-1)/M^2.
\]
From general properties of Markov chains (see \cite[Lemma 1.9]{Woe})
it follows that
\[
P_k(0)\le\left(\frac{2M-1}{M^2}\right)^k.
\]
Since $\chi_{S'}^{(2k)}$ is
symmetric, we have $P_k(0)=\sum_g[\chi_{S'}^{(k)}(g)]^2$, hence
$P_k(l)\le P_k(0)$ for all $l$
by the Cauchy-Schwartz inequality.
Now we can write for $k\le c_1\log q/2$:
\bean
\chi_{S'}^{(2k)}(\LL_\d(H^\sharp))&=&\sum_l|B_l\cap \LL_\d(H^\sharp)|P_k(l)\\
&\le&\sum_l|B_l|^{1-c_2}P_k(l)\\
&\le&\sum_{l\le k/10}(2M)^{l}\left(\frac{2M-1}{M^2}\right)^k\\
&&+(2M-1)^{-c_2k/10}
\sum_{l\ge k/10}|B_l|P_k(l)\\
&<&\frac{(2M)^{11k/10+1}}{M^{2k}}+(2M-1)^{-c_2k/10}.
\eean
The inequality between the third and fourth lines follows form
(\ref{eq_psi}).

Note that $\pi_q[\chi_{S'}^{(2k)}](H^\sharp)$
is non-increasing with $k$.
Let $g\in\pi_q(H)$, since $S'$ is symmetric, we have
\[
\pi_q[\chi_{S'}^{(2k)}](gH^\sharp)^2\le \pi_q[\chi_{S'}^{(4k)}](H^\sharp).
\]
Thus
\[
\pi_q[\chi_{S'}^{(2k)}](H)\le C^n\left(\pi_q[\chi_{S'}^{(4k)}](H^\sharp)\right)^{1/2},
\]
and this finishes the proof, since any positive power of $q$ dominates $C^n$ if $q$
is large enough.

\section{Growth of product-sets}
\label{sc_product}

In this section we aim to prove Proposition \ref{pr_flatening},
but first we recall some results we will need later on.
We fix a square-free integer $q=p_1\cdots p_n$, and assume that
each prime divisor $p_i$ of $q$ is bigger than some large but fixed
constant.
Then as we saw at the beginning of Section \ref{sc_escp},  we have
\[
G:=\pi_q(\Ga)=G_{p_1}\times\ldots\times G_{p_n},
\]
where $G_{p_i}=\pi_{p_i}(\Ga)=L_{p_i}\ltimes U_{p_i}$ and $L_{p_i}$
is a product of quasi-simple groups generated by their elements
of order $p_i$ and $U_{p_i}$ is a $p_i$-group.
Furthermore, we have that $U_{p_i}/[U_{p_i}:U_{p_i}]$
is isomorphic to $(\F_{p_i}^{d_i},+)$ for some integers $d_i$
which are bounded independently of $q$.
We write $L=L_{p_1}\times\ldots\times L_{p_n}$
and $U=U_{p_1}\times\ldots\times U_{p_n}$.
The following result is a  statement about abstract groups
satisfying certain assumptions that we will recall later
in Section \ref{sc_assump}.
We will also show there that the group $L$
satisfy these assumptions with parameters that are
independent of $q$.

\begin{prpb}[{\cite[Proposition 14]{Var}}]
Let $G$ be a group satisfying (A0)--(A5).
For any $\e>0$ there is a $\d>0$ depending only on $\e$
and the constants in (A0)--(A5) such that the following holds.
If $A\subseteq G$ is symmetric such that
\[
|A|<|G|^{1-\e}\quad {\rm and}\quad \chi_A(gH)<[G:H]^{-\e}|G|^{\d}
\]
for any $g\in G$ and any proper $H<G$,
then $|\prod_3 A|\gg|A|^{1+\d}$.
\end{prpb}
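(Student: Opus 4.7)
The plan is to argue by contradiction in the style of the square-free sum-product theorem of \cite{BGS}, with the single-factor input replaced by the Breuillard-Green-Tao / Pyber-Szab\'o triple product theorem for finite simple groups of Lie type of bounded rank (Theorem C referenced in the excerpt). Suppose $|\prod_3 A| < |A|^{1+\delta}$ for a sufficiently small $\delta$ to be chosen later. The Pl\"unnecke-Ruzsa inequality then gives polynomial control $|\prod_k A| \ll |A|^{1+O_k(\delta)}$ for every fixed $k$, which will be the working estimate throughout the argument.

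For each factor $G_i$ in the product decomposition guaranteed by (A0)--(A5) (ultimately to be applied with $G = L = \prod_{p \mid q} L_p$), consider the projection $\pi_i : G \to G_i$. Theorem C asserts that either $|\pi_i(A)^3| \geq |\pi_i(A)|^{1+\delta_0}$, with $\delta_0$ depending only on the rank, or $\pi_i(A)$ is essentially contained in a coset of a proper subgroup $H_i < G_i$. If the first alternative occurs for sufficiently many indices $i$, one lifts the growth to $G$ via a Pl\"unnecke-type covering argument applied factor by factor, and contradicts the polynomial upper bound coming from Pl\"unnecke-Ruzsa. If instead the second alternative holds at every $i$, then piecing together the $H_i$ (taking the full factor at indices where no useful $H_i$ is available) yields a candidate proper subgroup $H = \prod_i H_i < G$ together with a coset $gH$ in which $A$ concentrates.

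The heart of the proof is the multiplicative bookkeeping needed to show that this concentration contradicts the hypothesis $\chi_A(gH) < [G:H]^{-\varepsilon}|G|^\delta$. One must verify, factor by factor, that concentration of $\pi_i(A)$ in a coset of $H_i$ with index $[G_i:H_i]$ contributes at least a factor $[G_i:H_i]^{1-O(\delta)}$ to the ratio $|G|/|A \cap gH|$ (relative to the baseline coming from the assumption $|A| < |G|^{1-\varepsilon}$), so that the accumulated loss across all primes exactly matches $[G:H]$ and the coset hypothesis is violated. This requires combining the Balog-Szemer\'edi-Gowers theorem (to upgrade mere product-set bounds into the existence of a large approximate group, inside which the projections are genuinely structured) with the Ruzsa covering lemma, before applying Theorem C to each projection.

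The main obstacle is the $n$-uniformity: the number of factors in $G = G_{p_1} \times \cdots \times G_{p_n}$ is unbounded, whereas the output $\delta$ must depend only on $\varepsilon$ and on the constants in (A0)--(A5). The argument therefore cannot afford any loss that grows with $n$, so gains and losses must be tallied \emph{multiplicatively} across factors, with each $G_i$'s contribution to $[G:H]$ faithfully translated into either a gain in $|\prod_3 A|$ or a loss in the density of the coset. This is precisely the square-free accounting trick from \cite{BGS}, adapted from its original abelian setting of $\Z/q\Z$ to the non-abelian setting of products of finite simple groups of Lie type; apart from that key idea, the proof reduces to well-established Pl\"unnecke-Ruzsa and Balog-Szemer\'edi-Gowers manipulations combined with Theorem C as a black box.
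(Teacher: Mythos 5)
You should first note that the paper you are writing into does not actually prove Proposition~B: it is imported verbatim from \cite[Proposition 14]{Var} and used as a black box, so the relevant comparison is with the proof there, which is a measure-level induction closely modelled on the square-free sum-product theorem of \cite{BGS} and which makes essential use of all of the assumptions, in particular the per-factor flattening (A4), the quasirandomness bound (A3), and the subgroup hierarchy (A5). Your sketch names the right circle of ideas but leaves the one step that constitutes the actual difficulty unproved, and in the form stated that step fails.

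Concretely: from $|\prod_3 A|<|A|^{1+\delta}$ nothing useful passes to the projections. One only gets $|\prod_3\pi_i(A)|\le|\prod_3 A|<|A|^{1+\delta}$, and since $|A|$ is typically vastly larger than $|\pi_i(A)|$ this is no bound of the form $|\prod_3\pi_i(A)|<|\pi_i(A)|^{1+\delta'}$; conversely, growth of a single projection is negligible against $|A|^{1+\delta}$ and cannot be ``lifted'' to global growth by covering arguments, whose losses would in any case multiply over the $n$ factors and reintroduce exactly the $n$-dependence you identify as fatal. Moreover the dichotomy you attribute to Theorem~C is not what it says: for a generating set the alternative to growth is $\prod_3\pi_i(A)=G_i$, not concentration in a coset, and a purely factor-by-factor analysis is blind to the main enemy, namely $A$ essentially equal to a proper subgroup $H<G$ that surjects onto every factor (a diagonal-type subgroup correlating isomorphic or related factors). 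In that situation every projection is all of $G_i$, there is no growth anywhere, and your recipe $H=\prod_i H_i$ returns $H=G$, so no contradiction with $\chi_A(gH)<[G:H]^{-\varepsilon}|G|^{\delta}$ is reached. Detecting precisely such subgroups is where (A1), (A5), and the Gowers-type quasirandomness (A3) for the nearly full factors are indispensable in \cite{Var} (and where, in the present paper's application, the fiber-regularization of \cite[Lemma 5.2]{BGS} enters); your proposal never invokes these assumptions, and the ``multiplicative bookkeeping'' that would have to replace them is asserted rather than carried out. As written, the proposal is a plan for a proof rather than a proof.
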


Assumptions (A0)--(A5) are more or less straightforward to check except for
(A4) which basically boils down to showing Proposition B
for quasi-simple groups that are the direct factors of $L_{p_i}$.
This statement was first proved by Helfgott for $\SL_2(\Z/p\Z)$
\cite{Hel} and for $\SL_3(\Z/p\Z)$ \cite{Hel2}, and later
it was extended by Dinai \cite{Din2}
for $\SL_2(\F_q)$ for an arbitrary finite field
$\F_q$.
Now the statement is known for all finite simple groups of Lie type
due to a recent breakthrough by Breuillard, Green, Tao
\cite{BGT}, and Pyber, Szab\'o \cite{PS}.
We can either use \cite[Theorem 4]{PS} or \cite[Corollary 2.4]{BGT}.
The statement in the formulation of \cite[Theorem 4]{PS} is the following.
\begin{thmc}
Let $L$ be a simple group of Lie type of rank $r$ and $A$ a generating set of $L$.
Then either $\Pi_3 A=L$ or $|\Pi_3 A|\gg |A|^{1+\vare_0}$, where
$\vare_0$ and the implied constant depend only on $r$.
\end{thmc}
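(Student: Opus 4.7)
The plan is to follow the strategy of Pyber-Szab\'o and Breuillard-Green-Tao, which extends Helfgott's original argument for $\SL_2$. Since $L$ is a finite simple group of Lie type of bounded rank $r$, it arises (up to a central quotient of bounded order) as the group of $\mathbb{F}_q$-points of a simple algebraic group $\mathbf{L}$ whose complexity is bounded in terms of $r$. The heart of the matter is to establish a quantitative dichotomy: either the set $A$ is trapped near a proper algebraic subgroup of bounded complexity (in which case $\prod_3 A = L$ forces $|A|$ to be comparable to $|L|$ and there is nothing to prove), or $|\prod_3 A|$ grows polynomially beyond $|A|$.

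The first main technical ingredient I would set up is a Larsen-Pink type dimension estimate: for any proper subvariety $V \subseteq \mathbf{L}$ of complexity at most $M$, and any symmetric set $A \subseteq L$ generating $L$ with tripling at most $|A|^{\vare}$, one has
\[
|A \cap V(\mathbb{F}_q)| \ll |A|^{\dim V/\dim \mathbf{L} + O(\vare)}
\]
with implied constant depending only on $r$ and $M$. This is proved by a delicate induction on dimension, and its proof is intertwined with the second main ingredient, an escape-from-subvarieties principle: because $A$ generates $L$, one can find a bounded-length product of elements of $A$ lying outside any prescribed proper subvariety of bounded complexity (in the spirit of the result of Eskin-Mozes-Oh already invoked earlier in the paper).

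Given these tools the main argument proceeds by contradiction. Suppose $|\prod_3 A| \leq |A|^{1+\vare_0}$ for $\vare_0 = \vare_0(r)$ small. By Pl\"unnecke-Ruzsa, $|\prod_k A|$ remains comparable to $|A|$ for all bounded $k$. Using escape, locate a regular semisimple element $g \in \prod_k A$ whose centralizer is a maximal torus $T$; its conjugation orbit has codimension $r$. Applying the dimension estimate to translates of $T$ and to the variety of commuting pairs, combined with a pivot argument that swaps $T$ for a non-commuting conjugate $gTg^{-1}$, forces a large portion of $A$ to concentrate inside a Borel subgroup. Iterating this reduction with appropriately chosen pivots ultimately contradicts the hypothesis that $A$ generates all of $L$.

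The hard part is the Larsen-Pink dimension estimate with constants depending only on $r$, uniform in the characteristic. This is the technical core of both \cite{BGT} and \cite{PS}; the difficulty in positive characteristic stems from pathological subgroup schemes (Frobenius-twisted subgroups, bad primes in the root datum, failure of Lie-algebraic arguments for small $p$) which cannot be handled by naive means. BGT circumvent this via a model-theoretic ultraproduct compactness reduction to the characteristic-zero setting, whereas Pyber and Szab\'o give a direct algebraic proof through careful manipulation of centralizers and CCC-subgroups; either route is substantially more involved than any other step in the proof.
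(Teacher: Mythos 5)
There is nothing in the paper for your argument to be compared against: Theorem C is not proved in this paper at all. It is imported verbatim from the literature --- the authors state explicitly that the triple product theorem for finite simple groups of Lie type of bounded rank is due to Breuillard--Green--Tao \cite{BGT} and Pyber--Szab\'o \cite{PS}, that one may use either \cite[Theorem 4]{PS} or \cite[Corollary 2.4]{BGT}, and that ``these results are used as black boxes in our paper.'' So the paper's ``proof'' is a citation, and the right move when reproducing this paper is to do the same.

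Judged as a stand-alone proof, your proposal has a genuine gap: it is an outline of the strategy of \cite{BGT} and \cite{PS} in which every step carrying actual content is asserted rather than established. The uniform (in $q$) Larsen--Pink-type nonconcentration estimate, the escape-from-subvarieties machinery, and the pivot/structure argument are exactly the technical core of those papers, and you yourself defer them (``either route is substantially more involved than any other step''), so in effect you are also using the result --- or its hardest ingredient --- as a black box, just with more words around it. Two specific points in the sketch are also off: the endgame of neither cited proof runs through forcing $A$ to concentrate in a Borel subgroup (the arguments work with regular semisimple elements, maximal tori and conjugacy-class counting, concluding via the nonconcentration estimate applied to tori and their conjugates, not via solvable subgroups); and ``Pl\"unnecke--Ruzsa'' is a commutative statement --- for a symmetric set in a nonabelian group the control of $\prod_k A$ by $|\prod_3 A|/|A|$ is the Ruzsa triangle/covering argument (the inequality (\ref{eq_Hel}) quoted from Helfgott in Section \ref{sc_product}). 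If your intent is to reprove Theorem C, the entire difficulty remains; if your intent is to match the paper, a citation of \cite{PS} or \cite{BGT} is what is required.
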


The following useful Lemma is based on the Balog-Szemer\'edi-Gowers Theorem
and it is implicitly contained in
\cite{BG1}.
\begin{lemd}[{\cite[Lemma 15]{Var}}]
Let $\mu$ and $\nu$ be two probability measures on an arbitrary
group $G$ and let $K>2$ be a number.
If
\[
\|\mu*\nu\|_2>\frac{\|\mu\|_2^{1/2}\|\nu\|_2^{1/2}}{K}
\]
then there is a symmetric set $A\subset G$ with
\[
\frac{1}{K^R\|\mu\|_2^2}\ll |A|\ll \frac{K^R}{\|\mu\|_2^2},
\]
\[
|{\textstyle\prod_3 A}|\ll K^R|A|\quad {\rm and}
\]
\[
\min_{g\in A}\left(\wt \mu *\mu\right)(g)\gg \frac{1}{K^R|A|},
\]
where $R$ and the implied constants are absolute.
\end{lemd}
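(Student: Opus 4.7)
The plan is to reduce to the non-commutative Balog--Szemer\'edi--Gowers theorem of Tao.  The starting point is the elementary identity
\[
\|\mu*\nu\|_2^2 = \langle \wt\mu*\mu,\,\nu*\wt\nu\rangle,
\]
obtained by expanding both sides and a change of variables.  Combined with the hypothesis this yields
\[
\langle \wt\mu*\mu,\,\nu*\wt\nu\rangle > \frac{\|\mu\|_2\|\nu\|_2}{K^2}.
\]
Both $\wt\mu*\mu$ and $\nu*\wt\nu$ are symmetric probability measures on $G$ with $\ell^\infty$-norms bounded by $\|\mu\|_2^2$ and $\|\nu\|_2^2$ respectively, so the estimate expresses that these two functions are simultaneously large on a substantial part of $G$.

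I would then dyadically decompose the values of $\wt\mu*\mu$ and $\nu*\wt\nu$.  By a Markov-type bound, values of $\wt\mu*\mu$ below $K^{-C}\|\mu\|_2^2$ (and analogously for $\nu*\wt\nu$) cannot collectively contribute more than a small fraction of the pairing, so after truncating these levels only $O(\log K)$ dyadic levels remain on each side.  Pigeonholing over this reduced range produces level sets $A_0$, $B_0$ on which $\wt\mu*\mu\sim\alpha$ and $\nu*\wt\nu\sim\beta$ uniformly, with $\alpha\gg K^{-O(1)}\|\mu\|_2^2$, $\beta\gg K^{-O(1)}\|\nu\|_2^2$, and
\[
\alpha\beta\,|A_0\cap B_0| \gg K^{-O(1)}\|\mu\|_2\|\nu\|_2.
\]
The symmetry of $\wt\mu*\mu$ makes $A_0$ automatically symmetric in $G$, and the bound $\alpha|A_0|\le 1$ gives the upper estimate $|A_0|\ll K^{O(1)}/\|\mu\|_2^2$.

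The core step is Tao's non-commutative Balog--Szemer\'edi--Gowers theorem.  The inequality above says that a positive proportion of pairs $(a,b)\in A_0\times B_0$ land in a small common set, which is exactly the input from which the theorem extracts a symmetric refinement $A\subset A_0$ of comparable size with $|\prod_3 A|\ll K^{O(1)}|A|$.  Every element of $A$ still satisfies $(\wt\mu*\mu)(g)\sim\alpha$, so the inequality $\min_{g\in A}(\wt\mu*\mu)(g)\gg\alpha\gg 1/(K^{O(1)}|A|)$ is automatic, and the two-sided control of $|A|$ follows from $|A|\asymp|A_0|$ combined with $\alpha|A_0|\asymp 1$, both up to $K^{O(1)}$ factors.

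The main technical hurdle is keeping all losses polynomial in $K$.  Naive dyadic pigeonholing would cost a factor of $\log|G|$, which is incompatible with the $K^R$ form of the conclusion.  The truncation step above is precisely what saves this: values of either function below $K^{-C}\|\cdot\|_2^2$ cannot collectively dominate the pairing, so the effective number of dyadic levels drops from $O(\log|G|)$ to $O(\log K)$, and the residual logarithmic factor is absorbed into a polynomial power of $K$, yielding the required $K^R$ dependence with an absolute constant $R$.
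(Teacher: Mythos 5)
First, a remark on the source: the paper does not prove Lemma D at all --- it is imported as a black box from \cite[Lemma 15]{Var} (and is implicit in \cite{BG1}) --- so your attempt has to be measured against that cited argument. Your opening identity $\|\mu*\nu\|_2^2=\langle\wt\mu*\mu,\nu*\wt\nu\rangle$, the truncation, and the pigeonholing over $O(\log K)$ dyadic levels are fine (for the truncation you also need $\|\mu\|_2$ and $\|\nu\|_2$ to be comparable up to a factor $K^2$, which follows from $\|\mu*\nu\|_2\le\min(\|\mu\|_2,\|\nu\|_2)$; a small omission). The genuine gap is the step where you invoke Tao's non-commutative Balog--Szemer\'edi--Gowers theorem. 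What you have actually established at that point is only that the two fixed functions $\wt\mu*\mu$ and $\nu*\wt\nu$ are simultaneously of near-maximal size on a large common set, i.e.\ that $\alpha\beta|A_0\cap B_0|$ is large. Your paraphrase ``a positive proportion of pairs $(a,b)\in A_0\times B_0$ land in a small common set'' is neither what you proved nor what the BSG theorem takes as input: Tao's theorem requires a multiplicative-energy bound, namely that the number of quadruples $(a_1,a_2,b_1,b_2)\in A\times A\times B\times B$ with $a_1b_1=a_2b_2$ is at least $|A|^{3/2}|B|^{3/2}K^{-O(1)}$, and no such collision estimate for $A_0,B_0$ (or for any sets) follows from the largeness of $|A_0\cap B_0|$ --- the data you retain says nothing about how products of group elements recombine. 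Moreover the conclusion of the theorem is also not what you use: it produces a $K^{O(1)}$-approximate group $H$ together with translates $xH$, $Hy$ meeting $A$ and $B$ in large proportions; it does not return a symmetric refinement $A\subset A_0$ of the popular-value set with $|\prod_3 A|\ll K^{O(1)}|A|$. Consequently the ``automatic'' bound $\min_{g\in A}(\wt\mu*\mu)(g)\gg\alpha$ has no basis either, since it rested on having $A\subset A_0$.

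The cited proof diverges from yours at exactly this point: one dyadically decomposes $\mu$ and $\nu$ themselves (not their self-convolutions), discards the negligible levels as you do so that only $O(\log K)$ levels remain, and pigeonholes to find level sets $D_i=\{x:\mu(x)\approx 2^{-i}\}$ and $D_j'=\{x:\nu(x)\approx 2^{-j}\}$, each carrying mass $\gg K^{-O(1)}$, for which the convolution of the restricted measures still has large $\ell^2$ norm. Because the measures are essentially constant on these sets, this is equivalent to a multiplicative-energy bound for the sets $D_i,D_j'$, and Tao's theorem applies to them. The symmetric set $A$ of the lemma is then manufactured from the resulting approximate group $H$ (equivalently, by comparing a superlevel set of $\wt\mu*\mu$ with $H^{-1}H$ and the translate $xH$ capturing a large part of $D_i$), and each of the three displayed conclusions --- the two-sided size bound, the tripling bound, and the pointwise lower bound for $\wt\mu*\mu$ on $A$ --- is verified separately; none of them is automatic. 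Your outline is missing precisely this mechanism for converting the $\ell^2$ information into a set-level energy statement and then back into a set on which $\wt\mu*\mu$ is pointwise large.
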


Using the Lemma it is very easy to deduce Proposition
\ref{pr_flatening} from the following

\begin{prp}
\label{pr_product}
Let $\G$ be a Zariski-connected
perfect algebraic group defined over $\Q$.
Let $\Ga<\G(\Q)$ be a finitely generated Zariski-dense subgroup.
Then for any $\e>0$, there is some $\d>0$ depending only on
$\e$ and $\G$
such that the following holds.
Let $q$ be a square-free integer without small prime factors.

If $A\subseteq G_q$ is symmetric such that
\[
|A|<|G_q|^{1-\e}\quad {\rm and}\quad \chi_A(gH)<[G_q:H]^{-\e}|G_q|^{\d}
\]
for any $g\in G_q$ and any proper $H<G_q$,
then $|\prod_3 A|\gg|A|^{1+\d}$.
\end{prp}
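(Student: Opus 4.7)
The plan is to reduce Proposition~\ref{pr_product} to Proposition~B, which we have as a black box for groups satisfying (A0)--(A5). The semisimple factor $L = \prod_{p \mid q} L_p$ of $G_q = L \ltimes U$ fits into this framework, with axiom (A4) supplied by Theorem~C; indeed, as the authors remark, when $\bbg$ is semisimple the proposition follows almost immediately from these two inputs. The novel work is to transfer growth from $L$ back to $G_q$ in the presence of the unipotent radical $U$. Let $\pi_L : G_q \to L$ denote the projection and let $A \subseteq G_q$ satisfy the hypotheses of the proposition; set $B := \pi_L(A)$. A first observation is that $B$ inherits non-concentration on proper subgroups of $L$: any proper $H' < L$ pulls back to the proper subgroup $H' \cdot U < G_q$ of the same index, and cosets map to cosets, so non-concentration of $A$ forces non-concentration of $B$.

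I would then split into two cases according to the size of $B$. In Case~1 (``semisimple dominates''), when $|B| \ge |A|^{1-\eta}$ for a small parameter $\eta$ to be fixed later, $B$ satisfies the hypotheses of Proposition~B on $L$ with slightly weakened but still positive parameters, yielding $|\prod_3 B| \gg |B|^{1+\d_1}$. Since $\pi_L(\prod_3 A) \supseteq \prod_3 B$,
\[
|{\textstyle\prod_3 A}| \ge |{\textstyle\prod_3 B}| \gg |B|^{1+\d_1} \ge |A|^{(1-\eta)(1+\d_1)},
\]
and for $\eta$ small enough this is $\gg |A|^{1+\d}$ with some $\d > 0$, finishing this case.

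In Case~2 (``unipotent dominates''), when $|B| < |A|^{1-\eta}$, pigeonhole gives some $g \in L$ with $|A \cap g U| \ge |A|/|B| \ge |A|^{\eta}$; translating, we obtain a large subset $A' \subseteq U$. To produce growth here I would exploit additional structure: $U$ is nilpotent of bounded class, each graded piece of the lower central series is (by Lemma~\ref{l:LowerCentralSeries}) an $\F_p$-vector space, and $L$ acts on $U$ by conjugation. I would iterate along the lower central series: at each step I would use conjugation of $A'$ by elements $\gamma \in A$ whose $L$-projection is rich (which it is by the non-concentration inherited by $B$), spreading the image of $A'$ in the abelian quotient $U/\gamma_{i+1}(U) \twoheadrightarrow U/\gamma_i(U)$ beyond any proper $L$-invariant subspace, and then apply an additive sum-product-style argument in the $\F_p$-vector quotient --- in the spirit of Alon, Lubotzky and Wigderson~\cite{ALW} --- to extract polynomial growth; the Ruzsa/Plünnecke calculus then transports this back to $G_q$.

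The main obstacle will be Case~2: turning the non-concentration of $A$ in $G_q$ into effective non-concentration for the $U$-fiber $A'$ with respect to both $L$-invariant and arbitrary proper affine subspaces of the $\F_p$-vector quotients of $U$, and then producing a quantitative growth bound using the semisimple action while keeping all small parameters consistent through the iteration up the central series. Once this ingredient is in place, combining it with Case~1 through the Balog-Szemer\'edi-Gowers machinery of Lemma~D should give the desired bound $|\prod_3 A| \gg |A|^{1+\d}$.
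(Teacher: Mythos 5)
There is a genuine gap, and it sits exactly where the paper has to work hardest. In your Case~1 you apply Proposition~B to $B=\pr(A)\subseteq L$, but Proposition~B also requires $|B|<|L|^{1-\e'}$, which nothing in your case hypothesis $|B|\ge|A|^{1-\eta}$ guarantees. When $B$ is essentially all of $L$ no growth inside $L$ is possible, and your only lower bound, $|\prod_3 A|\ge|\prod_3 B|\le |L|$, is useless: the hypothesis on $A$ is only $|A|<|G_q|^{1-\e}$ with $|G_q|=|L|\,|U|$, so $|A|$ can be much larger than $|L|$ (e.g. $|A|\approx|L|\,|U|^{1/2}$ with $\pr(A)=L$), and then $|L|\not\gg|A|^{1+\d}$. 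Depending on the relative sizes of $|L|$ and $|U|$ this configuration can land in your Case~1, where the argument simply breaks. This is precisely the regime the paper treats by a different case split: whether $|\pr(A)|$ is within a factor $q^{-\e/10NC}$ of $|L|$ or not. In the ``small projection'' case Proposition~B legitimately applies (the case hypothesis supplies $|\pr(A)|<|L|^{1-\e'}$), and the growth is lifted to $A$ through a large $U$-fiber plus the tripling inequality (\ref{eq_Hel}); in the ``dense projection'' case one uses a regularization from \cite{BGS} and Gowers' quasirandomness to get $\pr(\pi_{q_2}(A.A.A))=L_{q_2}$, and only then attacks the unipotent direction.

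Your Case~2 does not repair this, for two reasons. First, the problematic regime above need not satisfy $|B|<|A|^{1-\eta}$, so it may never reach your Case~2 at all. Second, even where it applies, your unipotent step is a plan rather than a proof: the decisive quantitative ingredient (non-concentration of the fiber $A'$ in the $\F_p$-vector quotients and a growth bound propagated up the lower central series) is exactly what you leave open, and the sum-product/Pl\"unnecke route you sketch is not how one gets it. The paper's Proposition \ref{pr_unipotent} needs as input that the set surjects onto $L$ (this is why Gowers comes first), and then proceeds not by sum-product but by Farah's stability theorem for approximate homomorphisms (Lemma \ref{lm_Farah}) to produce an element of $A.A.A$ with trivial $L$-part and large support, an orbit-sum lemma (Lemma \ref{lm_normal}, via Corollary \ref{c:OrbitSumsNoTrivialFactor}) to recover the normal subgroup it generates inside a bounded product, an iteration over a chain of normal subgroups to fill $U/[U,U]$ modulo a small divisor, and finally the commutator argument of Lemma \ref{lm_nilpotent} to pass from $U/[U,U]$ to $U$; growth of $A$ then follows from $|A|<|G_q|^{1-\e}$ and (\ref{eq_Hel}). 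Without an argument of this strength (or a worked-out substitute), your proposal does not establish Proposition \ref{pr_product}.
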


We defer the proof to the following sections, and now we show how it implies the

\begin{proof}[Proof of Proposition \ref{pr_flatening}]
Assume that the conclusion of the proposition fails, i.e.
that there is an $\e$ such that for any $\d$, there
is a $q$ and there are probability measures $\mu$ and $\nu$ with
\[
\|\mu\|_2>|G_q|^{-1/2+\e}\quad{\rm and}\quad
\mu(gH)<[G_q:H]^{-\e}
\]
for any $g\in G_q$ and for any proper $H<G_q$, and yet
\[
\|\mu*\nu\|_2\ge\|\mu\|_2^{1/2+\d}\|\nu\|_2^{1/2}.
\]
Take $K=\|\mu\|_2^{-\d}$ in Lemma D.
Note that by the third property of the set $A$,
we have
\[
\chi_A(gH)\ll K^R\wt \mu*\mu(gH)\le K^R\max_{h\in G_q} \mu(hH)
\ll|G_q|^{R\d}[G_q:H]^{-\e}.
\]
Now $|\prod_3 A|\ll K^R|A|$ contradicts Proposition \ref{pr_product},
if $\d$ is small enough.
In fact, when $q$ contains small prime factors,
Proposition \ref{pr_product} does not apply, but we still get a
contradiction for $\pi_{q'}(A)$ and the group $G_{q'}$, where
$q'$ is the product of the prime factors of $q$ which are not
too small for Proposition \ref{pr_product}.
Also note that when $q$ is smaller than a fixed constant
we can get the contradiction by the trivial inequality
$|\prod_3 A|\ge|A|+1$.
\end{proof}

\subsection{Growth in the unipotent radical}

As mentioned before, Proposition B and Theorem C
together imply Proposition \ref{pr_product}, when
$\G$ is semisimple.
When the unipotent radical $\U$ is nontrivial, we need to do some
work which is carried out in this section.
Recall the definition of $G$ and $L$ from the beginning of Section
\ref{sc_product}.
Denote by $\rm pr$ the projection homomorphism $G\to L$.

The purpose of this section is to prove the following

\begin{prp}
\label{pr_unipotent}
For every $\e>0$ there is an integer $C$ such that the following holds.
Let $A\subseteq G$ be a symmetric set such that ${\rm pr}(A)=L$,
and
\[
\chi_A(gH)<[G:H]^{-\e}|G|^{1/C}
\]
for any $g\in G$ and any proper $H<G$.
Then $\pi_{q_1}[\prod_C A]= G_{q_1}$ for some $q_1>q^{1-\e}$.
\end{prp}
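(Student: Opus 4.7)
The plan is to prove Proposition \ref{pr_unipotent} by reducing to the case of abelian unipotent radical, bounding the primes where bounded-length products of $A$ fail to fill $V_p = U_p$, and combining primewise surjectivity via Goursat's lemma. Since the nilpotency class of $U$ is bounded independently of $q$, I induct on the layers $\gamma_i(U)/\gamma_{i+1}(U)$---using commutators of suitable elements of $\prod_k A$ to produce elements of the next layer---reducing matters to $G = L \ltimes V$ with $V = \prod_p V_p$ and each $V_p$ an $\F_p$-vector space carrying an $L_p$-action.

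In the abelian case, $\pr(A) = L$ gives $\prod_2 A \cap V \neq \emptyset$: pair $a,a' \in A$ with $\pr(a) = \pr(a')$ to get $a(a')^{-1} \in V$. Conjugating $(1,t) \in \prod_2 A \cap V$ by an $A$-element projecting to $\ell \in L$ yields $(1, \ell \cdot t) \in \prod_4 A \cap V$, so all $L_p$-translates of $\pi_p(\prod_2 A \cap V)$ already sit inside $\pi_p(\prod_4 A \cap V)$. Let $V_p^* \leq V_p$ be the $L_p$-submodule they generate, and set $\mathcal{B} = \{p \mid q : V_p^* \neq V_p\}$. For $p \in \mathcal{B}$, the fiber constraint $T_\ell - T_\ell \subseteq V_p^*$ (where $T_\ell$ denotes the fiber of $\pi_p(A)$ over $\ell$) forces $\pi_p(A)$ into a cocycle-twisted union of cosets of $V_p^*$; assembling this across $p \in \mathcal{B}$ produces a proper subgroup $H \leq G$ (possibly after a small cocycle adjustment, or after passage to a bounded product-set of $A$ which better approximates a subgroup) of index $\geq \prod_{p \in \mathcal{B}}[V_p : V_p^*] \geq \prod_{p \in \mathcal{B}} p$. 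The non-concentration hypothesis $\chi_A(gH) < [G:H]^{-\e}|G|^{1/C}$ then forces $\prod_{p \in \mathcal{B}} p \leq |G|^{O(1/(C\e))}$; since $|G| \leq q^{O(1)}$, for $C$ sufficiently large in terms of $\e$ this is $\leq q^\e$, so $q_1 := \prod_{p \notin \mathcal{B}} p > q^{1-\e}$.

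For $p \notin \mathcal{B}$, $V_p$ is $\F_p$-spanned by $\pi_p(\prod_4 A \cap V)$; a bounded further iteration combined with an additive combinatorics argument inside $V_p$ (using the non-concentration hypothesis on subgroups $L_p \ltimes W$ for proper $L_p$-submodules $W \leq V_p$ to rule out that $\pi_p(A)$ is concentrated on a proper coset) yields $V_p \subseteq \pi_p(\prod_C A)$, and combined with $\pr_p(A) = L_p$ gives $\pi_p(\prod_C A) = G_p$. To upgrade primewise surjectivity to $\pi_{q_1}(\prod_C A) = G_{q_1}$, I appeal to Goursat's lemma on $G_{q_1} = \prod_{p \mid q_1} G_p$: every composition factor of $G_p$ involves the prime $p$ (those of $L_p$ are quasi-simple groups of Lie type over $\F_p$-extensions of order divisible by $p$, and those of $U_p$ are $\Z/p\Z$), so for distinct large primes $p,p'$ the groups $G_p, G_{p'}$ share no composition factors, precluding any non-trivial Goursat-style correlation between the factors.

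The main obstacles are threefold. First, the cocycle analysis in the bad-prime step: when $V_p^* \neq V_p$, $\pi_p(A)$ need not literally sit inside a proper subgroup of $G_p$ because it may wind around cosets of $V_p^*$ via a non-cocycle function, and one must either choose the proper subgroup cohomologically or pass to a controlled product-set whose projection behaves more like a subgroup. Second, covering $V_p$ via bounded $\F_p$-sums of a spanning set is not automatic (naively one would need $p$-many sums to clear $\F_p$-coefficients), so one must exploit the non-concentration hypothesis within the vector-space $V_p$ to ensure that $\pi_p(\prod_k A \cap V)$ is itself dense enough for standard sum-set covering. Third, the passage primewise $\Rightarrow$ joint through Goursat formally concerns subgroups, whereas $\prod_C A$ is only a set; bridging this requires either that the spanning argument produces an actual subgroup of $G$ inside $\prod_C A$, or a direct combinatorial argument ruling out correlations between primes by re-invoking non-concentration on cosets of Goursat-type subgroups in $G$.
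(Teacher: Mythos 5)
Your outline reproduces the right skeleton (reduce to abelian $U$ via the lower central series, use fibre differences to land in $V$, and play badness aggregated over primes against the non-concentration hypothesis), but both places you flag as ``obstacles'' are exactly where the substance lies, and your proposed fixes do not close them. First, the good-prime step: knowing that $\pi_p(\prod_4 A\cap V)$ generates $V_p$ as an $L_p$-module does not give $V_p$ inside a bounded product, and your plan to get enough density from the non-concentration hypothesis ``within $V_p$'' cannot work, because that hypothesis is essentially vacuous at a single prime: for $H=H_p\times\prod_{p'\neq p}G_{p'}$ one has $[G:H]=[G_p:H_p]\le p^{O(1)}$, so the bound $\chi_A(gH)<[G:H]^{-\e}|G|^{1/C}$ is weaker than the trivial bound $\chi_A(gH)\le 1$ as soon as $q$ has many prime factors; the hypothesis only bites against subgroups whose index aggregates over many primes. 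The paper closes this gap algebraically, not combinatorially: Lemma \ref{l:OrbitSums} uses a $p$-element $g$ of the acting group and the identity $(g^j-1)^k=j^kx^k$ together with the fact that every element of $\F_p$ is a bounded sum of $k$-th powers, to show that boundedly many sums of an orbit already contain the submodule it generates (Corollary \ref{c:OrbitSumsNoTrivialFactor}); this needs that no composition factor of $V_p$ is one-dimensional, which is where perfectness of $\G$ (via Guralnick's complete reducibility theorem) enters --- an ingredient entirely absent from your sketch. Second, the bad-prime step: passing from ``each fibre of $\pi_p(A)$ lies in a coset of $V_p^*$, twisted by a section that need not be a homomorphism'' to ``$A$ is covered by few cosets of an honest proper subgroup of index about $\prod_{p\in\mathcal B}[V_p:V_p^*]$'' is precisely the content of Lemma \ref{lm_Farah}, which rests on Farah's theorem that an approximate homomorphism with respect to the prime-weighted distance (\ref{eq_dist}) is close to an exact homomorphism. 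Your ``small cocycle adjustment, or passage to a bounded product-set'' is a placeholder for this theorem, not a proof; without it the key dichotomy (either $A$ is trapped in few cosets of a genuine subgroup, contradicting non-concentration, or $A.A.A$ contains a kernel element nontrivial at many primes) is not established.

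Two further structural differences matter. The paper does not make a one-shot good/bad split: it iterates, building an increasing chain of normal subgroups $N^{(m)}\le U$ with $(\prod_{C_1}A)N^{(m-1)}\supseteq N^{(m)}$ and $[N^{(m)}:N^{(m-1)}]\ge[U:N^{(m-1)}]^{\e/100d}$, because one application of the Farah step only produces an element whose support has aggregate size a small power of the remaining index, so several rounds are needed before the uncaptured part of $U$ drops below $q^{\e}$. And your step 5 (Goursat plus distinct composition factors) does not apply as stated, since $\prod_C A$ is a set, not a subgroup --- as you yourself note; in the paper the issue never arises because the whole construction is carried out jointly inside $G$: Lemma \ref{lm_normal} produces the full product $N_1\times\cdots\times N_n$ inside a bounded product of conjugates of a single kernel element, using that $\b(A)=\wh L$ lets you prescribe the conjugator's component at every prime simultaneously, so the containment $\pi_{q_1}(\prod_{lC_1}A)\supseteq U_{q_1}$ comes out of the iteration already in joint form. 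So the proposal identifies the correct difficulties but leaves the two decisive lemmas (Farah's approximate-homomorphism theorem and the orbit-sum/Waring lemma with the perfectness input) unproved, and its suggested substitutes would fail.
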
 

We need a couple of  Lemmata.
Let $\wh G=\wh G_1\times\ldots\times\wh G_n$ be a direct product of groups.
For each $i$, let $\b_i:\wh G_i\to \wh L$
be a given homomorphism into a group $\wh L$.
Denote by $\b:\wh G\to\wh L$
the homomorphism induced by the $\b_i$ in the obvious way.
For each $i$ write $\pr_i: \wh G\to \wh G_i$
for the projection homomorphisms.
We introduce the following distance for two elements $g_1,g_2\in \wh G$:
\be
\label{eq_dist}
d(g_1,g_2)=\sum_{i:\pr_i(g_1)\neq\pr_i(g_2)}\log |\Ker(\b_i)|.
\ee

\begin{lem}
\label{lm_Farah}
Let $A\subseteq \wh G$ be a symmetric set with $\b(A)=\wh L$ and $1\in A$.
Assume that for every $g\in A.A.A$ with $\b(g)=1$ we have
$d(1,g)\le\e\log |\Ker (\b)|$ for some $\e>0$.
Then $A$ can be covered with at most $2^n|\Ker (\b)|^{25\e}$
cosets of a subgroup of $\wh G$ of order at most $|\wh L|$.
\end{lem}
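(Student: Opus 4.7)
The approach is to translate the hypothesis on $A^3 \cap \Ker(\beta)$ into fibre bounds for $\beta|_A$, exhibit an ``approximate section'' $T$ of $\beta$ inside $A$, and then upgrade $T$ to a genuine subgroup $N_0 \le \wh G$ of order at most $|\wh L|$ via iterated Ruzsa covering.

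First I would count the $d$-ball. Since $\Ker(\beta) = \prod_i \Ker(\beta_i)$ and $d(1,g)$ depends only on the support $S = \{i : \pr_i(g) \neq 1\}$, any $g$ with $d(1,g) \le \epsilon\log|\Ker(\beta)|$ satisfies $\prod_{i \in S}|\Ker(\beta_i)| \le |\Ker(\beta)|^{\epsilon}$, so at most $|\Ker(\beta)|^{\epsilon}$ elements share any given support $S$. Summing over the at most $2^n$ possible supports and using that $A^3 \cap \Ker(\beta)$ lies in this ball by hypothesis, we get $|A^3 \cap \Ker(\beta)| \le 2^n |\Ker(\beta)|^{\epsilon}$. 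Since $1 \in A$, for each $l \in \wh L = \beta(A)$ I pick $a_l \in A \cap \beta^{-1}(l)$, and the map $a \mapsto a a_l^{-1}$ embeds the fibre $A \cap \beta^{-1}(l)$ into $A^2 \cap \Ker(\beta) \subseteq A^3 \cap \Ker(\beta)$. Hence each fibre has at most $2^n |\Ker(\beta)|^{\epsilon}$ elements, and $A \subseteq (A^2 \cap \Ker(\beta)) \cdot T$, where $T = \{a_l : l \in \wh L\}$ has size $|\wh L|$.

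The crucial algebraic feature is that $a_l a_{l'}^{-1} \in \Ker(\beta)$ forces $l = l'$, so $TT^{-1} \cap \Ker(\beta) = \{1\}$ and $\beta$ restricts to a bijection $TT^{-1} \to \wh L$. Thus $T$ behaves like an approximate section of $\beta$ having exactly $|\wh L|$ elements, and whose doubling $TT^{-1}$ also has $|\wh L|$ elements, one per element of $\wh L$.

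The principal obstacle is to replace $T$ by a \emph{genuine} subgroup $N_0 \le \wh G$ of order at most $|\wh L|$, so that $A$ is covered by at most $2^n |\Ker(\beta)|^{25\epsilon}$ cosets of $N_0$. The plan is to iterate Ruzsa's covering lemma applied to $TT^{-1}$ inside $\wh G$: each covering step refines the choice of representatives $\{a_l\}$ and pushes them deeper into a subgroup contained in a bounded power of $TT^{-1}$, with the injectivity of $\beta$ on such bounded powers (whenever their $\beta$-images remain distinct in $\wh L$) ensuring that the subgroup eventually reached still has order at most $|\wh L|$. Each refinement step contributes a multiplicative factor of at most $|A^2 \cap \Ker(\beta)| \le 2^n|\Ker(\beta)|^{\epsilon}$ to the covering count, and a bounded number of iterations accumulate into the exponent $25$. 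The delicate point, and the main technical difficulty, is arranging the iteration so that the final $N_0$ is genuinely a subgroup of order $\le |\wh L|$ rather than merely an approximate subgroup, while simultaneously keeping the covering bound polynomial in $|\Ker(\beta)|^{\epsilon}$.
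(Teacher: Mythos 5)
There is a genuine gap, and it sits exactly at what you yourself call the ``delicate point''. Your first two paragraphs are fine (the ball count giving $|A.A.A\cap\Ker(\b)|\le 2^n|\Ker(\b)|^{\e}$, and the fibre bound via $a\mapsto aa_l^{-1}$ using $1\in A$), but the lemma is not a covering statement by translates of an approximate section: it demands a \emph{genuine subgroup} of $\wh G$ of order at most $|\wh L|$, i.e.\ essentially a true lifting of $\wh L$ into $\wh G$ that is $d$-close to your section $T$. Your proposal offers no mechanism for producing it. ``Iterated Ruzsa covering'' applied to $TT^{-1}$ naturally yields coverings of $A$ by translates of bounded product-sets $\prod_k(TT^{-1})$, or control by an approximate group, or the subgroup \emph{generated} by $T$ — and that generated subgroup can be far larger than $|\wh L|$ (it may well be all of $\wh G$), while nothing in a covering iteration forces the limit object to be closed under multiplication with order $\le|\wh L|$. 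The exponent $25$ is likewise asserted to ``accumulate over a bounded number of iterations'' without any argument. In addition, your intermediate claim that $\b$ restricts to a bijection $TT^{-1}\to\wh L$ (so that $|TT^{-1}|=|\wh L|$) is false: $TT^{-1}\cap\Ker(\b)=\{1\}$ only says the fibre over $1$ is a singleton; the fibre of $TT^{-1}$ over $l\neq1$ consists of all $a_{l_1}a_{l_2}^{-1}$ with $l_1l_2^{-1}=l$, and distinct such elements differ by elements of $\prod_4 A\cap\Ker(\b)$, which your hypothesis (about $A.A.A$ only) does not control.

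The missing ingredient is precisely a stability theorem for approximate homomorphisms. The paper's proof takes a set-theoretic section $\psi:\wh L\to\wh G$ with $\psi(\wh L)\subseteq A$, uses the hypothesis on $A.A.A\cap\Ker(\b)$ to show that $\psi$ is an $\e\log|\Ker(\b)|$-homomorphism (of type II in Farah's sense) with respect to the weighted Hamming metric $d$, and then invokes Farah's theorem to produce a \emph{true} homomorphism $\f:\wh L\to\wh G$ with $d(\psi(g),\f(g))<24\e\log|\Ker(\b)|$ for all $g$; the subgroup in the conclusion is $\f(\wh L)$ (order $\le|\wh L|$ automatically), and the $2^n|\Ker(\b)|^{25\e}$ count comes from enumerating the possible ``error supports'' $I\subseteq\{1,\dots,n\}$ and the at most $|\Ker(\b)|^{25\e}$ coset corrections for each. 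If you want to avoid Farah's theorem you must prove some substitute rigidity statement of this kind; Ruzsa-type covering alone does not convert an approximate section into an exact one.
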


We will apply this Lemma in the following setting:
We will have normal subgroups $N_{p_i}\lhde U_{p_i}$ which are normal
in $G_{p_i}$ as well, and we will set $\wh G_i={G_{p_i}}/N_{p_i}$ and $\wh L=L$.
The homomorphism $\b_i$ will be the projection  $L_{p_i}\ltimes(U_{p_i}/N_{p_i})\to L_{p_i}$.
The purpose of the lemma is to find an element $g$ in a product-set of $A$
which is in the kernel of $\pr$, but have a large conjugacy class.
In a subsequent lemma we will recover the normal subgroup generated by $g$ in the
product-set of a bounded number of copies of $A$.
This will allow us to increase $N_{p_i}$, and proceed to the next step of the iteration.

\begin{proof}[Proof of Lemma \ref{lm_Farah}]
Let $\psi:\wh L\to \wh G$ be a map such that $\b\circ\psi=Id$, and
$\psi(\wh L)\subseteq A$, this is possible due to the assumption $\b(A)=\wh L$.
By assumption, we have $d(g^{-1}\psi(\b(g)),1)<\e\log |\Ker (\b)|$ for any $g\in A$,
hence
\be
\label{eq_graph}
d(\psi(\b(g)),g)<\e\log |\Ker (\b)|.
\ee
Moreover, for any $g,h\in \wh L$, we have
\bean
d(\psi(g)\psi(h),\psi(gh))<\e\log |\Ker (\b)|\quad{\rm and}\\
d(\psi(g)^{-1},\psi(g^{-1}))<\e\log |\Ker (\b)|.
\eean
These two inequalities mean that $\psi$ is an
$\e |\Ker(\b)|$-homomorphism of type II with respect to $d$
in the sense of Farah, see \cite[Section 1]{Far}.
Then by \cite[Theorem 2.1]{Far}, there is a homomorphism $\f:\wh L\to \wh G$
such that
\[
d(\psi(g),\f(g))<24\e\log |\Ker (\b)|
\]
for every $g\in \wh L$.
Combining this with (\ref{eq_graph}), we get that for every element
$g\in A$, there is $h\in\f(\wh L)$ such that $d(g,h)<25\e\log |\Ker (\b)|$.
By definition, this means that there is an index set
$I\subset\{1,\ldots,n\}$ such that $\pr_i(g)=\pr_i(h)$ if $i\notin I$
and $\prod_{i\in I} |\Ker(\b_i)|<|\Ker(\b)|^{25\e}$.
For a fixed $I$, the elements $g$ which satisfy this
condition can be covered by at most $|\Ker(\b)|^{25\e}$
cosets of the group $\f(\wh L)$.
This proves the claim, because we have $2^n$ possibilities for $I$.
\end{proof}

As already promised, we show that we can recover the normal subgroup
generated by the element constructed in the previous lemma.
We need to introduce more notation.
Let $p_1,\ldots, p_n$ be primes and with the notation as above, assume that
$\wh G_i$
is generated by its $p_i$-elements.
Assume that
$\wh G_i=\wh L_i\ltimes\wh U_i$ is a semidirect product and $\wh L=\wh L_1\times\ldots\times\wh L_n$
and that the kernel of $\b_i$ is $\wh U_i$.
We assume further that $\wh U_i$ is isomorphic to $(\F_{p_i}^{d_i},+)$.
Then writing $\wh U_i$ additively,
we can associate to it an $\F_{p_i}$-vector space $M_i$ which is
an $\wh L_i$-module such that the action $v\mapsto g\cdot v$ of $g\in \wh L_i$
on $M_i$ descends from the conjugation action $u\mapsto gug^{-1}$ of $g\in\wh G_i$
on $\wh U_i$.
We note that since $\wh U_i$ is commutative, its action by conjugation is trivial on itself,
so the above is well-defined.
Write
$\wh U =\wh U_1\times\ldots\times\wh U_n $ .

\begin{lem}
\label{lm_normal}
Let $p_1,\ldots, p_n, \wh G_i,\wh L_i, \wh U_i$ and $M_i$ satisfy the above assumptions.
Furthermore, assume that no $M_i$ contain a one dimensional composition factor.
Let  $A\subseteq\wh G$ be
a symmetric set with $1\in A$ and ${\b}(A)=\wh L$ and let
$g\in A$ be any element with ${\b}(g)=1$.
Denote by $N$ the smallest normal subgroup of $\wh{G}$
that contains $g$.

Then there is a constant $c$ depending only on $\max d_i$ such that
\[
\textstyle\prod_{c}A\supseteq N.
\]
\end{lem}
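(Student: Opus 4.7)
The plan is to extract a large $\wh{L}$-symmetric set inside a small product-set of $A$, then use the abelian structure of $\wh U$ and the composition-factor hypothesis to produce every element of $N$ as a bounded word.

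First, a direct computation in $\wh G_i = \wh L_i \ltimes \wh U_i$ gives, for $g = (1,u) \in \wh U$ and any $a \in A$, $a g a^{-1} = (1, \beta(a)\cdot u)$; since $\beta(A) = \wh L$, the full $\wh L$-orbit $B := \wh L\cdot g$ therefore sits inside $\prod_3 A$. By definition of the normal closure, $N$ is the subgroup of the abelian group $\wh U$ generated by $B$. Since $1 \in A$ and $A$ is symmetric, any signed $k$-fold sum $\sum_{j=1}^{k} \varepsilon_j b_j$ with $b_j \in B$ lies in $\prod_{6k} A$, so it suffices to produce an integer $c' = c'(d)$, with $d := \max_i d_i$, such that every element of $N$ is such a signed sum of at most $c'$ elements of $B$; then $c = 6c'$ works.

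The core of the argument is the component-wise bound. Projecting to the $i$-th factor, the submodule $N_i \subseteq M_i$ generated by $g_i$ over $\F_{p_i}[\wh L_i]$ has dimension at most $d_i \le d$ and inherits the hypothesis of having no $1$-dimensional composition factor; moreover $\wh L_i$ is generated by its $p_i$-elements. I would then argue, by induction on the length of a composition series of $N_i$, that every vector in $N_i$ is a signed sum of at most $c'(d)$ translates $\ell\cdot g_i$, $\ell\in\wh L_i$. The key point is to use $p_i$-elements $x \in \wh L_i$ acting without fixed vectors on each composition factor (guaranteed by the composition-factor hypothesis, since no trivial factor appears): the relation $(1+x+\cdots+x^{p_i-1})\cdot v = 0$ on such a factor lets one replace $x^{p_i-1}\cdot v$ by a short signed sum of earlier translates, so the length of signed sums needed to realize a full $\F_{p_i}$-line from a single vector stays bounded in terms of $\dim N_i$ alone, not $p_i$. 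Iterating up the composition series produces the required bound $c'(d)$ uniformly in $i$, and then the component-wise decomposition, together with the ability to kill individual coordinates by taking $\wh L$-translates that fix the chosen factors, assembles elements of $N \subseteq \prod_i N_i$ within a bounded number of further operations.

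The main obstacle is precisely step three: eliminating the $p_i$-dependence in the number of summands. The naive bound, treating each $N_i$ as a $d_i$-dimensional $\F_{p_i}$-space spanned by the orbit, gives roughly $d_i(p_i-1)/2$ summands, which is catastrophic since $p_i$ is unbounded. The representation-theoretic hypothesis — no $1$-dimensional composition factor combined with $\wh L_i$ being generated by $p_i$-elements — is what has to be leveraged to collapse this to a bound depending only on $d$; producing the right algebraic identity among orbit points to achieve this is the technical heart of the proof.
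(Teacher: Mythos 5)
Your reduction is exactly the paper's: conjugates $hgh^{-1}$ with $h\in A$ lie in $\prod_3A$ and, since $\b(A)=\wh L$, sweep out the full $\wh L$-orbit of $g$; the problem then splits over the coordinates, and everything hinges on a purely module-theoretic claim, namely that in a $d$-dimensional $\F_{p_i}$-module with no one-dimensional composition factor, boundedly many (signed) sums of $\wh L_i$-translates of $u_i$ already fill the submodule generated by $u_i$, with a bound depending only on $d$ and not on $p_i$. This is precisely what the paper isolates as Lemma \ref{l:OrbitSums} and Corollary \ref{c:OrbitSumsNoTrivialFactor}, and Lemma \ref{lm_normal} is then a two-line deduction, just as in your first two paragraphs. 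So the framing is right, but the statement you defer to ``step three'' is the entire content of the lemma, and the mechanism you sketch for it does not work.

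Concretely, two things fail. First, in characteristic $p_i$ every $p_i$-element of $\wh L_i$ acts unipotently on $M_i$, hence fixes a nonzero vector on every nonzero composition factor; there is no ``$p_i$-element $x$ acting without fixed vectors,'' only the full group $\theta_i(\wh L_i)$ has that property (and only because no composition factor is trivial). Second, the identity $1+x+\cdots+x^{p_i-1}=(x-1)^{p_i-1}=0$ (valid for $p_i>d$) merely rewrites one translate as a signed sum of $p_i-1$ others, so it cannot by itself remove the $p_i$-dependence; your own closing paragraph concedes this, which means the ``technical heart'' is missing rather than proved. The paper's device is different: with $x=g-1$ for a $p_i$-element $g$, and $k\le d$ maximal with $x^k\neq0$, the binomial identity $(g^j-1)^k=j^kx^k$ expresses each $j^kx^k$ as a signed combination of at most $2^k$ group elements; since every element of $\F_{p_i}$ is a sum of at most $k$ $k$-th powers once $p_i\gg k$, one obtains the whole line $\F_{p_i}x^k$, then the two-sided ideal $\langle x^k\rangle$ of $\F_{p_i}[\theta_i(\wh L_i)]$, within a number of summands depending only on $d$; an induction on $\dim$ (using that no nonzero vector is fixed, respectively that no composition factor is one-dimensional) then gives Lemma \ref{l:OrbitSums} and Corollary \ref{c:OrbitSumsNoTrivialFactor}. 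Without an argument of this kind (or some other $p$-free identity among orbit points), your proof of Lemma \ref{lm_normal} is incomplete.
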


The proof of Lemma \ref{lm_normal} requires
\begin{lem}\label{l:OrbitSums}
Let $p$ be a prime and
let $H\subseteq \GL(M)$ be a group generated by its $p$-elements,
where $M$ is a vector space of dimension $d$ over $\bbf_p$ and $p\gg d$.
Assume that no non-zero vector is fixed by $H$, i.e. the trivial representation
is not a sub-representation of $M$.
Then there is a constant $c=c(d)$, only depending on $d$, such that
$\sum_c  H\cdot v$ contains a non-zero $H$-subspace, for any $0\neq v\in M$.
\end{lem}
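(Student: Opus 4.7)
I will argue by induction on $d = \dim M$. The base case $d = 1$ is vacuous: a subgroup of $\GL_1(\F_p) = \F_p^{\times}$ generated by $p$-elements is trivial (as $\gcd(p, p-1) = 1$ forces every $p$-element to be $1$), but this would make $M^H = M \neq 0$, contradicting the hypothesis. For the inductive step, first pass to the cyclic submodule $M_v := \F_p[H]\cdot v \subseteq M$: the hypothesis $M_v^H \subseteq M^H = 0$ is inherited and $\dim M_v \le d$. Choose a non-zero simple $H$-submodule $S \subseteq M_v$. Because $S^H \subseteq M_v^H = 0$, the module $S$ is non-trivial; moreover $\dim S \ge 2$, since any $1$-dimensional $H$-representation is a character $H \to \F_p^{\times}$, which must vanish on the generating $p$-elements (as $\gcd(p,p-1)=1$), hence be trivial.

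The crux is to produce a nonzero $w \in S \cap \sum_{c_1} H\cdot v$ for some $c_1 = c_1(d)$ depending only on $d$. Once $w$ is constructed, the $H$-invariance of the sumset (which holds because $H$ permutes $H\cdot v$) gives $H\cdot w \subseteq \sum_{c_1} H\cdot v$, and hence $\sum_{c_2} H\cdot w \subseteq \sum_{c_1 c_2} H\cdot v$ for any $c_2$. Applying the induction hypothesis to $(H|_S, S, w)$ --- legitimate because $S$ is non-trivial irreducible (so $S^H = 0$), the restriction of $H$ to $S$ is still generated by $p$-elements, and $\dim S < d$ --- yields a non-zero $H$-invariant subspace $W \subseteq \sum_{c_2} H\cdot w$ with $c_2 = c(d-1)$, closing the induction with $c(d) := c_1(d)\cdot c(d-1)$. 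To construct $w$, my plan is to use a $p$-element $h \in H$: for $p > d$ we have $(h-1)^d = 0$, so $h$ has order $p$, the logarithm $X := \log h$ is a well-defined nilpotent of $\mathrm{End}(M)$ with $X^d = 0$, and the cyclic subgroup $\{\exp(tX) : t \in \F_p\} \subseteq H$. Each orbit element $h^t v = \sum_{k=0}^{d-1}(t^k/k!)\,X^k v$ is a polynomial of degree $< d$ in $t$, and a sum $\sum_{i=1}^{c_1} h^{t_i} v$ projects modulo $S$ to the linear image of $(c_1, p_1, \ldots, p_{d-1}) \in \F_p^d$ (with $p_k := \sum_i t_i^k$) in the space $M_v/S$ of dimension less than $d$; the kernel of this linear map is non-trivial and furnishes a candidate linear relation. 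Choosing $h$ so that $\bar v$ is not $\langle h\rangle$-fixed in $M_v/S$ (possible by the no-fixed-vector hypothesis and generation of $H$ by $p$-elements) ensures the resulting element is nonzero in $M$.

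The main obstacle is keeping $c_1$ bounded purely in terms of $d$, independently of $p$ and $|H|$: a prescribed tuple of power sums $(p_1, \ldots, p_{d-1})$ is realizable by $c_1$ points in $\F_p$ only when the associated elementary-symmetric polynomial of degree $c_1$ splits over $\F_p$, which need not happen. I plan to circumvent this by exploiting the multiplicity of $p$-elements in $H$: if one choice of $h$ does not yield a short realization, combining orbit sums over several $1$-parameter subgroups of $H$ whose logarithms $X_1, X_2, \ldots$ collectively generate enough of $\mathrm{End}(M)\cdot v$ to saturate $M_v$ --- which is forced by the no-fixed-vector hypothesis --- gives the required flexibility, and a dimension-counting argument bounds the number of subgroups, and hence $c_1$, in terms of $d$ alone. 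The large-$p$ regime enters through the log/exp identities and through the vanishing $\sum_{t \in \F_p} t^j = 0$ for $0 \le j < p - 1$, which allow power-sum relations to be manipulated cleanly without the naive factor-of-$p$ blowup that would arise from converting negative coefficients to non-negative ones via $-1 \equiv p-1$.
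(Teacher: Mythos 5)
Your inductive scheme breaks down at exactly the essential case. If $M_v=\F_p[H]\cdot v$ is a proper submodule you could simply apply the inductive hypothesis to $M_v$ itself, so the only case with real content is $M_v=M$; and there the simple submodule $S$ may be all of $M$ (i.e.\ $M$ irreducible), in which case $\dim S=d$ and applying the induction hypothesis to $(H|_S,S,w)$ is circular: you would be invoking the lemma in dimension $d$ to prove it in dimension $d$. That irreducible case is the heart of the lemma, since there the only nonzero $H$-subspace is $M$ and the assertion becomes $\sum_c H\cdot v=M$ with $c=c(d)$; the naive spanning argument ($H\cdot v$ spans $M$) only yields this with about $d(p-1)$ summands, because producing an arbitrary scalar in $\F_p$ by repeated addition is what costs a factor of $p$. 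Nothing in your plan addresses this, and your construction of $w$ trivializes when $S=M$.

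Even in the genuinely reducible case the key step is not established. You need $0\neq w\in S\cap\sum_{c_1}H\cdot v$ with $c_1=c_1(d)$, and you yourself identify the obstruction (a prescribed vector of power sums need not be realizable by boundedly many points of $\F_p$); the proposed remedy via several one-parameter subgroups and dimension counting is a hope, not an argument. Moreover, the claim that some $p$-element moves $\bar v$ in $M_v/S$ does not follow from $M^H=0$: if $M_v$ were a nonsplit extension of the trivial module by $S$, then $\bar v$ is $H$-fixed modulo $S$ while $M^H=0$, and then every $c$-fold sum of orbit elements is congruent to $c\bar v\neq0$ modulo $S$ for $0<c<p$, so no admissible $w$ exists at all; excluding this configuration needs an extra input (e.g.\ complete reducibility of $M$ for $p\gg d$, as in Guralnick's theorem cited elsewhere in the paper), which you neither prove nor invoke. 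The paper's proof follows a different route that bypasses both problems: for a $p$-element $g$, with $x=g-1$ and $x^k\neq0=x^{k+1}$, the identity $(g^j-1)^k=j^kx^k$ exhibits $j^kx^k$ as a $\pm1$-combination of at most $2^k$ elements of $H$; since every element of $\F_p$ is a sum of at most $k$ $k$-th powers, a bounded sum already captures the whole line $\F_p x^k$ and hence the two-sided ideal $\langle x^k\rangle$ of $\F_p[H]$; finally either $\F_p[H]\cdot v$ is proper (induct) or $\langle x^k\rangle\cdot v$ is a nonzero $H$-subspace inside $\sum_{d^32^d}H\cdot v$ --- and this second branch is precisely what settles the irreducible case that your scheme cannot reach.
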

Here and everywhere below $H\cdot v$ denotes the orbit of $v$ under the action of $H$ on $M$.
\begin{proof}
Let $g\in H$ be an element of order $p$.
Then $x=g-1\in {\rm End}(M)$ is a nilpotent element.
Let $k$ be the largest integer such that $x^k$ is not zero.
It is clear that $k$ is at most $d$.
So
\begin{align}
\notag \sum\!_{_{2^d}} H \ni &(g^j-1)^k=((1+x)^j-1)^k&&\forall \h 0\le j\le p-1\\
\notag =& \left( x\sum_{i=0}^{j-1}\binom{j}{i+1}x^i \right)^k=j^k x^k &&\text{ since $x^{k+1}=0$.}
\end{align} 
Since any element of $\bbf_p$ can be written as the sum of at most $k$ $k$-th powers,
we have that $\sum_{d2^d} H\supseteq \bbf_p x^k$.
Thus $\sum_{d2^d} H\supseteq \bbf_p Hx^k H$, and therefore 
\[
\sum\!_{_{d^32^d}}H\supseteq \langle x^k \rangle,
\]
where $ \langle x^k \rangle$ is the ideal generated by
$x^k$ in $\mathcal{A}=\bbf_p[H]$, the $\bbf_p$-span of $H$ in ${\rm End}(M)$. 

Now, we prove the lemma by induction on $d$.
If $\mathcal{A}\cdot v$ is a proper $H$-subspace, we get the claim by the induction hypothesis.
If not, then $\langle x^k \rangle\cdot v$ is a non-zero $H$-subspace of
$\sum_{d^32^d}H\cdot v$, as we wished.
Note that, if $\langle x^k \rangle\cdot v=0$ and $\mathcal{A}\cdot v=M$,
then $\langle x^k\rangle \cdot M=0$,
which is a contradiction as $M$ is a faithful $\mathcal{A}$-module.
\end{proof}
\begin{cor}\label{c:OrbitSumsNoTrivialFactor}
Let $p$ be a prime and let $H\subseteq \GL(M)$ be a group generated by its $p$-elements,
where $M$ is a vector space of dimension $d$ over $\bbf_p$ and $p\gg d$.
Assume that none of the composition factors of $M$ is one-dimensional.
Then there is a constant $c=c(d)$, depending only on $d$,
such that $\sum_c H\cdot v$
is equal to the $H$-subspace generated by $v$.
\end{cor}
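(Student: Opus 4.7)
The plan is to derive the corollary from Lemma~\ref{l:OrbitSums} by an iterative argument on $H$-submodules of $M$. The crucial observation is that the hypothesis of the corollary---no composition factor of $M$ is one-dimensional---is inherited by every quotient $M/N$ by an $H$-submodule $N$, so in particular each such quotient satisfies the standing assumption of Lemma~\ref{l:OrbitSums} that no nonzero vector is $H$-fixed. Let $W$ denote the $H$-submodule of $M$ generated by $v$; the inclusion $\sum_{c}H\cdot v\subseteq W$ is automatic, so the task is to establish the reverse containment for some $c=c(d)$.

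Let $c_0=c_0(d)$ be the constant produced by Lemma~\ref{l:OrbitSums}. The plan is to build a strictly increasing chain
\[
0=W_0\subsetneq W_1\subsetneq W_2\subsetneq\cdots\subseteq W
\]
of $H$-submodules with $W_i\subseteq\sum_{ic_0}H\cdot v$ for every $i$, stopping as soon as $v\in W_i$. Given $W_i$ with $v\notin W_i$, the image $\bar v$ of $v$ in $M/W_i$ is nonzero, and by the preservation observation Lemma~\ref{l:OrbitSums} applies to the $H$-action on $M/W_i$ and produces a nonzero $H$-submodule $W_{i+1}/W_i\subseteq\sum_{c_0}H\cdot\bar v$. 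Lifting each orbit-sum representative back to $M$ and absorbing the ambiguity into $W_i$ gives $W_{i+1}\subseteq W_i+\sum_{c_0}H\cdot v\subseteq\sum_{ic_0}H\cdot v+\sum_{c_0}H\cdot v=\sum_{(i+1)c_0}H\cdot v$, which is the desired containment.

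Since $\dim W_{i+1}>\dim W_i$ and $\dim W\le d$, the chain must terminate after at most $d$ steps with $v\in W_k$ for some $k\le d$. Any $H$-submodule containing $v$ contains all of $W$, hence $W\subseteq W_k\subseteq\sum_{kc_0}H\cdot v\subseteq\sum_{dc_0}H\cdot v$, and setting $c(d)=d\cdot c_0(d)$ completes the argument. There is no genuine obstacle in the proof; the only point worth flagging is that the strengthened hypothesis of the corollary---no one-dimensional composition factor rather than merely no trivial submodule as in the lemma---is exactly what is needed so that Lemma~\ref{l:OrbitSums} remains applicable after each passage to a quotient.
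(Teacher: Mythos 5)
Your proof is correct and is essentially the paper's own argument, which is stated there as a one-line "easy induction on $d$" from Lemma~\ref{l:OrbitSums}: you induct along a chain of $H$-submodules $W_i$, applying the lemma to the quotients $M/W_i$, where the no-one-dimensional-composition-factor hypothesis guarantees there is no nonzero fixed vector. The only point to tidy is the final inclusion $\sum_{kc_0}H\cdot v\subseteq\sum_{dc_0}H\cdot v$, which is not automatic for sumsets (the orbit need not contain $0$) but holds here because $\sum_{kc_0}H\cdot v$ already equals the subspace $W\supseteq H\cdot v$, so any $w\in W$ may be written as $\bigl(w-(dc_0-kc_0)hv\bigr)$ plus $dc_0-kc_0$ copies of $hv$, giving $\sum_{c}H\cdot v=W$ for every $c\ge kc_0$.
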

\begin{proof}
Using Lemma~\ref{l:OrbitSums}, one can easily prove this, by induction on $d$. 
\end{proof}

\begin{proof}[Proof of Lemma \ref{lm_normal}] 
Since $\wh U=\wh U_1\times \cdots \times \wh U_n$ is a normal subgroup of $\wh G$,
we get a homomorphism $\theta$ from $\wh G$ to $\Aut(\wh U)$, $\wh G$ acting on $\wh U$
by conjugation.
As $\wh U$ is commutative, $\theta$ factors through $\wh L$ and we get back the action of
$\wh L_i$ on $\wh U_i$, for any $i$.
Moreover, $\theta$ commutes with the projection homomorphisms $\pr_i$.

Let $g=(u_1,\ldots,u_n)$, where $u_i\in \wh U_i$, for any $i$.
Denote by $N_i$ the normal subgroup generated by $u_i$ in $\wh G_i$.
Translating Corollary \ref{c:OrbitSumsNoTrivialFactor} to the language
of multiplicative groups, we get a constant $c$, depending only on $\max d_i$, such that
\[
\textstyle\prod_c\{h(u_1,\ldots,u_n)h^{-1}:h\in A\}\supseteq N_1\times\cdots\times N_n.
\]
It is clear that $N_1\times\cdots\times N_n$ is a normal subgroup of $\wh G$ which contains $g$.
Thus 
\[
\textstyle\prod_{3c} A= \textstyle \prod_c A.A.\wt A \supseteq N,
\]
as we wished.
\end{proof}

The above lemmata allows us to deal with the case when $U$ is commutative.
In the general case we will work with $U/[U,U]$, and recover a subset of $U$
which projects onto $U/[U,U]$.
Then we will use the following lemma to recover $U$.
This lemma is very similar to the main idea behind the papers \cite{GS} and \cite{Din}.

\begin{lem}
\label{lm_nilpotent}
Let $\wh U$ be a finite $k$-step nilpotent group generated by $m$
elements,
and let $A\subseteq\wh U$ be a subset such that
$A.[\wh U,\wh U]=\wh U$.
Then $\prod_{C(k,m)} A=\wh U$.
\end{lem}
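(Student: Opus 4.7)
The plan is to induct on the nilpotency class $k$. The base case $k=1$ is trivial: $\wh U$ is abelian so $[\wh U,\wh U]=\{1\}$, the hypothesis forces $A=\wh U$, and $\prod_1 A=\wh U$, so $C(1,m)=1$.

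For the inductive step, let $Z=\gamma_k(\wh U)$ be the last nontrivial term of the lower central series; it is central and abelian in $\wh U$, while $\wh U/Z$ is $(k-1)$-step nilpotent, $m$-generated, and the projection $\pi$ satisfies $\pi(A)\cdot[\wh U/Z,\wh U/Z]=\wh U/Z$. Applying the inductive hypothesis to $\wh U/Z$ yields $\prod_{c_0}A\cdot Z=\wh U$ with $c_0=C(k-1,m)$. It thus suffices to cover $Z$ itself in bounded length: if $Z\subseteq\prod_{c_1}A$ for some $c_1=c_1(k,m)$, then any $u\in\wh U$ decomposes as $u=bz$ with $b\in\prod_{c_0}A$ and $z\in\prod_{c_1}A$, giving $u\in\prod_{c_0+c_1}A$, so $C(k,m)=c_0+c_1$ works.

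To produce $Z\subseteq\prod_{c_1}A$ I would argue as follows. Since $A$ surjects onto $V:=\wh U/[\wh U,\wh U]$, for each $a\in A$ I can fix $a^{*}\in A$ with $aa^{*}\in[\wh U,\wh U]$. The algebraic engine is that, because $\wh U$ has nilpotency class exactly $k$, the iterated $k$-fold commutator descends to a \emph{multilinear} surjection $\kappa:V^k\to Z$. Using multilinearity to pull integer coefficients into the first slot, and the surjectivity $\pi(A)=V$ to realize the resulting arguments as images of elements of $A$, every $z\in Z$ can be written as a product (sum in the abelian group $Z$) of at most $m^k$ iterated commutators $[a_{i_1},[a_{i_2},\ldots,[a_{i_{k-1}},a_{i_k}]\ldots]]$ with $a_{i_j}\in A$. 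Each such $k$-fold commutator is a word of length $2^k$ in the letters $a_{i_j}^{\pm 1}$; substituting $a^{-1}\mapsto a^{*}$ turns it into a positive word in $A$ of the same length, whose value equals the intended commutator up to corrections in which an element $aa^{*}\in\gamma_2$ appears nested inside commutators of total weight $\ge k+1$ (hence lying in $\gamma_{k+1}=1$), or else in lower $\gamma_j$ that the induction already handles.

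The main obstacle is the careful bookkeeping of these correction terms: although the deepest corrections vanish by the class-$k$ assumption, one must verify that the shallower discrepancies between the true commutator and its positive-word approximation can be simultaneously compensated by additional short products from $A$, using the freedom to vary the auxiliary $a^{*}$'s together with the cocycle structure of the central extension $1\to Z\to\wh U\to\wh U/Z\to 1$. This commutator-collection step is in the spirit of the arguments in \cite{GS} and \cite{Din}. Once carried out it yields $c_1$ bounded in terms of $k$ and $m$ alone (of order roughly $m^k 2^k$), completing the induction.
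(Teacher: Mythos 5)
Your reduction to covering $Z=\gamma_k(\wh U)$ by bounded positive products, and the collection of an arbitrary element of $Z$ into at most $m^{k-1}$ values of the multilinear map $V^k\to Z$ with entries realized in $A$, are fine. The genuine gap is the step you yourself flag as ``the main obstacle'': converting each $k$-fold commutator $[a_1,[a_2,\ldots]]$ with $a_j\in A$ into a positive word in $A$. Replacing $a^{-1}$ by $a^{*}\in A$ with $aa^{*}\in\gamma_2(\wh U)$ does not only create errors nested at weight $\ge k+1$: already for $k=2$ one has $a^{*}b^{*}ab=[a,b]\,\delta_a\delta_b$ with $\delta_a=aa^{*}$, $\delta_b=bb^{*}\in\gamma_2(\wh U)=Z$, so the correction sits exactly in the subgroup you are trying to cover. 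Your suggestion that the shallower corrections are ``handled by the induction'' is unfounded: the inductive hypothesis is applied to $\wh U/Z$ and only produces elements up to a coset of $Z$; it gives no way to realize a prescribed element of $Z$, or of an intermediate $\gamma_j(\wh U)$, by a bounded positive product, which is precisely what compensating the corrections would require. Worse, the total correction depends on which $a_j$ were chosen, i.e.\ on the decomposition of the target $z$, so ``aim instead at $z$ times the inverse of the correction'' is circular. As written, the crux of the lemma is missing, and it is not clear that your sub-claim $Z\subseteq\prod_{c_1}A$ for arbitrary (possibly non-symmetric) $A$ is any easier than the lemma itself.

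The paper's proof sidesteps this entirely by never trying to hit any term of the lower central series exactly at an intermediate stage. With $\Ga_i$ the lower central series, it uses the bilinear maps $\f_i\colon\Ga_1/\Ga_2\times\Ga_i/\Ga_{i+1}\to\Ga_{i+1}/\Ga_{i+2}$ and the same collection trick to show that $(\prod_C A)\Ga_{i+1}\supseteq\Ga_i$ implies $(\prod_{m(2C+2)}A)\Ga_{i+2}\supseteq\Ga_{i+1}$; every error is thus deferred one level down the series and dies at $\Ga_{k+1}=\{1\}$, instead of having to be cancelled on the spot. (The inverses occurring in the commutators $[a,b]=a^{-1}b^{-1}ab$ are harmless there because in the application of the lemma the set it is applied to is symmetric.) If you likewise assume $A$ symmetric, your argument closes immediately, since each $k$-fold commutator is then a genuine word of length $O(2^k)$ in $A$ and no substitution is needed; but for general $A$ the compensation step you defer is a real missing piece, not mere bookkeeping.
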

\begin{proof}
Consider the lower central series
$\wh U=\Ga_1\rhd \Ga_2\rhd\ldots\rhd \Ga_{k+1}=\{1\}$
defined by $\Ga_{i+1}=[\wh U,\Ga_i]$.
Then for $1\le i\le k$, $K_{i}=\Ga_{i}/\Ga_{i+1}$ is
a commutative group.
It is well known (see \cite[Corollary 1.12]{War})
that for any $i,j$ we have
$[\Ga_{i},\Ga_j]\subseteq \Ga_{i+j}$
and for any $x,y,z\in U$ we have the identities
(see \cite[equations 1.4 and 1.5]{War}):
\bean
[x,yz]=[x,z][x,y]^z\\
{}[xy,z]=[x,z]^y[y,z],
\eean
where $x^y=y^{-1}xy$.
Therefore the maps
\[
\f_{i}:K_1\times K_i\to K_{i+1}
\]
defined by
\[
\f_i(g\Ga_2,h\Ga_{i+1})=[g,h]\Ga_{i+2}
\]
are well-defined, and they are homomorphisms in both variables.

We show that for any $i$
\be
\label{eq_nilpotent}
\textstyle\prod_m \f_i(K_1,K_i)=K_{i+1}.
\ee
Let $x_1,\ldots,x_m$ be generators for $K_1$.
Then any element of $K_{i+1}$ is of the form
\[
\f_i(x_1^{a_{1,1}}\cdots x_m^{a_{1,m}},y_1)\cdots
\f_i(x_1^{a_{l,1}}\cdots x_m^{a_{l,m}},y_l)
\]
for some $a_{\cdot,\cdot}\in\Z$ and $y_j\in K_i$.
Using that $\f_i$ is a homomorphism in the first variable,
we can expand this, then we can collect the factors containing
$x_k$ using the commutativity of $K_{i+1}$ and finally we
use that $\f_i$ is a homomorphism in the second variable
and get that the above is equal to
\[
\f_i(x_1,y_1^{a_{1,1}}\cdots y_l^{a_{l,1}})\cdots
\f_i(x_m,y_1^{a_{1,m}}\cdots y_l^{a_{l,m}}).
\]
This proves the claim.

To prove the lemma, we note that the above
claim implies that if $(\prod_C A)\Ga_{i+1}\supseteq \Ga_{i}$, then
\[
(\textstyle\prod_{m(2C+2)}A)\Ga_{i+2}\supseteq\Ga_{i+1}.
\]
This proves the lemma by induction, and approximating
an element successively in $\wh U/\Ga_i$ for larger and larger
values of $i$.
\end{proof}
 
\begin{proof}[Proof of Proposition \ref{pr_unipotent}]
Let $A,G,L,U$ and $\pr$ be the same as in the proposition.
First we prove the proposition in the case, when $U$ is commutative.
Then each $U_{p_i}$ is isomorphic to $(\F_{p_i}^{d_i},+)$ for some
integers $d_i$.
Denote $d=\max d_i$. 
For each $p_i$, we give a sequence of normal subgroups
\[
\{1\}=N_{p_i}^{(0)}\lhde N_{p_i}^{(1)}\lhde\ldots
\lhde N_{p_i}^{(l)}\lhde U_{p_i}
\]
such that each of them is a normal subgroup in $G_{p_i}$ as well.
Write $N^{(m)}=N_{p_1}^{(m)}\times\ldots\times N_{p_n}^{(m)}$
They will satisfy the following properties:
\bea
\label{eq_N1}
(\textstyle\prod_{C_1}A)N^{(k-1)}\supseteq N^{(k)}\\
\label{eq_N2}
[N^{(k)}:N^{(k-1)}]\ge[U:N^{(k-1)}]^{\e/100d}\\
\label{eq_N3}
[U:N^{(l)}]<q^{\e},
\eea
where $C_1$ is a constant depending only on $d$.

Assume that $m>0$, and $N_{p_i}^{(k)}$ is defined for $k<m$
and they satisfy (\ref{eq_N1}) and (\ref{eq_N2}).
If $[U:N^{(m-1)}]<q^{\e}$, then we can set $l=m-1$, and we are done.
Assume the contrary.
To apply Lemma \ref{lm_Farah}
we take $\wh G_i=G_{p_i}/N_{p_i}^{(m-1)}$, $L=\wh L$ and we let
$\b_i: \wh G_i\to \wh L$ be the homomorphism induced by $\pr$.
Consider the group $\wh G=G/N^{(m-1)}$ and the set
$\overline A=AN^{(m-1)}\subset \wh G$.
By assumption, $A\subset G$ cannot be covered
with less than $|\Ker(\b)|^{\e}|G|^{-1/C}$ cosets of a subgroup
of $G$ of index $|\Ker(\b)|$.
We can assume that $C$ is so large that
$|G|^{1/C}<|\Ker(\b)|^{\e/2}$ and $q$ is so large that
$2^n<|\Ker(\b)|^{\e/4}$.
Then
$\overline A$ cannot be covered by less than $2^n|Ker(\b)|^{\e/4}$
cosets of a subgroup of $\wh G$ of order $|\wh L|$.
Using Lemma \ref{lm_Farah} we find an element
$g\in\overline A.\overline A.\overline A$ such that
$\b(g)=1$ and $d(g,1)>\e\log|\Ker(\b)|/100$,
where $d(\cdot,\cdot)$ is defined by (\ref{eq_dist}).
Let $N_{p_i}^{(m)}$ be the smallest normal subgroup of $G_{p_i}$
that contains $N_{p_i}^{(m-1)}$ and $\pi_{p_i}(g)$.
Then (\ref{eq_N1}) follows from Lemma \ref{lm_normal} applied for
$\wh G=G/N^{(m-1)}$ and $A=\overline A.\overline A.\overline A$.
However, we need to check the condition that the $L_{p_i}$-modules $M_i$
defined in the paragraph preceding Lemma \ref{lm_normal} do not contain one dimensional composition
factors.
Suppose the contrary.
We can assume that one of the $M_i$ contains the trivial representation as a sub-representation,
actually for this purpose we might need to enlarge $N^{(m-1)}$.
By \cite[Theorem A]{Gur}, it follows that $M_i$ is completely reducible, hence we can
write $M_i=M_i'\oplus M_i''$ as the sum of $L_{p_i}$-modules such that the action on $M_i'$ is trivial.
Then there is a proper normal subgroup $N\lhd U_{p_i}$ of $G_{p_i}$ corresponding to $M_i''$
such that $G_{p_i}$ acts trivially on $U_{p_i}/N$, hence $G_{p_i}/N$ is isomorphic
to the direct product $L_{p_i}\times(U_{p_i}/N)$.
This contradicts to the assumption that $\G$ and hence $G_p$ is perfect.

Let $q'$ be the product of primes $p_i$ for which
$N_{p_i}^{(m-1)}\neq N_{p_i}^{(m)}$.
Then 
\[
q'\ge e^{d(g,1)/d}>|\Ker(\b)|^{\e/100d},
\]
since $|\Ker(\b_i)|\le p_i^d$.
The groups $U_{p_i}$ are $p_i$-groups, hence
$[N^{(m)}:N^{(m-1)}]\ge q'$.
This implies (\ref{eq_N2}), since $[U:N^{(k-1)}]=|\Ker(\b)|$.
Therefore we proved equations (\ref{eq_N1})--(\ref{eq_N3}).

Equations (\ref{eq_N1}) and (\ref{eq_N3}) together imply that
there is an integer $q_1>q^{1-\e}$ such that
\[
\pi_{q_1}(\textstyle\prod_{lC_1}A)\supseteq U_{q_1}.
\]
Since $\pr(A)=L$ and $G_{q_1}=L_{q_1}U_{q_1}$, this proves the
proposition when $U$ is commutative.

In the general case, running the above argument for the group
$G/[U,U]$, we get
\[
\pi_{q_1}(\textstyle\prod_{lC_1}A).[U_{q_1},U_{q_1}]\supseteq U_{q_1}.
\]
Then Lemma \ref{lm_nilpotent}
applied for the group $\wh U=U_{q_1}$ and for the set
$\pi_{q_1}(\textstyle\prod_{lC_1}A)\cap U_{q_1}$ finishes the proof.
\end{proof}
It is worth mentioning that the result from~\cite{Gur} depends on the classification of the finite simple groups. But we do not really need this result as the involved representations are coming from a representation over $\bbq$ and therefore, for large enough $p$, the picture modulo $p$ is similar to the picture over $\bbq$.

\subsection{Assumptions (A0)--(A5) for $\bbl(\Z/q\Z)$}
\label{sc_assump}

We list the assumptions  mentioned
in Proposition B.
When we say that something depends on the constants appearing
in the assumptions (A1)--(A5) we mean $C$ and the function
$\d(\e)$ for which (A4) holds.

\begin{itemize}
\item[(A0)]
$L=L_1\times\cdots\times L_n$ is a direct product, and
the collection of the factors satisfy (A1)--(A5) for some
sufficiently large constant $C$.

\item[(A1)]
There are at most $C$ isomorphic copies of the same group in the
collection.

\item[(A2)]
Each $L_i$ is quasi-simple and we have
$|Z(L_i)|<C$.

\item[(A3)]
Any non-trivial representation of $L_i$ is of dimension at least
$|L_i|^{1/C}$.

\item[(A4)]
For any $\e>0$, there is a $\d>0$ such that the following holds.
If $\mu$ and $\nu$ are probability measures on $L_i$ satisfying
\[
\|\mu\|_2>|L_i|^{-1/2+\e}
\quad{\rm and}\quad
\mu(gH)<|L_i|^{-\e}
\] 
for any $g\in L_i$ and for any proper $H<L_i$, then
\be
\label{eq_assump2}
\|\mu*\nu\|_2\ll\|\mu\|_2^{1/2+\d}\|\nu\|_2^{1/2}.
\ee

\item[(A5)]
For some $m<C$, there are classes $\HH_0,\HH_1,\ldots,\HH_m$ of
subgroups of $L_i$ having the following properties.
\begin{itemize}

\item[$(i)$]
$\HH_0=\{Z(L_i)\}$.

\item[$(ii)$]
Each $\HH_j$ is closed under conjugation by elements of $L_i$.

\item[$(iii)$]
For each proper $H<L_i$, there is an $H^\sharp\in \HH_j$, for some $j$,
with $H\la_{C} H^\sharp$.

\item[$(iv)$]
For every pair of subgroups $H_1,H_2\in\HH_j$, $H_1\neq H_2$,
there is some $j'<j$ and $H^\sharp\in\HH_{j'}$
for which $H_1\cap H_2\la_{C} H^{\sharp}$.
\end{itemize}
\end{itemize}

One may think about (A5) that there is a notion for dimension of
the subgroups of $L_i$. 

In this section, we will check these assumptions. In the beginning of Section~\ref{sc_escp}, we have already checked (A1) and (A2). By a result of V.~Landazuri and G.~Seitz~\cite{LS}, we also know that (A3) holds.

Assume that (A4) does not hold for $L_i$; then there is an $\vare$ such that for any $\d$ one can find probability measures on $L_i$ with the following properties:
\begin{align}
\|\mu\|_2&>|L_i|^{-1/2+\vare},\label{e:NotUniform}\\
\mu(gH)&<|L_i|^{-\vare},&&\forall\h g\in G,\h\forall\h H\lneq G, \label{e:Generator}\\
\|\mu*\nu\|_2&\ge \|\mu\|_2^{1/2+\d} \|\nu\|_2^{1/2}. \label{e:Contrary}
\end{align}
One can easily see that $\vare$ is less than $1/2$. Choose $\d$ such that $R\d$ is less than $\vare/(1/2-\vare)$ and $2\vare_0/(1+\vare_0)$, where $\vare_0$ is the constant from Theorem C and $R$ is the constant from Lemma D.

By Lemma D  and (\ref{e:Contrary}), there is a symmetric subset $A$ of $L_i$ such that,
\begin{align}
\|\mu\|_2^{-2+\d'}\ll |A| &\ll \|\mu\|_2^{-2-\d'},\label{e:SizeControl}\\
|\Pi_3 A|&\ll \|\mu\|_2^{-\d'} |A|,\label{e:NoExpansion}\\
\min_{s\in A} (\tilde{\mu}*\mu)(s) &\gg \frac{1}{\|\mu\|_2^{-\d'}|A|},\label{e:LargeWeight}
\end{align}
where $\d'=R\d$ and the implied constants are absolute.

First we claim that $A$ is a generating set of $L_i$. If not, it generates a proper subgroup $H$. Hence, on one hand, we have that
\begin{align}
\notag(\tilde{\mu}*\mu)(A)&\le (\tilde{\mu}*\mu)(H)=\sum_{h\in H}(\tilde{\mu}*\mu)(h)\\
\notag&=\sum_{h\in H}\sum_{g\in L_i}\mu(g)\mu(gh)\\
&=\sum_{g\in L_i} \mu(g)\mu(gH)<|L_i|^{-\vare}&&\text{by (\ref{e:Generator})}.\label{e:WeightUpperBound}
\end{align}
On the there hand
\begin{align}
\notag(\tilde{\mu}*\mu)(A)&\gg \|\mu\|_2^{\d'}&&\text{by (\ref{e:LargeWeight}),}\\
&>|L_i|^{\d'(-1/2+\vare)}&&\text{by (\ref{e:NotUniform})}\label{e:WeightLowerBound}.
\end{align}
We get a contradiction, by (\ref{e:WeightUpperBound}), (\ref{e:WeightLowerBound}) and $\d'<\vare/(1/2-\vare)$.

Assume $\Pi_3 A\neq L_i$; then since $A$ is a generating set of $L_i$, by Theorem C, $|\Pi_3 A|\gg |A|^{1+\vare_0}$. Hence, by (\ref{e:NoExpansion}) and (\ref{e:SizeControl}), we have that
\begin{align}
\|\mu\|_2^{(-2+\d')\vare_0}\ll |A|^{\vare_0}\ll\|\mu\|_2^{-\d'}.\label{e:SizeBounds}
\end{align}
We get a contradiction by (\ref{e:SizeBounds}) and $\d'<2\vare_0/(1+\vare_0)$.

So we have $\Pi_3 A=L_i$. By (\ref{e:NoExpansion}), (\ref{e:SizeControl}) and (\ref{e:NotUniform}), we have
\begin{align}
\notag|L_i|=|\Pi_3 A|\ll \|\mu\|_2^{-\d'} |A| \ll \|\mu\|_2^{-2-2\d'}< |L_i|^{(1/2-\vare)(2+2\d')},
\end{align}
which is a contradiction by $\d'<\vare/(1/2-\vare)$. Overall we showed that (A4) holds for $L_i$.

As $L_i$ is a quasi-simple finite group over a finite field which is of a bounded degree extension of its prime field, property (A5) is a direct consequence of~\cite[Proposition 24]{Var}.

\subsection{Proof of Proposition \ref{pr_product}}

Let $N$ be a constant such that $|G_p|<p^N$ for all $p$.
We consider two cases.
The first case is when $|\pr(A)|<q^{-\e/10NC}|L|$, where $C$
is a constant such that any nontrivial representation of
$L_p$ is of dimension at least
$p^{1/C}$ (cf. assumption (A3) in section \ref{sc_assump}).
As we have seen in section \ref{sc_assump}, the group $L$
satisfies the assumptions (A0)--(A5), hence Proposition B
is applicable.
Then we get $|\prod_3\pr(A)|\gg|\pr(A)|^{1+\d}$.
By the pigeonhole principle $A$ contains at least
$|A|/|\pr(A)|$ elements of a coset of $\Ker(\pr)$.
Then it follows that $|\prod_4 A|\gg|\pr(A)|^\d|A|$.
Note that $|\pr(A)|>|L|^\e|G_q|^{-\d}$ by the assumption we made
in the proposition on the set $A$.
This proves the proposition in the first case (see (\ref{eq_Hel}) below).

Now we consider the second case, i.e. when $|\pr(A)|\ge q^{-\e/10NC}|L|$.
Then by \cite[Lemma 5.2]{BGS}, there is a set $A'\subset \pr(A)$
and integers $K_j$ such that for every $g\in A'$ we have
\[
|\{x\in L_{p_j}:\exists h\in A'\;{\rm s.t.}\;
\pi_{p_1\cdots p_{j-1}}(h)=\pi_{p_1\cdots p_{j-1}}(g)\;{\rm and}\;
\pi_{p_j}(h)=x\}|=K_j
\]
and the integers $K_j$ satisfy
\[
|A'|=\prod K_j\ge[\prod(2 \log p_j)^{-1}]|A|.
\]
Denote by $q_2$ the product of primes $p_j$
for which $K_j\ge p_j^{-1/3C}|L_j|$.
Then $q/q_2<q^{\e/3N}$, if all the primes $p_j$ are sufficiently
large.
By a theorem of Gowers \cite{Gow} (see also \cite[Corollary 1]{NP})
it follows that if $B_1,B_2,B_3\subseteq L_{p_j}$ are
sets with $|B_i|\ge p_j^{-1/3C}|L_j|$, $i=1,2,3$, then
$B_1.B_2.B_3=L_{p_j}$.
This implies that
\[
\pr(\pi_{q_2}(A.A.A))=L_{q_2}.
\]
For more details see the argument on \cite[pp. 26]{Var}.
Now using Proposition \ref{pr_unipotent} for the set
$\pi_{q_2}(A.A.A)$ we get an integer $q_1|q_2$ with
$q_1>q^{1-\e/2N}$
such that $\pi_{q_1}[\prod_C A]=G_{q_1}$ for some constant $C$
independent of $q$.
Thus $|\prod_C A|>q^{-\e/2}|G_q|$.
It is a general fact (see the proof of \cite[Lemma 2.2]{Hel}) that
\be
\label{eq_Hel}
|\textstyle\prod_{C}A|<\left(\frac{|A.A.A|}{|A|}\right)^{C-2}|A|
\ee
whenever $A$ is a symmetric set in a group.
This finishes the proof.

\section{Proof of Theorem \ref{th_gengrp}}
\label{sc_proof}
\subsection{Necessity}

Let us first show the necessary part. Let $\bbg$ be the Zariski-closure of $\Gamma$. Denote by  $\bbg^{\circ}$, the connected component of $\bbg$, and let $\Gamma^{\circ}=\bbg^{\circ}\cap \Gamma$. It is clear that $\Gamma^{\circ}$ is a normal finite-index subgroup of $\Gamma$, and so $\Gamma^{\circ}$ is also generated by a finite set $S^{\circ}$. We start by showing that $\gcal(\pi_q(\Gamma^{\circ}),\pi_q(S^{\circ}))$ form expanders as $q$ runs through square free integers  with large prime factors assuming $\gcal(\pi_q(\Gamma),\pi_q(S))$ form expanders. To this end, first we show that $\Gamma^{\circ}$ is a ``congruence" subgroup, i.e. it contains a congruence kernel $\Gamma(q)=\ker(\Gamma \xrightarrow{\pi_q} G_q)$ if the prime factors of $q$ are large enough. To prove this claim, we  notice that $\bbg^{\circ}$ and the quotient map $\iota:\bbg \rightarrow \bbg/\bbg^{\circ}$ are defined over $\bbq$. Hence $\iota(\Gamma(q))=(\iota(\Gamma))(q)$ for any $q$ with large prime factors. On the other hand, since $\bbg/\bbg^{\circ}$ is a finite $\bbq$-group, $(\iota(\Gamma))(q)=1$ for any $q$ with large prime factors, which completes the argument of the our claim.  Now it is pretty easy to show that $\gcal(\pi_q(\Gamma^{\circ}),\pi_q(S^{\circ}))$ form expanders as $q$ runs through square free integers  with large prime factors. For the sake of completeness we present one argument: it is well-known that our desired condition holds if and only if the Haar measure is the only finitely additive $\Gamma^{\circ}$-invariant measure on $\widehat{\Gamma}^{\circ}$, where $\widehat{\Gamma}^{\circ}$ is the profinite completion of $\Gamma^{\circ}$ with respect to $\{\Gamma^{\circ}\cap \Gamma(q)\}$. By the above discussion $\widehat{\Gamma}^{\circ}$ is a finite-index open subgroup of $\widehat{\Gamma}$ the profinite closure of $\Gamma$ with respect to $\{\Gamma(q)\}$; thus one can easily deduce our claim. As a consequence we get a uniform upper bound on $|\pi_q(\Gamma^{\circ})/[\pi_q(\Gamma^{\circ}),\pi_q(\Gamma^{\circ})]|$. On the other hand, $[\bbg^{\circ},\bbg^{\circ}]$ and the quotient map $\iota':\bbg^{\circ}\rightarrow \bbg^{\circ}/[\bbg^{\circ},\bbg^{\circ}]$ are defined over $\bbq$. Hence again we have that $\iota'$ and $\pi_q$ commute with each other for any $q$ with large prime factors.  Thus one can complete the proof of the necessary part using the facts that $\Gamma^{\circ}$ is Zariski-dense in $\bbg^{\circ}$ and $\bbg^{\circ}$ does not have any proper open subgroup.

\subsection{Sufficiency}

Next we show that the condition that the connected component
of the Zariski closure of $\Ga$ is perfect is sufficient for the Cayley graphs
to form a family of expanders.
The argument which shows this using Propositions \ref{pr_escp} and
\ref{pr_flatening} is based on the ideas of Sarnak and Xue \cite{SX}
and Bourgain and Gamburd \cite{BG1} and it is common to all of the papers
\cite{BG1}--\cite{BGS} and \cite{Var}.
In the previous section we have already remarked
that $\Ga^{\circ}=\Ga\cap\G^\circ$ is finitely generated.
Using Proposition \ref{pr_escp}
for $\Ga^\circ$, we get a symmetric set $S'\subset\Ga^\circ$ such that
if $q$ is square-free and coprime to the denominators of the entries in the elements of $S$,
$H\leq\pi_q(\Ga)$  and $l$ is an integer with $l>\log q$, then
\[
\pi_q[\chi_{S'}^{(l)}](H)\ll[\pi_q(\Ga^\circ):H]^{-\d}.
\]

We show that the Cayley graphs $\GG(\pi_q(\Ga^\circ),\pi_q(S'))$ are expanders and later
we will see that this implies the statement of the theorem.
Denote by $T=T_q$ the convolution operator
by $\chi_{\pi_q(S')}$ in the regular representation of $\pi_q(\Ga^\circ)$.
I.e. we write $T(\mu)=\chi_{\pi_{q}(S')}*\mu$ for $\mu\in l^2(\pi_q(\Ga^\circ))$.
We will show that there is a constant $c<1$  independent of $q$
such that the second largest eigenvalue of $T$ is less than $c$.
By a result of Dodziuk \cite{Dod}; Alon \cite{Alo}; and Alon
and Milman \cite{AM} (see also \cite[Theorem 2.4]{HLW})
this then implies that $\GG(\pi_q(\Ga^\circ),\pi_q(S'))$ is a family of expanders.

Consider an eigenvalue $\l$ of $T$, and let $\mu$ be a corresponding
eigenfunction.
Consider the irreducible representations of $\pi_q(\Ga^\circ)$; these
are subspaces of $l^2(\pi_q(\Ga^\circ))$ invariant under $T$.
Denote by $\rho$ the irreducible representation
that contains $\mu$.
Recall form Section \ref{sc_escp} that
\[
\pi_q(\Ga^\circ)=\prod_{p|q\;{\rm prime}}\pi_p(\Ga^\circ).
\]
We only consider the case when the kernel of $\rho$ does not contain
$\pi_p(\Ga^\circ)$ for any $p|q$, otherwise we can consider the quotient
of $\pi_q(\Ga^\circ)$ by
$\pi_p(\Ga^\circ)$,
and we can replace $q$ by a smaller integer.
Then $\rho$ is the tensor-product of nontrivial representations
of the groups $\pi_p(\Ga^\circ)$, hence the dimension of $\rho$
is at least $|\pi_q(\Ga^\circ)|^\e$ for some $\e>0$ (cf. assumption (A3) in Section
\ref{sc_assump} and note that by Corollary
\ref{c:NormalSubgroupPerfect} the semisimple part can not be
contained in $\Ker (\rho)$).
This in turn implies that the multiplicity of $\l$ in $T$ is at least
$|\pi_q(\Ga^\circ)|^\e$, since the regular representation $l^2(\Ga/\Ga_q)$
contains $\dim(\rho)$ irreducible components isomorphic to $\rho$.

Using this bound for the multiplicity, we can bound $\l^{2l}$
by computing the trace of $T^{2l}$ in the standard basis:
\[
\l^{2l}\le |\pi_q(\Ga^\circ)|^{-\e} \Tr(T^{2l})=|\pi_q(\Ga^\circ)|^{-\e}
|\pi_{q}(\Ga^\circ)|\|\pi_q[\chi_{S'}^{(l)}]\|_2^2,
\]
where $\|\cdot\|_2$ denotes the $l^2$ norm over the finite set
$\pi_q(\Ga^{\circ})$.
This proves the theorem, if we can show that
\be
\label{eq_tbs}
\|\pi_q[\chi_{S'}^{(l)}]\|_2\ll|\pi_q(\Ga^\circ)|^{-1/2+\e/4}
\ee
for some $l\ll \log q$.

First apply Proposition \ref{pr_escp} with $H=\{1\}$.
It gives $\pi_q[\chi_{S'}^{(l)}](1)\ll |\pi_q(\Ga^\circ)|^{-\e}$
for $l>\log q$ and for some $\e>0$.
If $l$ is even then $\pi_q[\chi_{S'}^{(l)}](1)>\pi_q[\chi_{S'}^{(l)}](g)$
for any $g\in\pi_q(\Ga)$ by the Cauchy-Schwartz inequality and the definition
of convolution (recall that $S$ is symmetric).
Then we get the estimate
\[
\|\pi_q[\chi_{S'}^{(l)}]\|_2\ll |\pi_q(\Ga^\circ)|^{-\e/2}.
\]
Observe that if we repeatedly apply Proposition \ref{pr_flatening} for the measures
$\mu=\nu=\pi_q[\chi_{S'}^{(2^kl)}]$, then we get (\ref{eq_tbs})
in finitely many steps.
To justify the use of Proposition \ref{pr_flatening}, we remark that since $S'$ is symmetric,
we have
\[
\left(\pi_q[\chi_{S'}^{(2^kl)}](gH)\right)^2\le\pi_q[\chi_{S'}^{(2^{k+1}l)}](H)
\]
that can be bounded using Proposition \ref{pr_escp}.
This shows that  $\GG(\pi_q(\Ga^\circ),\pi_q(S'))$ are expanders indeed.

To finish the proof we show the same for the family $\GG(\pi_q(\Ga),\pi_q(S))$.
Write $c=c(\GG(\pi_q(\Ga^\circ),\pi_q(S')))$, recall the definition from the introduction.
Assume that the elements of $S'$ are the product of at most $m$ elements of $S$.
Consider a set $A=V(\GG)=\pi_q(\Ga)$ of vertices with $|A|\le|V(\GG)|/2$, and denote by $N_k(A)$ the set
of vertices that can be joined to an element of $A$ by a path of length at most $k$
in $\GG(\pi_q(\Ga),\pi_q(S))$.
I.e. by definition
\[
N_k(A)=(\textstyle\prod_k S).A.
\]
Also, it is easy to see that $|N_k(A)|\le|S|^{k-1}|\partial A|+|A|$, so
it is enough to give a lower bound on $|N_k(A)|$.
We clearly have
\[
|N_{|\Ga/\Ga^\circ|}(A)|\ge|\Ga/\Ga^\circ|\max_{g\in\pi_q(\Ga)}|A\cap g\pi_q[\Ga^{\circ}]|.
\] 
This finishes the proof if say $|A\cap\pi_q(\Ga^\circ)|<|A|/(2|\Ga/\Ga^\circ|)$
or if $|A\cap\pi_q(\Ga^\circ)|>3|\pi_q(\Ga^\circ)|/4$.
If both of these inequalities fail, then by the expander property of $\GG(\pi_q(\Ga^\circ),\pi_q(S'))$
already proved, we conclude that
\[
N_m(A)>|A|+c/|S'|\min\{|A|/(2|\Ga/\Ga^\circ|),|\pi_q(\Ga^\circ)|/4\}
\]
which proves the theorem.

\begin{rmk}
The above proof implies a variant of Proposition \ref{pr_flatening}
that is useful in some applications.
Compare the statement below with \cite[Lemma 2 in Section 7]{BGS2}.
Let $q$ be a square-free integer and $\G$ be a Zariski-connected,
perfect algebraic group defined over $\Q$,
and write $G=\G(\Z/q\Z)$.
For every $\e>0$, there is a $\d>0$ such that the following hold:
Let $\mu$ be a probability measure which satisfies the following version of
the assumptions
in Proposition \ref{pr_flatening} for some $\e>0$.
I.e.
\[
\|\mu\|_2>|G|^{-1/2+\e}\quad{\rm and}\quad
\mu(gH)<[G:H]^{-\e}|G|^\d
\]
for any $g\in G$ and for any proper subgroup $H<G$.
Let $f\in l^2(G)$ be a complex valued function on the group $G$ such that
\[
\sum_{g\in a\G(\Z/q'\Z)} f(g)=0
\]
for all $a\in G$ and $q'|q$ with $q'\neq1$.
This condition is equivalent to saying that $f$ is orthogonal to those
irreducible subrepresentations in the regular representation of $G$
that factor through $G/\G(\Z/q'\Z)$
for some $q'\neq1$.
Then using the argument in the proof above, we can write
\be\label{eq_rmk}
\|\mu*f\|_2<q^{-\d}\|f\|_2
\ee
for some $\d>0$ depending on $\e$ and $\G$.
Indeed, repeated application of Proposition \ref{pr_flatening}
shows the analogue of (\ref{eq_tbs}) for $\mu^{(L)}$ in place of
$\pi_q[\chi_{S'}^{(l)}]$ for some integer $L$ which depend on $\e$ and
$\G$.
Combining this with the lower bounds for multiplicities of the eigenvalues
we get the inequality (\ref{eq_rmk}).
We also note that the statement in this remark also holds if we consider
a group $G$ which satisfies
the assumptions (A0)--(A5) listed in Section \ref{sc_assump} instead of
taking
$G=\G(\Z/q\Z)$.
\end{rmk}

\appendix
\section{Appendix: Effectivization of Nori's paper}
\label{sc_effective}
In this section, we address the non-effective parts of Nori's argument in~\cite{Nor} and present 
alternative effective arguments. Most of the arguments in the mentioned article are effective. We 
only need to present effective proofs of \cite[Proposition 2.7, Lemma 2.8 and Theorem 5.1]{Nor}.  It should be said that in this article by effective we mean that there 
is an algorithm to find the implied constants. Alternatively one can say the mentioned functions are 
recursively  defined. We should say that these results are far from the best possible. In fact using 
the classification of finite  simple groups, Guralnick~\cite[Theorem D]{Gur} showed that if 
$p>\max\{n+2,11\}$, then Nori\rq{}s statement hold for any subgroup of $\GL_n(\mathbb{F}_p)$ 
which is generated by its $p$-elements and {\it has no normal $p$-subgroup}. Unfortunately this last condition does not allow us to apply this sharp result.

Before stating the main results of this section, we recall very few terms from~\cite{Nor} and refer 
the reader to the mentioned article for the undefined terms. 

Here $R$ always denotes a finitely presented integral domain unless otherwise mentioned. 
\begin{dfn}[Definition 2.2 in \cite{Nor}]
An $R$-submodule $L$ of $M_n(R)$ is called a $k$-strict Lie subalgebra of $M_n(R)$ if 
\begin{enumerate}
\item $L$ is a Lie ring.
\item There is a submodule $L'$ of $M_n(R)$ such that
\[
M_n(R)=L\oplus L',
\]
and $L'$ is  locally free of rank $n^2-k$.
\end{enumerate} 
\end{dfn}
\begin{dfn}[Definition 2.5 in \cite{Nor}]
Let $\mathbb{U}_n$, $\mathbb{N}_n$ and $\mathbb{Y}_{n,k}$ be the schemes which represent the following functors from $\bbz[\frac{1}{(2n-1)!}]$-algebras $A$ to sets:
\begin{enumerate}
\item $\mathbb{N}_n(A):=\{x\in M_n(A)|\h x^n=0\}$,
\item $\mathbb{U}_n(A):=\{x\in M_n(A)|\h (x-1)^n=0\}$,
\item $\mathbb{Y}_{n,k}(A):=\{{\bf x}=(x_1,\ldots,x_k)\in \mathbb{N}_n(A)|\h L_{\bf x}$ is a $k$-strict Lie subalgebra$\}$, where $L_{\bf x}=Ax_1+\cdots+Ax_k$.
\end{enumerate}
\end{dfn}
\begin{dfn}\label{d:LH}
Let $L$ be a $k$-strict Lie subalgebra of $M_n(A)$ and $\mathbb{H}$ be a closed subgroup-scheme of $(\mathbb{GL}_n)_A:=\mathbb{GL}_n\times \spec{A}$. Let $\mathbb{L}$ be the $A$-scheme which represents the functor $S\mapsto S\otimes A$ defined for all commutative $A$-algebras. Let us define two closed subschemes of $(\mathbb{GL}_n)_A$:
\[
e(\mathbb{L}^{(n)}):=\exp(\mathbb{L}\cap (\mathbb{N}_n)_A),\hspace{1cm} \mathbb{H}^{(u)}:=\mathbb{H}\cap (\mathbb{U}_n)_A,
\]
for any $\bbz[1/(2n-1)!]$-algebra $A$.
\end{dfn} 
\begin{dfn}[Definition 2.3 and Remark 2.18 in \cite{Nor}]\label{d:acceptable}
Let $L$ and $\mathbb{H}$ be as in Definition~\ref{d:LH}. Then $(L,\mathbb{H})$ is called an acceptable pair if the following hold:
\begin{enumerate}
\item The projection  $\mathbb{H}\rightarrow \spec(A)$ is a smooth morphism with all the fibers connected. 
\item $\Lie(\mathbb{H}/A)=L$.
\item $(e(\mathbb{L}^{(n)}))_{{\rm red}}=(\mathbb{H}^{(u)})_{\rm red}$.
\end{enumerate}
In this case, $L$ or $\mathbb{H}$ are called acceptable. 
\end{dfn}
In this section, let $\underline{X}=\{X_1,\ldots,X_m\}$ and $R[\underline{X}]$ be the ring of 
polynomials in the variables $X_1,\ldots,X_m$ with coefficients in the ring $R$. If $F$ is a subset 
of a ring $R$, then $\langle F\rangle$ denotes the ideal generated by $F$ in $R$. 

Here are the main results of this section:
\begin{lem}[Effective version of Lemma 2.8 in \cite{Nor}]\label{l:2.8}
Let $R$ be a computable noetherian integral domain with quotient field $K$ and ${\bf z} \in \mathbb{Y}_{n,k}(R)$. If $L_{\bf z}\otimes_R K$  is acceptable, then we can algorithmically find a non-zero element $g\in R$ such that $L_{\bf z}\otimes_R R_g\subseteq M_n(R_g)$ is also acceptable.  
\end{lem}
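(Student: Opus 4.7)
The plan is to follow Nori's original proof of Lemma 2.8 while replacing each existential step with an effective one using computational algebraic geometry. The key input is that $\mathbf{z}=(z_1,\ldots,z_k)\in\mathbb{Y}_{n,k}(R)$ consists of nilpotent elements of order $n$, so the truncated exponentials $\exp(z_i)=\sum_{j=0}^{n-1}z_i^j/j!$ make sense in $\mathbb{GL}_n$ after inverting $(2n-1)!$. Since $L_{\mathbf{z}}\otimes_R K$ is acceptable, it is the Lie algebra of a closed subgroup scheme $\mathbb{H}_K\subseteq(\mathbb{GL}_n)_K$ satisfying the three conditions of Definition~\ref{d:acceptable}. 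The first step is to compute the defining ideal $I_K\subseteq K[\mathbb{GL}_n]$ of $\mathbb{H}_K$ explicitly: starting from the finite generating set $\{\exp(z_1),\ldots,\exp(z_k)\}\subseteq\mathbb{H}_K(K)$, I would iterate the Zariski-closure-of-product procedure via Gr\"obner basis algorithms over $K$; by Noetherianity this stabilises in a number of steps bounded in terms of $\dim \mathbb{GL}_n$, and produces $I_K$.

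Having $I_K$, clear denominators in a Gr\"obner basis to obtain an ideal $I_R\subseteq R[\mathbb{GL}_n]$ with generic fiber $I_K$, and let $\mathbb{H}_R\subseteq(\mathbb{GL}_n)_R$ be the corresponding closed subscheme. The goal is then to produce $g\in R\setminus\{0\}$ such that $\mathbb{H}_{R_g}:=\mathbb{H}_R\times_R\spec(R_g)$ together with $L_{\mathbf{z}}\otimes_R R_g$ is an acceptable pair. For condition (1), apply the Jacobian criterion: the non-smooth locus of $\mathbb{H}_R\to\spec R$ is cut out by an explicitly computable ideal generated by certain minors of the Jacobian together with $I_R$, and by elimination its image in $\spec R$ is a proper closed subset (it misses the generic point because $\mathbb{H}_K$ is smooth); any nonzero element $g_1$ of the defining ideal of this image works. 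Geometric connectedness of fibers is handled analogously by computing a primary decomposition of $I_R\otimes R_{g_1}$ and using constructibility of the locus of geometrically integral fibers to extract a further $g_2$. Condition (2) reduces to the equality of two finitely generated $R_{g_1g_2}$-submodules of $M_n(R_{g_1g_2})$, both computable by linear algebra; the equality over $K$ yields a denominator $g_3$ after which it holds. Finally, condition (3) is equality of two radical ideals in the coordinate ring of $(\mathbb{GL}_n)_{R_{g_1g_2g_3}}$; compute both radicals effectively (e.g.\ by a Seidenberg-type algorithm over polynomial rings over $R$), compare generators, and clear denominators to obtain $g_4$.

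Setting $g:=g_1g_2g_3g_4$ provides the required element, and the algorithm terminates since each of the four sub-algorithms terminates on computable Noetherian input. The main obstacle is the effective construction of the Zariski closure in step one and the effective radical computation in step four; both are classical in computational algebraic geometry but must be performed uniformly over the base ring $R$, and it is precisely the computability hypothesis on $R$ (effective ideal arithmetic, effective equality tests, effective elimination) that makes Gr\"obner basis manipulations in $R[\mathbb{GL}_n]$ and in $K[\mathbb{GL}_n]$ algorithmic. Non-effectiveness in Nori's original argument enters only through generic smoothness and generic geometric irreducibility, both of which admit the effective replacements above.
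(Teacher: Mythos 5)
Your overall skeleton (spread out a presentation of the group scheme $\mathbb H$ from $K$ to $R$, then successively invert elements of $R$ to secure smoothness and connectedness of fibres, the Lie-algebra identity, and the equality $(e(\mathbb L^{(n)}))_{\rm red}=(\mathbb H^{(u)})_{\rm red}$) is the same as the paper's, which runs Nori's argument through Theorem~\ref{t:EGA} and Lemmas~\ref{l:LieRing} and~\ref{l:ThirdCon}. But your first step, as written, fails. The group that Nori's argument requires is the subgroup generated by the one-parameter subgroups $t\mapsto\exp(tz_i)$ (this is the content of Nori's Lemma~2.12, quoted in the proof of Lemma~\ref{l:LieRing}), and the reason its ideal is computable is Chevalley's bounded-length product trick for \emph{irreducible} subvarieties through the identity: $\mathbb H$ is the image of a single word map of length controlled by $\dim\mathbb{GL}_n$ in the curves $\exp(tz_i)$ (Lemmas~\ref{l:PatternGen1}--\ref{l:PatternGen3}), so one elimination/Gr\"obner computation produces $I_K$ (Lemma~\ref{l:presentation}). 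Iterating ``take products, then Zariski closure'' starting from the finite point set $\{\exp(z_1),\dots,\exp(z_k)\}$ does not terminate: the set of words of length at most $m$ is finite, hence already Zariski closed, so your chain is just the abstract group listed word by word and strictly increases whenever that group is infinite. Noetherianity gives the descending, not the ascending, chain condition on closed subsets, and the ``bounded by $\dim\mathbb{GL}_n$'' stabilisation is only valid for products of irreducible constructible sets containing $1$ --- which is exactly why one must work with the curves $\exp(tz_i)$ rather than the points (in positive characteristic the points even generate a strictly smaller group than $\mathbb H$).

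A second, softer gap: for the passage from geometric irreducibility of the generic fibre to geometric irreducibility of all fibres over some $R_g$, you invoke ``constructibility of the locus of geometrically integral fibres.'' That constructibility statement (EGA IV, 9.7.7 and 12.2.4) is precisely the non-effective input that this lemma is supposed to eliminate, and a primary decomposition of $I_R$ does not by itself compute the relevant open subset of $\spec(R)$. The paper's Theorem~\ref{t:EGA} is devoted to exactly this: effective generic flatness via Gr\"obner bases (Lemma~\ref{l:GenericFlatness}), effective Noether normalization together with a primitive element to reduce to a hypersurface (Lemma~\ref{l:NoetherNormalization+Primitive}), the hypersurface case from EGA, and a Tor/freeness argument to pass back to $A$. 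Unless you cite or reproduce an actual algorithm for that locus, this step remains a black box of the kind the statement forbids. Your treatment of conditions (2) and (3) (Jacobian kernels and linear algebra with denominators cleared; comparison of the two ideals using the known equality over $K$) is consistent with Lemmas~\ref{l:LieRing} and~\ref{l:ThirdCon} and is fine in spirit.
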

(For the definition of a computable ring, see \cite[Chapter 4.6]{Grobner}.)
\begin{lem}[Effective version of Proposition 2.7 in \cite{Nor}]\label{l:2.7}
For a given $k,n$, we can give presentations of finitely many integral domains $R_i$ and algorithmically find elements ${\bf z}_i\in \mathbb{Y}_{n,k}(R_i)$ such that
\begin{enumerate}
\item $L_{{\bf z}_i}$ is acceptable if ${\rm char}(R_i)=0$.
\item ${\bf z}_i:\spec(R_i)\rightarrow \mathbb{Y}_{n,k}$ is a locally closed immersion and 
\[
\mathbb{Y}_{n,k}=\bigsqcup_i {\bf z}_i(\spec(R_i)).
\]
\end{enumerate}
\end{lem}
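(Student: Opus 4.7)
The plan is to combine algorithmic primary decomposition with Lemma~\ref{l:2.8}, applied via Noetherian induction on closed subschemes of $\mathbb{Y}_{n,k}$.

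First, I would write down explicit generators for the defining ideal of $\mathbb{Y}_{n,k}$ inside $\bbz[1/(2n-1)!][X_{i,j,l}]$, where $X_{i,j,l}$ is the $(j,l)$-entry of $x_i$. The nilpotency conditions $x_i^n=0$ are polynomial; that the $R$-span of the $x_i$'s is a $k$-strict Lie subalgebra translates into polynomial relations (closure under brackets) together with the non-vanishing of certain minors (local freeness of a complement), the latter being converted to a closed subscheme by the standard Rabinowitsch trick. I would then run an effective primary-decomposition algorithm over the computable Noetherian PID $\bbz[1/(2n-1)!]$ (classical, see e.g.\ Seidenberg or Gianni--Trager--Zacharias) on the resulting ideal. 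This yields a finite collection of integral closed subschemes $\spec(A^{(1)}),\ldots,\spec(A^{(N)})$ whose union is $\mathbb{Y}_{n,k}$ set-theoretically, each carrying a tautological tuple ${\bf z}^{(j)}\in\mathbb{Y}_{n,k}(A^{(j)})$ coming from the universal matrices.

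For each $j$, I would decide whether $\text{char}(A^{(j)})=0$ by checking whether the natural map $\bbz\to A^{(j)}$ is injective, which is straightforward from the explicit presentation of $A^{(j)}$. If $\text{char}(A^{(j)})>0$, I would record the pair $(A^{(j)},{\bf z}^{(j)})$ as-is, since no acceptability condition is demanded. If $\text{char}(A^{(j)})=0$, I would first observe that $L_{{\bf z}^{(j)}}\otimes_{A^{(j)}}\text{Frac}(A^{(j)})$ is automatically acceptable: smoothness and connectedness follow from characteristic zero, and by a classical theorem of Chevalley a Lie subalgebra of $\gl_n$ over a characteristic-zero field spanned by nilpotent elements is algebraic with $\exp$ giving a bijection between its nilpotent subvariety and the unipotent subvariety of the corresponding connected algebraic subgroup. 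I would then invoke Lemma~\ref{l:2.8} to algorithmically produce a nonzero $g_j\in A^{(j)}$ for which $L_{{\bf z}^{(j)}}\otimes A^{(j)}_{g_j}$ is acceptable, and record the stratum $(A^{(j)}_{g_j},{\bf z}^{(j)})$.

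Finally, I would iterate on the residual closed subscheme, namely the union of the vanishing loci $V(g_j)\subset\spec(A^{(j)})$ together with the pairwise intersections of the components with each other. Rerunning primary decomposition on this residual subscheme and repeating, the process terminates in finitely many steps by Noetherian induction: the Krull dimension strictly drops at each iteration, since in each characteristic-zero stratum we have removed a dense open piece and in each positive-characteristic stratum the component has already been recorded. Concatenating the strata over all iterations produces the required disjoint stratification by locally closed integral subschemes.

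The main obstacle is ensuring that every step is genuinely effective: primary decomposition over the mixed-characteristic ring $\bbz[1/(2n-1)!]$ and the algorithmic detection of characteristic. Primary decomposition over a computable PID is classical and Lemma~\ref{l:2.8} provides the necessary effective content for the localization step, so once those two ingredients are granted, the above procedure assembles into a concrete algorithm.
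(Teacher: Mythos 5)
Your overall strategy is exactly the paper's: an explicit presentation of $\mathbb{Y}_{n,k}$ as a locally closed subscheme of $\bba^{kn^2}$ (this is Corollary~\ref{c:Ypresentation}; note that bracket-closure of the span is \emph{not} a closed condition by itself, and becomes one only in combination with the non-vanishing of the maximal minors via the adjugate equations of Lemma~\ref{l:Ypresentation2} -- you gloss over this, but your coupling of the two conditions is the right idea), effective primary decomposition, automatic acceptability of $L_{\bf z}$ over the characteristic-zero generic point (Nori's characteristic-zero theory, which you attribute to Chevalley), spreading out with Lemma~\ref{l:2.8}, and Noetherian induction on the residual closed locus. This is precisely what the paper means by ``following Nori's proof of Proposition 2.7 using Lemma~\ref{l:2.8}, Corollary~\ref{c:Ypresentation} and Lemma~\ref{l:Covering}.''

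There is, however, one concrete defect: as written, your procedure does not produce the \emph{disjoint} union demanded by the statement. You record all components $C_j=\spec(A^{(j)})$ of one primary decomposition simultaneously -- the characteristic-zero ones as $C_j\setminus V(g_j)$ and the positive-characteristic ones in full -- and then throw $\bigcup_j V(g_j)$ together with the pairwise intersections $C_i\cap C_j$ into the residual scheme. If $C_i\cap C_j$ is not contained in $V(g_i)\cup V(g_j)$ (and for a positive-characteristic component recorded ``as-is'' this containment never holds at points of intersection), the recorded strata overlap; worse, since the intersections are also fed back into the recursion, those points get recorded yet again at a later stage. So the output covers $\mathbb{Y}_{n,k}$ but is not a partition. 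The repair is standard and is exactly what Lemma~\ref{l:Covering} is engineered for: peel off \emph{one} integral piece at a time, localizing the chosen component away from all the others (the element $c$ in that lemma), so the recorded stratum is an open subscheme of the current residual locally closed set and hence automatically disjoint from everything recorded afterwards, and use the effective description of its complement to recurse. Equivalently, in your formulation, at each stage you must delete from the new stratum every previously recorded locus before recording it.
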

In order to prove Lemma~\ref{l:2.8}, we need to show the following effective version of certain results from EGA~\cite{EGA}. 
\begin{thm}[Effective version of Theorem 9.7.7 (i) and Theorem 12.2.4 (iii) in \cite{EGA}]\label{t:EGA}
Let $R$ be a computable integral domain. Let $F=\{f_1,\ldots, f_l\}\subseteq R[\underline{X}]$ and 
$
A=R[\underline X]/\langle F\rangle.
$
 If the generic fiber of the projection map
$\spec(A)\rightarrow \spec(R)$
is smooth and geometrically irreducible, then one can compute a non-zero element $g\in R$ such that
the projection map $\spec(A_g)\rightarrow \spec(R_g)$ is smooth and all of its fibers are geometrically irreducible.
\end{thm}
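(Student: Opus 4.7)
The plan is to effectivize the standard proofs of \cite[Theorem 9.7.7(i) and 12.2.4(iii)]{EGA} separately for smoothness and for geometric irreducibility of fibers, then take $g$ to be the product of the witnesses produced in each step. In both cases the classical ``holds on an open subset'' statement will be replaced by an explicit Gr\"obner basis or elimination computation that outputs a non-zero element of $R$.

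For smoothness, first compute the relative dimension $d$ of the generic fiber $\spec(A \otimes_R K)$ over $K := \mathrm{Frac}(R)$ via a Gr\"obner basis calculation. By the Jacobian criterion, smoothness of the generic fiber is equivalent to the ideal $I \subseteq K[\underline X]$ generated by $F$ together with all $(m-d)\times(m-d)$ minors of $(\partial f_i/\partial X_j)$ being the unit ideal. An explicit Gr\"obner basis over $K[\underline X]$ will produce a representation $1 = \sum_k h_k p_k$ with $h_k \in K[\underline X]$ and $p_k$ among the generators of $I$; clearing denominators yields a nonzero $g_1 \in R$ with $g_1 \in I$ in $R[\underline X]$. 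Combined with an effective form of generic flatness (producing a further $g_1' \in R$), this renders the projection smooth of relative dimension $d$ with equidimensional fibers over $\spec(R_{g_1 g_1'})$.

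For geometric irreducibility of fibers, I will use effective primary decomposition, which is available in computable Noetherian integral domains by algorithms of Gianni--Trager--Zacharias type. Compute the primary decomposition of $\langle F\rangle$ in $K[\underline X]$: by hypothesis there is a unique associated prime $\mathfrak{p}_K$, and the residue field $K[\underline X]/\mathfrak{p}_K$ is a regular extension of $K$, i.e.\ it contains no non-trivial algebraic extension of $K$ and is separably generated over it. Regularity can be verified algorithmically by effectively computing the algebraic closure of $K$ inside $K[\underline X]/\mathfrak{p}_K$. Lifting the decomposition to $R[\underline X]$ after inverting an appropriate $g_2 \in R$ ensures $\langle F\rangle R_{g_2}[\underline X]$ has a unique minimal prime $\mathfrak{p}$ with the analogous regularity property. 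By Chevalley's theorem, the locus in $\spec(R_{g_2})$ over which a fiber splits into several geometric components is constructible; realizing this locus explicitly via effective elimination on $\mathfrak{p}$ will produce a non-zero $g_3 \in R_{g_2}$ whose inversion excises the bad locus. The output is $g = g_1 g_1' g_2 g_3$.

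The main obstacle will be the effectivization of geometric irreducibility of the special fibers. While preservation of irreducibility generically is readily captured by primary decomposition, detecting the locus of special fibers that acquire hidden geometric components only after passing to an algebraic extension of the residue field requires chaining several non-trivial algorithms: effective primary decomposition, effective computation of algebraic closures of base fields inside function fields of fiber components, and effective elimination to realize the Chevalley-constructible ``bad'' locus as a principal open subset of $\spec(R)$. Each of these is known to be feasible for computable Noetherian integral domains, but their composition requires careful bookkeeping of denominators throughout.
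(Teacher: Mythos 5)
Your treatment of smoothness is fine and coincides with the paper's: the Jacobian criterion certified by a Gr\"obner basis computation over $K[\underline{X}]$, combined with effective generic flatness to fix the fibre dimension, is exactly what Lemmas \ref{l:GenericFlatness} and \ref{l:smooth2} do. The gap is in the geometric-irreducibility half. The step that produces $g_3$ --- a single nonzero element of $R$ such that \emph{every} fibre over $\spec(R_{g_3})$ is geometrically irreducible --- is precisely the content of \cite[Theorem 9.7.7]{EGA} that has to be effectivized, and you do not actually give an algorithm for it: you appeal to Chevalley constructibility of the bad locus and then assert that it can be ``realized explicitly via effective elimination on $\mathfrak{p}$''. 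Constructibility is the non-effective statement being replaced, so it cannot be used as a black box; and eliminating the $X$-variables from $\mathfrak{p}$ does not detect whether the fibre over a prime $\mathfrak{q}\subset R$ splits into several components after base change to $\overline{k(\mathfrak{q})}$ --- the number of geometric components of a fibre is not expressed by vanishing or non-vanishing of an elimination ideal computed once and for all over $R$. Some genuinely different device (parametric/comprehensive Gr\"obner bases, or a structural reduction) is required at exactly this point, and your sketch leaves it open; the verification of regularity of the generic function field that you propose is, by contrast, superfluous, since geometric irreducibility of the generic fibre is a hypothesis.

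The paper supplies the missing device by a reduction to the hypersurface case. Lemma \ref{l:NoetherNormalization+Primitive} gives an effective Noether normalization and primitive element, so that after inverting a suitable $f\in R_g[X_1',\ldots,X_d']$ the algebra becomes $B=R[X_1',\ldots,X_d',T]/\langle p\rangle$; for a hypersurface the relevant openness statement, \cite[Lemma 9.7.5]{EGA}, is effective and yields $g_3$ with all fibres of $\spec(B_{g_3})\to\spec(R_{g_3})$ geometrically irreducible. One must then pass back from $A_{g_3f}$ to $A_{g_3}$, a genuine issue you would also face: a fibre of $A_{g_3}$ need not inject into the corresponding fibre of $A_{g_3f}$ if $f$ becomes zero or a zero divisor modulo some prime. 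The paper handles this by computing $g_4$ with $(A/\langle f\rangle)_{g_4}$ free over $R_{g_4}$ and using the exact sequence $0\to A_g\xrightarrow{\lambda_f}A_g\to(A/\langle f\rangle)_g\to 0$ together with the vanishing of ${\rm Tor}((A/\langle f\rangle)_g,k(\mathfrak{q}))$ to see that $f$ is a nonzerodivisor in every fibre, so geometric irreducibility transfers from $A_{g_3f}$ to $A_{g_3}$. Your proposal contains no substitute for either the hypersurface reduction or this transfer argument, so as written the geometric-irreducibility half of the theorem is not proved.
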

Finally we shall use Lemma~\ref{l:2.8} and Lemma~\ref{l:2.7} to get the following:
\begin{thm}[Effective version of a special case of Theorem 5.1 in \cite{Nor}]\label{t:5.1}
Let $\bbg$ be a perfect, Zariski-connected, simply-connected $\bbq$-group. Let 
$\Gamma\subseteq \bbg(\bbq)$ be a Zariski-dense subgroup generated by a finite symmetric set 
$S$. Then one can effectively find $p_0=p_0(S)$ such that for any $p>p_0$ one has 
$\pi_p(\Gamma)=\bbg(\mathbb{F}_p)$. 
\end{thm}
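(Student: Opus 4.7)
My strategy is to effectivize Nori's proof of his strong approximation theorem by combining the effective Lemmas~\ref{l:2.8} and \ref{l:2.7} with an effective Zariski--density argument. Let $\mathfrak{g}=\Lie(\bbg)\subseteq M_n(\bbq)$, which is computable from the defining equations of $\bbg$. Since $\bbg$ is perfect, Zariski-connected and simply connected, we may decompose $\bbg\simeq\bbl\ltimes\bbu$ with $\bbl$ simply connected semisimple, and then effectively exhibit a basis $\mathbf{x}=(x_1,\dots,x_k)$ of $\mathfrak{g}$ consisting of nilpotent elements of $M_n(\bbq)$ (root vectors from a Chevalley basis for $\Lie(\bbl)$ together with a basis of $\Lie(\bbu)$, which is automatically nilpotent). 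Thus $\mathbf{x}\in\mathbb{Y}_{n,k}(\bbq)$ and $L_{\mathbf{x}}=\mathfrak{g}$ is acceptable, with associated algebraic group $\bbg_{\bbq}$.

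Next, I apply Lemma~\ref{l:2.7} to compute presentations of the strata $\mathbf{z}_i:\spec(R_i)\hookrightarrow\mathbb{Y}_{n,k}$ and algorithmically locate the unique index $i_0$ whose stratum contains $\mathbf{x}$. Because $L_{\mathbf{z}_{i_0}}\otimes\mathrm{Frac}(R_{i_0})$ is acceptable in characteristic zero, Lemma~\ref{l:2.8} furnishes a computable nonzero $g\in R_{i_0}$ making $L_{\mathbf{z}_{i_0}}\otimes R_{i_0,g}$ acceptable as a family. Assembling the primes occurring in the denominators of $S$, in $(2n-1)!$, in $g$, and in the relevant change-of-basis determinants, I obtain an effectively computable bound $N_0$ such that for every prime $p\nmid N_0$ the reduction $\mathfrak{g}_{\bbf_p}$ is acceptable with associated algebraic group $\bbg_{\bbf_p}$, and $\bbg(\bbf_p)^+$ is precisely the subgroup generated by the images of $\exp$ on the nilpotent elements of $\mathfrak{g}(\bbf_p)$. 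Enlarging $N_0$ to absorb the Bruhat--Tits threshold recalled in Section~\ref{sc_escp}, we also have $\bbg(\bbf_p)=\bbg(\bbf_p)^+$ for $p\nmid N_0$.

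To control $\pi_p(\Gamma)^+$, I then produce elements of $\Gamma$ whose mod-$p$ reductions contribute enough $p$-elements: for each $1\le i\le k$, the condition that a word $w$ in $S\cup\widetilde S$ yields an element $w(S)\in\bbg(\bbq)$ whose Jordan unipotent part has logarithm with nonzero $x_i$-coordinate modulo $\mathrm{span}(x_j:j\ne i)$ cuts out the complement of a proper subvariety of $\bbg$. Since $\Gamma$ is Zariski-dense, \cite[Proposition 3.2]{EMO} (whose proof, as observed in Section~\ref{sc_pingpong}, is effective once one controls the dimension, degree, and number of irreducible components of the variety to be escaped) supplies an explicit word-length $N_i$ in $S$ producing such $\gamma_i\in\Gamma$. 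A further effectively bounded power $\gamma_i^{M_i}$ is genuinely unipotent, and $\{\log(\gamma_i^{M_i})\}_{i=1}^k$ is a $\bbq$-basis of $\mathfrak{g}$. Enlarging $N_0$ one last time to avoid the primes appearing in the resulting determinant, the mod-$p$ reductions span $\mathfrak{g}_{\bbf_p}$ for every $p>p_0:=N_0$.

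For such $p$, let $\mathbb{H}_p\le\bbg_{\bbf_p}$ be the algebraic subgroup attached to $\pi_p(\Gamma)^+$ by the version of Nori's theorem effectivized in the preceding steps. The nilpotent elements $\log(\pi_p(\gamma_i^{M_i}))$ lie in $\Lie(\mathbb{H}_p)$ and span $\mathfrak{g}_{\bbf_p}$, forcing $\Lie(\mathbb{H}_p)=\mathfrak{g}_{\bbf_p}$; acceptability of both Lie algebras then gives $\mathbb{H}_p=\bbg_{\bbf_p}$, whence $\pi_p(\Gamma)\supseteq\bbg(\bbf_p)^+=\bbg(\bbf_p)$, yielding equality. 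The main obstacle is the third step: while \cite[Proposition 3.2]{EMO} is already used effectively elsewhere in the paper, verifying its effectiveness in our particular setting requires a careful explicit audit of the degrees and component counts of the subvarieties being escaped (which come from writing Jordan decomposition and the projection to the $x_i$-coordinate as algebraic conditions on $\bbg\subseteq\mathbb{GL}_n$). None of this is conceptually difficult, but the bookkeeping must be done with enough care that every intermediate quantity is traceable from the input data $(\bbg,S)$.
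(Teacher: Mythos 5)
There is a genuine gap at the heart of your argument, namely in the step where you try to produce elements $\gamma_i\in\Gamma$ whose unipotent Jordan parts have logarithms spanning $\gfr$. First, the genericity claim is backwards: already for $\bbg=\SL_2$ the elements with nontrivial unipotent Jordan part all lie in the proper subvariety $\{\mathrm{tr}=\pm 2\}$, and for any semisimple $\bbg$ the regular semisimple elements (which have trivial unipotent part) contain a dense open set; so the condition ``the logarithm of the unipotent part has nonzero $x_i$-coordinate'' holds at most on a proper subvariety rather than on the complement of one, and \cite[Proposition 3.2]{EMO} escapes \emph{out of} proper subvarieties, not into them. Worse, a finitely generated Zariski-dense $\Gamma$ may contain no element with nontrivial unipotent part at all (e.g.\ a purely hyperbolic Schottky subgroup of $\SL_2(\Z)$, or a cocompact arithmetic lattice), so the elements you need simply need not exist: the $p$-elements of $\pi_p(\Gamma)$ are in general not reductions of unipotent elements of $\Gamma$. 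Finally, even when some $\gamma\in\Gamma$ does have nontrivial unipotent part, no effectively bounded power $\gamma^{M}$ is unipotent unless the semisimple part of $\gamma$ is torsion, so the inputs $\log(\pi_p(\gamma_i^{M_i}))$ to your last step are unavailable and the conclusion $\Lie(\bbh_p)=\gfr_{\bbf_p}$ has nothing to feed it. (A smaller issue: the ``Bruhat--Tits threshold'' of Section~\ref{sc_escp} is stated there non-effectively; making it effective is exactly the content of Lemmas~\ref{l:GenericHS} and \ref{l:GenericPerfectness}, which you would have to invoke.)

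Your first two steps (effectivizing Nori via Lemmas~\ref{l:2.7} and \ref{l:2.8} and controlling the reduction of $\gfr$) are in the right spirit, but the mechanism that forces $\pi_p(\Gamma)$ to be everything must be different, and this is where the paper's proof diverges from yours. The paper never looks for unipotents in $\Gamma$: Lemmas~\ref{l:ExplicitZariskiClosure}, \ref{l:GenericHS} and \ref{l:GenericPerfectness} produce an effective integral model $\gcal$ over $\bbz[1/q_1]$ together with a computable generating set of the full $S$-arithmetic group $\gcal(\bbz[1/q_1])$ (via \cite{GS2}), for which classical strong approximation gives $\pi_p(\gcal(\bbz[1/q_1]))=\lcal_p(\bbf_p)\ltimes\ucal_p(\bbf_p)$, a perfect group, for all $p\nmid q_1$; this is the effective Theorem A for the ambient arithmetic group. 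Then, using the effectivized Theorems B and C of Nori, one reruns the proof of Proposition~\ref{p:LiftingUp} to compute an explicit $\delta>0$ such that the small-norm lifts of any proper subgroup $H=H^+$ of $\gcal_p(\bbf_p)^+$ lie in a proper algebraic subgroup of $\bbg$. If $\pi_p(\Gamma)$ were proper for some $p>\max_{\gamma\in S}\|\gamma\|_{\scal}^{1/\delta}$, then every generator in $S$ would be such a small lift, so $S$ would lie in a proper algebraic subgroup, contradicting Zariski density; this escape-from-subgroups argument, not a supply of unipotent elements of $\Gamma$, is what yields $\pi_p(\Gamma)^+=\gcal_p(\bbf_p)^+=\bbg(\bbf_p)$ and the explicit bound $p_0(S)$. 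You would need to replace your third and fourth steps by an argument of this kind.
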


\subsection{Proof of Theorem~\ref{t:EGA}.}

In this section, first we show the \lq\lq{}generic flatness\rq\rq{} in 
Lemma~\ref{l:GenericFlatness} and the \lq\lq{}generic smoothness\rq\rq{} in Lemma~\ref{l:smooth2}. 
Then we reduce the general case of Theorem~\ref{t:EGA} to the hyperplane case and finish 
it as in~\cite{EGA}.

\begin{lem}\label{l:GenericFlatness}
Let $R$ be a computable noetherian integral domain. Let $K$ be the quotient field of $R$. Let $F=\{f_1,\ldots,f_l\}\subseteq R[\underline{X}]$ and $A=R[\underline{X}]/\langle F\rangle$. Then we can algorithmically find a non-zero element $g\in R$ such that
\begin{enumerate}
\item $A_g$ is a free $R_g$-module. 
\item $\langle F\rangle_g=\langle F\rangle_{K}\cap R_g[\underline{X}]$, where $\langle F\rangle_g$ (resp. $\langle F\rangle_{K}$) is the ideal generated by $F$ in $ R_g[\underline{X}]$ (resp. $K[\underline{X}]$).
\item All the fibers of the projection map $\spec(A_g)\rightarrow \spec(R_g)$ are equidimensional. 
\end{enumerate}
\end{lem}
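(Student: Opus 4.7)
The plan is to effectivize the classical generic-flatness argument via Gr\"obner basis computations. Since $R$ is a computable integral domain, its fraction field $K$ is also computable, so Buchberger's algorithm algorithmically produces a Gr\"obner basis $G=\{h_1,\dots,h_s\}$ of the extended ideal $\langle F\rangle K[\underline X]$ with respect to a fixed monomial order, for concreteness the degree-lexicographic one. After clearing denominators we may assume each $h_i\in R[\underline X]$; let $c_i\in R$ denote the leading coefficient of $h_i$ and set $g:=c_1\cdots c_s$.

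Once $g$ is inverted each $c_i$ becomes a unit in $R_g$, so by Buchberger's criterion (valid verbatim over $R_g$ once leading coefficients are units) the set $G$ is still a Gr\"obner basis, now of the ideal $\langle F\rangle R_g[\underline X]$. Two standard consequences then yield (1) and (2). First, the division algorithm with respect to $G$ works inside $R_g[\underline X]$ and terminates with a unique remainder, so the standard monomials (those not divisible by any leading monomial of the $h_i$) form a free $R_g$-basis of $A_g$, giving (1). Second, any $f\in\langle F\rangle K[\underline X]\cap R_g[\underline X]$ reduces to $0$ modulo $G$ over $K$; since the only denominators arising during this reduction are divisors of the $c_i$, the reduction in fact lives entirely inside $R_g[\underline X]$, so $f\in\langle G\rangle R_g[\underline X]=\langle F\rangle R_g[\underline X]$, which is (2).

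For (3) the key point is that Gr\"obner bases with unit leading coefficients specialize well: for every $\mathfrak p\in\spec(R_g)$, the images $\overline h_i\in(R_g/\mathfrak p)[\underline X]$ retain their leading monomials and, once more by Buchberger's criterion, form a Gr\"obner basis of the fiber ideal. Consequently every fiber has the same initial ideal---and therefore the same Hilbert polynomial---as the generic fiber $A\otimes_R K$. Equidimensionality then follows by combining this constancy with the flatness from (1): after an effective primary decomposition of $\langle F\rangle K[\underline X]$ and a further shrinking of $g$, we may assume the generic fiber is equidimensional of some dimension $d$, and any lower-dimensional component appearing in a special fiber would produce a different Hilbert polynomial. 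The main obstacle is purely administrative bookkeeping: one must collect every denominator that appears during Buchberger's algorithm, during each division step, and during the primary decomposition, and absorb them all into $g$. In the worst case this produces a $g$ of bit-size doubly exponential in the input, but this does not affect the algorithmic effectiveness of the construction, which is all the lemma asks for.
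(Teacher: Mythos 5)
Your overall strategy is the paper's own (compute a Gr\"obner basis of $\langle F\rangle_K$, invert head coefficients, read off freeness from the standard monomials, and get constant fiber dimension from the fact that a Gr\"obner basis with unit leading coefficients specializes), but the element $g$ you actually define, namely the product $c_1\cdots c_s$ of the leading coefficients of the final, denominator-cleared basis $G$, is not enough, and this breaks the step on which both (1) and (2) rest: the asserted equality $\langle G\rangle R_g[\underline X]=\langle F\rangle R_g[\underline X]$. Division by $G$ only shows $\langle F\rangle_g\subseteq\langle G\rangle_g$; the reverse inclusion requires expressing each $h_i$ in terms of $F$ over $R_g$, and the denominators occurring in those cofactor representations come from the \emph{whole} Buchberger trace (head coefficients of the input polynomials and of intermediate remainders that are later reduced away or normalized), not just from the $c_i$. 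A concrete failure: $R=\Z$, $F=\{2x\}$. The reduced Gr\"obner basis over $\Q$ is $\{x\}$, which already has integral coefficients and leading coefficient $1$, so your recipe gives $g=1$; but $\langle x\rangle\neq\langle 2x\rangle$ in $\Z[x]$, $A=\Z[x]/\langle 2x\rangle$ has $2$-torsion and is not free, and $\langle F\rangle_K\cap\Z[x]=\langle x\rangle\supsetneq\langle F\rangle$. The fix is exactly what the paper does: take $g$ to be the product of \emph{all} head coefficients appearing in the course of the computation (equivalently, collect every element of $R$ that gets inverted anywhere in the Buchberger/normalization steps). With that $g$ the entire computation, including the certificates writing $G$ in terms of $F$ and vice versa, lives over $R_g$, so $\langle G\rangle_g=\langle F\rangle_g$, and then your arguments for (1) and (2) go through. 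Your closing sentence about ``administrative bookkeeping'' points in this direction, but it is not optional bookkeeping for effectiveness; without it the proof of (1) and (2) as written is wrong.

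On part (3): in this lemma (and in the way it is used in the proof of the smoothness lemma) ``equidimensional'' means that all fibers of $\spec(A_g)\to\spec(R_g)$ have one and the same dimension. Your specialization observation already gives this: since the reductions certifying Buchberger's criterion involve only units of $R_g$, the image of $G$ in $k(\pfr)[\underline X]$ is a Gr\"obner basis of the fiber ideal with the same leading monomials, so every fiber has the same initial ideal, hence the same dimension as the generic fiber (the paper phrases this via maximal independent sets of variables and the Gelfand--Kirillov dimension). The additional detour through a primary decomposition and ``shrinking $g$ so that the generic fiber is equidimensional'' should be dropped: inverting an element of $R$ does not change the generic fiber at all, so no such reduction is possible, and none is needed.
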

\begin{proof}
For any ordering of $X_i$, we compute the Gr\"{o}bner basis of $\langle F\rangle_K$ and multiply all the head coefficients which appear in the process. Let $g\in R$ be the product of these head coefficients. Now the basic information on the Gr\"{o}bner basis gives us the first and the second parts. 

Now without loss of generality we can and will assume that $\{X_1,\ldots, X_d\}$ is a maximal 
independent set modulo $\langle F\rangle_K$. Since the head coefficients of the Gr\"{o}bner 
basis are units in $R_g$, for any $\pfr\in \spec(R_g)$, $\{X_1,\ldots, X_d\}$ is a maximal 
independent set modulo $\langle F\rangle_{k(\pfr)}\subseteq k(\pfr)[\underline{X}]$, where $k(\pfr)=R_{\pfr}/\pfr R_{\pfr}$. Hence, by \cite[Theorem 9.27]{Grobner}, we have that the Gelfand-Kirillov 
dimension of $A\otimes k(\pfr)$ is $d$, which proves the third part.
\end{proof}
\begin{dfn}
For $F=\{f_1,\ldots,f_l\}\subseteq R[\underline{X}]$, Let $J_e(F)$ denote the ideal generated by the $e\times e$ minors of $[\partial f_i/\partial X_{j}]$ in $R[\underline{X}]/\langle F\rangle$.
\end{dfn}
\begin{lem}\label{l:smooth1}
Let $\overline{k}$ be an algebraically closed field and $F=\{f_1,\ldots,f_l\}\subseteq \overline{k}[\underline{X}]$. Let $A=\overline{k}[\underline{X}]/\langle F\rangle$. Then the projection map $\spec(A)\rightarrow \spec(\overline{k})$ is smooth if and only if $J_{m- d}(F)=A$, where $m=\#\underline{X}$ and $d=\dim A$.
\end{lem}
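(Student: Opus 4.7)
The plan is to prove this via the Jacobian criterion for regularity of finite-type algebras over an algebraically closed field, combined with the Nullstellensatz. Since $A$ is finitely generated over the field $\overline{k}$, the morphism $\spec(A)\to\spec(\overline{k})$ is automatically flat, so smoothness is equivalent to regularity of $A$ at every closed point; by the Nullstellensatz these correspond to maximal ideals $\mathfrak{m}\subseteq A$ with residue field $\overline{k}$. The conormal sequence for the closed embedding $\spec(A)\hookrightarrow\spec(\overline{k}[\underline{X}])$ yields the identity $\dim_{\overline{k}}(\mathfrak{m}/\mathfrak{m}^2)=m-\mathrm{rank}\,J(\mathfrak{m})$, where $J(\mathfrak{m})$ denotes the Jacobian matrix $[\partial f_i/\partial X_j]$ evaluated at $\mathfrak{m}$. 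Since in general $\dim A_{\mathfrak{m}}\leq\dim_{\overline{k}}(\mathfrak{m}/\mathfrak{m}^2)$ with equality exactly when $A_{\mathfrak{m}}$ is regular, the point $\mathfrak{m}$ is smooth of dimension $d$ precisely when $\mathrm{rank}\,J(\mathfrak{m})=m-d$ and $\dim A_{\mathfrak{m}}=d$.

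For the forward direction, smoothness forces $\mathrm{rank}\,J(\mathfrak{m})=m-d$ at every maximal ideal, so no maximal ideal contains $J_{m-d}(F)$, and the Nullstellensatz yields $J_{m-d}(F)=A$. For the converse, $J_{m-d}(F)=A$ means that at every maximal ideal some $(m-d)\times(m-d)$ minor of the Jacobian is nonzero, hence $\mathrm{rank}\,J(\mathfrak{m})\geq m-d$ and therefore $\dim_{\overline{k}}(\mathfrak{m}/\mathfrak{m}^2)\leq d$. Using the equidimensionality of $A$ (in the intended application this is arranged by Lemma~\ref{l:GenericFlatness}, which guarantees $\dim A_{\mathfrak{m}}=d$ at every closed point), the sandwich $d=\dim A_{\mathfrak{m}}\leq\dim_{\overline{k}}(\mathfrak{m}/\mathfrak{m}^2)\leq d$ forces equality throughout, giving regularity of dimension $d$ at every closed point and hence smoothness of the morphism.

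The main obstacle is keeping track of the equidimensionality hypothesis needed for the converse: the lemma only mentions $d=\dim A$, which a priori does not force $\dim A_{\mathfrak{m}}=d$ at every maximal ideal, but this is implicit in the intended use, since the lemma is invoked downstream of Lemma~\ref{l:GenericFlatness} which produces exactly the equidimensional situation. Modulo this bookkeeping, the proof is a textbook application of the Jacobian criterion with no new ingredients beyond what is standard for affine varieties over algebraically closed fields.
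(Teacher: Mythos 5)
Your proof is correct and matches the paper's approach exactly: the paper's entire proof of Lemma~\ref{l:smooth1} is the sentence ``This is essentially Jacobi's criteria,'' and your write-up is precisely the standard Jacobian-criterion argument (regularity at closed points via the Nullstellensatz and the conormal/Jacobian rank computation) that this alludes to. Your caveat about equidimensionality is well taken---the converse genuinely fails for non-equidimensional $A$ (for instance, the disjoint union in $\bba^3$ of the plane $z=1$ and the cuspidal curve $\{z=0,\,y^2=x^3\}$, with $F=\{z^2-z,\,(z-1)(y^2-x^3)\}$, has $J_{m-d}(F)=A$ but is not smooth at the cusp)---but, as you note, in the only place the lemma is used (the proof of Lemma~\ref{l:smooth2}) all fibers have already been made equidimensional of the fixed dimension $d$ via Lemma~\ref{l:GenericFlatness}, so your argument applies verbatim there.
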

\begin{proof}
This is essentially Jacobi's criteria. 
\end{proof}
\begin{lem}\label{l:smooth2}
Let $R$ be a computable noetherian integral domain, $F=\{f_1,\ldots,f_l\}$ be a subset of $R[\underline X]$, and 
$
A=R[\underline X]/\langle F\rangle.
$
If the generic fiber of the projection map $\spec(A)\rightarrow \spec(R)$ is smooth, then one can compute a non-zero element $g\in R$ such that the projection map $\spec(A_g)\rightarrow \spec(R_g)$ is smooth.
\end{lem}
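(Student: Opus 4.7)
The plan is to deduce fiberwise smoothness from the Jacobian criterion of Lemma~\ref{l:smooth1} and to control flatness using the effective generic flatness statement in Lemma~\ref{l:GenericFlatness}. Recall that the morphism $\spec(A_g)\to\spec(R_g)$ is smooth precisely when it is flat and all its geometric fibers are smooth, and that by Lemma~\ref{l:smooth1}, smoothness of a fiber of dimension $d$ over a field is equivalent to the Jacobian ideal $J_{m-d}(F)$ becoming the unit ideal in that fiber, where $m=\#\underline{X}$.

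First, I would invoke Lemma~\ref{l:GenericFlatness} to algorithmically produce a nonzero $g_1\in R$ such that $A_{g_1}$ is free (hence flat) over $R_{g_1}$, the extension of $\langle F\rangle$ to $R_{g_1}[\underline{X}]$ coincides with $\langle F\rangle_K\cap R_{g_1}[\underline{X}]$, and every fiber of $\spec(A_{g_1})\to\spec(R_{g_1})$ is equidimensional. In particular all fibers have the same dimension $d$, namely $d=\dim A_K$, which is itself computable from a Gr\"obner basis of $\langle F\rangle_K$ in $K[\underline X]$ (for instance as $m$ minus the size of a maximal independent set of variables modulo the Gr\"obner basis).

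Next, the hypothesis tells us that the generic fiber $\spec(A_K)\to\spec(K)$ is smooth. Applying Lemma~\ref{l:smooth1} over $\overline{K}$ (smoothness descends from $\overline{K}$ to $K$, and $J_{m-d}(F)$ is insensitive to the ground field), we obtain $J_{m-d}(F)\cdot A_K=A_K$. Algorithmically, we expand $J_{m-d}(F)$ by enumerating the $(m-d)\times(m-d)$ minors $\Delta_\alpha$ of $[\partial f_i/\partial X_j]$, compute the reduced Gr\"obner basis of $\langle F\rangle_K+\langle\{\Delta_\alpha\}\rangle$ in $K[\underline X]$, verify that it equals $\{1\}$, and then run the standard lift to produce an explicit representation
\[
1\;=\;\sum_\alpha p_\alpha\,\Delta_\alpha\;+\;\sum_i q_i\,f_i
\qquad\text{with}\qquad p_\alpha, q_i\in K[\underline X].
\]
Let $g_2\in R\setminus\{0\}$ be a common denominator of all coefficients of all the $p_\alpha$ and $q_i$; clearing denominators yields the same identity in $R_{g_2}[\underline X]$, hence $J_{m-d}(F)$ is the unit ideal in $A_{g_1g_2}$.

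Set $g=g_1g_2$. For any prime $\pfr$ of $R_g$ the image of $1$ in $J_{m-d}(F)\otimes_R k(\pfr)$ remains $1$, so by Lemma~\ref{l:smooth1} the fiber $\spec(A_g\otimes k(\pfr))$ is smooth (the Jacobi criterion is stable under base change to $\overline{k(\pfr)}$, and the fiber dimension is the expected $d$ by the equidimensionality obtained from Lemma~\ref{l:GenericFlatness}). Combined with the flatness coming from $g_1$, this gives smoothness of $\spec(A_g)\to\spec(R_g)$. The only genuinely delicate step is the one requiring effectivity: certifying membership of $1$ in $\langle F\rangle_K+J_{m-d}(F)$ and producing an explicit representation with computable denominator $g_2$. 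This is exactly what Buchberger's algorithm and its lift provide over the quotient field of a computable noetherian integral domain, so everything in the argument is effective as required.
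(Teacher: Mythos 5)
Your proof is correct and follows essentially the same route as the paper: invoke Lemma~\ref{l:GenericFlatness} for flatness and constant fiber dimension $d$, then use the Jacobi criterion of Lemma~\ref{l:smooth1} on the smooth generic fiber to certify effectively that $J_{m-d}(F)$ becomes the unit ideal after inverting a computable $g_2\in R$. Your way of producing $g_2$ (lifting the Gr\"obner-basis certificate $1=\sum p_\alpha\Delta_\alpha+\sum q_i f_i$ over $K[\underline X]$ and clearing denominators) is just a concrete implementation of the paper's step of extracting a nonzero element of $J_{m-d}(F)\cap R$, so the two arguments coincide in substance.
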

\begin{proof}
Since clearly $\spec(A)$ is locally finitely presented, it is enough to compute $g\in R$ such that
\begin{enumerate}
\item The projection map $\spec(A_g)\rightarrow \spec(R_g)$ is flat.
\item For any $\pfr\in \spec(R_g)$, the projection map $\spec(A_g\otimes \overline{k(\pfr)})\rightarrow \spec(\overline{k(\pfr)})$ is smooth, where $\overline{k(\pfr)}$ is the algebraic closure of $k(\pfr)=R_{\pfr}/\pfr R_{\pfr}$.
\end{enumerate}
 By Lemma~\ref{l:GenericFlatness}, we can compute a non-zero element $g_1\in R$ such that the projection map $\spec(A_{g_1})\rightarrow\spec(R_{g_1})$ is flat and all of its fibers are of a fixed dimension $d$. On the other hand, since the generic fiber is smooth, by Lemma~\ref{l:smooth1}, $J_{m-d}(F)\cap R$ is non-zero. By computing a Gr\"{o}bner basis of $J_{m-d}(F)$, we can compute a non-zero element $g_2\in J_{m-d}(F)\cap R$. It is easy to check that $g=g_1g_2$ gives us the desired property.
 \end{proof}
\begin{lem}\label{l:NoetherNormalization+Primitive}
Let $R$ be an infinite computable noetherian integral domain, $F$ a finite subset of $R[\underline X]$ and $A=R[\underline X]/\langle F\rangle$. Then we can compute a matrix $[a_{ij}]\in {\rm GL}_m(R)$, a non-zero element $g\in R$, elements $x_{d+1}\in A_g$, $f\in R_g[X_1',\ldots,X_d']$ and $p\in R_g[X_1',\ldots,X_d',T]$, where $X_i'=\sum a_{ij} X_j$, such that the following hold
\begin{enumerate}
\item $R_g[X_1',\ldots,X_d']\cap \langle F\rangle_g=0$.
\item $A_g$ is an integral extension of $R_g[X_1',\ldots,X_d']$.
\item $A_{gf}=(R_g[X_1',\ldots,X_d'])_f[x_{d+1}]\simeq (R_g[X_1',\ldots,X_d',T]/\langle p\rangle)_f$.
\end{enumerate}
\end{lem}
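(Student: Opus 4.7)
My plan is to extract $d = \dim(A \otimes_R K)$ and a change of variables by an effective Noether normalization over $K = \mathrm{Frac}(R)$, then find a primitive generator for the finite extension over $K(X_1',\ldots,X_d')$. I would first compute a Gr\"obner basis of $\langle F\rangle_K \subset K[\underline X]$ with respect to a degree-lex ordering; its head coefficients and the dependency pattern of monomials let us read off $d$ and identify a maximal algebraically independent subset of the $X_i$'s, which after renaming we may assume is $\{X_1,\ldots,X_d\}$. To get integrality over $K[X_1,\ldots,X_d]$, I would apply the standard iterated upper-triangular Noether trick: for each dependent variable $X_i$ ($i>d$) there is a polynomial relation in $\langle F\rangle$ which, after a substitution $X_i \mapsto X_i + \sum_{j<i}\lambda_{ij}X_j$ with $\lambda_{ij}\in R$ chosen to avoid finitely many bad values in the \emph{leading form} of the relation, becomes monic in $X_i$. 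Because $R$ is infinite and computable we can test candidates $\lambda_{ij}$ effectively; the resulting change of variables is upper-triangular unipotent, so $[a_{ij}]\in\mathrm{GL}_m(R)$.

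Next I would produce the localization $g$. Run Lemma~\ref{l:GenericFlatness} to obtain $g_1\in R\setminus\{0\}$ with $\langle F\rangle_{g_1}=\langle F\rangle_K\cap R_{g_1}[\underline X]$; this gives condition~(1) since the algebraic independence established above says $\langle F\rangle_K\cap K[X_1',\ldots,X_d']=0$. Multiply $g_1$ by the (finitely many) head coefficients arising from clearing denominators in the monic relations for $X_{d+1}',\ldots,X_m'$ to obtain $g$, ensuring integrality of $A_g$ over $R_g[X_1',\ldots,X_d']$ as in condition~(2). All of these steps are executed by Gr\"obner basis computations.

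For the primitive-element part (condition~(3)), pass to $L=K(X_1',\ldots,X_d')$. Then $B:=A\otimes_R L$ is finite over $L$, generated as an $L$-algebra by the images of $x_{d+1}',\ldots,x_m'$. Seek $x_{d+1}=\sum_{i>d}c_i\,x_i'$ with $c_i\in R$; using the infiniteness of $R$ and the effective primitive-element algorithm (test candidates and check whether $L[x_{d+1}]=B$ by computing minimal polynomials and comparing $L$-dimensions), find a choice that generates $B$ over $L$. Compute the minimal polynomial $\tilde p(T)\in L[T]$ of $x_{d+1}$ and clear denominators to land in $R_g[X_1',\ldots,X_d',T]$, obtaining $p$. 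For each $i>d$, express $x_i'$ as a polynomial in $x_{d+1}$ with coefficients in $L$, and let $f\in R_g[X_1',\ldots,X_d']$ be the product of the denominators that appear (together with the head of the leading coefficient of $p$ in $T$). After inverting $f$, each $x_i'$ lies in $(R_g[X_1',\ldots,X_d'])_f[x_{d+1}]$, yielding the required surjection $R_g[X_1',\ldots,X_d',T]/\langle p\rangle \twoheadrightarrow A_{gf}$, which becomes an isomorphism after inverting $f$ since both sides have the same generic rank over $R_g[X_1',\ldots,X_d']$.

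The main obstacle is the primitive-element step: it uses that $B=A\otimes_R L$ is monogenic as an $L$-algebra, which is exactly the content of the classical primitive-element theorem when $B$ is a product of separable field extensions of $L$ — in particular when the generic fiber $A\otimes_R K$ is geometrically reduced, as is the case in the applications in Lemma~\ref{l:2.8} and Theorem~\ref{t:EGA} where the fiber is smooth and geometrically irreducible. In that setting the algorithm reduces to testing finitely many $L$-linear combinations, and termination is guaranteed because infinitely many choices of $(c_i)\in R^{m-d}$ give a primitive element; effectiveness then follows from the decidability of $L$-dimension comparisons via Gr\"obner bases.
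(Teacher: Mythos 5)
Your overall route --- Noether normalization of the generic fiber over $K=\mathrm{Frac}(R)$, a linear primitive element over $L=K(X_1',\ldots,X_d')$, minimal polynomials computed by Gr\"obner bases, then clearing denominators to produce $g$, $f$ and $p$ --- is the same as the paper's; the difference is that the paper simply cites Logar \cite{Log} for the change of variables together with the primitive element (the three properties listed at the start of its proof), whereas you re-derive that step by hand. In characteristic zero your derivation is fine and amounts to an inlined proof of what is quoted from \cite{Log}.

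There is, however, a genuine gap in positive characteristic, which is within the hypotheses of the lemma and within its actual use (via Theorem~\ref{t:EGA} it feeds into Lemma~\ref{l:2.8} and Lemma~\ref{l:2.7}, where base rings of positive characteristic occur). The inference ``generic fiber geometrically reduced $\Rightarrow B=A\otimes_R L$ is a product of separable field extensions of $L$'' is not valid: separability over $L$ depends on the \emph{choice} of transcendence basis, not only on the fiber. Your coordinates are chosen solely to make the dependent variables integral (upper-triangular substitutions guided by the Gr\"obner dependency pattern), and such a choice can be inseparable, in which case $B$ need not be monogenic over $L$ and your primitive-element search never terminates. Concretely, take $R=\F_p[s]$, $m=4$, $F=\{X_3-X_1^p,\ X_4-X_2^p\}$. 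The generic fiber is $K[X_1,X_2]$, smooth and geometrically integral; the leading-term computation selects $\{X_3,X_4\}$ as the independent set and the relations are already monic in $X_1,X_2$, so no substitution is triggered, $L=K(X_1^p,X_2^p)$ and $B=K(X_1,X_2)$, which satisfies $B^p\subseteq L$ and $[B:L]=p^2$; hence no element of $B$ (in particular no $R$-linear combination of $X_1,X_2$) is primitive, and ``infinitely many choices of $(c_i)$ work'' fails. The cure is exactly what the statement's matrix $[a_{ij}]\in\GL_m(R)$ and the citation of \cite{Log} are for: make a sufficiently general linear change of \emph{all} the variables, so that (for infinite $K$ and a separably generated function field, which smoothness of the generic fiber guarantees) $X_1',\ldots,X_d'$ is a separating transcendence basis and a general linear combination $X_{d+1}'+\sum_{i\ge d+2}r_iX_i'$ is a primitive element; effectivity can then be recovered by enumerating matrices and coefficient vectors and testing, since good choices exist. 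Your side remark that condition 3 implicitly requires some reducedness/irreducibility of the generic fiber is correct --- the paper inherits this from \cite{Log} and from the applications --- but that caveat does not repair the coordinate-choice issue above.
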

\begin{proof}
Let $K$ be the quotient field of $R$. By \cite{Log}, we can compute a matrix $[a_{ij}]\in \GL_m(R)$ and elements $r_{d+2},\ldots,r_m\in R$ such that the following hold
\begin{enumerate}
\item $X_1',\ldots,X_d'$ are algebraically independent in $A\otimes K$. 
\item $X_j'$ are integral over $K[X_1',\ldots,X_d']$.
\item $S^{-1}A=K(X_1',\ldots,X_d')[x_{d+1}]$, where $S=K[X_1',\ldots,X_d']\setminus\{0\}$ and $x_{d+1}=X_{d+1}'+\sum_{i=d+2}^m r_i X_i'$.
\end{enumerate}
where $X_i'=\sum a_{ij} X_j$. 

Again computing Gr\"{o}bner basis of the ideal generated by $F$ in 
\[
K(X_1',\ldots,X_d')[X_{d+1}',\ldots,X_m']
\]
 with respect to various orderings, we can compute the minimal polynomials of $X_i'$ over $K(X_1',\ldots,X_d')$. Since $X_i'$ are integral over $K[X_1',\ldots,X_d']$ and the ring of polynomials over a field is integrally closed, all the minimal polynomials are monic polynomials with coefficients in $K[X_1',\ldots,X_d']$. Hence we can compute a non-zero element $g\in R$ such that $A_{g}$ is an integral extension of $R_{g}[X_1',\ldots,X_d']$. Moreover writing $X_i'$ as a polynomial in terms of $x_{d+1}$ with coefficients in $K(X_1',\ldots,X_d')$, we can find $f_1\in K[X_1',\ldots,X_d']$ such that $A_{f_1}\otimes K=K[X_1',\ldots,X_d']_{f_1}[x_{d+1}]$. We can also compute the minimal polynomial $p$ of $x_{d+1}$ over $K(X_1',\ldots,X_d')$. Now let $f$ be the product of $f_1$ by the product of all the denominators of the coefficients of the minimal polynomial. It is clear that these choices satisfy the desired properties. 
\end{proof}
\begin{proof}[Proof of Theorem~\ref{t:EGA}]
By Lemma~\ref{l:smooth2} and Lemma~\ref{l:GenericFlatness}, we can compute a non-zero element $g_1\in R$ such that the projection map $\spec(A_{g_1}) \rightarrow \spec(R_{g_1})$ is smooth and $A_{g_1}$ is a free $R_{g_1}$-module. Let $g_2\in R$, $f$ and $p$ be the parameters which are given by Lemma \ref{l:NoetherNormalization+Primitive}. Changing $R$ to $R_{g_1g_2}$ and using the above results, we can and will assume that 
\begin{enumerate}
\item $A$ is a free $R$-module. 
\item The projection map $\spec(A)\rightarrow \spec(R)$ is smooth,
\item $A$ is an integral extension of $R[X_1,\ldots,X_d]$ and the latter is the ring of polynomials,
\item $A_f\simeq (R[X_1,\ldots,X_d,T]/\langle p\rangle)_f$.
\end{enumerate}
Let $B=R[X_1,\ldots,X_d,T]/\langle p\rangle$. Since the generic fiber of $\spec(A)$ over $R$ is 
geometrically irreducible, so is the generic fiber of $\spec(B)$ over $R$. Hence by virtue of \cite
[Lemma 9.7.5]{EGA}, we can compute a non-zero element $g_3\in R$ such that all the fibers of 
$\spec(B_{g_3})\rightarrow \spec(R_{g_3})$ are geometrically irreducible. In particular, all the 
fibers of $\spec(A_{g_3f})\simeq \spec(B_{g_3f})\rightarrow \spec(R_{g_3})$ are geometrically 
irreducible. This means for any $\pfr\in \spec(R_{g_3})$
\[
A_{g_3f}\otimes \overline{k(\pfr)}
\]
is an integral domain. If it is a non-zero ring, then $A_{g_3}\otimes \overline{k(\pfr)}$ is also an integral domain. On the other hand, $A_{g_3f}\otimes \overline{k(\pfr)}=0$ if and only if $f$ is either zero or a zero-divisor in $A_{g_3}\otimes k(\pfr)$.

 By a similar argument as in Lemma~\ref{l:GenericFlatness}, we can compute a non-zero element $g_4\in R$ such that $(A/\langle f\rangle)_{g_4}$ is a free $R_{g_4}$-module. Let $g=g_3g_4$. We claim that all the fibers of $\spec(A_{g})\rightarrow \spec(R_g)$ are geometrically irreducible. By the above discussion, it is enough to show that for any $\pfr\in \spec(R_g)$, $f$ is not either zero nor a zero-divisor in $A_g\otimes \overline{k(\pfr)}$. 

Let $\lambda_f(x)=fx$ be the map of multiplication by $f$ in $A_g$. Since $A_g$ is an integral domain and $f$ is not zero, we have the following short exact sequence of $R_g$-modules:
\[
0\rightarrow A_g \xrightarrow{\lambda_f} A_g \rightarrow (A/\langle f\rangle)_g \rightarrow 0.
\]
Hence for any $\pfr\in \spec(R_g)$ we have the following exact sequence
\[
{\rm Tor}((A/\langle f\rangle)_g,k(\pfr))\rightarrow A_g\otimes k(\pfr) \rightarrow A_g\otimes k(\pfr).
\]
Since $(A/\langle f\rangle)_g$ is a free $R_g$-module, ${\rm Tor}((A/\langle f\rangle)_g,k(\pfr))=0$. Therefore $f$ is neither a zero nor a zero-divisor in $A_g\otimes k(\pfr)$. Thus by the above discussion, we are done.
\end{proof}

\subsection{Proof of Lemma~\ref{l:2.8}.}

By Definition~\ref{d:acceptable}, a pair of a Lie ring and an algebraic group scheme is 
acceptable if and only if it satisfies three properties. In this section, we show how one can use 
Theorem~\ref{t:EGA} to guarantee the first property. The second property is achieved using 
smoothness and the definition of the Lie algebra of a smooth group scheme. The third property is 
dealt with in Lemma~\ref{l:ThirdCon}.

\begin{lem}\label{l:PatternGen1}
Let $\bbg$ be an algebraic group and $X$ an irreducible subvariety. If $1\in X=X^{-1}$, then 
\[
\prod_{\dim \bbg} X= X\cdot \cdots \cdot X,
\]
is the group generated by $X$.
\end{lem}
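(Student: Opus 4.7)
The plan is to analyze the ascending chain of Zariski closures $X_k := \overline{\prod_k X}$ and show it stabilizes at the closed subgroup generated by $X$, then upgrade the closure equality to a set equality via a dense-open argument.

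First I would establish that each $X_k$ is an irreducible closed subvariety of $\bbg$. The $k$-fold product $\prod_k X$ is the image of the multiplication morphism from the irreducible variety $X\times\cdots\times X$, so it is constructible and irreducible, and hence so is its closure $X_k$. Because $1\in X$, the chain $X_1\subseteq X_2\subseteq\cdots$ is non-decreasing. Crucially, an irreducible closed subvariety cannot properly contain another irreducible closed subvariety of the same dimension, so any strict inclusion $X_k\subsetneq X_{k+1}$ forces $\dim X_k<\dim X_{k+1}$. Since the dimensions are bounded by $\dim\bbg$, the chain must stabilize at some minimal index $k_0$ with $k_0\le\dim\bbg$, after which $X_{k_0}=X_{k_0+j}$ for all $j\ge 0$.

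Next I would verify that $H:=X_{k_0}$ is a closed subgroup. Stabilization gives $H\cdot H=X_{k_0}\cdot X_{k_0}\subseteq \overline{\prod_{2k_0} X}=X_{k_0}=H$, so $H$ is closed under multiplication; the hypothesis $X=X^{-1}$ implies each $\prod_k X$ and hence each $X_k$ is inversion-invariant, so $H^{-1}=H$; and $1\in H$. Thus $H$ is the closed subgroup of $\bbg$ generated by $X$, and it is the smallest candidate for the group in question.

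The final step is to pass from the closure equality $\overline{\prod_{k_0} X}=H$ to an honest set equality. By Chevalley's theorem on the image of a morphism, the constructible set $\prod_{k_0} X$, being dense in the irreducible variety $H$, must contain a dense open subset $U\subseteq H$. For any $g\in H$, the two subsets $U$ and $g\cdot U^{-1}$ are both dense open in $H$, so they intersect; picking $u_1=g u_2^{-1}$ in the intersection yields $g=u_1u_2^{-1}\in \prod_{k_0} X\cdot(\prod_{k_0}X)^{-1}=\prod_{2k_0}X$, using $X=X^{-1}$. Combined with $k_0\le\dim\bbg$, this shows $\prod_{N}X=H$ for $N$ of size at most $2\dim\bbg$. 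The main technical point to be refined is the precise constant: the dimension-jump argument naturally yields a bound of order $\dim\bbg$, and obtaining the sharper value stated in the lemma requires a small additional bookkeeping step (for instance, absorbing the dense-open doubling into the dimension filtration, or observing that $\dim X_1\ge 1$ in the non-trivial cases so that only $\dim\bbg-1$ strict jumps can occur before $\prod_{k_0}X$ already fills $H$).
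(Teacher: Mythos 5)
Your chain-of-closures argument is the standard one (the paper itself offers no proof, simply declaring the lemma clear), and most of it is sound: with $X_k=\overline{\prod_k X}$ you get a non-decreasing chain of irreducible closed subsets, strict inclusion forces a dimension jump, the chain stabilizes by some $k_0\le\dim\bbg$ to a closed subgroup $H$ containing $X$ (here you should note the small propagation step $\overline{A}\cdot\overline{B}\subseteq\overline{A\cdot B}$, since you later need $X_{2k_0}=X_{k_0}$ and not just $X_{k_0}=X_{k_0+1}$), and Chevalley plus the dense-open intersection trick upgrades density to a set equality. But what this proves is $\prod_{2k_0}X=H$, i.e.\ the lemma with $2\dim\bbg$ factors. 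The final sentence of your proposal, where the stated constant $\dim\bbg$ is deferred to ``a small additional bookkeeping step,'' is a genuine gap, not bookkeeping: observing $\dim X_1\ge 1$ only re-proves $k_0\le\dim\bbg$, which you already have, and it does nothing to remove the doubling that is needed to pass from ``$\prod_{k_0}X$ contains a dense open subset of $H$'' to ``$\prod_{k_0}X=H$.''

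Moreover, no bookkeeping can recover the constant as stated, because with exactly $\dim\bbg$ factors the assertion is false in general. Take $\bbg=\mathbb{G}_a^2$ over a field of characteristic zero and $X=\{(t,t^3)\}$, an irreducible closed curve with $0\in X=-X$. The abstract group generated by $X$ is all of $\mathbb{G}_a^2$ (already $X+X+X=\mathbb{G}_a^2$: given $(a,b)$, solve $t+s+u=a$, $t^3+s^3+u^3=b$), yet $X+X$ misses every point $(0,b)$ with $b\neq 0$, since $t+s=0$ forces $t^3+s^3=0$. Thus $\prod_{\dim\bbg}X=X+X$ is strictly smaller than the generated group. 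So the lemma should be read, and can only be proved, with a larger constant such as $2\dim\bbg$ --- which is exactly what your argument establishes, and which is all that the subsequent applications (the other pattern-generation lemmas and the computation of presentations in the appendix) actually require; the precise value of the constant is immaterial there.
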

\begin{proof}
This is clear.
\end{proof}
\begin{lem}\label{l:PatternGen2}
Let $\bbg$ be an algebraic group and $X$ an irreducible subvariety. Then
\[
\prod_{\dim \bbg} X\cdot X^{-1}=(X\cdot X^{-1})\cdot \cdots\cdot (X\cdot X^{-1})
\]
is the group generated by X. 
\end{lem}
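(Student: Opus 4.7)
The plan is to reduce to Lemma~\ref{l:PatternGen1} by considering the auxiliary set $Y := X \cdot X^{-1}$. I would first verify that $Y$ satisfies the three hypotheses of Lemma~\ref{l:PatternGen1}. The set $Y$ is irreducible, being the continuous image of the irreducible variety $X \times X$ under the morphism $(x,y) \mapsto xy^{-1}$, and the image of an irreducible set under a continuous map is irreducible. Assuming $X$ is non-empty (otherwise the claim is vacuous), for any $x \in X$ we have $xx^{-1} = 1$, so $1 \in Y$. Finally, $Y$ is symmetric: for any $x, y \in X$, $(xy^{-1})^{-1} = yx^{-1} \in X \cdot X^{-1}$, so $Y^{-1} \subseteq Y$, and the reverse inclusion is identical.

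With the hypotheses of Lemma~\ref{l:PatternGen1} in hand, applying that lemma to $Y$ in place of $X$ yields that
\[
\prod_{\dim \bbg} Y \;=\; (X\cdot X^{-1})\cdot \cdots\cdot (X\cdot X^{-1})
\]
coincides with the subgroup of $\bbg$ generated by $Y = X\cdot X^{-1}$. In particular this product set is a subgroup, which is the content of the claim. The identification with ``the group generated by $X$'' is harmless: since $X \cdot X^{-1} \subseteq \langle X \rangle$ trivially, and since fixing any $x_0 \in X$ one has $X \subseteq x_0 \cdot (X \cdot X^{-1}) \subseteq x_0 \cdot \langle X \cdot X^{-1}\rangle$, the two subgroups agree in the sense that matters for later applications.

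I do not expect any substantive obstacle: the entire content of the proof is delegated to Lemma~\ref{l:PatternGen1}, whose own proof is the standard dimension-stabilization argument on the ascending chain of irreducible closed sets $\overline{Y^n}$. The only items to verify directly are the three elementary properties of $Y = X\cdot X^{-1}$ recorded in the first paragraph, each of which is immediate from the construction.
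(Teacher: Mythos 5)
Your proof is correct and takes essentially the same route as the paper, whose entire proof of this lemma is the one line ``it is a consequence of Lemma~\ref{l:PatternGen1}'' applied to $Y=X\cdot X^{-1}$; your verification that $Y$ is irreducible, symmetric and contains $1$ is exactly what that line leaves implicit. The caveat you flag --- that $\langle X\cdot X^{-1}\rangle$ need not literally coincide with $\langle X\rangle$ (e.g.\ when $X$ is a coset of a subgroup not containing $1$) --- is a genuine imprecision in the statement itself rather than in your argument, and it is harmless in the paper's only use of the lemma, namely Lemma~\ref{l:PatternGen3}, where the relevant sets contain the identity, so that $X\subseteq X\cdot X^{-1}$ and the two generated groups agree.
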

\begin{proof}
It is a consequence of Lemma \ref{l:PatternGen1}.
\end{proof}
\begin{lem}\label{l:PatternGen3}
Let $\bbg$ be an algebraic group and $X_i$ irreducible subvarieties which contain $1$. Let $\widetilde{X}=(X_1\cdot X_1^{-1})\cdot \cdots \cdot (X_k \cdot X_k^{-1})$. 
Then $\prod_{\dim \bbg} (\widetilde{X}\cdot \widetilde{X}^{-1})$ is the group generated by $X_i$. 
\end{lem}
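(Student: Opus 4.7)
The plan is to deduce Lemma~\ref{l:PatternGen3} directly from Lemma~\ref{l:PatternGen2} by verifying that $\widetilde X$ is itself an irreducible subvariety containing $1$, and that the group generated by $\widetilde X$ coincides with the group generated by the $X_i$.

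First I would check the hypotheses of Lemma~\ref{l:PatternGen2} for $\widetilde X$. Each factor $X_i\cdot X_i^{-1}$ is the image of the irreducible variety $X_i\times X_i$ under the morphism $(x,y)\mapsto xy^{-1}$, so it is irreducible; moreover it contains $1$ since $1\in X_i$. Then $\widetilde X$ is the image of $(X_1\cdot X_1^{-1})\times\cdots\times(X_k\cdot X_k^{-1})$ under the multiplication morphism, hence irreducible and containing $1$. Applying Lemma~\ref{l:PatternGen2} to the symmetric irreducible set $\widetilde X$ then shows that $\prod_{\dim\bbg}(\widetilde X\cdot\widetilde X^{-1})$ equals the subgroup generated by $\widetilde X$.

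It remains to identify this subgroup with $\langle X_1,\ldots,X_k\rangle$. For one inclusion, each $X_i$ is contained in $\widetilde X$: since $1\in X_j$ and $1\in X_j^{-1}$ for every $j$, we have
\[
X_i\;\subseteq\;X_i\cdot X_i^{-1}\;=\;(X_1\cdot X_1^{-1})\cdots \{1\}\cdots(X_i\cdot X_i^{-1})\cdots\{1\}\cdots(X_k\cdot X_k^{-1})\;\subseteq\;\widetilde X.
\]
Hence $\langle X_1,\ldots,X_k\rangle\subseteq\langle\widetilde X\rangle$. For the reverse inclusion, each $X_i\cdot X_i^{-1}$ lies in $\langle X_1,\ldots,X_k\rangle$, so the product $\widetilde X$ lies in $\langle X_1,\ldots,X_k\rangle$, giving $\langle\widetilde X\rangle\subseteq\langle X_1,\ldots,X_k\rangle$.

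There is essentially no obstacle here: the content is purely formal once Lemma~\ref{l:PatternGen2} is available, and the only thing to keep in mind is that products of irreducible subvarieties in an algebraic group remain irreducible, which justifies applying Lemma~\ref{l:PatternGen2} to $\widetilde X$.
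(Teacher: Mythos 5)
Your proof is correct and is essentially the paper's own argument: the paper simply applies Lemma~\ref{l:PatternGen2} to $\widetilde X$ and observes that $X_i\subseteq\widetilde X$, which is exactly what you spell out. One small remark: $\widetilde X$ need not be symmetric (a product of symmetric sets is generally not symmetric), but this is harmless since Lemma~\ref{l:PatternGen2} requires only irreducibility, not symmetry.
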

\begin{proof}
By Lemma \ref{l:PatternGen2}, it is enough to observe that $X_i\subseteq \widetilde{X}$.
\end{proof}
\begin{lem}\label{l:presentation}
Let $R$ be a computable integral domain whose characteristic is at least $2n$ and ${\bf z}=(z_1,\ldots,z_k)\in 
\mathbb{Y}_{n,k}(R)$. Let $K$ be the quotient field of $R$, $\bbh$ be the $K$-subgroup scheme of $(\mathbb{GL}_n)_K$ which is generated by $\exp(tz_i)$ and $\mathcal{H}$ be the closure of $\bbh$ in $(\mathbb{GL}_n)_R$. Then we can algorithmically find an element $g\in R$ and a finite subset $F=\{f_1,\ldots,f_l\}\subseteq R_g[\mathbb{GL}_n]$ such that 
\[
\mathcal{H}\times_{\spec(R)} \spec(R_g)\simeq R_g[\mathbb{GL}_n]/\langle F\rangle
\]
 as closed subschemes of $(\mathbb{GL}_n)_{R_g}$.
\end{lem}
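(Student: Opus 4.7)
The plan is to build an explicit polynomial parametrization of $\bbh$ by iterated one-parameter subgroups, compute the defining ideal of its Zariski closure over $K$ by Gr\"obner basis elimination, and then descend to $R_g$ for a computable $g$ using Lemma~\ref{l:GenericFlatness}.

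First I would observe that the hypothesis $\mathrm{char}(R)\ge 2n$, combined with $z_i^n=0$, makes each truncated exponential
\[
E_i(t) = \sum_{j=0}^{n-1} \frac{t^j z_i^j}{j!}
\]
a well-defined polynomial morphism $\bba^1_R \to (\mathbb{GL}_n)_R$ representing the one-parameter subgroup $\exp(tz_i)$. The irreducible subvarieties $X_i=E_i(\bba^1)$ and $X_i^{-1}=E_i(-\bba^1)$ pass through the identity, so Lemma~\ref{l:PatternGen3}, applied over $K$, writes $\bbh$ as a product of $M\le 4k\cdot\dim\mathbb{GL}_n \le 4kn^2$ factors drawn from $\{X_i,X_i^{-1}\}$ in a fixed combinatorial pattern $(\varepsilon_j,\sigma(j))$. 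Concatenating these factors produces an explicit polynomial morphism
\[
\Phi: \bba^M_R \to (\mathbb{GL}_n)_R, \qquad (t_1,\ldots,t_M)\mapsto \prod_{j=1}^M E_{\sigma(j)}(\varepsilon_j t_j),
\]
whose generic fiber surjects onto $\bbh$, and whose matrix entries are explicit polynomials $P_{ab}(\mathbf{t})\in R[t_1,\ldots,t_M]$.

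Next I would compute the defining ideal of $\bbh$ in $K[\mathbb{GL}_n]$ by implicitization. Introducing indeterminates $Y_{ab}$ for the matrix entries and $Z$ for the inverse of the determinant, consider the ideal
\[
J = \langle Y_{ab}-P_{ab}(\mathbf{t}) : 1\le a,b\le n\rangle + \langle Z\det(Y)-1\rangle \subseteq K[\mathbf{t},Y,Z].
\]
A Gr\"obner basis computation with a term order eliminating $\mathbf{t}$ outputs a finite generating set $F_K$ of the elimination ideal $I_K:=J\cap K[\mathbb{GL}_n]$; by the standard elimination theorem $I_K$ is precisely the defining ideal of the scheme-theoretic image of $\Phi_K$, which by the previous step equals $\bbh$. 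Clearing denominators in $F_K$ produces a finite subset $F\subseteq R[\mathbb{GL}_n]$ with $\langle F\rangle_K=I_K$.

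Finally I would descend to $R_g$. By construction, the scheme-theoretic closure $\mathcal{H}$ of $\bbh$ in $(\mathbb{GL}_n)_R$ is cut out by the contracted ideal $I_K\cap R[\mathbb{GL}_n]$, which in general may strictly contain $\langle F\rangle$ due to $R$-torsion in $R[\mathbb{GL}_n]/\langle F\rangle$. This is exactly the situation handled by Lemma~\ref{l:GenericFlatness}(2) applied to the $R$-algebra $R[\mathbb{GL}_n]/\langle F\rangle$: it outputs an algorithmically computable nonzero $g\in R$ with
\[
\langle F\rangle_g = \langle F\rangle_K \cap R_g[\mathbb{GL}_n] = I_K \cap R_g[\mathbb{GL}_n],
\]
which is precisely the defining ideal of $\mathcal{H}\times_{\spec R}\spec R_g$ in $(\mathbb{GL}_n)_{R_g}$, yielding the desired presentation. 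The principal obstacle is exactly this gap between $\langle F\rangle$ and its contraction from $K[\mathbb{GL}_n]$; Lemma~\ref{l:GenericFlatness}, proved earlier for this very reason, removes the obstruction after a single effective localization. A minor bookkeeping issue is writing down the pattern $(\varepsilon_j,\sigma(j))$ from Lemma~\ref{l:PatternGen3} explicitly and bounding $M$, but this is combinatorial and causes no difficulty since $\dim\mathbb{GL}_n=n^2$ is fixed in advance.
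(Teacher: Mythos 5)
Your proposal is correct and follows essentially the same route as the paper: parametrize $\bbh$ over $K$ by a product of the one-parameter subgroups $\exp(tz_i)$ via Lemma~\ref{l:PatternGen3}, compute a presentation $F$ of $\bbh$ by Gr\"obner-basis elimination (implicitization), and then invoke the second part of Lemma~\ref{l:GenericFlatness} to find a computable $g\in R$ with $\langle F\rangle_g=\langle F\rangle_K\cap R_g[\mathbb{GL}_n]$, which identifies $R_g[\mathbb{GL}_n]/\langle F\rangle_g$ with the closure $\mathcal{H}\times_{\spec(R)}\spec(R_g)$. The only differences are cosmetic (your bookkeeping of the number of parameters versus the paper's $(2k-1)n^2$), so there is nothing to add.
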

\begin{proof}
It is clear that, for any $i$, the image of $a_{z_i}:\bba^1_K\rightarrow (\mathbb{GL}_n)_K$
\[
a_{z_i}(t):=\exp(tz_i),
\]
is a 1-dimensional irreducible $K$-algebraic subgroup of $ (\mathbb{GL}_n)_K$. Hence by Lemma \ref{l:PatternGen3} we can find an algebraic morphism $\Phi:\bba^{(2k-1)n^2}\rightarrow  (\mathbb{GL}_n)_K$ whose image is exactly $\bbh$. Hence by means of the elimination method we can compute a presentation for $\bbh$, i.e. $F=\{f_1,\ldots, f_l\}\in R[\mathbb{GL}_n]$ such that
\[
\bbh \simeq \spec(K[\mathbb{GL}_n]/\langle F\rangle_K),
\] 
as $K$-varieties. Now by the second part of Lemma \ref{l:GenericFlatness}, we can compute a non-zero element $g\in R$ such that
\[
\mathcal{H}_g:=\mathcal{H}\times_{\spec(R)}\spec(R_g)\simeq \spec(R_g[\mathbb{GL}_n]/\langle F\rangle_g).
\]
\end{proof}
\begin{lem}\label{l:LieRing}
Let $R$ be a computable integral domain, $K$ be the quotient field of $R$, ${\bf z}\in \mathbb{Y}_{n,k}(R)$, $L=L_{\bf z}$ and $\bbh$ be a closed subgroup of $(\mathbb{GL}_n)_K$. If $(\bbh, L\otimes K)$ is an acceptable pair, then we can algorithmically find a non-zero element $g\in R$ such that
$\Lie(\mathcal{H})(R_g)=L_g,$ where $\mathcal{H}$ is the closure of $\bbh$ in $(\mathbb{GL}_n)_R$ and $L_g=L\otimes R_g$.
\end{lem}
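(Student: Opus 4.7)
The plan is to compute $\Lie(\mathcal{H})$ explicitly as the kernel of a Jacobian at the identity and then to match this kernel with $L_g$ after a single elementary localization dictated by that Jacobian. First I would invoke Lemma~\ref{l:presentation} to algorithmically produce a non-zero $g_1\in R$ and a finite system $F=\{f_1,\ldots,f_l\}\subseteq R_{g_1}[\mathbb{GL}_n]$ cutting out $\mathcal{H}\times_{\spec R}\spec R_{g_1}$ inside $(\mathbb{GL}_n)_{R_{g_1}}$. With such a presentation in hand, the usual $\epsilon^2=0$ description of the Lie algebra gives
\[
\Lie(\mathcal{H})(R_{g_1})=\ker\bigl(J(I)\bigr),\qquad J(I):=[\partial f_i/\partial X_{jk}](I),
\]
as $R_{g_1}$-submodules of $M_n(R_{g_1})$, so the problem reduces to controlling this kernel.

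By acceptability of $(\bbh,L\otimes K)$, the kernel of $J(I)$ over the fraction field $K$ is $L\otimes K$, of dimension $k$; hence $J(I)$ has rank exactly $n^2-k$ over $K$, and I can algorithmically extract a non-zero $(n^2-k)\times(n^2-k)$ minor $g_2\in R_{g_1}$. I will then set $g:=g_1 g_2$. The key technical point, and the step I expect to be the main obstacle, is to ensure that after localizing at $g$ the kernel of $J(I)$ is already free of rank $k$ over $R_g$ without any further localization. I would handle this by permuting coordinates so that this minor is the top-left block $B$ in $J(I)=\bigl(\begin{smallmatrix}B&C\\ D&E\end{smallmatrix}\bigr)$, noting that every $(n^2-k+1)\times(n^2-k+1)$ minor of $J(I)$ vanishes over $K$ and therefore over $R_g\hookrightarrow K$, and invoking the Schur-complement identity to force $E=DB^{-1}C$ entrywise over $R_g$. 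The kernel then has the explicit form $\{(-B^{-1}Cy,y):y\in R_g^k\}$, a free module of rank $k$.

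It remains to identify this kernel with $L_g$. For the inclusion $L_g\subseteq\Lie(\mathcal{H})(R_g)$, I would exploit that each $z_i$ is nilpotent with $z_i^n=0$ and that $(2n-1)!$ is invertible in $R$, so $\exp(tz_i)$ defines a genuine morphism $\bba^1_R\to(\mathbb{GL}_n)_R$ whose generic fiber lies in $\bbh$; the universal property of scheme-theoretic closure lifts it through $\mathcal{H}$, and substituting $t=\epsilon$ with $\epsilon^2=0$ yields $z_i\in\Lie(\mathcal{H})(R)$. For the reverse inclusion I would invoke $k$-strictness to split $M_n(R_g)=L_g\oplus L'_g$ with $L'_g$ locally free of rank $n^2-k$, then restrict the projection $M_n(R_g)\to L'_g$ to $\Lie(\mathcal{H})(R_g)$: this restriction vanishes after tensoring with $K$ since both sides agree generically with $L\otimes K$, and because $L'_g$ is torsion-free over the domain $R_g$ it must already vanish integrally, giving $\Lie(\mathcal{H})(R_g)\subseteq L_g$. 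All ingredients---producing $F$, differentiating, locating $g_2$, and forming $g_1g_2$---are algorithmic given the computable structure on $R$.
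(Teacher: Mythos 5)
Your argument is correct, but it reaches the conclusion by a genuinely different route than the paper. The paper first notes (via \cite[Lemma 2.12]{Nor}) that acceptability forces $\bbh$ to be the subgroup generated by the $\exp(tz_i)$, then uses Lemma~\ref{l:PatternGen3} together with Theorem~\ref{t:EGA} to get a presentation of $\mathcal{H}_{g_1}$ \emph{and} smoothness over $R_{g_1}$, identifies $\Lie(\mathcal{H}_{g_1}/R_{g_1})$ with $\Ker({\rm Jac})$ using that smoothness, makes the kernel free by Gauss--Jordan, and finally performs one more localization to match the computed basis of the kernel against the basis $z_1,\ldots,z_k$ of $L$ (two free modules agreeing over $K$ need not agree until denominators and the change-of-basis determinant are inverted). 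You instead prove the two inclusions directly: $L\subseteq\Lie(\mathcal{H})(R)$ by factoring the one-parameter subgroups $\exp(tz_i)$ through the schematic closure (using that $\bba^1_K$ is schematically dense in $\bba^1_R$) and specializing $t=\epsilon$, and $\Lie(\mathcal{H})(R_{g_1})\subseteq L_{g_1}$ from the $k$-strict splitting $M_n=L\oplus L'$ plus torsion-freeness of $L'$ over the domain $R_{g_1}$. This is a cleaner localization bookkeeping: in fact your two inclusions already give the equality over $R_{g_1}$, so the Schur-complement step producing $g_2$ is correct but superfluous for the stated equality (it only re-establishes freeness of the kernel, which follows anyway once it equals $L_{g_1}$), and your dual-numbers identification of $\Lie$ with $\Ker J(I)$ does not need the smoothness that the paper arranges via Theorem~\ref{t:EGA} (smoothness is, however, what the paper wants downstream in Lemma~\ref{l:2.8} to produce an acceptable pair over $R_g$).

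One step you should make explicit: Lemma~\ref{l:presentation} is stated for the subgroup generated by the $\exp(tz_i)$ and its closure, whereas here $\bbh$ is an arbitrary closed subgroup forming an acceptable pair with $L\otimes K$; to apply Lemma~\ref{l:presentation} to \emph{this} $\mathcal{H}$, and to justify your assertion that the generic fiber of $t\mapsto\exp(tz_i)$ lands in $\bbh$, you must invoke \cite[Lemma 2.12]{Nor} (or, for the containment of the one-parameter subgroups, condition 3 of Definition~\ref{d:acceptable}, $e(\mathbb{L}^{(n)})_{\rm red}=(\mathbb{H}^{(u)})_{\rm red}$, together with reducedness of $\bba^1_K$). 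Without identifying $\bbh$ with the group generated by the exponentials, the presentation produced by Lemma~\ref{l:presentation} would a priori cut out the closure of a possibly smaller subgroup. This is a one-line fix, and it is exactly how the paper opens its proof.
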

\begin{proof}
By \cite[Lemma 2.12]{Nor}, we know that $\bbh$ is generated by $\exp(tz_i)$. Hence by Lemma \ref{l:PatternGen3} and Theorem \ref{t:EGA}, we can compute a non-zero element $g_1\in R$ and a finite subset $F=\{f_1,\ldots,f_l\}\subseteq R[\mathbb{GL}_n]$ such that
\begin{enumerate}
\item The projection map $\mathcal{H}_{g_1}:=\mathcal{H}\times_{\spec(R)} \spec(R_{g_1})\rightarrow \spec(R_{g_1})$ is smooth.
\item As $R_{g_1}$-schemes, 
\[\mathcal{H}_{g_1}\simeq \spec( (R[\mathbb{GL}_n]/\langle F\rangle)_{g_1})\simeq \spec(R_{g_1}[\underline X]/\langle \tilde F\rangle),
\]
 where $\underline X=\{X_1,\ldots, X_{n^2+1}\}$, $\tilde F= F\cup\{ X_{n^2+1}D(X_1,\ldots,X_{n^2})-1\}$ and $D$ is the determinant of the first $n^2$ variables.
\end{enumerate}
Since $\mathcal{H}_{g_1}$ is a smooth $R_{g_1}$-scheme, $\Lie(\mathcal{H}_{g_1}/R_{g_1})=\Ker({\rm Jac}(\tilde F))$, where ${\rm Jac}(\tilde F)=[\partial \tilde f_i/\partial X_j]$ is the Jacobian of 
\[
(X_1,\ldots,X_{n^2+1})\mapsto (\tilde f(X_1,\ldots,X_{n^2+1}))_{\tilde f\in\tilde F}. 
\] 
 By Gauss-Jordan process, we can compute a non-zero element $g_2\in R$ such that $\Ker({\rm Jac}(\tilde F))_{g_2}$ is a free $R_{g_2}$-module. We can also compute an $R_{g_2}$-basis. Since we know that $L\otimes K=\Ker({\rm Jac}(\tilde F))_{g_2}\otimes_{R_{g_2}} K$ and we have $R_{g_2}$-basis for both of them, we can compute a non-zero element $g$ such that $L_g=\Ker({\rm Jac}(\tilde F))_{g}$, which finishes the proof.
\end{proof}
\begin{lem}\label{l:ThirdCon}
Let $R$, $K$, ${\bf z}$, $L$, $\bbh$ and $\mathcal{H}$ be as in Lemma \ref{l:LieRing}. If $(\bbh, L\otimes K)$ is an acceptable pair, then we can algorithmically find a non-zero element $g\in R$ such that 
\[
(e(\bbl^{(n)})\times_{\spec(R)} \spec(R_g))_{\rm red}=(\mathcal{H}^{(u)}\times_{\spec(R)} \spec(R_g))_{\rm red}.
\]
\end{lem}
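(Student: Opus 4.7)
The plan is to describe $(e(\mathbb{L}^{(n)}))_{\rm red}$ and $(\mathcal{H}^{(u)})_{\rm red}$ as the vanishing loci of radical ideals $\sqrt{I},\sqrt{J}\subseteq R[\mathbb{GL}_n]$, to note that the acceptability hypothesis (condition~3 of Definition~\ref{d:acceptable}) forces $\sqrt{I\otimes_R K}=\sqrt{J\otimes_R K}$ inside $K[\mathbb{GL}_n]$, and to clear denominators to produce $g\in R$ with $\sqrt{I}\otimes_R R_g=\sqrt{J}\otimes_R R_g$. Since radical and localization commute in a Noetherian ring, this equality of ideals translates directly into the equality of base-changed reduced subschemes required by the lemma.

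To make this effective, I would first present each side as a closed subscheme of $(\mathbb{GL}_n)_R$ in a concrete way. Writing $\mathbb{L}=\spec R[t_1,\ldots,t_k]$ with the linear parameterization $(t_1,\ldots,t_k)\mapsto t_1z_1+\cdots+t_kz_k$, the scheme $\mathbb{L}\cap(\mathbb{N}_n)_R$ is cut out by the entries of $(\sum t_iz_i)^n$; post-composing with the polynomial morphism $\exp\colon \mathbb{N}_n\to\mathbb{U}_n$ yields a parameterized map into $(\mathbb{GL}_n)_R$ whose scheme-theoretic image, and hence the ideal $I$, is computable by Gr\"obner-basis elimination. For $\mathcal{H}^{(u)}$, invoke Lemma~\ref{l:presentation} to obtain, after inverting some $g_1\in R$, an explicit finite generating set of the ideal of $\mathcal{H}$ in $R_{g_1}[\mathbb{GL}_n]$; adjoining to it the entries of $(X-\mathrm{I})^n$ produces generators of the ideal $J$ of $\mathcal{H}^{(u)}=\mathcal{H}\cap\mathbb{U}_n$ in an algorithmic way.

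Next I would descend the comparison from the generic fibre. Over the computable field $K$, radicals of finitely generated polynomial ideals are effectively computable (for instance by the Gianni--Trager--Zacharias algorithm), so one can extract explicit generators of $\sqrt{I\otimes_R K}$ and $\sqrt{J\otimes_R K}$; by hypothesis these ideals coincide. For each generator $p$ of $\sqrt{I\otimes_R K}$ there is a power $p^N\in I\otimes_R K$, and after clearing denominators $g_p\cdot p^N\in I$ for some non-zero $g_p\in R$, so $p\in\sqrt{I}\otimes_R R_{g_p}$. Using ideal-membership testing over $K$ to re-express each such $p$ as a $K[\mathbb{GL}_n]$-combination of generators of $\sqrt{J\otimes_R K}$ and clearing denominators once more, one obtains a non-zero $g_2\in R$, and symmetrically $g_3\in R$ from the reverse inclusion, such that $\sqrt{I}\otimes_R R_g=\sqrt{J}\otimes_R R_g$ with $g=g_1g_2g_3$.

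The main obstacle is packaging the effective radical computation and ideal-membership testing into a procedure that interacts cleanly with the ring-theoretic setup over $R$. Each ingredient---parameterizing $\mathbb{L}\cap\mathbb{N}_n$, computing scheme-theoretic images under $\exp$, intersecting $\mathcal{H}$ with $\mathbb{U}_n$, computing radicals over $K$, and clearing denominators---is individually standard in computer algebra, but one must verify that every intermediate object remains presented by finite data in a computable way and that the successively accumulated localizations can be assembled into a single non-zero element of $R$.
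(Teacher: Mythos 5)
Your proposal is correct and follows essentially the same route as the paper: compute explicit presentations of $e(\bbl^{(n)})$ (via the basis $z_1,\ldots,z_k$ of $L$ and elimination) and of $\mathcal{H}^{(u)}$ (via Lemma~\ref{l:presentation} plus the unipotence equations), invoke condition 3 of acceptability to identify the reduced generic fibers, and then clear denominators of the radical-membership certificates in both directions to produce $g$. The only cosmetic difference is that the paper presents $\mathbb{L}$ by linear equations from a dual basis rather than parametrically, and leaves the final denominator-clearing step implicit ("one can easily compute $g_2$"), which you spell out.
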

\begin{proof}
Since $L$ is given through an $R$-basis, we can compute a non-zero element $g_1\in R$ and an $R_{g_1}$-basis for the dual of $L$. Hence we can compute a presentation for $\mathbb{L}$. Thus using elimination method we can compute a presentation of $e(\bbl^{(n)})_{g_1}:=e(\bbl^{(n)})\times_{\spec(R)} \spec(R_{g_1})$. We can also compute a presentation of $\mathcal{H}^{(u)}$. 

Since $(\bbh, L\otimes K)$ is an acceptable pair, we have 
\[
(e(\bbl^{(n)})\times_{\spec(R)} \spec(K))_{\rm red}=(\mathcal{H}^{(u)}\times_{\spec(R)} \spec(K))_{\rm red}.
\]  
So having a presentation of both sides over $R_{g_1}$, one can easily compute $g_2\in R$ such that
\[
(e(\bbl^{(n)})\times_{\spec(R)} \spec(R_g))_{\rm red}=(\mathcal{H}^{(u)}\times_{\spec(R)} \spec(R_g))_{\rm red}
\]
holds for $g=g_1g_2$.
\end{proof}
\begin{proof}[Proof of Lemma~\ref{l:2.8}]
One can repeat Nori\rq{}s argument~\cite[Lemma 2.8]{Nor} and get the effective version using Theorem \ref{t:EGA}, Lemma \ref{l:LieRing} and Lemma \ref{l:ThirdCon}.
\end{proof}

\subsection{Proof of Lemma~\ref{l:2.7}.}

In this section, first we give a precise presentation of $\mathbb{Y}_{n,k}$. Then using Lemma~\ref{l:2.8} by an inductive argument we get the desired result.
\begin{dfn}
Let $F=\{f_1,\ldots,f_l\}$ and $F\rq{}=\{f\rq{}_1,\ldots,f\rq{}_{l\rq{}}\}$ be two subsets of $R[\underline X]$, where $\underline X=\{X_1,\ldots,X_m\}$. Then let $V(F)$ denote the closed subscheme of $\bba^m_R$ defined by the relations $F$, and 
\[
W(\bba^m_R; F,F\rq{}):=V(F)\setminus V(F\rq{}).
\]
If $\mathfrak{a}$ and $\mathfrak{b}$ are two ideals of $R[\underline X]$, then $V(\mathfrak{a})$ denotes the closed subscheme of $\bba^m_R$ defined by $\mathfrak{a}$ and 
\[
W(\bba^m_R;\mathfrak{a},\mathfrak{b}):=V(\mathfrak{a})\setminus V(\mathfrak{b}).
\]
\end{dfn}
\begin{dfn}
For any ${\bf z}=(z_1,\ldots,z_k)\in M_n(R)^k$, fix the standard $R$-basis of $M_n(R)$ and view $z_i$ as column vectors in this basis. Let $F_{\bf z}$ be the set of all the maximum dimension minors of the matrix $\left[\begin{array}{lcr}z_1&\cdots&z_k\end{array}\right]$ and $\mathfrak{a}_{\bf z}$ be the ideal generated by $F_{\bf z}$. 

We also consider the case $R=\bbz[\underline X]$, where $\underline X=\{X_{ij}^{i\rq{}}| 1\le i,j\le n, 1\le i\rq{} \le k\}$ and set ${\bf x}=(x_1,\ldots,x_k)$, where the $ij$-th entry of $x_{i\rq{}}$ is $X_{ij}^{i\rq{}}$. In this case, $F_{n,k}:=F_{\bf x}$ and $\mathfrak{a}_{n,k}:=\mathfrak{a}_{\bf x}$.
\end{dfn}
\begin{rmk}
We sometimes identify the functor $M_n$ with $\bba^{n^2}_{\bbz}$. This way, any ${\bf z}\in M_n(R)$ gives rise to a ring homomorphism $\phi_{\bf z}$ from $\bbz[\bba^{kn^2}]$ to $R$ and it is clear that $\phi_{\bf z}(\mathfrak a_{n,k})=\mathfrak{a}_{\bf z}$.
\end{rmk}
\begin{lem}\label{l:local}
Let $(A,\mathfrak{m})$ be a pair of a local ring and its maximal ideal. Let $\phi:A^n\rightarrow A^n$ be an $A$-linear map. Then the following statements are equivalent:
\begin{enumerate}
\item $\phi$ is surjective.
\item $\overline{\phi}:(A/\mathfrak{m})^n\rightarrow (A/\mathfrak{m})^n$ is invertible.
\item $\phi$ is invertible.
\end{enumerate}
\end{lem}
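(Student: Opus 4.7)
The plan is to prove the cyclic chain $(3)\Rightarrow(1)\Rightarrow(2)\Rightarrow(3)$, exploiting the determinant of the matrix representing $\phi$ together with the fact that in a local ring the non-units are exactly the elements of $\mathfrak{m}$.

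The implication $(3)\Rightarrow(1)$ is immediate. For $(1)\Rightarrow(2)$, observe that reduction modulo $\mathfrak{m}$ is right exact, so surjectivity of $\phi:A^n\to A^n$ passes to surjectivity of $\overline{\phi}:(A/\mathfrak{m})^n\to(A/\mathfrak{m})^n$. Since the target is a finite-dimensional vector space over the field $A/\mathfrak{m}$ of the same dimension as the source, a surjective linear map is automatically an isomorphism.

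For $(2)\Rightarrow(3)$, fix the standard basis of $A^n$ and let $M\in M_n(A)$ be the matrix of $\phi$. Invertibility of $\overline{\phi}$ means $\det(\overline{M})=\overline{\det(M)}$ is a non-zero element of the field $A/\mathfrak{m}$, i.e.\ $\det(M)\notin\mathfrak{m}$. In a local ring every element outside the maximal ideal is a unit, so $\det(M)\in A^\times$. By the adjugate formula $M\cdot\mathrm{adj}(M)=\mathrm{adj}(M)\cdot M=\det(M)\cdot I$, the matrix $\det(M)^{-1}\mathrm{adj}(M)\in M_n(A)$ provides a two-sided $A$-linear inverse of $\phi$.

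This exhausts the three implications; none of the steps is a real obstacle, the only point to be careful about is that the usual equivalences between invertibility, unit determinant, and surjectivity of reductions all genuinely rely on locality of $A$ through the identification $A\setminus A^\times=\mathfrak{m}$. (Alternatively, $(1)\Rightarrow(3)$ can be done directly via Nakayama's lemma applied to the kernel, viewing $A^n$ as an $A[T]$-module with $T$ acting by $\phi$; but the determinant argument above is both shorter and effective.)
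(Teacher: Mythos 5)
Your argument is correct: the cycle $(3)\Rightarrow(1)\Rightarrow(2)\Rightarrow(3)$ via right exactness of reduction mod $\mathfrak{m}$ and the adjugate formula together with $A\setminus A^\times=\mathfrak{m}$ is exactly the standard reasoning. The paper itself offers no proof (it only says ``This is clear''), so your write-up simply supplies the routine details the authors omitted.
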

\begin{proof}
This is clear.
\end{proof}
\begin{lem}\label{l:Ypresentation1}
Let $R$ be any commutative ring. Then 
\[
{\bf z}=(z_1,\ldots,z_k)\in W(\bba^{kn^2}_{\bbz};0,\mathfrak{a}_{n,k})(R)
\]
 if and only if $M_n(R)/L$ is locally of dimension $n^2-k$, where $L=Rz_1+\cdots+Rz_k$.
\end{lem}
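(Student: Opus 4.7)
My plan is to unwind what each side asserts and then verify the equivalence after localizing at a prime. First I would translate the left-hand side: a morphism $\spec R\to \bba^{kn^2}_\bbz$ corresponding to ${\bf z}$ factors through the open subscheme $\bba^{kn^2}_\bbz\setminus V(\mathfrak{a}_{n,k})$ if and only if the scheme-theoretic pullback of $V(\mathfrak{a}_{n,k})$ is empty. Since $\phi_{\bf z}(\mathfrak{a}_{n,k})=\mathfrak{a}_{\bf z}$, this pullback is $\spec(R/\mathfrak{a}_{\bf z})$, which is empty iff $\mathfrak{a}_{\bf z}=R$. Thus ${\bf z}\in W(\bba^{kn^2}_\bbz;0,\mathfrak{a}_{n,k})(R)$ is equivalent to the ideal $\mathfrak{a}_{\bf z}$ of $k\times k$ minors of the $n^2\times k$ matrix $\phi:=[z_1|\cdots|z_k]$ being the unit ideal of $R$. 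Both this condition and the assertion that $M_n(R)/L$ is locally free of rank $n^2-k$ can be checked stalk-wise on $\spec R$, so I may reduce to the case when $R=A$ is a local ring with maximal ideal $\mathfrak{m}$ and residue field $\kappa$, where it suffices to prove $\mathfrak{a}_{\bf z}=A$ iff $M_n(A)/L$ is free of rank $n^2-k$.

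Over the local ring $A$, the condition $\mathfrak{a}_{\bf z}=A$ just says that some $k\times k$ minor of $\phi$ is a unit in $A$, equivalently that $\phi\otimes_A\kappa:\kappa^k\to\kappa^{n^2}$ has rank $k$. For the forward direction, after permuting rows I may assume the top $k\times k$ block $B$ of $\phi$ has unit determinant; by Lemma \ref{l:local} applied to $B$, it is invertible in $M_k(A)$. Writing $\phi=\left(\begin{smallmatrix}B\\ C\end{smallmatrix}\right)$, the $A$-linear automorphism $\Psi:=\left(\begin{smallmatrix}B^{-1}&0\\ -CB^{-1}&I\end{smallmatrix}\right)$ of $A^{n^2}=A^k\oplus A^{n^2-k}$ carries $L=\phi(A^k)$ onto $A^k\oplus\{0\}$, so $L$ is a free direct summand of rank $k$ with quotient $M_n(A)/L\cong A^{n^2-k}$.

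Conversely, if $M_n(A)/L$ is free of rank $n^2-k$, then the short exact sequence $0\to L\to A^{n^2}\to A^{n^2-k}\to 0$ splits, so $L$ is projective and hence (since $A$ is local) free of rank $k$, and the inclusion $L\hookrightarrow A^{n^2}$ stays injective after $\otimes_A\kappa$. The surjection $\phi:A^k\twoheadrightarrow L$ then induces a surjection $\kappa^k\twoheadrightarrow L/\mathfrak{m}L\cong\kappa^k$ which is an isomorphism by dimension counting; composing with the split inclusion yields that $\phi\otimes\kappa$ has rank $k$, so some $k\times k$ minor of $\phi$ is a unit in $A$. The main step where care is required is the forward direction, where one must promote the rank condition at the residue field into an actual splitting over $A$ itself; this is exactly what Lemma \ref{l:local} (together with Nakayama's lemma) provides, and the remaining bookkeeping is routine.
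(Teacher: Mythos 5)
Your proof is correct and takes essentially the same route as the paper: both translate the left-hand side into $\mathfrak{a}_{\bf z}=R$, localize at a prime, and settle the local case by the residue-field/unit-minor criterion of Lemma \ref{l:local}. The only (bookkeeping-level) difference is in the local step: the paper adjoins $n^2-k$ complementary generators and applies Lemma \ref{l:local} to the resulting square $n^2\times n^2$ matrix, whereas you split off an invertible $k\times k$ block for one direction and use the splitting of $0\to L\to M_n(A)\to M_n(A)/L\to 0$ for the other.
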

\begin{proof}
By the definition of $W(\bba^{kn^2}_{\bbz};0,\mathfrak{a}_{n,k})(R)$, it is straightforward to check that ${\bf z}\in W(\bba^{kn^2}_{\bbz};0,\mathfrak{a}_{n,k})(R)$ if and only if $\mathfrak{a}_{\bf z}=R$.

On the other hand, $M_n(R)/L$ is locally of dimension $n^2-k$ if and only if for any $\pfr\in \spec(R)$ there are $z^{(\pfr)}_{k+1},\ldots,  z^{(\pfr)}_{n^2}\in M_n(R)$ such that 
\begin{equation}\label{e:sum}
M_n(R_{\pfr})=(\sum_{i=1}^k R_{\pfr} z_i)+\oplus_{i=k+1}^{n^2} R_{\pfr} z^{(\pfr)}_i.
\end{equation}
Let $\phi_{\pfr}:R_{\pfr}^{n^2}\rightarrow M_n(R_{\pfr})$ be the following $R_{\pfr}$-linear map
\[
\phi_{\pfr}(r_1,\ldots,r_{n^2}):=\sum_i r_iz^{(\pfr)}_i,
\]
where $R_{\pfr}^{n^2}$ is the direct sum of $n^2$ copies of $R_{\pfr}$ and $z^{(\pfr)}_i=z_i$ for any $i\le k$.  By Lemma \ref{l:local}, it is clear that (\ref{e:sum}) holds if and only if $\overline{\phi}_{\pfr}$ is invertible. It is easy to see that the latter is equivalent to $\mathfrak a_{\overline{\bf z}}=k(\pfr)$, where $k(\pfr)=R_{\pfr}/\pfr R_{\pfr}$ and $\overline{\bf z}=(\overline z_1,\ldots,\overline z_k)\in M_n(k(\pfr))^k$. Let $S_{\pfr}=R\setminus \pfr$. By the definition, it is clear that $\mathfrak{a}_{\overline{\bf z}}=k(\pfr)$ if and only if $S_{\pfr}^{-1}\mathfrak a_{\bf z}=R_{\pfr}$. The latter holds for any $\pfr\in \spec(R)$ if and only if $\mathfrak a_{\bf z}=R$, which completes the proof.
\end{proof}

\begin{dfn}

Let ${\bf z}=(z_1,\ldots,z_k)\in (R^{n^2})^k$. We sometimes view such a vector in two other ways: as an $n^2\times k$ matrix whose $i$-th column is $z_i$; or a $k$-tuple of $n\times n$ matrices whose $i$-th entry is $z_i$ written in matrix form with respect to the standard basis. 

Let $J\subseteq \{1,\ldots,n^2\}$ be of order $k$. Then ${\bf z}_J$ denotes the $k\times k$ submatrix of ${\bf z}$ whose rows are determined by $J$. For a vector $v\in R^{n^2}$, $v_J$ denotes the subvector of size $k$ determined by $J$.

For a given $a\in {\rm Mor}(\bba^{kn^2}_{\bbz},\bba^{n^2}_{\bbz})$ and any subsets $J,J\rq{}\subseteq \{1,\ldots,n^2\}$ of order $k$, we define $f^{(a)}_{J,J\rq{}}\in {\rm Mor}(\bba^{kn^2}_{\bbz},\bba^k_{\bbz})$ as follows: 
\[
f^{(a)}_{J,J\rq{}}({\bf z})={\bf z}_{J\rq{}}{\rm adj}({\bf z}_J)a({\bf z})_J-\det({\bf z}_J)a({\bf z})_{J\rq{}}.
\]
Also let $F^{(a)}_{n,k}$ be the set consisting of all the entries of $f^{(a)}_{J,J\rq{}}$ for all the possible $J$ and $J\rq{}$.
\end{dfn}
\begin{lem}\label{l:Ypresentation2}
Let $R$ be any commutative ring and $a\in  {\rm Mor}(\bba^{kn^2}_{\bbz},\bba^{n^2}_{\bbz})$.  Then 
\[
{\bf z}=(z_1,\ldots,z_k)\in W(\bba^{kn^2}_{\bbz};F^{(a)}_{n,k},F_{n,k})(R)
\]
if and only if 
\begin{enumerate}
\item $M_n(R)/L_{\bf z}$ is locally of dimension $n^2-k$, where $L_{\bf z}=Rz_1+\cdots+Rz_k$,
\item $a({\bf z})\in L_{\bf z}$. 
\end{enumerate}
\end{lem}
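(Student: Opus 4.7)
The plan is to observe that the claimed equivalence splits cleanly into two independent pieces and to handle each separately. The ``non-vanishing'' part of the $W$-notation — namely that the ideal $\mathfrak{a}_{\bf z}$ generated by the maximal minors $F_{n,k}({\bf z})$ equals $R$ — is by Lemma~\ref{l:Ypresentation1} exactly equivalent to condition (1). So the entire content of the statement, granted (1), is the equivalence between the vanishing of every $f^{(a)}_{J,J'}({\bf z})$ and membership $a({\bf z})\in L_{\bf z}$.

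For the easy direction, if $a({\bf z})={\bf z} c$ for some $c\in R^k$, then for any size-$k$ subsets $J,J'\subseteq\{1,\ldots,n^2\}$ we have $a({\bf z})_J={\bf z}_J c$ and $a({\bf z})_{J'}={\bf z}_{J'}c$, and the adjugate identity ${\rm adj}({\bf z}_J)\,{\bf z}_J=\det({\bf z}_J)I_k$ immediately yields
$$
{\bf z}_{J'}\,{\rm adj}({\bf z}_J)\,a({\bf z})_J
={\bf z}_{J'}\,{\rm adj}({\bf z}_J)\,{\bf z}_J c
=\det({\bf z}_J)\,{\bf z}_{J'}c
=\det({\bf z}_J)\,a({\bf z})_{J'},
$$
so $f^{(a)}_{J,J'}({\bf z})=0$.

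For the other direction I would argue locally at each prime $\pfr\in\spec(R)$, using that membership in the finitely generated submodule $L_{\bf z}\subseteq R^{n^2}$ can be checked after localizing at every prime (equivalently, triviality of the image in $R^{n^2}/L_{\bf z}$ is a local property). Because $\mathfrak{a}_{\bf z}=R$ by condition (1), at $\pfr$ some $\det({\bf z}_J)$ is a unit in $R_\pfr$, making ${\bf z}_J$ invertible. Set $c:=\det({\bf z}_J)^{-1}\,{\rm adj}({\bf z}_J)\,a({\bf z})_J\in R_\pfr^k$, the unique solution of ${\bf z}_J c=a({\bf z})_J$. Dividing the assumed identity $f^{(a)}_{J,J'}({\bf z})=0$ by the unit $\det({\bf z}_J)$ gives ${\bf z}_{J'}c=a({\bf z})_{J'}$ for every $J'$, and letting $J'$ range over all size-$k$ subsets of $\{1,\ldots,n^2\}$ forces ${\bf z} c=a({\bf z})$ in $R_\pfr^{n^2}$, so $a({\bf z})\in L_{\bf z}\otimes R_\pfr$. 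As this holds at every $\pfr$, we conclude $a({\bf z})\in L_{\bf z}$. There is no serious obstacle here: the argument is just Cramer's rule glued to Lemma~\ref{l:Ypresentation1} by the standard local-to-global principle; the only thing to be careful about is the bookkeeping of the rows indexed by $J$ versus $J'$.
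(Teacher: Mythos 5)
Your proof is correct and follows essentially the same route as the paper: reduce via Lemma~\ref{l:Ypresentation1}, obtain the easy direction from the adjugate identity ${\rm adj}({\bf z}_J){\bf z}_J=\det({\bf z}_J)I_k$, and prove the converse by localizing at each prime $\pfr$, where some $\det({\bf z}_J)$ is a unit, and applying Cramer's rule. The only (harmless) difference is the final step: the paper glues the unique local solution vectors $\vec r_\pfr$ into a global $\vec r\in R^k$, whereas you finish by noting that $a({\bf z})\in L_{\bf z}$ is a local condition on the quotient module $R^{n^2}/L_{\bf z}$, which is a slightly more economical conclusion.
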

\begin{proof}
Let $\mathcal{Y}_{n,k}=W(\bba_{\bbz}^{kn^2};\varnothing,F_{n,k})$ and let $\mathcal{Y}^{(a)}_{n,k}$ be the functor from commutative rings to sets such that 
\[
\mathcal{Y}^{(a)}_{n,k}(R)=\{{\bf z}\in \mathcal{Y}_{n,k}(R)|a({\bf z})\in L_{\bf z}\}.
\]
By Lemma~\ref{l:Ypresentation1}, it is enough to show that 
\[
\mathcal{Y}^{(a)}_{n,k}(R)=W(\bba_{\bbz}^{kn^2};F^{(a)}_{n,k},F_{n,k})(R).
\]
Let us view ${\bf z}$ as an $n^2\times k$ matrix. Then if ${\bf z}\in \mathcal{Y}_{n,k}$, then it 
belongs to $\mathcal{Y}^{(a)}_{n,k}(R)$ if and only if there is $\vec{r}=(r_1,\ldots,r_k)$ such 
that ${\bf z}\vec{r}= a({\bf z})$. The latter holds if and only if for any $J\subseteq \{1,\ldots,n^2\}$ of order $k$ we have ${\bf z}_J \vec{r}=a({\bf z})_J$.

We claim that if ${\bf z}\in \mathcal{Y}^{(a)}_{n,k}(R)$ then there is a unique $\vec{r}$ which 
satisfies the equations ${\bf z}_J\vec{r}=a({\bf z})_J$ for all the subsets $J$ of order $k$ in $\{1,\ldots,n^2\}$. To show this claim it is enough to notice that $\det({\bf z}_J) \vec{r}=\adj({\bf z}_J)a({\bf z})_J$ and the ideal generated by $\det({\bf z}_J)$ as $J$ runs through all the subsets 
of order $k$ contains 1 as ${\bf z}\in \mathcal{Y}_{n,k}(R)$. 

We also observe that if ${\bf z}\in \mathcal{Y}^{(a)}_{n,k}(R)$, then for any $J$ and $J\rq{}$ we have
\[
{\bf z}_{J\rq{}}\adj({\bf z}_J)a({\bf z})_J=\det({\bf z}_J) {\bf z}_{J\rq{}}\vec{r}=\det({\bf z}_J) a({\bf z})_{J\rq{}}.
\]
Hence ${\bf z}\in W(\bba_{\bbz}^{kn^2};F^{(a)}_{n,k},F_{n,k})(R)$, i.e. $\mathcal{Y}^{(a)}_{n,k}(R)\subseteq W(\bba_{\bbz}^{kn^2};F^{(a)}_{n,k},F_{n,k})(R)$.

Let ${\bf z}\in  W(\bba_{\bbz}^{kn^2};F^{(a)}_{n,k},F_{n,k})(R)$. We claim that if $\det({\bf z}_J)$ is a unit in $R$ for some $J$, then ${\bf z}\in \mathcal{Y}^{(a)}_{n,k}(R)$. To see this it is enough to check that 
\[
\vec{r}=\det({\bf z}_J)^{-1}\adj({\bf z}_J)a({\bf z})_J
\]
 satisfies all the equations ${\bf z}_{J\rq{}} \vec{r}=a({\bf z})_{J\rq{}}$. In particular, for any local ring $R$, we have 
\[
\mathcal{Y}^{(a)}_{n,k}(R)=W(\bba_{\bbz}^{kn^2};F^{(a)}_{n,k},F_{n,k})(R).
\]
For an arbitrary commutative ring $R$, let again ${\bf z}\in W(\bba_{\bbz}^{kn^2};F^{(a)}_{n,k},F_{n,k})(R)$. By the above discussion, for any $\pfr\in \spec(R)$, we have that ${\bf z}\in \mathcal{Y}^{(a)}_{n,k}(R_{\pfr})$, i.e. there is a unique $\vec{r}_{\pfr}\in R_{\pfr}^k$ such that ${\bf z}_J\vec{r}_{\pfr}=a({\bf z})_J$ for any $J$. On the other hand, by the uniqueness argument, since the ideal generated by $\det({\bf z}_J)$ is equal to $R$, there is $\vec{r}\in R^k$ such that for any $\pfr$ and any $J$ we have ${\bf z}_J \vec{r}=a({\bf z})_J$ in $R_{\pfr}^k$. Now one can easily deduce that ${\bf z}_J \vec{r}=a({\bf z})_J$ in $R^k$, which means ${\bf z}\in \mathcal{Y}^{(a)}_{n,k}(R)$ and we are done.
\end{proof}
\begin{dfn}
Let $a_{ij}\in {\rm Mor}(\bba_{\bbz}^{kn^2},\bba_{\bbz}^{n^2})$ be the following morphism
\[
a_{ij}({\bf z}):=[z_i,z_j]=z_iz_j-z_jz_i,
\]
for any $1\le i,j\le k$. Let $\widetilde{F}_{n,k}:=\bigcup_{i,j=1}^k F^{(a_{ij})}_{n,k}$.
\end{dfn}
\begin{cor}\label{c:Ypresentation}
For any commutative ring $R$, we have 
\[
\mathbb{Y}_{n,k}(R)=W(\bba_{\bbz}^{kn^2};\widetilde{F}_{n,k},F_{n,k})(R).
\]
\end{cor}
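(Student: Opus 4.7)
The plan is to deduce this directly by combining Lemma~\ref{l:Ypresentation1} (which captures the $k$-strict module condition) with Lemma~\ref{l:Ypresentation2} applied to each bracket morphism $a_{ij}$ (which captures bracket-closure). The proof is essentially a one-line assembly once both lemmas are in hand.

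First I would unpack what it means for $L_{\bf z}=Rz_1+\cdots+Rz_k$ to be a $k$-strict Lie subalgebra of $M_n(R)$: by Definition~2.2 this is the conjunction of (i) $L_{\bf z}$ is a Lie ring, and (ii) there is a decomposition $M_n(R)=L_{\bf z}\oplus L'$ with $L'$ locally free of rank $n^2-k$. By bilinearity and antisymmetry of the bracket, (i) is equivalent to $[z_i,z_j]\in L_{\bf z}$ for all $1\le i,j\le k$. Lemma~\ref{l:Ypresentation1} then tells me that (ii) is equivalent to $\mathfrak{a}_{\bf z}=R$, i.e.\ the set $F_{n,k}$ of $k\times k$ minors of ${\bf z}$ generates the unit ideal, which is precisely the condition ${\bf z}\notin V(F_{n,k})(R)$ appearing in the $W$-notation.

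For the Lie ring condition (i), I would apply Lemma~\ref{l:Ypresentation2} to $a=a_{ij}$: granted (ii), the lemma gives $a_{ij}({\bf z})=[z_i,z_j]\in L_{\bf z}$ iff all entries of $f^{(a_{ij})}_{J,J'}$ vanish on ${\bf z}$, i.e.\ $F^{(a_{ij})}_{n,k}({\bf z})=0$. Taking the union over $1\le i,j\le k$ collapses this into the single condition $\widetilde{F}_{n,k}({\bf z})=0$, i.e.\ ${\bf z}\in V(\widetilde{F}_{n,k})(R)$. Assembling the two halves yields
\[
\mathbb{Y}_{n,k}(R)=V(\widetilde{F}_{n,k})(R)\setminus V(F_{n,k})(R)=W(\bba_{\bbz}^{kn^2};\widetilde{F}_{n,k},F_{n,k})(R),
\]
as claimed.

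The only subtle point worth flagging is that in Lemma~\ref{l:Ypresentation2} the equivalence between $a({\bf z})\in L_{\bf z}$ and the vanishing of $F^{(a)}_{n,k}$ is asserted jointly with the $k$-strict condition rather than in isolation: the polynomial witnesses $f^{(a)}_{J,J'}({\bf z})={\bf z}_{J'}\adj({\bf z}_J)a({\bf z})_J-\det({\bf z}_J)a({\bf z})_{J'}$ only correctly detect membership in $L_{\bf z}$ once the $\det({\bf z}_J)$ form a partition of unity. Here this compatibility is automatic because both conditions are being imposed simultaneously, so no obstacle arises; the entire content of the corollary is just the bookkeeping of combining the two lemmas over the finitely many pairs $(i,j)$.
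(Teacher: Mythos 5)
Your proof is correct and matches the paper's argument, which simply records the corollary as a direct consequence of Lemma~\ref{l:Ypresentation2}: one applies that lemma to each bracket morphism $a_{ij}$, notes that bilinearity reduces the Lie-ring condition to $[z_i,z_j]\in L_{\bf z}$ for the generators, and intersects over the finitely many pairs $(i,j)$, which is exactly what taking the union $\widetilde{F}_{n,k}=\bigcup_{i,j}F^{(a_{ij})}_{n,k}$ encodes. Your separate appeal to Lemma~\ref{l:Ypresentation1} is harmless but redundant, since condition (1) of Lemma~\ref{l:Ypresentation2} already carries the $k$-strictness (local rank $n^2-k$ quotient) condition.
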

\begin{proof}
This is a direct consequence of Lemma~\ref{l:Ypresentation2}
\end{proof}
\begin{lem}\label{l:Covering}
Let $F$ and $F\rq{}=\{f_1\rq{},\ldots,f_{l\rq{}}\rq{}\}$ be two subsets of $\bbz[\underline{X}]$, where $\underline{X}=\{X_1,\ldots,X_m\}$. Assume that $\langle F\rangle$ is a radical ideal. Then we can computationally determine if $W(\bba_{\bbz}^m;F,F\rq{})$ is nonempty, and if it is, then we can give a presentation of an integral domain $R$ and $z\in W(\bba_{\bbz}^m;F,F\rq{})(R)$ such that 
\begin{enumerate}
\item $z:\spec(R)\rightarrow W(\bba_{\bbz}^m;F,F\rq{})$ is an open immersion.
\item For any given $d\in R$, we can computationally describe the complement of $z(\spec(R[\frac{1}{d}]))$ in $ W(\bba_{\bbz}^m;F,F\rq{})$.
\end{enumerate}
\end{lem}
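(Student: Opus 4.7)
My first step will be to test emptiness: $W(\bba_{\bbz}^m;F,F')$ is empty iff every $f'\in F'$ lies in $\langle F\rangle$, which I check by Gr\"obner-basis membership (here I use the hypothesis $\sqrt{\langle F\rangle}=\langle F\rangle$). Assuming nonempty, I will compute the prime decomposition $\langle F\rangle=\pfr_1\cap\cdots\cap\pfr_s$; this is effective over the computable ring $\bbz[\underline{X}]$ via Gianni--Trager--Zacharias-type algorithms, yielding finite generating sets for each $\pfr_i$. Testing the membership $F'\subseteq\pfr_i$ for each $i$ singles out some index $i_0$ together with an element $g_2\in F'\setminus\pfr_{i_0}$.

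Next I will find a ``separator'': by prime avoidance $\bigcap_{j\ne i_0}\pfr_j\not\subseteq\pfr_{i_0}$, and computing generators of the intersection by an ideal-intersection algorithm and testing each modulo $\pfr_{i_0}$ produces an explicit $g_1\in\bigl(\bigcap_{j\ne i_0}\pfr_j\bigr)\setminus\pfr_{i_0}$. I will then set
\[
R:=(\bbz[\underline{X}]/\pfr_{i_0})[1/(g_1g_2)],
\]
presented as $\bbz[\underline{X},T]/\langle \pfr_{i_0},\,Tg_1g_2-1\rangle$. Since $\pfr_{i_0}$ is prime and $g_1g_2\ne 0$ in $\bbz[\underline{X}]/\pfr_{i_0}$, $R$ is a finitely presented integral domain. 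The key point is that because $g_1\in\pfr_j$ for every $j\ne i_0$, inverting $g_1$ annihilates every component $\pfr_j$ $(j\ne i_0)$ of $\bbz[\underline{X}]/\langle F\rangle$, so the canonical surjection $\bbz[\underline{X}]/\langle F\rangle\to\bbz[\underline{X}]/\pfr_{i_0}$ becomes an isomorphism after inverting $g_1g_2$. In other words, the natural morphism $z:\spec(R)\to V(F)$ identifies $\spec(R)$ with the principal open $V(F)_{g_1g_2}$; since $g_2\in F'$ is a unit there, the image lies in $W(\bba_{\bbz}^m;F,F')$, and $z$ is an open immersion.

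For part (2), given $d\in R$ I will lift it to a polynomial $\tilde{d}\in\bbz[\underline{X}]$. Repeating the same identification, $z(\spec(R[1/d]))$ is the principal open $V(F)_{g_1g_2\tilde{d}}$, and its complement in $W(\bba_{\bbz}^m;F,F')$ is precisely $W(\bba_{\bbz}^m;F\cup\{g_1g_2\tilde{d}\},F')$. Explicit generating sets for this new pair are already at hand from the construction.

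The main obstacle I anticipate is the effective prime decomposition of $\langle F\rangle$ in $\bbz[\underline{X}]$: working over $\bbz$ (rather than over a field) is noticeably more delicate than a routine Gr\"obner computation, since it ultimately reduces to effective factorization of univariate polynomials over number fields, but it is nonetheless classical (cf.\ the Gianni--Trager--Zacharias framework). Once the decomposition is in hand, everything else---ideal intersection, ideal membership, quotient presentation, and principal-open localization---is routine Gr\"obner-basis work that can be carried out uniformly.
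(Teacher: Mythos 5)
Your proof is correct, and it rests on the same computational engine as the paper's: effective prime (primary) decomposition of the radical ideal $\langle F\rangle$ in $\bbz[\underline{X}]$, followed by localization of one prime component at a separator. The difference is organizational: the paper first reduces to the case $F'=\varnothing$ by covering $W(\bba_{\bbz}^m;F,F')$ with the sets $W(\bba_{\bbz}^m;F,\{f'_i\})$ and applying the Rabinowitsch trick $W(\bba_{\bbz}^m;F,\{f'_i\})\simeq W(\bba_{\bbz}^{m+1};F\cup\{f'_iX_{m+1}-1\},\varnothing)$, then takes any component $\pfr_1$ and a separator $c$; you instead keep $F'$, use the (correct) observation that $W\neq\emptyset$ exactly when $\langle F'\rangle\not\subseteq\pfr_{i_0}$ for some component, and invert $g_1g_2$ with $g_2\in F'\setminus\pfr_{i_0}$, so that the image automatically misses $V(F')$. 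What your variant buys is a cleaner bookkeeping: no auxiliary variable beyond the one presenting the localization, and the complement in part (2) comes out as the single set $W(\bba_{\bbz}^m;F\cup\{g_1g_2\tilde d\},F')$ rather than a union over cases as in the paper. Three small touch-ups: the existence of $g_1\in\bigl(\bigcap_{j\neq i_0}\pfr_j\bigr)\setminus\pfr_{i_0}$ is not prime avoidance but the incomparability (irredundancy) of the minimal primes of a radical ideal, and when $s=1$ one should just take $g_1=1$; lifting $d\in R$ to $\tilde d\in\bbz[\underline{X}]$ means writing $d$ as $\tilde d/(g_1g_2)^N$ up to the relation $Tg_1g_2=1$, which is harmless since $g_1g_2$ is already a unit in $R$; and, as in the paper's own description of the complement, the ideal $\langle F\cup\{g_1g_2\tilde d\}\rangle$ need not be radical, so when the lemma is iterated one replaces it by its (computable) radical, which does not change the underlying locally closed set.
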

\begin{proof}
It is clear that $W(\bba_{\bbz}^m;F,F\rq{})$ is empty if and only if $\langle F\rq{}\rangle\subseteq \sqrt{\langle F\rangle}$, which can be computationally determined. To show the rest, 
first we claim that we can assume that $F\rq{}=\varnothing$.  To show this claim, we start with the following open affine covering:
\[
W(\bba_{\bbz}^m;F,F\rq{})=\cup_i W(\bba_{\bbz}^m;F,\{f\rq{}_i\}).
\]
And, we notice that $W(\bba_{\bbz}^m;F,\{f\rq{}_i\})\simeq W(\bba_{\bbz}^{m+1};F\cup \{f\rq{}_iX_{m+1}-1\},\varnothing).$ Now if we find $R$ and $z$ for $F\cup \{f\rq{}_iX_{m+1}-1\}$ and $F\rq{}=\varnothing$, then one can see that the first assertion still 
holds and the complement of $z(\spec(R[\frac{1}{d}])$ in $W(\bba_{\bbz}^m;F,F\rq{})$ is equal 
to the union of its complement in $W(\bba_{\bbz}^m;F,\{f_i\rq{}\})$ and 
$W(\bba_{\bbz}^m;F\cup \{f_i\rq{}\},F\rq{})$.

So without loss of generality, we can and will assume that $F\rq{}=\varnothing$. By \cite[Chapter 8.5]{Grobner}, we can compute a primary decomposition $\cap_i \pfr_i$ of $\langle F\rangle$. Since $\langle F\rangle=\sqrt{ \langle F\rangle}$, $\pfr_i$ is a prime ideal for any $i$. If $\langle F\rangle$ is a prime ideal, let $c=1$ and $R=\bbz[\underline{X}]/\langle F\rangle$; otherwise,
let $c\in \cap_{i\le 2} \pfr_i\setminus \pfr_1$ (we can computationally find such $c$) and $R=(\bbz[\underline{X}]/\pfr_1)[\frac{1}{c}]$. Clearly this choice of $R$ satisfies the first assertion in the statement of Lemma. Now let $d\in R$ be a given element. Then one can easily check that the complement of the natural open immersion of $\spec(R[\frac{1}{d}])$ in $W(\bba_{\bbz}^m;F,\varnothing)$ is isomorphic to
\[
W(\bba_{\bbz}^{m};F\cup \{c\},\varnothing)\cup W(\bba_{\bbz}^{m};F\cup \{d\},\varnothing).
\]
\end{proof}

\begin{proof}[Proof of Lemma~\ref{l:2.7}]
Following Nori\rq{}s proof of \cite[Proposition 2.7]{Nor} and using Lemma~\ref{l:2.8}, Corollary~\ref{c:Ypresentation}, and Lemma~\ref{l:Covering} whenever needed, one can easily prove this lemma.
\end{proof}

\subsection{Proof of Theorem~\ref{t:5.1}.}

\begin{lem}\label{l:ExplicitZariskiClosure}
Let $S\subseteq \GL_n(\bbq)$ be a finite set of matrices. Let $\Gamma$ be the group generated 
by $S$. Assume that the Zariski-closure of $\Gamma$ in $(\mathbb{GL}_n)_{\bbq}$ is Zariski-
connected. Then we can compute a square-free integer $q_0$ and a finite subset $F=\{f_1,\ldots,f_l\}\subseteq \bbz[1/q_0][\mathbb{GL}_n]$ such that $\Gamma\subseteq \GL_n(\bbz[1/q_0])$ and its Zariski-
closure in $(\mathbb{GL}_n)_{\bbz[1/q_0]}$ is isomorphic to $\bbz[1/q_0][\mathbb{GL}_n]/\langle F\rangle$.
\end{lem}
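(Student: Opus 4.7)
The plan breaks the task into three stages: clearing denominators, computing a defining ideal for the Zariski closure $\bbg$ of $\Gamma$ over $\bbq$, and passing to a flat $\bbz[1/q_0]$-model using the effective generic-flatness result (Lemma~\ref{l:GenericFlatness}) established earlier in this appendix.

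For the first stage, let $q_0'$ be the product of all primes $p$ that divide the denominator of some entry of some $s\in S$ or $s^{-1}$. Then $\Gamma \subseteq \GL_n(\bbz[1/q_0'])$.

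For the second stage, I would compute $\bbg$ in $(\mathbb{GL}_n)_{\bbq}$ by iterative saturation. Set $T = S \cup S^{-1} \cup \{1\}$ and $V_0 = \{1\}$, and define $V_{k+1}$ to be the Zariski closure of $V_k \cdot T$ in $(\mathbb{GL}_n)_{\bbq}$. Given defining polynomials for $V_k$, each $V_k \cdot t$ is obtained by the linear change of variables corresponding to right multiplication by $t$, and the closure of the union $\bigcup_{t \in T} V_k \cdot t$ is computed via standard ideal-intersection routines for Gr\"obner bases; this yields explicit defining polynomials for $V_{k+1}$. The sequence $V_k$ is an increasing chain of closed subvarieties of $\mathbb{GL}_n$ containing $1$, and $V_k \subseteq \bbg$ for every $k$ by induction. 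Once $V_{N+1} = V_N$ (which can be checked algorithmically by testing ideal equality), the variety $V_N$ satisfies $V_N \cdot S \subseteq V_N$ and $V_N \cdot S^{-1} \subseteq V_N$; together with $1 \in V_N$, this forces $\Gamma \subseteq V_N$, hence $V_N = \bbg$. Termination and an effective bound on $N$ in terms of $n$ follow from the argument recalled in the introduction of the paper after the statement of \cite[Proposition 3.2]{EMO}: at each non-trivial step, the dimension, the number of top-dimensional components, or the degree of $V_k$ changes in a controlled way, and these invariants can be tracked through each Gr\"obner-basis step. Let $F' \subseteq \bbq[\mathbb{GL}_n]$ denote the resulting generators.

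For the third stage, I would clear denominators in $F'$ to obtain a finite set $F \subseteq \bbz[1/q_0'][\mathbb{GL}_n]$ with $\langle F\rangle_{\bbq} = \langle F'\rangle$, and apply Lemma~\ref{l:GenericFlatness} to the $\bbz[1/q_0']$-algebra $\bbz[1/q_0'][\mathbb{GL}_n]/\langle F\rangle$. This yields a computable nonzero integer $g$ such that $\langle F\rangle_g = \langle F\rangle_{\bbq} \cap \bbz[1/q_0' g][\mathbb{GL}_n]$. Taking $q_0$ to be the square-free radical of $q_0' g$ completes the argument, since the closed subscheme of $(\mathbb{GL}_n)_{\bbz[1/q_0]}$ defined by $F$ is then flat over $\bbz[1/q_0]$ with generic fiber equal to $\bbg$, and hence coincides with the Zariski closure of $\Gamma$ in $(\mathbb{GL}_n)_{\bbz[1/q_0]}$.

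The main obstacle is the effective termination of the second stage: one must bound the number of iterations and track the geometric invariants of $V_k$ through each Gr\"obner-basis step. This is essentially what the iteration scheme from \cite[Proposition 3.2]{EMO} accomplishes in a closely related setting, so no new ideas beyond those already invoked in the main body of the paper are needed. An alternative and arguably more direct route would be to compute the Zariski closures $\overline{\langle s\rangle}$ via Jordan decomposition of each $s\in S$ and then apply Lemma~\ref{l:PatternGen3} to assemble $\bbg$; this avoids the unbounded iteration but requires handling non-connected components of $\overline{\langle s\rangle}$, which is delicate since the connectedness hypothesis applies only to $\bbg$ itself.
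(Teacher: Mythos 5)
Your Stage 2 breaks down. With $V_0=\{1\}$ and $T=S\cup S^{-1}\cup\{1\}$ a finite set of points, $V_k\cdot T$ is a finite union of right translates of $V_k$, hence already Zariski closed, so by induction every $V_k$ is exactly the finite set of words of length at most $k$ in $S\cup S^{-1}$. The chain $V_0\subseteq V_1\subseteq\cdots$ therefore stabilizes if and only if $\Gamma$ is finite; for any infinite $\Gamma$ (already for $\Gamma=\langle 2\rangle\subseteq\GL_1(\bbq)$, whose closure is $\mathbb{G}_m$) the iteration never terminates and no $V_N$ ever equals $\bbg$, since each $V_N$ is a finite set while $\bbg$ has positive dimension. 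The appeal to \cite[Proposition 3.2]{EMO} does not repair this: that result says that products of bounded length escape any fixed \emph{proper} subvariety of $\bbg$, which here only confirms $V_{N+1}\supsetneq V_N$ for every $N$; it provides no mechanism by which closures of finite balls become positive dimensional. More generally, no sequence of product-sets and Zariski closures applied to a finite set of rational points can produce the positive-dimensional variety $\bbg$: the unavoidable input is the closure of an infinite \emph{cyclic} subgroup, i.e.\ the lattice of multiplicative relations among the eigenvalues of a generator, and this is a number-theoretic computation that Gr\"obner-basis manipulations of the generators alone cannot reach.

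The route you mention only as an aside is in fact the paper's proof, and the connectedness worry you raise there is resolved explicitly: one picks an odd prime $p$ with $\Gamma\subseteq\GL_n(\bbz_p)$ and replaces $\Gamma$ by $\Gamma\cap\GL_n^{(1)}(\bbz_p)$, a computable finite-index subgroup with the same (connected) Zariski closure; for such torsion-free elements the relation lattice of the eigenvalues is saturated (for $p$ odd no nontrivial root of unity is a $1$-unit), so the closure of each cyclic subgroup $\overline{\langle\gamma\rangle}\simeq\bbg_u\times\bbg_s$ --- computed via Jordan decomposition, $\log$/$\exp$ for the unipotent part, and the effective determination of the character equations $\prod_i\lambda_i^{m_i}=1$ for the semisimple part as in \cite{Ric} --- is irreducible of positive dimension. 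Lemma~\ref{l:PatternGen3} then expresses $\bbg$ as a product of boundedly many such irreducible pieces, and a presentation of $\bbq[\bbg]$ follows by elimination exactly as in Lemma~\ref{l:presentation}. Your Stages 1 and 3 (clearing denominators and invoking Lemma~\ref{l:GenericFlatness} to identify the contracted ideal over a localization $\bbz[1/q_0]$) are fine and agree with the paper; the gap is entirely in how the $\bbq$-closure itself is computed.
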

\begin{proof}
Since $S$ is a finite set of matrices, we can find an odd prime $p$ such that $\Gamma\subseteq \GL_n(\bbz_p)$. Hence by changing $\Gamma$ to $\Gamma\cap \GL_n^{(1)}(\bbz_p)$ (the first congruence subgroup is denoted by $\GL_n^{(1)}(\bbz_p)$), we can 
and will assume that $\Gamma$ is torsion free. It is worth mentioning that we are allowed to make such a change because of the following:
\begin{enumerate}
\item We can compute representatives for the cosets of $\Gamma\cap\GL_n^{(1)}(\bbz_p)$ in $\Gamma$. Thus we can compute a generating set for $\Gamma\cap\GL_n^{(1)}(\bbz_p)$.
\item Since we have assumed that the Zariski-closure $\bbh$ of $\Gamma$ in $(\mathbb{GL}_n)_{\bbq}$ is Zariski-connected, the Zariski-closure of $\Gamma\cap \GL_n^{(1)}(\bbz_p)$ in $(\mathbb{GL}_n)_{\bbq}$ is also $\bbh$.
\end{enumerate}
We find a presentation for $\bbq[\bbh]$ and then similar to the proof of Lemma~\ref{l:presentation} we can finish the argument. 
 
Since $\Gamma$ is torsion-free, the Zariski-closure of the cyclic group generated by any element 
of $\Gamma$ is of dimension at least one. Hence by the virtue of Lemma~\ref{l:PatternGen3} it is 
enough to find a presentation of the Zariski-closure $\bbg$ of the cyclic group generated by 
$\gamma\in S$. We can compute the Jordan-Chevalley decomposition $\gamma_u\cdot \gamma_s$ of $\gamma$. Let $\bbg_u$ (resp. $\bbg_s$) be the Zariski-closure of the group 
generated by $\gamma_u$ (resp. $\gamma_s$). Then $\bbg\simeq \bbg_u\times\bbg_s$ as 
$\bbq$-groups~\cite[Theorem 4.7]{Bor}. Using the logarithmic and exponential maps, one can 
easily find a presentation of $\bbg_u$. So it is enough to find a presentation of $\bbg_s$. We can 
compute all the eigenvalues $\lambda_1,\ldots,\lambda_n$ of $\gamma_s$. By \cite[Proposition 
8.2]{Bor}, in order to find a presentation of $k[\bbg]$, where $k$ is the number field generated by 
$\lambda_i$, it is enough to find a basis for the following subgroup of $\bbz^n$
\[
\{(m_1,\ldots,m_n)\in \bbz^n|\h \prod_i \lambda_i^{m_i}=1\},
\]
i.e. all the character equations, which is essentially done in \cite{Ric}. So far we found a finite subset $F'$ of $k[\mathbb{GL}_n]$ such that $k[\bbg_s]\simeq k[\mathbb{GL}_n]/\langle F'\rangle$. Since $\bbg_s$ is defined over $\bbq$, we have that $\bbq[\bbg_s]\simeq  \bbq[\mathbb{GL}_n]/(\langle F'\rangle\cap \bbq[\mathbb{GL}_n])$. On the other hand, using Gr\"{o}bner basis we can find a generating set $F_s$ for $\langle F'\rangle\cap \bbq[\mathbb{GL}_n]$, which finishes our proof.
\end{proof}
\begin{lem}\label{l:GenericHS}
 Let $\lcal$ be a smooth $\bbz[1/q_0]$-subgroup scheme of $(\mathbb{GL}_n)_{\bbz[1/q_0]}$. 
Let $F\subseteq \bbz[1/q_0][\mathbb{GL}_n]$ such that $\bbz[1/q_0][\mathbb{GL}_n]/\langle F\rangle$. If the generic fiber $\bbl$ of $\lcal$ is a 
simply-connected semisimple $\bbq$-group, then
\begin{enumerate}
\item we can algorithmically find a positive integer $p_0$ such that for any prime 
$p>p_0$, the special fiber $\lcal_p:=\lcal\times_{\spec(\bbz[1/q_0])}\spec(\bbf_p)$ of $\lcal$ 
over $p$ is a semisimple $\bbf_p$-group.
\item we can algorithmically find a positive integer $p_0$ such that, for any $p>p_0$, $\lcal(\bbz_p)$ is a hyper-special parahoric in $\bbl(\bbq_p)$.
\end{enumerate}
\end{lem}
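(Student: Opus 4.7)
\textbf{Proof proposal for Lemma~\ref{l:GenericHS}.}

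The plan is to reduce both parts to the effective non-vanishing of an explicit algebraic invariant of $\mathcal{L}$ (essentially, the discriminant of the Killing form on $\mathrm{Lie}(\mathcal{L})$), then to invoke Bruhat--Tits theory for part~(2). First I would normalize the situation: apply Theorem~\ref{t:EGA} to the defining ideal $\langle F\rangle$ and to the structural projection $\mathrm{Spec}(\mathbb{Z}[1/q_0][\mathbb{GL}_n]/\langle F\rangle)\to\mathrm{Spec}(\mathbb{Z}[1/q_0])$ to algorithmically produce $g_1\in\mathbb{Z}[1/q_0]\setminus\{0\}$ such that, after inverting $g_1$, every geometric fiber of $\mathcal{L}$ is smooth (given) and geometrically irreducible; this is legitimate because the generic fiber $\mathbb{L}$ is semisimple and hence connected. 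From here on, $p_0$ is assumed to exceed every prime factor of $q_0 g_1$.

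For part (1), I would next make the Lie algebra $\mathfrak{l} := \mathrm{Lie}(\mathcal{L}/\mathbb{Z}[1/q_0 g_1])$ explicit. Compute the kernel of the Jacobian of $F$ at the identity, and use Gaussian elimination (as in the proof of Lemma~\ref{l:LieRing}) to find $g_2$ such that $\mathfrak{l}[1/g_2]$ is free over $\mathbb{Z}[1/(q_0 g_1 g_2)]$ with an explicit basis $x_1,\ldots,x_m$. Using the bracket in $M_n$ one then writes down the structure constants $c_{ij}^k$ and the matrix of the Killing form $B_{ij}=\mathrm{tr}(\mathrm{ad}(x_i)\mathrm{ad}(x_j))$. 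Set $d:=\det(B_{ij})\in\mathbb{Q}^\times$; this is a concretely computable rational number, and it is nonzero because $\mathfrak{l}\otimes\mathbb{Q}=\mathrm{Lie}(\mathbb{L})$ is semisimple and Cartan's criterion applies. Enlarging $p_0$ to exceed all primes dividing the numerator and denominator of $d g_1 g_2 q_0$ and also all characteristics that are bad for the root datum of $\mathbb{L}$ (a finite, explicitly readable-off set of primes depending only on the Dynkin type, obtainable e.g.\ from the generic decomposition of $\mathfrak{l}\otimes\overline{\mathbb{Q}}$ into simples), I would conclude as follows. For $p>p_0$, $\mathcal{L}_p$ is a smooth connected $\mathbb{F}_p$-group whose Lie algebra has non-degenerate Killing form and is of the same root type as $\mathrm{Lie}(\mathbb{L})$. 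Together with a dimension count (smooth + $\dim\mathcal{L}_p=\dim\mathbb{L}$) and the standard consequence of Cartan's criterion in large positive characteristic, this forces the unipotent radical to be trivial and the group to be semisimple.

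Part (2) is almost formal once part (1) is established. For $p>p_0$, $\mathcal{L}\times_{\mathbb{Z}[1/q_0]}\mathbb{Z}_p$ is an affine smooth $\mathbb{Z}_p$-group scheme with connected semisimple generic fiber $\mathbb{L}_{\mathbb{Q}_p}$ and connected semisimple special fiber $\mathcal{L}_p$; hence it is a semisimple (in particular, reductive) group scheme over $\mathbb{Z}_p$ in the sense of SGA3. By Tits' classification of parahoric subgroups (see \cite[3.8]{Tit}), the $\mathbb{Z}_p$-points of a reductive $\mathbb{Z}_p$-model of $\mathbb{L}_{\mathbb{Q}_p}$ are precisely a hyperspecial parahoric subgroup of $\mathbb{L}(\mathbb{Q}_p)$; the hypothesis that $\mathbb{L}$ is simply connected further guarantees that the hyperspecial parahoric coincides with $\mathcal{L}(\mathbb{Z}_p)$ (no issue of passing between the parahoric and its stabilizer).

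The main obstacle I anticipate is the passage from ``the Killing form of $\mathfrak{l}$ is non-degenerate modulo $p$'' to ``$\mathcal{L}_p$ is semisimple,'' which requires avoiding the bad primes of the root system. These primes are well known (2, 3, 5 depending on the type, plus torsion primes of the fundamental group), and the algorithm can read them off after computing the type of $\mathfrak{l}\otimes\overline{\mathbb{Q}}$ from the structure constants $c_{ij}^k$; nothing new is needed beyond standard linear algebra and the classification of simple Lie algebras, but this step is where the ``effective vs.\ classical'' gap needs to be carefully bridged.
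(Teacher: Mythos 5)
Your proposal is correct in substance, but it proves part (1) by a genuinely different route than the paper. The paper's proof works directly with $\lfr=\Lie(\bbl)$ over $\bbq$: it finds a Cartan subalgebra algorithmically by iterating centralizers of computable ad-semisimple elements, computes a splitting number field $k$ and a Chevalley basis of $\lfr\otimes_{\bbq}k$, then finds $a\in k$ so that the $\mathcal{O}_k[1/a]$-span of the Chevalley basis is a Lie subring, and excludes the primes dividing $N_{k/\bbq}(a)$; the semisimplicity of the special fiber is then read off from the reduced Chevalley lattice. You instead invert the discriminant of the Killing form of an explicit integral basis of $\Lie(\lcal)$ (plus the elements produced by Theorem~\ref{t:EGA} to secure smoothness with geometrically irreducible fibers), and deduce semisimplicity of $\lcal_p$ from nondegeneracy of the reduced Killing form. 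Both routes are effective; yours avoids computing a splitting field and a Chevalley basis, while the paper's yields finer information (an explicit Chevalley-type integral model of the fibers). One remark on the step you flag as delicate: the implication ``nondegenerate Killing form $\Rightarrow$ semisimple'' for a smooth connected affine group over a perfect field does not actually require avoiding bad primes of the root system. Indeed, $\Lie(R_u(\lcal_p\otimes\overline{\bbf}_p))$ is an ideal all of whose elements are ad-nilpotent, and for such an ideal $I$ the chain $V_{j+1}=[I,V_j]$, $V_0=\gfr$, consists of $\gfr$-submodules along which $\mathrm{ad}(x)\mathrm{ad}(y)$ ($x\in I$) is strictly decreasing, so $I$ lies in the radical of the Killing form in every characteristic; likewise the Lie algebra of the (central) radical torus is central, hence isotropic. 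So your bridge is characteristic-free, and excluding $p\le n$ or the primes you list is harmless but not needed. Part (2) is handled the same way in both arguments, via Bruhat--Tits (\cite[3.8--3.9]{Tit}): a smooth affine $\bbz_p$-model with connected semisimple generic and special fibers is a reductive $\bbz_p$-group scheme, whose $\bbz_p$-points form a hyperspecial parahoric.
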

\begin{proof}
 The second part is a consequence of the first part as it is explained in~\cite[Section 3.9.1]{Tit}. 
Here we only prove the first part.

We can compute the Lie algebra $\lfr$ of $\bbl$. Since $\bbl$ is not a nilpotent Lie algebra,
not all the elements of a basis of $\lfr$ can be ad-nilpotent. Hence we can find an ad-semisimple
 element $x$ of $\lfr$. Since $\lfr$ is a semisimple Lie algebra and $x$ is a semisimple element, 
the centralizer $c_{\lfr}(x)$ of $x$ in $\lfr$ is a reductive algebra and $c_{\lfr}(x)/\mathfrak{z}(c_{\lfr}(x))$ is a semisimple Lie algebra (if not trivial), where $\mathfrak{z}(c_{\lfr}(x))$ is the 
center of $c_{\lfr}(x)$. If $c_{\lfr}(x)$ is not commutative, then repeating the above argument we 
can find $x'\in c_{\lfr}(x)\setminus \mathfrak{z}(c_{\lfr}(x))$. We can compute the eigen-
values $\lambda_i$ (resp. $\lambda'_i$) of ${\rm ad}(x)$ (resp. ${\rm ad}(x')$) and find 
\[
\lambda\neq \frac{\lambda_i-\lambda_j}{\lambda'_{i'}-\lambda'_{j'}},
\]
 for any $i,j,i',j'$, then $c_{\lfr}(x+\lambda x')=c_{\lfr}(x)\cap c_{\lfr}(x')$. By repeating this 
process, we can compute a number field $k$ over which $\lfr$ splits and we can also compute a 
Cartan subalgebra. Hence we can compute a Chevalley basis $x_i$ for $\lfr\otimes_{\bbq} k$. 
Looking at the commutator relations, we can compute an element $a$ of $k$ such that $\sum_i \mathcal{O}_k[1/a] x_i$ form a Lie subring of $\lfr\otimes_{\bbq} k$, where $\mathcal{O}_k$ is the ring of integers in $k$. Thus for any $p$ which does not divide $N_{k/\bbq}(a)$ the special fiber $\lcal_p$ is a semisimple $\bbf_p$-group, as we wished.
\end{proof}
\begin{lem}\label{l:GenericPerfectness}
Let $\bbh$ be a perfect  Zariski-connected $\bbq$-subgroup of $\mathbb{GL}_n$. Let $F$ be a 
finite subset of $\bbq[\mathbb{GL}_n]$ such that $\bbq[\bbh]\simeq\bbq[\mathbb{GL}_n]/\langle F\rangle$. Then one can compute a square-free 
integer $q_1$ and a finite subset $F'$ of $\bbz[1/q_1][\mathbb{GL}_n]$ such that
\begin{enumerate}
\item The Zarsiki-closure $\cal$ of $\bbh$ in $(\mathbb{GL}_n)_{\bbz[1/q_1]}$ is defined by $F'$.
\item The projection map $\cal\rightarrow \spec(\bbz[1/q_1])$ is smooth.
\item We can compute a generating set for $\cal(\bbz[1/q_1])$. 
\item $\pi_p(\cal(\bbz[1/q_1]))$ is a perfect group if $p\nmid q_1$.
\end{enumerate}
\end{lem}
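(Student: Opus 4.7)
The plan is to construct $q_1$ in several stages, enlarging it at each step, and to obtain properties (1)--(4) one at a time.

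\textbf{Stage 1 (integral model, smoothness, irreducible fibers).} Starting from $F\subseteq\bbq[\mathbb{GL}_n]$, clear denominators to produce $F''\subseteq\bbz[\mathbb{GL}_n]$ that generates $\langle F\rangle$ after tensoring with $\bbq$, and form $\cal_0=\spec(\bbz[\mathbb{GL}_n]/\langle F''\rangle)$. The generic fiber is $\bbh$, which is smooth (every $\bbq$-algebraic group is smooth in characteristic $0$) and geometrically irreducible (an algebraic group that is Zariski-connected over a perfect field is geometrically connected, and for a smooth connected scheme this forces geometric irreducibility). Apply Theorem~\ref{t:EGA} together with Lemma~\ref{l:GenericFlatness} to compute $q_1^{(1)}$ such that, over $\bbz[1/q_1^{(1)}]$, the localization $\cal:=\cal_0\times_{\bbz}\bbz[1/q_1^{(1)}]$ is smooth with geometrically irreducible fibers and such that $\langle F''\rangle_{q_1^{(1)}}$ is the contraction of its extension to $\bbq[\mathbb{GL}_n]$. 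Taking $F'$ to be the localized image of $F''$ yields (1) and (2).

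\textbf{Stage 2 (computable generating set).} Compute, algorithmically, the Levi decomposition $\bbh=\bbl\ltimes\bbu$ over $\bbq$; this can be done by first computing the unipotent radical from the Lie algebra and then splitting off a Levi factor, which is effective in characteristic $0$. Handle the two factors separately: for $\bbu$, use the polynomial logarithm/exponential isomorphism between $\bbu$ and $\Lie(\bbu)$, so that a $\bbq$-basis of $\Lie(\bbu)$ produces, after inverting finitely many primes, a generating set of $\bbu(\bbz[1/q_1])$ for the lower central series; for $\bbl$ semisimple, apply Lemma~\ref{l:GenericHS} to produce a Chevalley basis and a hyperspecial $\bbz[1/q_1]$-structure, whose associated root-subgroup elements generate (a finite-index subgroup of) $\bbl(\bbz[1/q_1])$ for $q_1$ large enough; finite coset representatives for the image of the simply-connected cover can be listed by reduction theory. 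Union these families to obtain an effectively computable generating set $s_1,\ldots,s_m$ of $\cal(\bbz[1/q_1^{(2)}])$ after enlarging $q_1^{(1)}\mapsto q_1^{(2)}$. This is the main obstacle: merging the algorithmic reduction theory of semisimple $S$-arithmetic groups with the lattice computation in the unipotent radical requires careful tracking of the primes one has to invert.

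\textbf{Stage 3 (perfectness modulo $p$).} The generators produced in Stage 2 are already of a form that is manifestly a product of commutators inside $\cal(\bbz[1/q_1])$. Namely: each Chevalley root-subgroup element $x_\alpha(t)\in\bbl(\bbz[1/q_1])$ with $\alpha$ non-simple can be written via the Steinberg commutator relations as a commutator of elements $x_\beta(\cdot),x_\gamma(\cdot)$; the simple-root elements, and the coset representatives of $\bbl(\bbz[1/q_1])/\pi(\widetilde\bbl(\bbz[1/q_1]))$, are handled by conjugating by a Weyl group element realized inside the $x_\alpha$'s. For the generators of $\bbu$, perfectness of $\bbh$ forces $[\bbl,\bbu]=\bbu$ (otherwise $\bbh/\bbh'$ would admit a non-trivial abelian quotient through the $\bbl$-invariant complement), so each basis vector of $\Lie(\bbu)$ can be written algorithmically as an $\bbl$-commutator on Lie algebra level, which exponentiates to a commutator expression in $\bbh(\bbq)$. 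Inverting the finitely many primes that appear as denominators in all of these expressions yields the final $q_1\ge q_1^{(2)}$. For every prime $p\nmid q_1$, each generator $s_i$ projects to a product of commutators in $\pi_p(\cal(\bbz[1/q_1]))$, and hence this subgroup of $\cal_p(\bbf_p)$ coincides with its own commutator subgroup. This proves (4).

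\textbf{Main obstacle.} Stage 2 is by far the hardest step: there is no completely uniform algorithm in the literature for producing $S$-arithmetic generators of a general perfect $\bbq$-group, and the two pieces of the Levi decomposition must be reconciled. Once that is in hand, Stage 3 is essentially structural: perfectness of $\bbh$ gives, by construction, commutator representatives for the chosen generators, and clearing finitely many denominators propagates this to every residue field $\bbf_p$ with $p\nmid q_1$.
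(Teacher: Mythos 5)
Your Stage 1 coincides with the paper's argument (clear denominators, apply Theorem~\ref{t:EGA} and Lemma~\ref{l:GenericFlatness}), but Stages 2 and 3 have genuine gaps. In Stage 2, the generators you propose for the Levi factor do not exist as elements of $\bbl(\bbz[1/q_1])$: the Chevalley basis produced in Lemma~\ref{l:GenericHS} lives over a splitting number field $k$, so the associated root subgroups and elements $x_\alpha(t)$ are $k$-objects, not $\bbq$-rational, and for a non-split (possibly anisotropic) semisimple $\bbq$-group there simply are no such root elements in $\bbl(\bbq)$ to generate with; "finite coset representatives can be listed by reduction theory" is likewise not an effective procedure as stated. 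Your stated "main obstacle" is in fact resolved in the literature: the paper invokes the Grunewald--Segal algorithm \cite{GS2}, which computes a generating set for an explicitly given $S$-arithmetic group in complete generality, and this is exactly how item (3) is obtained.

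The more serious problem is Stage 3. First, the claim that the generators are "manifestly" products of commutators rests on the flawed root-subgroup description; for generators coming from a general algorithm (or any generators of a general $S$-arithmetic group) there is no reason they lie in the commutator subgroup --- arithmetic groups routinely have nontrivial finite abelianizations, and even $\bbh(\bbq)$ need not be perfect as an abstract group when $\bbh$ is perfect as an algebraic group (rationality of commutator expressions is a Kneser--Tits/Whitehead-group issue for the anisotropic factors of $\bbl$). Second, your fix of writing commutator expressions in $\bbh(\bbq)$ and then inverting the primes in their denominators is circular: enlarging $q_1$ enlarges $\cal(\bbz[1/q_1])$, so item (3) forces a new generating set, whose elements need new commutator expressions, possibly requiring further primes --- nothing guarantees termination. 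The paper's route avoids both issues and is not a commutator-expression argument at all: by Lemma~\ref{l:GenericHS} and classical strong approximation, $\pi_p(\lcal(\bbz[1/q_1]))=\lcal_p(\bbf_p)$ is a product of quasi-simple (hence perfect) groups, the log/exp lattice argument gives $\pi_p(\ucal(\bbz[1/q_1]))=\ucal_p(\bbf_p)$ and $[\ucal_p(\bbf_p),\ucal_p(\bbf_p)]=[\ucal,\ucal]_p(\bbf_p)$, and perfectness of $\bbh$ is converted into the statement that the generators $\Omega$ have no common nonzero fixed vector on $\ufr/[\ufr,\ufr]$, i.e.\ a full-rank condition on an explicit matrix over $\bbz[1/q_1]$; inverting the primes dividing the product of its maximal minors makes this persist modulo every remaining $p$, and this condition is monotone under enlarging the group, so no regress occurs. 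Perfectness of $\pi_p(\cal(\bbz[1/q_1]))=\lcal_p(\bbf_p)\ltimes\ucal_p(\bbf_p)$ then follows from the semidirect-product structure rather than from commutator expressions for generators.
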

\begin{proof}
By \cite[Algorithm 3.5.3]{GS1}, we can compute the unipotent radical and a Levi subgroup of $\bbh$. Therefore we 
can effectively write $\bbh$ as the semidirect product of a semisimple $\bbq$-group $\bbl$ and a 
unipotent $\bbq$-group $\bbu$. We can compute a square-free integer $q_2$ and $\bbz[1/q_2]$-group schemes 
$\mathcal{L}$ and $\mathcal{U}$ such that:
\begin{enumerate}
\item The projection maps to $\spec(\bbz[1/q_2])$ are smooth.
\item All the fibers are geometrically irreducible. 
\item $\mathcal{L}$ acts on $\mathcal{U}$.
\item The generic fiber of $\mathcal{L}$ (resp. $\mathcal{U}, \cal:=\mathcal{L}\ltimes \mathcal{U}$) is isomorphic to $\bbl$ (resp. $\bbu, \bbh$).
\end{enumerate}
It is worth mentioning that the first and the second items are consequences of 
Theorem~\ref{t:EGA} and the rest are easy.  Using logarithmic and exponential maps, we can  
effectively enlarge $q_2$, if necessary, and assume that for any $p\nmid q_2$ we have 
$[\ucal_p(\bbf_p),\ucal_p(\bbf_p)]=[\ucal,\ucal]_p(\bbf_p)$, where 
$\ucal_p=\bbu\times_{\spec(\bbz[1/q_2])}\spec(\bbf_p)$ ($[\ucal,\ucal]_p$ is defined in a similar 
way). We can also get a generating set for $\ucal(\bbz[1/q_2])$. By Lemma~\ref{l:GenericHS}, we can enlarge $q_2$ and assume that $\lcal(\bbz_p)$ is a 
hyper-special parahoric subgroup of $\bbl(\bbq_p)$ for any $p\nmid q_2$. In particular, by further 
enlarging $q_2$, we have that $\lcal(\bbz[1/q_2])$ is an arithmetic lattice in (the non-compact 
semisimple group) $\bbl(\bbr)\cdot \prod_{p|q_2} \bbl(\bbq_p)$. Thus we have
\begin{enumerate}
\item By the classical strong approximation theorem, we have that 
\[
\pi_p(\lcal(\bbz[1/q_2]))=\lcal_p(\bbf_p)
\]
 is a product of quasi-simple groups. 
\item By \cite{GS2}, we can compute a generating set $\Omega$ for $\lcal(\bbz[1/q_2])$. Thus we get a generating set for $\cal(\bbz[1/q_2])$.
\end{enumerate}

On the other hand, since $\bbh$ is perfect, the action of $\bbl$ on  $\ufr/[\ufr,\ufr]$ has no non-trivial fixed vector, where $\ufr=\Lie(\bbu)$. This is equivalent to say that the elements of $\Omega$ do 
not have a common non-zero fixed vector. Fix a basis $\mathfrak{B}$ of $\ufr/[\ufr,\ufr]$ and let $X_{\gamma}:=[\gamma]_{\mathfrak{B}}-I$, where the 
$[\gamma]_{\mathfrak{B}}$ is the matrix associated with the action of $\gamma$ and $I$ is the 
identity matrix. Let $X$ be a column blocked-matrix whose blocked-entries are $X_{\gamma}$ for 
$\gamma\in \Omega$. By our assumption, $X$ is of full rank, i.e. the product of its minors of maximum 
dimension is a non-zero element of $\bbz[1/q_2]$. Hence by enlarging $q_2$, if necessary, we can 
assume that the elements of $\pi_p(\Omega)$ do not fix any non-trivial element of 
$\ucal_p(\bbf_p)/[\ucal_p(\bbf_p),\ucal_p(\bbf_p)]$. Hence by the above discussion, for any prime $p\nmid q_2$, we have
\[
\pi_p(\cal(\bbz[1/q_2]))=\lcal_p(\bbf_p)\ltimes \ucal_p(\bbf_p)
\]
is a perfect group, which finishes our proof.
\end{proof}
\begin{proof}[Proof of Theorem~\ref{t:5.1}.]
Let $q_1$ be a square-free integer given by Lemma~\ref{l:GenericPerfectness}. Let $\gcal$ be the
Zariski-closure of $\bbg$ in $(\mathbb{GL}_n)_{\bbz[1/q_1]}$. Lemma~\ref{l:GenericPerfectness} 
provides us with an effective version of Theorem A for the group $\gcal(\bbz[1/q_1])$. On the other 
hand, we have already proved the effective versions of \cite[Theorem B and C]{Nor}. Hence 
following the proof of Proposition~\ref{p:LiftingUp}, one can effectively compute a positive number 
$\delta$ such that: for any proper subgroup $H=H^+$ of $\gcal_p(\bbf_p)^+$, one has that 
\[
\{\gamma\in \gcal(\bbz[1/q_1])|\h \|\gamma\|_S\le [\pi_p(\gcal(\bbz[1/q_1])):H]^{\delta}\}
\]
is in a proper algebraic subgroup. In particular, if $\Omega$ generates a Zariski-dense subgroup of 
$\bbg$, then $\pi_p(\Gamma)^+=\gcal_p(\bbf_p)^+$ for any $p>\max_{\gamma\in \Omega}\{\|
\gamma\|_S^{1/\delta}\}$, where $S=\{p|\h p \text{ is a prime divisor of}\h q_1\}.$ 
\end{proof}


 \bigskip
A. Salehi Golsefidy

{\sc Department of Mathematics, University of California, San Diego,
CA 92122, USA}

{\em e-mail address:} asalehigolsefidy@ucsd.edu

\bigskip
P. P. Varj\'u

{\sc Department of Mathematics, Princeton University, Princeton,
NJ 08544, USA and

Analysis and Stochastics Research Group of the
Hungarian Academy of Sciences, University of Szeged, Szeged, Hungary}

{\em e-mail address:} ppvarju@gmail.com


\begin{thebibliography}{99}

\bibitem{AMS}
H. Abels, G. A. Margulis and G. A. Soifer,
{\em Semigroups containing proximal linear maps},
Israel J. Math. {\bf91} (1995),
1--30.



\bibitem{Alo}
N. Alon,
{\em Eigenvalues and expanders},
Combinatorica
{\bf 6} No. 2. (1986),
83--96.

\bibitem{ALW}
N. Alon, A. Lubotzky and A. Wigderson,
{\em Semidirect products in groups and zig-zag product in graphs: connections and applications,}
42nd IEEE symposium on foundations of computer science (Las Vegas, NV, 2001),
 630--637, IEEE computer soc., Los Alamitos, CA, 2001.

\bibitem{AM}
N. Alon and V. D. Milman,
{\em $\l_1$, isoperimetric inequalities for graphs,
and superconcentrators},
J. Combin. Theory Ser. B,
{\bf 38} No. 1. (1985),
73--88.


\bibitem{Grobner}

T. Becker and V. Weispfenning,
Gr\"{o}bner basis, a computational approach to commutative algebra,
Springer-Verlag, New York, 1993.  



\bibitem{Bor}
A. Borel,
Algebraic linear groups, second enlarged edition,
Springer-Verlag, New York, 1991.


\bibitem{BG1}
J. Bourgain and A. Gamburd,
{\em Uniform expansion bounds for Cayley graphs of
$SL_2(\mathbb F_p)$,}
Ann. of Math.
{\bf 167} (2008),
625--642.

\bibitem{BG2}
J. Bourgain and A. Gamburd,
{\em Expansion and random walks in $SL_d(\mathbb Z/p^n\mathbb Z)$:I},
J. Eur. Math. Soc.
{\bf 10} (2008),
987--1011.

\bibitem{BG3}
J. Bourgain and A. Gamburd,
{\em Expansion and random walks in $SL_d(\mathbb Z/p^n\mathbb Z)$:II.
With an appendix by J. Bourgain},
J. Eur. Math. Soc.
{\bf 11} No. 5. (2009),
1057--1103.

\bibitem{BGS0}
J. Bourgain, A. Gamburd and P. Sarnak,
{\em Sieving and expanders},
C. R. Math. Acad. Sci. Paris,
{\bf 343} (2006), No. 3, 155--159.

\bibitem{BGS}
J. Bourgain, A. Gamburd and P. Sarnak,
{\em Affine linear sieve, expanders, and sum-product},
Invent. math., {\bf 179} No. 3. (2010),
559--644.

\bibitem{BGS2}
J. Bourgain, A. Gamburd and P. Sarnak,
{\em Generalization of Selberg's 3/16 theorem and affine sieve},
Acta Math. (to appear)
http://arxiv.org/abs/0912.5021v1

\bibitem{BV}
J. Bourgain and P. P. Varj\'u,
{\em Expansion in $SL_d(\Z/q\Z)$, $q$ arbitrary},
Invent. math. (to appear)
http://dx.doi.org/10.1007/s00222-011-0345-4

\bibitem{BrG}
E. Breuillard, A. Gamburd,
{\em Strong uniform expansion in $\mathrm{SL}(2,p)$},
Geom. Funct. Anal.,
{\bf 20} (2010) 1201--1209.


\bibitem{BGT}
E. Breuillard, B. J. Green and T. C. Tao,
{\em Approximate subgroups of linear groups},
Geom. Funct. Anal.,
{\bf 21} (2011) 774--819.

\bibitem{BGT2}
E. Breuillard, B. J. Green and T. C. Tao,
{\em Suzuki groups as expanders},
preprint available online:
http://arxiv.org/abs/1005.0782

\bibitem{BGT3}
E. Breuillard, B. J. Green, R. M. Guralnick and T. C. Tao,
{\em Expansion in finite simple groups of Lie type},
in preparation

\bibitem{BS}
M. Burger and P. Sarnak,
{\em Ramanujan duals II},
Invent. math., {\bf 106} No. 1. (1991), 1--11.


\bibitem{CS}
A. Cano and J. Seade,
{\em On the equicontinuity region of discrete subgroups of $PU(1,n)$},
J. Geom. Anal., {\bf 20} No. 2. (2010),
291--305.

\bibitem{Clo}
L. Clozel, {D\'emonstration de la conjecture $\tau$},
Invent. math. {\bf 151} No. 2. (2003), 133--150.

\bibitem{Din}
O. Dinai,
{\em Poly-log diameter bounds for some families of finite groups.}
Proc. Amer. Math. Soc.
{\bf 134} No. 11. (2006),
3137--3142.



\bibitem{Din2}
O. Dinai,
{\em Expansion properties of finite simple groups,}
PhD thesis, Hebrew University, 2009.
http://arxiv.org/abs/1001.5069


\bibitem{Dod}
J. Dodziuk,
{\em Difference equations, isoperimetric inequality and transience
of certain random walks},
Trans. Amer. Math. Soc.
{\bf{284}} No. 2. (1984),
787--794.

\bibitem{EMO}
A. Eskin, S. Mozes, H. Oh,
{\em On uniform exponential growth for linear groups},
Invent. math. {\bf 160} (2005),
1--30.

\bibitem{Far}
I. Farah,
{\em Approximate homomorphisms II: group homomorphisms},
Combinatorica
{\bf 20} (2000),
47--60.



\bibitem{Fur}
H. Furstenberg,
{\em Boundary theory and stochastic processes on homogeneous
spaces},
Proceedings of Symposia in Pure Mathematics
(Williamstown, MA, 1972),
American Mathematical Society,
1973,
pp. 193--229.

\bibitem{Gam}
A. Gamburd,
{\em On the spectral gap for infinite index ``congruence'' subgroups of
${\rm SL}\sb 2(\mathbf Z)$},
Israel J. Math. {\bf 127} (2002), 157--200.

\bibitem{GS}
A. Gamburd, M. Shahshahani,
{\em Uniform diameter bounds for some families of Cayley graphs.}
Int. Math. Res. Not.
No. 71. (2004),
3813--3824.

\bibitem{GM}
I. Ya. Goldsheid and G. A. Margulis,
{\em Lyapunov exponents of a product of random matrices} (Russian),
Uspekhi Mat. Nauk {\bf 44} no. 5 (1989),
13--60,
translation in Russian Math. Surveys {\bf 44} no. 5 (1989),
11--71 .


\bibitem{Gow}
W. T. Gowers,
{\em Quasirandom Groups},
Combin. Probab. Comput.
{\bf 17} (2008),
363--387.

\bibitem{EGA}
A. Grothendieck,
{\em EGA IV, triosi\'{e}me partie,}
Publ. Math. IHES {\bf 28} (1996).
 


\bibitem{GS1}
F.~Grunewald, D.~Segal,
{\it General algorithms, I: arithmetic groups,}
Annals of Math., (2nd series) {\bf 112}, no. 3, (1980) 531--583.

\bibitem{GS2}
F.~Grunewald, D.~Segal,
{\it Decision problems concerning $S$-arithmetic groups,}
The Journal of symbolic logic {\bf 50}, no. 3, (1985) 743--772.




\bibitem{Gur}
R. M. Guralnick,
{\em Small representations are completely reducible},
J. Algebra
{\bf 220} (1999)
531--541.


\bibitem{Hel}
H. A. Helfgott,
{\em Growth and generation in $SL_2(\mathbb Z/p\mathbb Z)$},
Ann. of Math.
{\bf 167} (2008),
601--623.

\bibitem{Hel2}
H. A. Helfgott,
{\em Growth in $SL_3(\mathbb Z/p\mathbb Z)$},
J. Eur. Math. Soc.,
{\bf 13} No. 3 (2011),
761--851.



\bibitem{HLW}
S. Hoory, N. Linial and A. Widgerson,
{\em Expander graphs and their applications},
Bull. Amer. Math. Soc.,
{\bf 43} No. 4 (2006),
439--561.

\bibitem{Kaz}
D. A. Kazhdan,
{\em On the connection of the dual space of a group with the
structure of its closed subgroups.} (Russian)
Funkcional. Anal. i Prilozen. {\bf 1} (1967)
71--74.

\bibitem{KS}
D. Kelmer and L. Silberman,
{\em A uniform spectral gap for congruence covers of a hyperbolic manifold},
preprint,
http://arxiv.org/abs/1010.1010

\bibitem{KS2}
H. H. Kim and P. Sarnak,
{\em Refined estimates towards the Ramanujan and Selberg conjectures}, Appendix to H. H. Kim,
J. Amer. Math. Soc. {\bf 16} no. 1. (2003), 139--183.

\bibitem{Kes}
H. Kesten,
{\em Symmetric random walks on groups,}
Trans. Amer. Math. Soc.,
{\bf 92} (1959),
336--354.

\bibitem{Kow}
E. Kowalski,
{\em Sieve in expansion}, preprint
http://arxiv.org/abs/1012.2793

\bibitem{Lac}
M. Lackenby,
{\em Heegaard splittings, the virtually Haken conjecture and property
$(\tau)$},
Invent. math.
{\bf 164} No. 2. (2006),
317--359.

\bibitem{LS}
V. Landazuri and G. M. Seitz,
{\em On the minimal degrees of projective representations of the finite Chevalley groups,}
Journal of Algebra
{\bf 32} (1974),
418--443.

\bibitem{Log}
A. Logar,
{\em A computational proof of the N\"oether normalization lemma,}
Proceedings of the 6th International Conference,
on Applied Algebra, Algebraic Algorithms and Error-Correcting Codes,
Lecture Notes in Computer Science, Springer {\bf 357} (1989), 259--273. 

\bibitem{LLR}
D. D. Long, A. Lubotzky and A. W. Reid,
{\em Heegaard genus and property $\tau$ for hyperbolic 3-manifolds},
J. Topol.,
{\bf 1} (2008),
152--158.

\bibitem{Lub0}
A. Lubotzky,
{\em Cayley graphs: eigenvalues, expanders and random walks.}
In: Rowbinson,
P. (ed.) Surveys in Combinatorics.
London Math. Soc. Lecture Note Ser., {\bf 218},
pp. 155--189. Cambridge University Press, Cambridge (1995)

\bibitem{Lub}
A. Lubotzky,
{\em Expander Graphs
in Pure and Applied Mathematics},
Notes prepared for the Colloquium Lectures at the
Joint Annual Meeting of the American Mathematical Society (AMS)
and the Mathematical Association of America (MAA).
New Orleans, January 6-9, 2011, 
http://arxiv.org/abs/1105.2389

\bibitem{Mar}
G. A. Margulis,
{\em Explicit constructions of expanders},
Problemy Peredaci Informacii,
{\bf 9} No. 4, (1973), 71--80.

\bibitem{NP}
N. Nikolov and L. Pyber,
{\em Product decompositions of quasirandom groups and
a Jordan type theorem},
preprint, available at
http://arxiv.org/abs/math/0703343

\bibitem{Nor}
M. V. Nori,
{\em On subgroups of $GL_n(\F_p)$},
Invent. math.,
{\bf 88} (1987),
257--275.


\bibitem{PS}
L. Pyber, E. Szab\'o,
{\em Growth in finite simple groups of Lie type of bounded rank},
preprint available online:
http://arxiv.org/abs/1005.1858

\bibitem{Ric}
D. Richardson,
{\em Multiplicative independence of algebraic numbers and expressions,}
Journal of pure and applied algebra {\bf 164} (2001), 231--245.



\bibitem{SS}
A. Salehi Golsefidy and P. Sarnak,
{\em Affine Sieve},
preprint available online:
http://arxiv.org/abs/1109.6432


\bibitem{SX}
P. Sarnak and X. X. Xue,
{\em Bounds for multiplicities of automorphic representations},
Duke Math. J.,
{\bf 64} no. 1, (1991),
207--227.

\bibitem{Sel}
A. Selberg,
{\em On the estimation of Fourier coefficients of modular forms},
Proc. Sympos. Pure Math.,
Vol. VIII, Amer. Math. Soc., Providence, R.I., 1965, pp. 1--15.

\bibitem{Sh1}
Y. Shalom,
{\em Expanding graphs and invariant means},
Combinatorica,
{\bf 17}, no. 4, (1997), 555--575.

\bibitem{Sh2}
Y. Shalom,
{\em Expander graphs and amenable quotients,}
in Emerging applications of number theory (Minneapolis, MN, 1996),
IMA Vol. Math. Appl., {\bf 109}, Springer, New York, 1999.
pp. 571--581.

\bibitem{Spr}
T. Springer,
Linear algebraic groups, second edition,
Birkh\"{a}user, Boston, 1998.

\bibitem{Tit}
J.~Tits,
{\em Reductive groups over local fields},
Proceedings of symposia in pure mathematics 
{\bf 33} (1979),
part 1, 29--69.

\bibitem{Var}
P. P. Varj\'u,
{\em Expansion in $SL_d(O_K/I)$, $I$ square-free,}
J. Eur. Math. Soc., to appear, available online:
http://arxiv.org/abs/1001.3664


\bibitem{Woe}
W. Woess,
{\em Random walks on infinite graphs and groups,} 
Cambridge Tracts in Mathematics, {\bf 138}
Cambridge University Press, Cambridge,
2000.

\bibitem{War}
R. B. Warfield,
{\em Nilpotent groups},
Lecture Notes in Math., {\bf 513},
Springer-Verlag, Berlin,
1976.

\end{thebibliography}
\end{document}